\documentclass[12pt]{article}
\title{The many faces of the stochastic zeta function}
\usepackage{palatino}
\date{}
\author{Benedek Valk\'o and B\'alint Vir\'ag}

\oddsidemargin 0in \topmargin 0in \headheight 0in \headsep 0in
\textheight 9in \textwidth 6.5in

\usepackage{amsmath, amsthm, amssymb}
\usepackage{graphicx}
\usepackage{amsmath, enumerate}
\usepackage{color, url}

\usepackage{hyperref}
    \newtheorem{theorem}{Theorem}
    \newtheorem{lemma}[theorem]{Lemma}
    \newtheorem{proposition}[theorem]{Proposition}
    \newtheorem{corollary}[theorem]{Corollary}

        \newtheorem{conjecture}[theorem]{Conjecture}

\theoremstyle{definition} 
    \newtheorem{definition}[theorem]{Definition}
    \newtheorem{remark}[theorem]{Remark}
    \newtheorem{example}[theorem]{Example}
    \newtheorem{assumption}{Assumption}
    
\newcommand{\eps}{\varepsilon}
\newcommand{\Z}{{\mathbb Z}}
\newcommand{\ZZ}{{\mathbb Z}}

\newcommand{\R}{{\mathbb R}}
\newcommand{\CC}{{\mathbb C}}

\newcommand{\HH}{{\mathbb H} }
\newcommand{\ev}{{\rm   E}}

\newcommand{\lstar}{{\raise-0.15ex\hbox{$\scriptstyle \ast$}}}

\theoremstyle{remark} 
\newcommand{\Sineb}{\operatorname{Sine}_{\beta}}
\newcommand{\Sine}{\operatorname{Sine}}

\newcommand{\Sineop}{\mathtt{Sine}_{\beta}}
\newcommand{\Circop}{\mathtt{Circ}_{\beta,n}}

\newcommand{\Dirop}{\mathtt{Dir}}

\definecolor{violet}{rgb}{0.8,0,0.2}
\newcommand{\cd}{\stackrel{d}{\longrightarrow}}
\newcommand{\ed}{\stackrel{d}{=}}

\newcommand{\cB}{{\mathcal B}}

\newcommand{\cA}{{\mathcal A}}

\newcommand{\cE}{{\mathcal E}}
\newcommand{\cH}{{\mathcal H}}
\newcommand{\cL}{{\mathcal L}}
\newcommand{\cG}{{\mathcal G}}
\newcommand{\btau}{{\boldsymbol{\tau}}}

\newcommand{\Tr}{{\operatorname{Tr\,}}}
\newcommand{\mat}[4]{\left( \hspace{-0.4em} \begin{array}{cc}
#1 & #2  \\
#3 & #4  \\
\end{array} \hspace{-0.4em}\right)}
\newcommand{\bin}[2]{\binom{#1}{#2}}

\newcommand{\ac}{{\text{\sc ac}}}
\newcommand{\dom}{\operatorname{dom}}

\newcommand{\tr}{\operatorname{tr}}
\newcommand{\ind}{\mathbf 1}

\newcommand{\benedek}[1]{\textcolor{red}{\tt{#1}}}

\newcommand{\szeta}{\zeta_\beta}

\newcommand{\res}{{\mathtt{r}\,}}

\newcommand{\q}{q}

\newcommand{\bside}{\noindent\textbf{\benedek{Begin side computation.}}
\begin{footnotesize}}

\newcommand{\eside}{\end{footnotesize}
\noindent \textbf{\benedek{End side computation.}}}

\newcommand{\sech}{\operatorname{sech}}

\begin{document}
\maketitle
\begin{abstract}
We introduce a framework to study the random entire function $\zeta_\beta$ whose zeros are given by the Sine$_\beta$ process, the bulk limit of beta ensembles. We present several equivalent characterizations, including an explicit power series representation built from Brownian motion. 

We study related distributions using stochastic differential equations. Our function is a uniform limit of characteristic polynomials in the circular beta ensemble; we give upper bounds on the rate of convergence.  Most of our results are new even for classical values of $\beta$.

We provide explicit moment formulas for $\zeta$ and its variants, and we show that the Borodin-Strahov moment formulas hold for all $\beta$ both in the limit and for circular beta ensembles. We show a uniqueness theorem for $\zeta$ in the Cartwright class, and deduce some product identities between conjugate values of $\beta$. The proofs rely on the structure of the $\Sineop$ operator to express $\zeta$ in terms of a regularized determinant. 
\end{abstract}

\tableofcontents

\section{Introduction}

The goal of this paper is to study a central object in random matrix theory: the large $n$ limit of characteristic polynomials. These limits are expected to be universal; here we study the universality class of general beta ensembles in the bulk. 

Relatively little is known about this limit, while the limit of the eigenvalues, the $\Sineb$ process, has been more extensively studied.  For $\beta=2$, Chhaibi, Najnudel and Nikhekgbali \cite{CNN} 
introduced the random entire function
$$
\zeta_2(z)=\lim_{r\to \infty} \prod_{\lambda\in \Sine_2,\; |\lambda|<r }(1-z/\lambda),
$$
and studied its properties using the special determinantal structure of this case. 
Our goal is to deepen the understanding of this function and its general $\beta$ versions by introducing a set of new tools. Among others, we give an explicit Taylor series expansion given in terms of Brownian motion. 

The  {\bf Cartwright class}  of entire functions is defined by the growth conditions $|f(z)|\le c^{1+ |z|}$ and  $\int_{\mathbb R}\log_+|f(x)|/(1+x^2) \,dx<\infty.$ It can be thought of as an infinite-dimensional version of  the class of polynomials. 

Let $b_1,b_2$ be independent copies of two-sided standard Brownian motion,  let $y_u=e^{b_2-u/2}$, let $q$ be an independent Cauchy random variable with density $1/(\pi(1+x^2))$.  Let $\cA_0\equiv 1, \cB_0 \equiv 0$, and define, recursively,
\begin{equation}\label{eq:AnBn_intro}
\begin{aligned}
\cB_{n,u}&=y_u\int_{-\infty}^u \tfrac{\beta}{8}\,e^{\beta s/4 }\cA_{n-1,s}/y_s\,ds,\\
\cA_{n,u}&=\int_{-\infty}^u \tfrac{\beta}{8}\, e^{\beta s/4}\,\cB_{{n-1},s}\,ds- \int_{-\infty}^u\cB_{n,s} \, db_1.
\end{aligned}
\end{equation}

\begin{theorem}[The stochastic zeta function]\label{t:main}
There exists a unique probability measure  on the Cartwright class of  entire functions $f$ with $f(0)=1$, $f(\R)\subset \R$, so that the law of zeros is given by the $\Sineb$ process. 

The corresponding random entire function, {\bf the stochastic zeta function} $\zeta_\beta$ has several explicit representations. As a  power series with infinite radius of convergence, 
\begin{equation}\label{e:zeta-intro}
\zeta_\beta(z)=\sum_{n=0}^\infty(\cA_{n,0}-q \cB_{n,0})z^n.
\end{equation}
As the principal value infinite product
$$
\zeta_\beta(z)=\lim_{r\to \infty} \prod_{\lambda\in \Sineb,\; |\lambda|<r }(1-z/\lambda).
$$
As $\zeta_\beta=[1,-q]\cH_0$, where $\cH_u(z)$ is the unique analytic  solution of the system of stochastic differential equations 
\begin{align}\label{HSDE-intro}
d \cH= \mat{0}{-db_1}{0}{db_2} \cH-z\tfrac{\beta}{8}e^{\beta u/4} J \cH \, du, \qquad u\in \mathbb R, \qquad \lim_{u\to -\infty} \sup_{z<1}\Big|\cH_u(z)-\binom{1}{0}\Big|=0.
\end{align}
\end{theorem}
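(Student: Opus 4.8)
The plan is to take the system \eqref{HSDE-intro} as the definition of $\cH_u(z)$ and build the other representations on top of it. First I would fix $z$ and solve \eqref{HSDE-intro} as a linear SDE on $(-\infty,0]$: the driving matrix $\mat{0}{-db_1}{0}{db_2}$ is upper triangular, and the potential term $z\tfrac{\beta}{8}e^{\beta u/4}J$ has finite total mass there since $\int_{-\infty}^0 \tfrac{\beta}{8}e^{\beta u/4}\,du=\tfrac12$. Picard iteration started from $\binom{1}{0}$ at $u=-\infty$ produces a unique solution (extended to all $u\in\R$ by the local linearity), and substituting the power-series ansatz $\cH_u(z)=\sum_n\binom{\cA_{n,u}}{\cB_{n,u}}z^n$ into \eqref{HSDE-intro} and matching coefficients (using $dy_u=y_u\,db_2$) reproduces exactly the recursion \eqref{eq:AnBn_intro}. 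The iterated-integral structure together with the It\^o isometry gives $\ev|\cA_{n,0}|^2+\ev|\cB_{n,0}|^2\le C^{2n}/(n!)^2$, up to the Brownian fluctuations controlled by the law of the iterated logarithm; a union bound over $n$ then yields a.s. random constants with $|\cA_{n,0}|,|\cB_{n,0}|\le C^n/n!$. Hence $z\mapsto\cH_0(z)$ is a.s.\ entire of exponential type, \eqref{e:zeta-intro} converges with infinite radius of convergence, and $\szeta:=[1,-\q]\cH_0$ is an a.s.\ well-defined random entire function with $\szeta(0)=1$ and $\szeta(\R)\subset\R$ (the coefficients of \eqref{HSDE-intro} and $\q$ are real). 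This identifies the first and third representations and yields the exponential-type half of the Cartwright condition.

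Next I would identify the zero set using the structure of the $\Sineop$ operator developed earlier: under the change of variables that turns $(-\infty,0]$ into the regular part of the domain, $\cH_0(z)$ is the transfer matrix of the $\Sineop$ eigenvalue problem applied to the left boundary vector, while the constant covector $[1,-\q]$ encodes the limit-circle data at the singular endpoint, with $\q$ the $z$-independent boundary point to which the carousel converges because $\int_0^\infty\tfrac{\beta}{8}e^{\beta u/4}\,du=\infty$. Thus $z\in\Sineb$ exactly when $\cH_0(z)$ is parallel to $\binom{\q}{1}$, i.e.\ when $\szeta(z)=0$, and self-adjointness and simplicity of the $\Sineop$ spectrum give that these zeros are simple and that there are no others. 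For the real-line integrability in the Cartwright condition I would use the known regularity of the $\Sineb$ counting function (linear growth with at most logarithmic fluctuations) to bound $\log_+|\szeta(x)|$.

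For the principal-value product I would first deduce from the same $\Sineb$ counting asymptotics and rigidity that $\sum_{|\lambda|<r,\ \lambda\in\Sineb}1/\lambda$ converges as $r\to\infty$, so that $\lim_r\prod_{|\lambda|<r}(1-z/\lambda)$ exists, defines an entire function with exactly the $\Sineb$ zeros, is real on $\R$, equals $1$ at $0$, and (again from those estimates) lies in the Cartwright class. That it coincides with $\szeta$ then follows from the uniqueness statement, which I would prove directly: if $f$ is any Cartwright-class entire function with $f(0)=1$, $f(\R)\subset\R$ and zero set $\Sineb$, then $f/\szeta$ is entire, zero-free and $1$ at the origin, hence equals $e^{cz}$; $f(\R)\subset\R$ forces $c\in\R$, and then $\int_\R\log_+|f(x)|/(1+x^2)\,dx<\infty$ together with the same bound for $\szeta$ forces $\int_\R(cx)_+/(1+x^2)\,dx<\infty$, i.e.\ $c=0$. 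So $f=\szeta$ is determined by its zeros; there is at most one probability measure as in the statement, it must be the law of $\szeta$, and existence is exactly what the previous steps provide. (Alternatively one gets the whole package at once by showing the OPUC transfer-matrix recursion for the circular $\beta$-ensemble, with boundary Verblunsky coefficient uniform on the circle, converges after bulk rescaling to \eqref{HSDE-intro} with $\q$ Cauchy, so the rescaled characteristic polynomials converge uniformly on compacts to $\szeta$; Hurwitz's theorem and the convergence of $\Circop$ eigenvalues to $\Sineb$ re-derive the zero set.)

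The main obstacle is the middle step: pinning down precisely why $[1,-\q]\cH_0$ vanishes on $\Sineb$ and nowhere else, with the correct multiplicities, and in particular justifying that the singular-endpoint data collapses to a single $z$-independent Cauchy number $\q$. The SDE estimates of the first paragraph and the Cartwright bookkeeping are comparatively routine; the delicate points are the operator-theoretic identification of $\cH$ with a transfer matrix, the limit-circle analysis as $u\to+\infty$, and the quantitative $\Sineb$ fluctuation inputs needed both for the real-line integrability and for the convergence of the principal-value product.
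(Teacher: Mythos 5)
Your overall architecture (SDE $\Rightarrow$ Taylor coefficients $\Rightarrow$ entire function; operator identification $\Rightarrow$ zeros $=\Sineb$; Cartwright $+$ Levin $\Rightarrow$ principal-value product; Liouville $\Rightarrow$ uniqueness) tracks the paper's, but two of your key quantitative steps fail as stated. First, the bound $\ev|\cA_{n,0}|^2+\ev|\cB_{n,0}|^2\le C^{2n}/(n!)^2$ is false for $\beta\le 2$: already $\cB_{1,0}=\tfrac{\beta}{8}\int_{-\infty}^0 e^{-b_2(s)+(\beta/4+1/2)s}\,ds$ has, by Dufresne's identity, the law of $\beta/(4G)$ with $G\sim\Gamma(1+\beta/2)$, whose second moment is infinite unless $\beta>2$. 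So no It\^o-isometry/$L^2$ argument can give the needed $C^n/n!$ decay; the paper instead proves the almost-sure bounds pathwise (Proposition \ref{prop:Taylor}), combining the law of the iterated logarithm for $y_u$ with a tailored martingale maximal estimate (Proposition \ref{prop:BR_mart}) and a careful induction. Relatedly, ``Picard iteration from $u=-\infty$'' does not by itself give uniqueness of $\cH$: the homogeneous equation $dY=Y\,db_2$ has the nonzero solution $Y=Cy_u$, which is ruled out only by imposing tightness/stationarity of $e^{-n\beta u/4}\cB_{n,u}$ as $u\to-\infty$ (Proposition \ref{prop:Taylor_1} and Corollary \ref{cor:H_char}); this is exactly the content of the boundary condition in \eqref{HSDE-intro} and needs to be used, not just stated.

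Second, the Cartwright integral condition $\int_{\R}\log_+|\szeta(x)|/(1+x^2)\,dx<\infty$ cannot be extracted from the regularity of the $\Sineb$ counting function alone. Knowing that $\szeta$ is entire of (random) exponential type with zero set $\Sineb$ only gives, via Hadamard, $\szeta(z)=e^{cz}\prod_k(1-z/\lambda_k)e^{z/\lambda_k}$ for some real $c$, and a nonzero mismatch between $c$ and the principal-value trace $\lim_r\sum_{|\lambda_k|<r}\lambda_k^{-1}$ destroys the integral condition ($e^z$ is real, of exponential type, zero-free, and not Cartwright). Pinning down this linear factor and the size of $|\szeta|$ on $\R$ is precisely what the paper's moment machinery does: the quadratic-variation bounds for the phase function (Proposition \ref{prop:E_mom}) give $\ev|\cE(x)|^\gamma\le c(1+|x|)^{2\gamma^2/\beta}$ (Theorem \ref{thm:E_mom}) and hence $\ev\log^+|\szeta(x)|\le c_\beta+\tfrac{2}{\beta}\log^+|x|$ (Proposition \ref{prop:zeta_C}). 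Without some substitute for these estimates your appeal to Levin's theorem for the product representation, and the uniqueness-in-Cartwright-class argument, both hang in the air. (Your Liouville argument for uniqueness and the identification of the zeros via the conjugation to $\Sineop$ are in the right spirit, though note that in the paper's parametrization the singular endpoint is $u=-\infty$, where the condition is $[1,0]^t$, and the Cauchy variable $q$ enters at the regular endpoint $u=0$ through the disintegration of hyperbolic Brownian motion, Lemma \ref{l:HBM}.)
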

The claims of Theorem \ref{t:main} are proved in Propositions \ref{prop:zetaunique}, \ref{prop:Taylor_1}, \ref{prop:HSDE} and in Section \ref{subs:C_class}, see equation \eqref{eq:zeta_prod}. Most of the underlying theory is developed in Sections 2-5  of this paper.  

The name, stochastic zeta function, is motivated by  \cite{CNN}, and   their analogue of the Montgomery conjecture about the Riemann zeta function $\boldsymbol{\zeta}$.
\begin{conjecture}[Chhaibi, Najnudel and Nikhegbali \cite{CNN}]
For $\omega$ uniform on $[0,1]$ as $\nu\to \infty$,  the following distributional convergence of random analytic functions holds:
$$
\frac{\boldsymbol{\zeta}\left(\tfrac12+ie^\nu \omega  + i z/\nu\right)}{
\boldsymbol{\zeta}\left(\tfrac12+ie^\nu\omega  \right)}\;\cd \;\zeta_2(z).
$$
\end{conjecture}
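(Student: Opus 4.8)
This is a conjecture, not a theorem: it is essentially a quantitative form of the GUE (Montgomery) hypothesis for the zeros of $\boldsymbol\zeta$, and the relevance of Theorem \ref{t:main} is that it reduces the conjecture to a statement about those zeros together with soft tightness estimates. Write $T=e^{\nu}\omega$ for the random height and $f_\nu(z)=\boldsymbol\zeta(\tfrac12+iT+iz/\nu)\,/\,\boldsymbol\zeta(\tfrac12+iT)$, a random analytic function with $f_\nu(0)=1$. By the uniqueness part of Theorem \ref{t:main}, $\zeta_2$ is the only law on the Cartwright class with $f(0)=1$, $f(\R)\subset\R$, and zero set $\Sine_2$, so the plan is to prove: (i) $\{f_\nu\}$ is tight in the topology of locally uniform convergence on $\CC$; (ii) every subsequential limit is a.s. in the Cartwright class, is real on $\R$, and takes the value $1$ at the origin; and (iii) the zero set of every subsequential limit is $\Sine_2$-distributed. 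Given (i)--(iii), Theorem \ref{t:main} forces the limit to be $\zeta_2$ and the full family converges.

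Step 1, the zeros. The zeros of $f_\nu$ are the numbers $\nu(\gamma-T)$ as $\gamma$ ranges over the ordinates of the nontrivial zeros of $\boldsymbol\zeta$; since the mean density of ordinates near height $T$ is $\tfrac1{2\pi}\log\tfrac{T}{2\pi}$, which is $\tfrac{\nu}{2\pi}(1+o(1))$ for typical $\omega$, the zeros of $f_\nu$ have asymptotic density $\tfrac1{2\pi}$, matching $\Sine_2$. The assertion that this rescaled point process, averaged against $\omega$ uniform on $[0,1]$, converges in distribution to $\Sine_2$ is precisely the averaged GUE hypothesis. I would take this as an input: Montgomery's pair-correlation theorem and the $n$-level correlation estimates of Rudnick and Sarnak give it for linear statistics whose Fourier transform has bounded support, which does not yet yield weak convergence of the point process, so at present this step is conditional.

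Step 2, tightness and identification of the limit. I would obtain tightness of $\{f_\nu\}$ from a short-interval second moment, $\int_0^1\bigl|\boldsymbol\zeta(\tfrac12+iT+iz/\nu)\bigr|^2\,d\omega\ll_K\nu$ uniformly for $z$ in a compact $K$, together with an anticoncentration estimate preventing $\boldsymbol\zeta(\tfrac12+iT)$ from being atypically small (the fourth-moment bound and Cauchy--Schwarz give a usable lower bound off an exceptional set of $\omega$ of small measure; under RH one has the far stronger $\log|\boldsymbol\zeta(\tfrac12+it)|\ll\log t/\log\log t$). This is the reason the conjecture is phrased as a ratio: the $\exp(\pm(\log T)^{o(1)})$ size fluctuations of $|\boldsymbol\zeta|$ cancel. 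Subsequential limits are then identified by a Hurwitz-type argument: their zero sets are the limits from Step 1, hence $\Sine_2$; the Cartwright bound $|f(z)|\le c^{1+|z|}$ follows from the convexity estimate $|\boldsymbol\zeta(\tfrac12+it)|=\exp(o(|t|))$, which after the $z/\nu$ rescaling yields exponential type $\tfrac1{2\pi}$, the density of $\Sine_2$; and reality on $\R$ (modulo the standard normalization of $\zeta_2$, with the archimedean phase $\theta(T+z/\nu)-\theta(T)$ contributing the deterministic factor $e^{-iz/2+o(1)}$ on this scale, see \cite{CNN}) follows from the Hardy $Z$-function $Z(t)=e^{i\theta(t)}\boldsymbol\zeta(\tfrac12+it)$. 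With (i)--(iii) established, uniqueness in Theorem \ref{t:main} closes the argument.

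The main obstacle is Step 1: the convergence of the rescaled zero process to $\Sine_2$ is the GUE conjecture, which is open; only the Fourier-restricted correlation estimates are known, so the statement should be regarded as a consequence of a sufficiently strong quantitative form of that hypothesis combined with the deterministic structure supplied by Theorem \ref{t:main}. A secondary difficulty is that even the ``soft'' Step 2 is not unconditional: the uniform mean-value and anticoncentration bounds for $\boldsymbol\zeta$ in almost every short window near a random height are currently available in full strength only under RH.
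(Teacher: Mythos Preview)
The paper does not prove this statement: it is presented as a conjecture attributed to Chhaibi, Najnudel and Nikeghbali, and serves only to motivate the name ``stochastic zeta function.'' There is no proof in the paper to compare your attempt against. You correctly recognize this and frame your discussion as a conditional strategy rather than a proof.

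Your outline is a reasonable heuristic for why the uniqueness clause of Theorem~\ref{t:main} is relevant to the conjecture: if one could establish tightness and convergence of the rescaled zero process to $\Sine_2$, the Cartwright uniqueness would pin down the limit. You also correctly identify the two essential obstructions --- the GUE hypothesis for the zeros (your Step~1) and the short-interval moment/anticoncentration estimates needed for tightness (Step~2) --- and acknowledge that both are open. One small correction: the exponential type of $\zeta_\beta$ is $1/2$ (see Section~\ref{subs:C_class}), not $1/(2\pi)$; the density $1/(2\pi)$ of $\Sine_\beta$ and the exponential type are related through Proposition~\ref{prop:inf_prod}, equation~\eqref{eq:exp_order}, via $\pi k/\lambda_k \to 1/2$, not equal. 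But this does not affect your overall assessment, which is that the conjecture is out of reach with current tools.
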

Properties of $\boldsymbol{\zeta}$ have been conjectured based on $\zeta_2$, see Sodin \cite{sodin2018critical}.
The function $\zeta_2$ is the limit of characteristic polynomials, see \cite{CNN} and Chhaibi, Hovhannisyan, Najnudel, Nikeghbali,   and Rodgers \cite{chhaibi2019limiting}. We show that $\zeta_\beta$ is the limit in the case of circular beta ensembles, Section \ref{subs:char_circ_beta}, with an explicit rate of convergence. 

\begin{theorem}\label{thm:intro-limit} There is a coupling of the characteristic polynomials $$p_{n}(z)=\det(I-zU_\beta^{-1})=\prod_{i=1}^n(1-z/\lambda_i)$$ 
of the circular beta ensemble, $\zeta_\beta$, and a random $C$ so that for all $z\in \mathbb C$, $n>1$ we have
\begin{align}
|p_n(e^{i z/n})e^{-i z/2}-\zeta_\beta(z)| \le  \Big(e^{|z| \frac{\log^3 n}{\sqrt{n}}}-1\Big)
C^{|z|^2+1}.\label{circ_cp_bnd-intro}
\end{align}
\end{theorem}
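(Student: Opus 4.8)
\emph{Proof plan.} The plan is to realize both sides of \eqref{circ_cp_bnd-intro} as boundary values of solutions of linear $2\times 2$ systems — a discrete transfer–matrix recursion for $p_n$ and the continuous system \eqref{HSDE-intro} for $\zeta_\beta$ — to couple their driving noises by a strong approximation, and then to run a deterministic Gronwall comparison. Following the circular beta ensemble / $\Circop$ constructions of Section \ref{subs:char_circ_beta}, I would first write
$$
p_n(e^{iz/n})e^{-iz/2}=[1,-q_n]\,\cH^{(n)}_0(z),
$$
where $\cH^{(n)}_\cdot(z)$ solves a discrete recursion whose steps are explicit random $2\times 2$ matrices built from the (deformed) Verblunsky coefficients of $U_\beta$ together with a small $z$–dependent rotation of total angle $\asymp |z|\int_{-\infty}^{0}\tfrac{\beta}{8}e^{\beta u/4}\,du$, started from $\binom10$ at the edge, and $q_n$ is the associated boundary phase. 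This is precisely the discretization of \eqref{HSDE-intro}, and $\zeta_\beta=[1,-q]\cH_0$ by Theorem \ref{t:main}; so it suffices to estimate $[1,-q_n]\cH^{(n)}_0(z)-[1,-q]\cH_0(z)$.

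Next comes the coupling. On a single probability space I would construct the Verblunsky walk, the two-sided Brownian motions $b_1,b_2$, and the variables $q_n$ and $q$ by a Komlós–Major–Tusnády–type strong approximation, so that the partial sums of the walk match $b_1,b_2$ up to $O(\log n/\sqrt n)$ and the deterministic drift parts match up to $O(1/n)$, uniformly over the relevant range of the time parameter. One extra logarithmic factor is spent truncating the time axis at $u\approx -c\log n$, past which $e^{\beta u/4}\le n^{-c'}$ and both systems are within $n^{-c'}$ of $\binom10$; another comes from a maximal inequality for the martingale part of the recursion. This accounts for the power $\log^3 n$; write $\eps_n=\log^3 n/\sqrt n$ for the resulting uniform coefficient error between the two systems, the $z$–dependence entering only through the rotation term and so bounded by $\eps_n(1+|z|)$.

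Finally the deterministic comparison: $\cH^{(n)}_\cdot(z)$ and $\cH_\cdot(z)$ solve linear systems on $u\in(-\infty,0]$ with the same boundary data at $-\infty$ and coefficient matrices $\eps_n(1+|z|)$–close. By variation of parameters and Gronwall, together with the a priori bound $\|\cH_0(z)\|\le \|\Phi_0\|\,C^{|z|}$ — where $\Phi$ is the fundamental solution of the $z=0$ system, and the bound follows from integrating the norms of the fundamental solution and its inverse against $\tfrac{\beta}{8}e^{\beta s/4}$, which is a.s. finite since the transfer–matrix norms grow sub-exponentially while $e^{\beta s/4}$ decays — one obtains $|\cH^{(n)}_0(z)-\cH_0(z)|\le \eps_n\,|z|\,C^{|z|}$, and the analogous estimate for the $q_n-q$ correction, after enlarging the random constant $C$. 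Using $\eps_n|z|\le e^{|z|\eps_n}-1$ and absorbing the remaining factor $|z|\,C^{2|z|}$ into $C^{|z|^2+1}$ (valid once $|z|\ge 2$; adjust $C$ on the bounded range) yields \eqref{circ_cp_bnd-intro}.

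The main obstacle is the coupling step with the sharp power of $\log n$: one must treat the edge regime of the circular ensemble (the first Verblunsky coefficients, equivalently $u\to-\infty$, where the diffusive scaling degenerates) and, crucially, propagate a single strong-approximation error through the \emph{multiplicative} transfer matrices so that the resulting bound holds simultaneously for all $z\in\mathbb C$ with an $n$-independent random constant $C$. The latter requires uniform integrability, as $n\to\infty$, of the functionals of the coupled objects that enter the exponent $\log C$, so that the a priori growth estimates for $\cH_0^{(n)}$ and the Gronwall factor can be dominated by the same $C^{|z|}$.
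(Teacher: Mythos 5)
Your first step---the transfer-matrix representation $p_n(e^{iz/n})e^{-iz/2}=[1,-q]\cH_0^{(n)}(z)$ via the deformed Verblunsky coefficients---is exactly Proposition \ref{prop:unitary_car2b}, and coupling the discrete hyperbolic walk to hyperbolic Brownian motion is also how the paper proceeds. From there the arguments diverge, and your route has a genuine gap. The paper does \emph{not} compare the two linear systems pathwise by Gronwall; it compares the secular functions through the regularized-determinant representation $\zeta_\btau=e^{-z\mathfrak t_\btau}{\det}_2(I-z\res\btau)$ and the Hilbert--Schmidt Lipschitz estimate \eqref{det2_tri} for ${\det}_2$, packaged as Proposition \ref{prop:zeta_tri}. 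The probabilistic inputs are the resolvent coupling \eqref{resbound} already established in \cite{BVBV_op} --- a coupling of the hyperbolic \emph{paths}, with rate $\log^{3-1/8}n/\sqrt{(1-t)n}$ degenerating at the singular endpoint, not a KMT coupling of increments --- supplemented by one new estimate, the integral-trace bound \eqref{tr_bnd} of Proposition \ref{prop:circtrace}. The exponent $|z|^2$ in $C^{|z|^2+1}$ is the fingerprint of the ${\det}_2$ perturbation bound; a Gronwall argument would not naturally produce it.

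The concrete difficulties with your plan are the two you half-acknowledge. First, the singular endpoint: where the path escapes to $\partial\HH$ the coefficient $R$ blows up ($\int_0^1\|R\|\,ds=\infty$ a.s.\ for $\beta<2$; cf.\ Remark \ref{rem:ODElin}), the relevant Verblunsky coefficients are $O(1)$ rather than $O(n^{-1/2})$, and the available coupling control there is only $O((\log\log n)^2)$ in hyperbolic distance, see \eqref{eq:coupling_2}; so there is no ``uniform coefficient error $\eps_n$'' to feed into Gronwall, and one must instead work with the integrable resolvent kernels $a,c$ (bounds \eqref{eq:acbounds}) and a truncation near the endpoint, which is exactly what the Hilbert--Schmidt computation in Proposition \ref{prop:circtrace} does. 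Second, the noise discrepancy enters \eqref{HSDE-intro} through the martingale terms $db_1,db_2$, not through the deterministic $z$-rotation, so the error between the two systems is not ``bounded by $\eps_n(1+|z|)$''; controlling the difference of stochastic integrals against two different noises pathwise, uniformly in $z$ and with an $n$-independent random constant, is the uniform-integrability problem you flag at the end and do not resolve. The ${\det}_2$ route sidesteps both issues because $\|\res_1-\res_2\|_{HS}$ and $|\mathfrak t_1-\mathfrak t_2|$ are single scalar random variables and \eqref{det2_tri} converts them into a bound valid simultaneously for all $z$. The realistic fix is to replace your KMT-plus-Gronwall step by the coupling of \cite{BVBV_op} together with Proposition \ref{prop:zeta_tri}; the only genuinely new ingredient needed beyond \cite{BVBV_op} is then the trace estimate \eqref{tr_bnd}.
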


The starting point of our analysis is the general framework of Dirac differential operators $\btau$, see Section \ref{s:secular}. In  this setting, we introduce the {\bf secular function}, the analogue of the characteristic polynomial. We apply this theory to the $\Sineop$ operator introduced in \cite{BVBV_op} and its conjugate $\btau_\beta$, see Section \ref{s:sineop}. The eigenvalues of these operators are given by the $\Sineb$ process.

For trace class $\btau^{-1}$, a natural  choice for the secular function would be $\det(I-\btau^{-1}z)$. However, the  eigenvalues of $\btau_\beta$ are given by the stationary process $\Sineb$, so their inverses are not summable.   In this case the  regularized determinant $\det_2$ can be used. For trace class operators, the ordinary and regularized determinant are related by 
$$
\det(I-\btau^{-1}z)={\det}_2(I-\btau^{-1}z)e^{z\tr \btau^{-1}},
$$
which motivates the following theorem. We consider the integral operator $\res\btau_\beta$, a conjugate of the inverse of the $\Sineop$ operator. Interestingly enough, the integral trace  ${\mathfrak t}\,\res\btau_\beta$ can be defined as the diagonal integral of the kernel, see \eqref{eq:int_tr}, and it agrees with the principal value trace \eqref{eq:traceid} defined as
$$\lim_{r\to \infty} \sum_{|\lambda_k|\le r} \frac{1}{\lambda_k}.
$$ The principal value  trace has Cauchy distribution, a general phenomenon for translation-invariant processes, see Theorem \ref{t:AW} and Aizenman and Warzel \cite{AW_2014}.
\begin{theorem}
\begin{equation}\label{eq:zeta-det-intro}
\zeta_\beta(z) = {\det}_2(I-z\,\res \btau_\beta)e^{z{\mathfrak t}\, \res \btau_\beta}.
\end{equation}
\end{theorem}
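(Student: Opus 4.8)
The plan is to expand the right-hand side of \eqref{eq:zeta-det-intro} over the spectrum of $\res\btau_\beta$ and match the result with the principal value infinite product representation of $\zeta_\beta$ furnished by Theorem \ref{t:main}. First I would record the two facts about $\res\btau_\beta$ that make both sides meaningful: it is a Hilbert--Schmidt operator, and its eigenvalues, counted with algebraic multiplicity, are exactly the numbers $1/\lambda$ with $\lambda\in\Sineb$. The Hilbert--Schmidt bound follows by estimating the double integral of the square of the explicit kernel of $\res\btau_\beta$; the spectrum is identified through the fact, developed in Section \ref{s:sineop}, that $\res\btau_\beta$ is a conjugate of the inverse of the $\Sineop$ operator of \cite{BVBV_op}, whose spectrum is the almost surely simple point process $\Sineb$. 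In particular $\sum_{\lambda\in\Sineb}\lambda^{-2}<\infty$ almost surely, so $z\mapsto{\det}_2(I-z\,\res\btau_\beta)$ is a well-defined entire function of $z$.

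Second, I would invoke the standard spectral product formula for the $2$-regularized determinant of a Hilbert--Schmidt operator: for every $z\in\CC$,
$$
{\det}_2(I-z\,\res\btau_\beta)=\prod_{\lambda\in\Sineb}(1-z/\lambda)\,e^{-z/\lambda},
$$
the product converging absolutely and locally uniformly in $z$ by square-summability of $\{1/\lambda\}$; the sign of the exponential factor is fixed by the normalization of ${\det}_2$ used in \eqref{eq:zeta-det-intro}, namely the one for which $\det(I-zA)={\det}_2(I-zA)e^{z\tr A}$ on trace class operators. Multiplying by $e^{z\,{\mathfrak t}\,\res\btau_\beta}$ and using that the integral trace ${\mathfrak t}\,\res\btau_\beta$ equals the principal value trace $\lim_{r\to\infty}\sum_{\lambda\in\Sineb,\,|\lambda|\le r}1/\lambda$ (see \eqref{eq:int_tr}, \eqref{eq:traceid} and Theorem \ref{t:AW}), I would write
$$
{\det}_2(I-z\,\res\btau_\beta)\,e^{z\,{\mathfrak t}\,\res\btau_\beta}=\lim_{r\to\infty}\Big(\prod_{|\lambda|\le r}(1-z/\lambda)\,e^{-z/\lambda}\Big)e^{z\sum_{|\lambda|\le r}1/\lambda}=\lim_{r\to\infty}\prod_{|\lambda|\le r}(1-z/\lambda),
$$
splitting the convergent product and the convergent sum along the common symmetric truncations $|\lambda|\le r$. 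By Theorem \ref{t:main} (equation \eqref{eq:zeta_prod}) the last limit is $\zeta_\beta(z)$, which proves \eqref{eq:zeta-det-intro}.

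The main obstacle is the compatibility of the two regularizations: ${\det}_2$ is built by subtracting the only conditionally summable tail $\sum_\lambda z/\lambda$, so the identity can hold only if the correction $e^{z\,{\mathfrak t}\,\res\btau_\beta}$ is computed with the \emph{symmetric} principal value $\lim_r\sum_{|\lambda|\le r}1/\lambda$ that also governs the infinite product; establishing that the integral trace of $\res\btau_\beta$ is exactly this symmetric sum, rather than merely some other summation of the eigenvalues, is the delicate point, and it is where the translation invariance of $\Sineb$ (Theorem \ref{t:AW}) enters. A secondary technical step is to check that ${\det}_2$ of the integral operator $\res\btau_\beta$, a priori defined through its kernel via \eqref{eq:int_tr}, coincides with the abstract Hilbert--Schmidt $2$-determinant, which again reduces to verifying that $\res\btau_\beta$ acts as a genuine Hilbert--Schmidt operator on the relevant $L^2$ space. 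Alternatively, once the spectrum and the value at $0$ are known, one could identify the right-hand side of \eqref{eq:zeta-det-intro} with $\zeta_\beta$ directly from the uniqueness statement in Theorem \ref{t:main}, provided one separately argues that it lies in the Cartwright class and is real on $\R$; that route trades the trace bookkeeping for a growth estimate on ${\det}_2$.
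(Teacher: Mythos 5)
Your reduction is algebraically clean, but as a proof it is circular relative to how the two facts you consume are actually obtained, and it also rests on a nonstandard sign convention for ${\det}_2$. In the paper, \eqref{eq:zeta-det-intro} is essentially Definition \ref{def:zetab} combined with \eqref{zeta}: the secular function is \emph{defined} as $e^{-z\mathfrak{t}_\btau}{\det}_2(I-z\,\res\btau)$, and both the principal value product \eqref{eq:zeta_prod} and the trace identity \eqref{eq:traceid} are then \emph{derived from} that definition in Proposition \ref{prop:inf_prod} — by first showing the ${\det}_2$-defined function is of Cartwright class (which requires the growth and moment estimates of Sections 5--7, e.g.\ Proposition \ref{prop:zeta_C} and Corollary \ref{c:sineb-regularity}) and then invoking Levin's factorization theorem. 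Your argument takes exactly these two downstream consequences as black boxes and recombines them into the statement they came from. The single genuinely new input your route needs — a proof, independent of \eqref{eq:zeta-det-intro}, that the integral trace $\tfrac12\int_0^\sigma a^tc\,ds$ equals the \emph{symmetric} principal value $\lim_{r}\sum_{|\lambda_k|\le r}\lambda_k^{-1}$ — is precisely what you call the delicate point, but you do not supply it, and Theorem \ref{t:AW} does not: that theorem identifies the \emph{law} of the principal value sum as Cauchy, not its pathwise equality with the diagonal integral of the kernel. Your fallback route (identify the right-hand side via uniqueness in the Cartwright class, Proposition \ref{prop:zetaunique}) is in fact the paper's route, but there the entire burden is the exponential-type and $\log_+$-integrability estimates for the ${\det}_2$ expression, which you defer.

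Separately, the spectral product you write, ${\det}_2(I-z\,\res\btau_\beta)=\prod(1-z/\lambda)e^{-z/\lambda}$, contradicts the paper's own \eqref{eq:det2} (and the standard Carleman determinant), which has $e^{+z\nu_k}$ in each factor and hence forces the compensator $e^{-z\mathfrak{t}_\btau}$ as in \eqref{zeta} and \eqref{eq:detdet2}. The $+$ sign in the displayed statement \eqref{eq:zeta-det-intro} is inconsistent with the body of the paper; the correct response is to flag and fix the sign, not to redefine ${\det}_2$ so that the stated sign comes out — the regularized determinant is a fixed object, not a normalization you are free to choose.
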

The equivalence to \eqref{e:zeta-intro} is proved in Proposition \ref{prop:Taylor_1}. 
In fact, we will use \eqref{eq:zeta-det-intro} as the definition of the {\bf secular function} $\zeta$ for general Dirac operators, see Definition \ref{def:zetab}. Characterizations similar to  Theorem \ref{t:main} hold in this setting, see Section \ref{s:secular}.

The $\Sineop$ operator is built from  hyperbolic Brownian motion, which allows us to provide additional characterizations of $\zeta_\beta$ via a system of stochastic differential equations \eqref{HSDE-intro}. We use this characterization to compute  expectations of quantities related to  $\zeta_\beta$. As an application, we verify a conjecture of Borodin and Strahov \cite{BorodinStrahov} in this setting, see Section \ref{subs:BS_zeta}.
\begin{theorem}\label{thm:BS_conj-intro} For all $\beta>0$ we have
\begin{align} 
E \prod_{j=1}^k \frac{\zeta_\beta(z_j)}{\zeta_\beta(w_j)} =\begin{cases}
e^{i \sum_{j=1}^k \frac{z_j-w_j}{2}}&\qquad \text{ if } \Im w_j<0 \text{ for all } j,\\
e^{-i \sum_{j=1}^k \frac{z_j-w_j}{2}}&\qquad \text{ if } \Im w_j>0  \text{ for all } j.
\end{cases}
\end{align}
\end{theorem}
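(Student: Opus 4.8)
The plan is to reduce the ratio $\zeta_\beta(z)/\zeta_\beta(w)$ to a computable functional of the SDE system \eqref{HSDE-intro}, then integrate it. First I would use the representation $\zeta_\beta=[1,-q]\cH_0$, where $\cH_u(z)$ solves the linear ODE \eqref{HSDE-intro} driven by the fixed pair $(b_1,b_2)$ of Brownian motions and the Cauchy variable $q$. Since $\cH$ is linear in its ``initial condition at $-\infty$'' and the $z$-dependence enters only through the deterministic term $-z\frac{\beta}{8}e^{\beta u/4}J\cH\,du$, the map $z\mapsto\cH_0(z)$ is an entire $\CC^2$-valued function obtained from a single realization of $(b_1,b_2)$. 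For the ratio of products, I would first condition on $(b_1,b_2)$ and handle the $q$-dependence: $\zeta_\beta(z_j)=\cA_{j}-q\cB_j$ where $(\cA_j,\cB_j)=\cH_0(z_j)$, so $\prod_j \zeta_\beta(z_j)/\zeta_\beta(w_j)$ is a ratio of polynomials in $q$, and the Cauchy integral $\int (\prod_j(\cA^{z}_j-q\cB^{z}_j)/(\cA^{w}_j-q\cB^{w}_j))\,\frac{dq}{\pi(1+q^2)}$ can be evaluated by residues, once we know on which side of $\R$ the roots $q=\cA^{w}_j/\cB^{w}_j$ lie.

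The key structural input is the sign of $\Im(\cA^{w}_j/\cB^{w}_j)$ as a function of $\Im w$. This is where the hyperbolic geometry of the $\Sineop$ operator enters: the $2\times2$ transfer matrices of \eqref{HSDE-intro} act on the hyperbolic plane (upper half plane $\HH$, or the disk $\UU$ via $\SU$), and for $\Im w<0$ the generator pushes the ratio into one half-plane, for $\Im w>0$ into the other — this should follow from the standard fact that a Dirac/canonical system with a definite imaginary part of the spectral parameter maps the real line into a half-plane (a Herglotz/Weyl-disk monotonicity argument). Closing the $q$-contour in the appropriate half-plane then picks up exactly one pole, collapsing the Cauchy integral and leaving a ratio of the form $\prod_j \cB^{z}_j/\cB^{w}_j$ or $\prod_j(\cA^z_j-(\cA^w_j/\cB^w_j)\cB^z_j)/\cB^w_j$ depending on the sign; in either case the answer is a functional of $\cH_0$ alone (no $q$), and crucially the diagonal entry $\cB$ of $\cH$ satisfies a scalar linear equation whose expectation over $(b_1,b_2)$ is tractable.

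The remaining step is to compute $E$ of that functional over the Brownian motions. I would do this by deriving an ODE in $u$ for the expectation $E\,[\,\text{functional of }\cH_u(z_1),\dots,\cH_u(w_k)\,]$, reading off the drift from \eqref{HSDE-intro} via It\^o's formula: the martingale parts from $db_1,db_2$ vanish in expectation, and the quadratic-variation and deterministic-drift terms should combine, after the algebra that the product structure forces, into the linear equation $\frac{d}{du}E[\,\cdot\,] = (\pm\frac{i}{2})\sum_j(z_j-w_j)\,E[\,\cdot\,]$ (the $\pm$ matching the two cases), with boundary value $1$ at $u=-\infty$ from the normalization $\cH_{-\infty}=\binom10$. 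Integrating from $-\infty$ to $0$ yields $e^{\pm\frac{i}{2}\sum_j(z_j-w_j)}$. I expect the main obstacle to be the second step: rigorously justifying the sign of $\Im(\cA^w/\cB^w)$ uniformly (and the associated contour closure, including that the integrand decays appropriately and has no pole on $\R$ almost surely), since this is precisely the place where the special structure of the $\Sineop$ operator — rather than soft arguments — must be invoked; one must also be careful that the interchange of $E$ (over $b_1,b_2$) with the $q$-integral and with the $r\to\infty$ limits defining $\zeta_\beta$ is legitimate, which should follow from the Cartwright-class growth bounds and the moment estimates underlying \eqref{eq:zeta-det-intro}.
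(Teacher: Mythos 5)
Your strategy is, in all essentials, the one the paper uses: integrate out the Cauchy variable $q$ by residues using the fact that $\cA(w)/\cB(w)$ lies in a fixed half-plane determined by $\operatorname{sign}\Im w$ (this is exactly the P\'olya-class inequality $|\cE(w)|\ge|\cE^*(w)|$ for $\Im w<0$ from Lemma \ref{lem:Polya}/Corollary \ref{cor:cE_SDE}, i.e.\ the Herglotz/Weyl-disk monotonicity you invoke), and then compute the expectation of the resulting $q$-free functional by turning the SDE into an ODE for the mean (the paper's Lemma \ref{lem:Cauchy_ave} and Proposition \ref{prop:ratios}, run first for the truncated operators $\btau_{\beta,\nu}$ and then passed to the limit by uniform integrability). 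Two points in your write-up need correcting, though. First, the contour closure picks up only the pole of the Cauchy density at $q=\pm i$ (the $k$ poles $q=\cA(w_j)/\cB(w_j)$ all lie in the other half-plane), so the conditional expectation given $(b_1,b_2)$ is the integrand evaluated at $q=\pm i$, namely $\prod_j\cE(z_j)/\cE(w_j)$ with $\cE=\cA-i\cB=[1,-i]\cH_0$ — not the ratios of $\cB$'s you wrote. This is not cosmetic: it is precisely the combination $\cE$, $\cE^*$ that diagonalizes the drift of \eqref{HSDE}, $d\cE=\tfrac{iz}{2}f_\beta\,\cE\,du+\cdots$, and makes the expectation of $\prod_j\cE^{\eta_j}(z_j)$ satisfy a closed scalar ODE; the analogous expectation of products of $\cB$'s does not close.

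Second, the ODE-for-the-expectation step cannot be run for arbitrary $z_j$ as you propose. The martingale part of $d\prod_j\cE^{\eta_j}(z_j)$ has coefficient involving $\cG(z_j)=\cE^*(z_j)/\cE(z_j)$, and $|\cG(z)|\le 1$ only for $\Im z\le 0$; for $\Im z>0$ the function $\cE(z)$ has zeros and $\cG$ is unbounded, so the stochastic integral need not be a true martingale and its expectation need not vanish. The paper therefore first proves the identity for $\Im z_j<0$, $\Im w_j<0$ (Proposition \ref{prop:ratios} via Lemma \ref{l:linear-sde}), and only then extends to all $z_j\in\CC$ by showing that $z_1\mapsto E\prod_j\zeta(z_j)/\zeta(w_j)$ is entire (Proposition \ref{prop:ratio_analytic}, a Morera--Fubini argument resting on the exponential moment bounds for $\zeta'/\zeta$ off the real axis, Propositions \ref{prop:expmom} and \ref{prop:moment_ratio}). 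Your blanket appeal to ``Cartwright-class growth bounds'' does not supply this step; the analytic continuation in the $z_j$ is a separate, necessary ingredient.
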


In Theorem \ref{thm:finite-BStr} we show that the corresponding moment formulas also hold for circular beta ensembles.  In fact, Theorem \ref{thm:finite-BStr} applies in greater generality: for all models with independent, rotationally invariant Verblunsky coefficients. This includes well-studied random Schr\"odinger models for which few exact formulas are known, see Simon \cite{OPUC2}, Section 12.6. 

Because of the Cauchy variable in the normalization \eqref{e:zeta-intro},  $\zeta_\beta$ itself does not to have a first moment. A better choice for moments is the function 
$\hat \zeta_{\beta}(z)=(1+q^2)^{-1/2}\zeta_\beta(z)$, with $q$ as in \eqref{e:zeta-intro}. We have $$
E\hat \zeta_\beta(z)=\tfrac{2}{\pi}\cos(z/2), \qquad \beta>0, z\in \mathbb R,
$$
see \eqref{e:hatzeta-first}. We show that the moments of $\hat \zeta_\beta$  on the real line exist below  $1+\tfrac{\beta}{2}$, which is likely optimal. We have the following explicit formula for the second moment in terms of the generalized cosine integral:
$$
E\hat \zeta_\beta(z)^2=    \tfrac{1}{2}+\tfrac{2}{\beta}\int_0^1 t^{-4/\beta-1}(1-\cos(z t))dt, \qquad \beta>2, \;z\in \R,
$$
see \eqref{e:hatzeta-second}. More generally, the moments satisfy an explicit system of ordinary differential equations, see Section \ref{s:moments}. The equations are similar to those obtained by Killip and Ryckman \cite{KillipRyckman} for the circular beta ensemble. Their scaling is different from the one appearing in the distributional convergence of characteristic polynomials.  

The law of the function $\zeta_\beta$ can be represented as the stationary distribution of a system of diffusions, Proposition \ref{prop:H_stat}.  Surprisingly, the first degree Taylor coefficient $\cB_{1,0}$ 
satisfies the stationary stochastic differential equation appearing in Dufresne's identity. Its distribution can be computed explicitly:
\[
\cB_{1,0}\ed \frac{\beta}{4\, G},
\]
where $G$ has Gamma distribution with rate 1 and shape parameter $1+\frac{\beta}{2}$, see Remark \ref{rem:Dufresne}. 
Similarly, the higher order coefficients $\cB_{n,0}, \cA_{n,0}$ satisfy closed systems of stochastic differential equations. They can be converted to partial differential equations for the joint densities. For given $n$, the joint density $p(x,y)$ of $\cB_{1,0}, \cA_{1,0} \dots, \cB_{n,0}, \cA_{n,0}$ satisfies the following partial differential equation:
\begin{align}\notag 
     \sum_{k=1}^n \;\tfrac{1}{2}(\partial^2_{x_k}+\partial^2_{y_k})(x_k^2 p)-\tfrac{\beta}{8} \partial_{x_k}\left((y_{k-1}-2k x_k)p\right)-
    \tfrac{\beta}{8} \partial_{y_k}\left((x_{k-1}-2k y_k)p\right)=0,
\end{align}
where $x_0=0, y_0=1$, see Proposition \ref{prop:StationaryTaylor}.

Especially interesting is the function 
\begin{equation}\label{e:B-intro}\notag
\cB(z)=\sum_{n=0}^\infty\cB_{n,0}z^n.
\end{equation}
built from the coefficients $\cB_{n,0}$.
In \cite{BVBV_future} we show that the zero distribution of $\cB$ is the Palm measure of the $\Sineb$ process:  the process conditioned to have a point at zero. In particular, the intensity of zeros of $\cB$ is given by the two-point correlations of $\Sineb$. 

The uniqueness of $\zeta_\beta$ within the Cartwright class and results of Forrester \cite{ForresterDec} imply simple product identities connecting various $\beta$-s. 
Let $k\ge 2$ an integer. There exists a coupling of $k$ copies $\zeta_{2k,1},\ldots, \zeta_{2k,k}$ of $\zeta_{2k}$
so that
$$
\zeta_{2/k}(z)\ed \prod_{j=1}^k\zeta_{2k,j}(z/k),
$$
see Corollary \ref{c:forrester}. The way $\szeta$ is the secular function of the $\Sineb$ operator has analogues for more classical differential operators. The sine function and versions of the Bessel functions appear as secular functions. We illustrate the development of $\szeta$ with a series of concrete, classical examples, see Examples \ref{ex:bulk}, \ref{ex:edge}, \ref{ex:bulk_2}, \ref{ex:edge_2}, \ref{ex:bulk_3}, \ref{ex:edge_3}, and \ref{ex:part_4}. 

Many of our results rely on understanding the so called de Branges {\bf structure function}
$$
\cE(z)=\sum_{n=0}^\infty(\cA_{n,0}-i \cB_{n,0})z^n  = [1,-i]\cdot \cH_0(z)
$$
an analogue of orthogonal polynomials on the unit circle in the infinite-dimensional setting. For Dirac operators based on discrete circular ensembles, $\cE$ is a linear combination of orthogonal polynomials, see Proposition \ref{prop:unitary_car2b}, \eqref{e:orthogonal}. The entire function $\cE$  belongs to the so-called P\'olya class, and all of its zeros are in the upper half plane. In de Branges \cite{deBranges}  such functions are used to define Hilbert spaces of entire functions, the starting point of the theory of canonical systems. For us, $\cA$, $\cB$, $\zeta_\beta$ and $\hat \zeta_\beta$ can be expressed in terms of $\cE$ and $q$, and the moment formulas are simplest for the function $\cE$. In particular, as shown in Proposition \ref{prop:E-moments}, for $x_j\in \R$ we have
$$
E\,\prod_{j=1}^k \cE(x_j)=\prod_{j=1}^k e^{ix_j/2}, \qquad k<1+\tfrac{\beta}{2}.
$$

\section{Secular functions of Dirac operators}\label{s:secular}

Our approach is to understand the function $\zeta_\beta$ as the ``characteristic polynomial'' of an ``infinite matrix''. More precisely, it is the secular function of a random Dirac operator. We first introduce such secular functions for a general class of deterministic Dirac operators. 

\subsection{A class of Dirac operators}

Let $\sigma>0$ and $R:(0,\sigma]\to \mathbb R^{2\times 2}$ be a function  taking values in nonnegative definite matrices. In this paper, a {\bf Dirac operator} is defined as   
\begin{equation}\label{tau}
{\boldsymbol{\tau}}u=R^{-1} J u', \qquad J=\mat{0}{-1}{1}{0},
\end{equation}
acting on some subset of functions of the form $u:(0,\sigma]\to \R^2$.

We assume the following about $R$.

\begin{assumption}

$R(t)$ is positive definite for all $t\in (0,\sigma]$, $\|R\|, \|R^{-1}\|$ are locally bounded on $(0,\sigma]$. Moreover, $\det R(t)=1/4$ for all $t\in (0,\sigma]$.
\end{assumption}

The assumption $\det R=1/4$ can be replaced by the more general condition $\int_0^{\sigma} \det R(s)\, ds<\infty$. This setting is equivalent to ours up to a time change. 

\begin{assumption}
There is a nonzero vector $\mathfrak  u_0  \in \R^2$ so that with  $\mathfrak  u_0  ^{\perp}=J\mathfrak  u_0  $ we have 
\begin{align}
\int_0^{\sigma} \left\|\mathfrak  u_0  ^t R \right\|ds<\infty,\qquad \int_0^{\sigma} \int_0^t \mathfrak  u_0  ^t R(s) \mathfrak  u_0   \, (\mathfrak  u_0  ^{\perp})^t R(t) \mathfrak  u_0  ^{\perp} ds dt<\infty.\label{eq:assmpns}
\end{align}
\end{assumption}
The function $R$ can be parametrized as
\begin{align}
R=\frac{X^t X}{2\det X}  , \qquad X=\mat{1}{-x}{0}{y}, \qquad y>0, x\in \R,\label{eq:Rxy}
\end{align}
where $X$ is defined as the unique multiple of the square root of $R$ of the above  form. We refer to the group of matrices of such form as the affine group. We record the matrix identities
\begin{equation}\label{eq:x-conj}
yX^{-1}J=JX^t,\qquad  yJX^{-1}=X^tJ, \qquad R^{-1}J=2X^{-1}JX, \qquad 2JR=X^{-1}JX,
\end{equation}
that hold in this setting.

Let  $\ac$ be the set of absolutely continuous functions. Define   $L^2_R=L^2_R[0,\sigma]$ as the space with norm defined by
\begin{equation}\label{eq:L2R}
    \|f\|_R^2=\int_0^\sigma f^t(s) R(s)f(s)\,ds.
\end{equation}
Fix nonzero $\mathfrak  u_0  ,\mathfrak  u_1 \in \R^2$. From this point on, we will consider $\btau$ as the Dirac operator with the  domain
\begin{align}
\dom_\btau=\{v\in L^2_R \cap  \ac\,:  \, \btau v\in L^2_R, \,\lim_{s\downarrow 0} v(s)^t\,J\,\mathfrak  u_0   = 0, \,\,v(\sigma)^t\,J\,\mathfrak  u_1 =0\}.
\end{align}
We will also use the notation 
\begin{align}\label{eq:Dirop}
    \btau=\Dirop(X_{\cdot},\mathfrak  u_0  ,\mathfrak  u_1 ).
\end{align}
We will often have the following. 
\begin{assumption}
\begin{equation}\label{unorm}
    \mathfrak  u_0  ^t J \mathfrak  u_1 =1.
\end{equation}
\end{assumption}

In most of the paper, we will use special cases of $\mathfrak  u_0  $, $\mathfrak  u_1 $  satisfying  Assumption 3, namely 
\begin{align}\label{eq:qbndry}
\mathfrak  u_0  =\bin{1}{0}, \qquad \mathfrak  u_1 =\bin{-q}{-1}, \qquad     q\in \mathbb R,
\end{align}
which we call {\bf ${q}$-boundary conditions}.

\subsection{The inverse of a Dirac operator}

Under Assumptions 1-3, the operator $\btau$ has an inverse.  $\btau^{-1}$ is an integral operator on $L^2_R[0,\sigma]$  with kernel
\[
K(s,t)=\left(\mathfrak  u_0   \mathfrak  u_1 ^t \ind(s<t)+\mathfrak  u_1  \mathfrak  u_0  ^t \ind(s\ge t)\right),
\]
and by Theorem 9 of  \cite{BVBV_op}  it is Hilbert-Schmidt. Hence $\btau$ has a pure point spectrum. The eigenvalues are real, nonzero and have multiplicity one.  We label the eigenvalues 
in an increasing order by $\lambda_k, k\in \Z$ so that  $\lambda_{-1}<0<\lambda_0$.

Let $Y=X/\sqrt{\det X}$. The conjugate operator $Y \btau Y^{-1}$ is  self-adjoint on $\{v: Yv\in \dom_\btau\}$ with the same spectrum as $\tau$. We denote $(Y \btau Y^{-1})^{-1}$ by $\res \btau$.  This is an integral operator acting on $L^2[0,\sigma]$ with kernel
\begin{align}\label{resint}
K_{\res \btau}(s,t)=\tfrac{1}{2}\,a(s) c(t)^t \ind(s<t)\;+\;\tfrac{1}{2}\,c(s) a(t)^t \ind(s\ge t),   
\end{align}
where 
\begin{equation}\label{eq:ac}
a=\frac{X\mathfrak  u_0  }{\sqrt{\det X}},\qquad c=\frac{X\mathfrak  u_1 }{\sqrt{\det X}}.
\end{equation}
The first integral condition of Assumption 2 is equivalent to the  following pair of bounds:
\begin{align}\label{eq:int_cond}
\int_0^{\sigma} \|a(s)\|^2 ds<\infty, \qquad \int_0^{\sigma} \left|a(s)^t c(s) \right|ds<\infty.
\end{align}
With the $q$-boundary conditions \eqref{eq:qbndry} we have
\[
a(s)=\frac{1}{\sqrt{y(s)}}\bin{1}{0}, \qquad c(s)=\frac{1}{\sqrt{y(s)}}\bin{x(s)-q}{-y(s)},
\]
and the pair of bounds in \eqref{eq:int_cond} is equivalent to 
\begin{align}\label{eq:Asmp_2_1}
  \int_0^\sigma \frac{1+|x(s)|}{y(s)}\,ds<\infty.  
\end{align}
For $q$-boundary conditions, the second integral bound in Assumption 2 is given by 
\begin{align}\label{eq:Asmp_2_2}
\int_0^\sigma \int_0^t \frac{x^2(t)+y^2(t)}{y(s)y(t)} ds\, dt<\infty.
\end{align}

We introduce two simple examples.
\begin{example}[Deterministic Sine operator]\label{ex:bulk}
The simplest example is the operator $$\btau: u \to 2J \frac{d}{dt}u$$ on $(0,\sigma]$. In that case $R(s)=\frac{1}{2} I$, $x(s)=0$, and $y(s)=1$. Set
$$\mathfrak  u_0  =[1,0]^t, \qquad \mathfrak  u_1 =[-q,-1]^t,\qquad \text{ with } q=\cot(\theta/2).$$
Then
we have
$a(s)=[1,0]^t$, and $c(s)=[-q,-1]^t$.
The eigenvalues of $\btau$ are given by $\lambda_k=\frac{2 \pi k -\theta}{\sigma}$, $k\in \Z$, and the eigenfunction corresponding to $\lambda_k$ is $$\left[\cos(\tfrac{s(2k\pi -\theta)}{2\sigma}),-\sin(\tfrac{s(2k\pi -\theta)}{2\sigma})\right]^t.$$
\end{example}

 \begin{example}[Deterministic Bessel operator]\label{ex:edge}
Assume $\alpha>0$, and set $x(s)=0$ and $y(s)= s^{-\alpha}$. Then we have
\[
R(s)=\mat{s^{\alpha}}{0}{0}{s^{-\alpha}}, \qquad \btau u=2\mat{s^{-\alpha}}{0}{0}{s^{\alpha}}J \frac{d}{dt} u.
\]
The vector $\mathfrak  u_0  =[1,0]^t$ satisfies the integral conditions (\ref{eq:assmpns}), we set the other boundary condition as $\mathfrak  u_1 =[0,-1]^t$.

Then $a(s)=[s^{\alpha},0]^t$ and $c(s)=[0,s^{-\alpha}]^t$. 
Let  $\mathrm{J}_p$ be the Bessel function of the first kind with parameter $p$.  Then 
the eigenvalues of $\btau$ are given by $\pm \tfrac{2\gamma_k}{\sigma}, k\ge 1$, where $\gamma_k$ are  the positive zeros of  $\mathrm{J}_{\frac{\alpha -1}{2}}(\cdot)$.
The eigenfunction corresponding to eigenvalue $\lambda$ is given by
\begin{align}\label{eq:BesselH}
u(s)=\left(\tfrac{\lambda s}{4}\right)^{\frac{1-\alpha}{2}} \Gamma(\tfrac{\alpha +1}{2})\left[ \mathrm{J}_{\frac{\alpha -1}{2}}\left(\tfrac{s \lambda }{2}\right) , -s^{\alpha} \mathrm{J}_{\frac{\alpha +1}{2}}\left(\tfrac{s \lambda }{2}\right)   \right]^t.
\end{align}
Note that allowing $\alpha=0$ would give us Example \ref{ex:bulk}.

\end{example}

\subsection{Definition of the secular function $\zeta_\btau$}

Our goal is to define the analogue of the characteristic polynomial for the operator $\btau$. This will be an entire function $\zeta_\btau$ whose zeros are  the eigenvalues of $\btau$ and can be considered to be a version of  $\det(I-z\,  \btau^{-1})$. We will call $\zeta_\btau$ the secular function of $\btau$. The term secular function has been used in the past for the characteristic polynomial.


For a Hilbert-Schmidt operator $A$ with eigenvalues $\nu_k$  the regularized determinant
\begin{align}\label{eq:det2}
{\det}_2(I-z A)=\prod_k (1-z \,\nu_k)e^{z\, \nu_k}.
\end{align}
is an entire function in $z$ that vanishes exactly at the eigenvalues, see \cite{SimonTrace}, Chapter 9. The product is uniformly convergent on compact subsets of $\CC$. When $A$ is also trace class, then   $$\det(I-z A)=\prod_k (1-z \,\nu_k)$$ is a well defined entire function and the product is  absolutely convergent, see \cite{SimonTrace}, Chapter 3. Moreover, one has
\begin{equation}\label{eq:detdet2}
\det(I-z A)={\det}_2(I-z A) e^{-z \Tr A}.
\end{equation}

Our operator $\btau^{-1}$ or, equivalently $\res \btau$, is not a trace class operator, but the {\bf integral trace}
\begin{align}
\mathfrak{t}_\btau=\int_0^{\sigma} \Tr K_{\res \tau}(s,s) ds=\tfrac12 \int_0^{\sigma} a(s)^t c(s) ds
    \label{eq:int_tr}
\end{align}
is finite by Assumption 2. Under the $q$-boundary conditions \eqref{eq:qbndry} we have
\[
\mathfrak{t}_\btau=\int_0^{\sigma} \frac{x(s)-q}{2y(s)} \,ds.
\]

Note that if an integral operator with kernel of the form (\ref{resint}) is trace class, its trace is equal to its integral trace.

By analogy with formula \eqref{eq:detdet2}, since $\res\btau$ is Hilbert-Schmidt, we define
\begin{definition}[Secular function]
Assume that $\btau$ satisfies Assumptions 1-3. Then the secular function is defined as
\begin{align}\label{zeta}
\zeta_\btau(z)&=e^{-z{\mathfrak t}_\btau}{\det}_2(I-z \,\res \btau) \\&=e^{-\tfrac{z}{2} \int_0^{\sigma} a(s)^t c(s) ds} \prod_k (1-z/ \,\lambda_k)e^{z/ \lambda_k},\label{zeta2}
\end{align}
where $a$ and $c$ are defined in \eqref{eq:ac}. The infinite product is uniformly convergent on compact sets of $z$.
\end{definition}

The eigenvalues of $\res \btau$ are $\frac{1}{\lambda_k}$.
The infinite product $\prod_k(1-\tfrac{z}{\lambda_k})$ is not necessarily absolutely convergent, so  $\det(I-z \res \tau)$ might not be defined.  In Section \ref{subs:prod} we show that with some additional assumptions, $\zeta$ is equal  to the principal value product  
\[
\lim_{r\to \infty} \prod_{|\lambda_k|\le r}(1-z/\lambda_k).
\]

In the rest of the section we discuss other representations for $\zeta$: an explicit Taylor expansion and a representation by an ordinary differential equation.

\subsection{The Taylor expansion of $\zeta$}

We will work towards a more explicit expression for $\zeta_\btau$ in terms of $X,\mathfrak u_1,\mathfrak u_2$. The following proposition gives the  Taylor expansion of $\zeta$ with explicit coefficients.


\begin{proposition}\label{prop:Fred_exp}
Let $\btau$ satisfy Assumptions 1-3. Then the secular function  $\zeta_{\btau}$ \eqref{zeta} has the following Taylor expansion with infinite radius of convergence:
\begin{align}\label{eq:zeta_exp}
\zeta_\btau(z)&=1+\sum_{n=1}^\infty r_n  z^n,\qquad
r_n=-\hskip-20pt \iiint\limits_{0<s_1<s_2<\dots<s_n\le \sigma} \mathfrak  u_0  ^{t} R(s_1)JR(s_2)J\cdots R(s_n)  \mathfrak  u_1  ds_1\dots ds_n.
\end{align}
The coefficient $r_n$ can be bounded as
\begin{align}\label{coef_bnd0}
|r_n|\le \left(\int_0^{\sigma} |\mathfrak  u_0  ^t R(s) \mathfrak  u_1 | ds+ \int_0^{\sigma} \int_0^t \mathfrak  u_0  ^t R(s) \mathfrak  u_0   \mathfrak  u_1 ^t R(t) \mathfrak  u_1  ds dt\right)^n.
\end{align}
\end{proposition}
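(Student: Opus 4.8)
The plan is to obtain the Taylor expansion directly from the standard Fredholm/Plemelj expansion of the regularized determinant $\det_2$, then simplify using the special rank-structure of the kernel $K_{\res\btau}$ in \eqref{resint}. Recall that for a Hilbert–Schmidt operator $A$ one has $\det_2(I-zA)=\exp\big(-\sum_{m\ge 2} z^m \tr(A^m)/m\big)$ on a neighborhood of $0$, and this extends to an entire function; equivalently, using the Plemelj–Smithies formula, $\det_2(I-zA)=\sum_{n\ge0}\alpha_n z^n$ where $\alpha_n$ is an $n\times n$ determinant in the quantities $\tr(A^k)$ with the diagonal ($k=1$) terms deleted. The first step is therefore to write $\zeta_\btau(z)=e^{-z\mathfrak t_\btau}\det_2(I-z\,\res\btau)$ and expand. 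The key simplification is that $\res\btau$ has a kernel of the form $\tfrac12 a(s)c(t)^t\ind(s<t)+\tfrac12 c(s)a(t)^t\ind(s\ge t)$; powers of such a kernel again have a manageable structure, and crucially the factor $e^{-z\mathfrak t_\btau}=e^{-\tfrac z2\int a^tc}$ is exactly designed to cancel the $n=1$ diagonal contribution $\tr(\res\btau)$ that is missing from $\det_2$ but would be present in $\det$. So morally $\zeta_\btau$ behaves, in its Taylor expansion, like the \emph{ordinary} Fredholm determinant $\det(I-z\,\res\btau)$ would if it existed, and the clean Fredholm-minor formula applies.

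The second step is to identify the $n$-th Fredholm minor of $\res\btau$ with the iterated integral in \eqref{eq:zeta_exp}. The $n$-th coefficient of $\det(I-zK)$ for an integral kernel $K$ is the classical expression
\[
\frac{(-1)^n}{n!}\int_{[0,\sigma]^n}\det\big(K(s_i,s_j)\big)_{i,j=1}^n\,ds_1\cdots ds_n .
\]
Because $K_{\res\btau}(s,t)$ is built from the two fixed vectors $a,c$ with indicator weights, the $n\times n$ matrix $\big(K_{\res\btau}(s_i,s_j)\big)$ has rank-structure that makes its determinant, after symmetrizing over orderings of $s_1,\dots,s_n$, collapse to a single ordered product $a^t(\cdots)c$ along the simplex $s_1<\cdots<s_n$. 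Here I would change variables from $a,c$ back to $\mathfrak u_0,\mathfrak u_1,X$ via \eqref{eq:ac}, and use the conjugation identities \eqref{eq:x-conj}, in particular $R^{-1}J=2X^{-1}JX$ and $2JR=X^{-1}JX$, to recognize the string $X\cdot(JX^tX J^t\cdots)$ as $R(s_1)JR(s_2)J\cdots R(s_n)$ sandwiched between $\mathfrak u_0$ and $\mathfrak u_1$. The $1/n!$ and the ordering restriction to the simplex account for each other. This bookkeeping — tracking which of the $\ind(s<t)$/$\ind(s\ge t)$ branches survives and showing the determinant telescopes — is where most of the care is needed; it is essentially the computation that the resolvent kernel of a $2\times 2$ canonical system solves a linear ODE, so one could alternatively derive \eqref{eq:zeta_exp} by first writing $\zeta_\btau$ via the transfer matrix $\mathfrak u_0^t\mathcal T(z,\sigma)\mathfrak u_1$, where $\mathcal T$ solves $\mathcal T'=zJR\,\mathcal T$, and Picard-iterating; I expect that route to be cleaner and would present it as the main argument, with the Fredholm route as motivation.

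For the bound \eqref{coef_bnd0}, I would apply the triangle inequality inside the iterated integral. Split the factors $R(s_j)$ by inserting $\mathfrak u_0\mathfrak u_0^t\cdot\text{(something)}+\mathfrak u_1\mathfrak u_1^t\cdot\text{(something)}$—more precisely, bound $\|\mathfrak u_0^t R(s_1)JR(s_2)\cdots R(s_n)\mathfrak u_1\|$ by pairing consecutive $R$'s and using $|v^tR(s)J R(t)w|$-type estimates together with $\mathfrak u_0^tJ\mathfrak u_1=1$ (Assumption 3) to break the chain into alternating blocks controlled by $\int|\mathfrak u_0^tR\mathfrak u_1|$ and $\int\!\int \mathfrak u_0^tR(s)\mathfrak u_0\,\mathfrak u_1^tR(t)\mathfrak u_1$. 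Summing the resulting combinatorial terms (the number of ways to tile a length-$n$ chain by blocks of size $1$ and $2$ is bounded by $2^n$, absorbable, or more carefully gives exactly the $n$-th power shown) yields \eqref{coef_bnd0}. Finiteness of both displayed integrals is precisely Assumption 2, so the series has infinite radius of convergence, which in turn re-justifies that $\zeta_\btau$ is entire. The main obstacle is the second step: carrying out the determinant-collapse (or equivalently verifying the Picard expansion reproduces exactly the alternating $R J R J\cdots$ pattern with the correct sign $-1$ on $r_n$) without sign or ordering errors; everything else is routine once that identity is in hand.
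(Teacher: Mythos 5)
Your first two steps --- expanding $e^{-z\mathfrak t_\btau}{\det}_2(I-z\,\res\btau)$ by the Plemelj/Fredholm formula and collapsing the $n\times n$ minor of the semi-separable kernel \eqref{resint} on the ordered simplex --- are exactly the paper's argument (Lemma \ref{l:HStrace} together with a telescoping evaluation of $\det(p_{\min(j,\ell)}q_{\max(j,\ell)})$ and the identity $UJU^t=-J$ from Assumption 3). The gap is that you then propose to demote this to ``motivation'' and make Picard iteration of the transfer matrix $\mathcal T'=zJR\,\mathcal T$ the main argument. That is circular here: $\zeta_\btau$ is \emph{defined} by \eqref{zeta} as a regularized determinant, and the identity $\zeta_\btau(z)=H(\sigma,z)^tJ\mathfrak u_1$ is Proposition \ref{prop:zetaODE}, which the paper proves \emph{from} the present proposition (via Proposition \ref{prop:E}). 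Picard iteration does produce the iterated-integral series for the transfer-matrix expression, but it says nothing about ${\det}_2(I-z\,\res\btau)$ unless you independently prove that the two entire functions coincide --- and that identification is precisely the substance of \eqref{eq:zeta_exp}. (Picard iteration from $t=0$ is also not routine: $\|R\|$ is only locally bounded on $(0,\sigma]$ and is allowed to blow up at $0$, so convergence of the iterated integrals is exactly what Assumption 2 and the bound \eqref{coef_bnd0} must supply.) The Fredholm/determinant-collapse computation therefore cannot be skipped; it is the proof, and you should carry it out, including the embedding of $\R^2$-valued $L^2$ into scalar $L^2$ on a doubled interval and the justification that the diagonal integral (finite but with a kernel that need not be continuous on the diagonal) can be recombined with the diagonal-deleted $\det_2$ minors.

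On the bound \eqref{coef_bnd0}: inserting $J=\mathfrak u_0\mathfrak u_1^t-\mathfrak u_1\mathfrak u_0^t$ and tiling the chain is workable in principle, but your claim that only blocks of size $1$ and $2$ occur is wrong. A term of the expansion can contain an arbitrarily long run of the form $(\mathfrak u_0^tR\mathfrak u_0)(\mathfrak u_1^tR\mathfrak u_0)\cdots(\mathfrak u_1^tR\mathfrak u_0)(\mathfrak u_1^tR\mathfrak u_1)$; you would need to argue separately that the middle factors $\mathfrak u_1^tR\mathfrak u_0=\mathfrak u_0^tR\mathfrak u_1$ are individually integrable and that each $\mathfrak u_1^tR\mathfrak u_1$ --- which need \emph{not} be integrable on its own under Assumption 2 --- is matched with an earlier $\mathfrak u_0^tR\mathfrak u_0$. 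Even then the $2^{n-1}$ terms of the expansion give a bound of the form $C^n$, enough for infinite radius of convergence but not the constant displayed in \eqref{coef_bnd0}. The paper instead obtains the bound operator-theoretically, from the permutation expansion of the Fredholm minor and the estimate $\Tr(B^\ell)\le \Tr(B^2)^{\ell/2}=\|A\|_2^\ell$ applied to the cycles, which is \eqref{coef_bnd}.
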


We start with a statement about the Fredholm expansion of regularized determinants of Hilbert-Schmidt integral operators with a finite diagonal integral.
\begin{lemma}\label{l:HStrace}
Suppose that $G$ is a bounded interval on $\R$,   $k:G^2\to \R$ is a measurable function with $\iint_{G^2}|k(s,t)|^2 ds dt<\infty$ and $\int_G |k(s,s)|ds<\infty$. Let  $Af(s)=\int_G k(s,t) f(s) ds$ be the Hilbert-Schmidt integral operator  acting on real valued $L^2(G)$ functions. Then
\begin{align}\label{eq:Fr_2}
{\det}_2(I+z A) e^{z \int_G k(s,s)ds}=1+\sum_{n=1}^\infty  \frac{z^n}{n!}  \int_{G^n}\det \left[k(t_i,t_j) \right]_{i,j=1}^n dt_1\dots dt_n,
\end{align}
where the $n$-variable integrals on the right are all finite and the series converges on $\CC$.

Moreover, we also have the bound
\begin{align}\label{coef_bnd}
 \int_{G^n} \left| \det \left[k(t_i,t_j) \right]_{i,j=1}^n \right|dt_1\dots dt_n\le n! \left(\int_G|k(s,s)|ds+\|A\|_2  \right)^n
\end{align}

\end{lemma}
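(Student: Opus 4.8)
The plan is to prove Lemma \ref{l:HStrace} by first establishing it for a dense class of nice kernels, then passing to the limit, using the bound \eqref{coef_bnd} as the tool that controls the limit. Concretely, I would first recall the classical Fredholm / Plemelj formula for a \emph{trace class} integral operator $A$ with continuous kernel $k$ on a compact interval, namely $\det(I+zA)=1+\sum_{n\ge 1}\frac{z^n}{n!}\int_{G^n}\det[k(t_i,t_j)]\,dt_1\cdots dt_n$, which can be found in \cite{SimonTrace}, Chapter 3. Combining this with the relation \eqref{eq:detdet2}, $\det(I+zA)={\det}_2(I+zA)e^{z\Tr A}$ and $\Tr A=\int_G k(s,s)\,ds$ for such kernels, gives \eqref{eq:Fr_2} in the smooth/continuous case. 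So the real content is the extension from continuous kernels to kernels that are merely square-integrable with integrable diagonal, and this is exactly what the quantitative estimate \eqref{coef_bnd} is for.

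So the second step is to prove the bound \eqref{coef_bnd}. The standard route is Hadamard's inequality combined with a splitting of the determinant expansion according to how many indices ``land on the diagonal.'' Write $\det[k(t_i,t_j)]_{i,j=1}^n$ and expand along the Leibniz formula, or better, use the Plemelj–Smithies-type identity that expresses the Fredholm minor of $A$ in terms of $\Tr A$ and the Hilbert–Schmidt norm: each term in the expansion contributing to $\det[k(t_i,t_j)]$ can be organized into a part coming from diagonal entries $k(t_i,t_i)$ and a part that forms a genuine Hilbert–Schmidt ``loop.'' Integrating, one gets a sum over the ways to select the diagonal indices; the diagonal selections contribute factors of $\int_G|k(s,s)|\,ds$ and the remaining block contributes, via Hadamard and Cauchy–Schwarz applied to the rows, factors bounded by powers of $\|A\|_2=\left(\iint_{G^2}|k|^2\right)^{1/2}$. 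Counting gives the binomial sum $\sum_{j}\binom{n}{j}(\int|k(s,s)|ds)^j\|A\|_2^{\,n-j}\cdot(\text{combinatorial factor})$, and arranging the bookkeeping so the combinatorial factor is absorbed into $n!$ yields precisely $n!\big(\int_G|k(s,s)|\,ds+\|A\|_2\big)^n$. The convergence of the series on all of $\CC$ is then immediate since $\sum \frac{|z|^n}{n!}\cdot n!\,C^n$ does not converge — so instead one notes that ${\det}_2$ has the better growth $|{\det}_2(I+zA)|\le \exp(c|z|^2\|A\|_2^2)$ which, together with the $e^{z\int k(s,s)}$ factor, shows the left side of \eqref{eq:Fr_2} is entire of order $\le 2$; matching Taylor coefficients then forces the right side to be entire as well, and \eqref{coef_bnd} guarantees each coefficient integral is finite.

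For the limiting argument: given $k\in L^2(G^2)$ with $\int_G|k(s,s)|\,ds<\infty$, I would choose continuous kernels $k_m$ with $k_m\to k$ in $L^2(G^2)$ and $\int_G|k_m(s,s)-k(s,s)|\,ds\to 0$ (one can take $k_m$ to be a mollification of $k$ in a way that controls the diagonal, or truncate and convolve carefully; the diagonal control is the delicate point but is standard once one works with $k$ restricted to a neighborhood of the diagonal). For the $k_m$ the identity \eqref{eq:Fr_2} holds by the first step. The left-hand side converges: ${\det}_2$ is continuous in Hilbert–Schmidt norm (see \cite{SimonTrace}, Chapter 9) and $A_m\to A$ in $\|\cdot\|_2$, while $e^{z\int k_m(s,s)}\to e^{z\int k(s,s)}$. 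The right-hand side converges term by term because the multilinear map $(k^{(1)},\dots,k^{(n)})\mapsto \int_{G^n}\det$-type expression is continuous with respect to the norm $\|k\|_2+\int|k(s,s)|$ — this is again a consequence of the bound \eqref{coef_bnd} applied to differences, i.e.\ a standard multilinearity/telescoping estimate — and uniform convergence of the series in $m$ on compact $z$-sets (from the uniform-in-$m$ version of \eqref{coef_bnd}, since $\int|k_m(s,s)|ds+\|A_m\|_2$ stays bounded) lets us exchange limit and sum. Finally \eqref{coef_bnd} for $k$ itself follows by passing to the limit in \eqref{coef_bnd} for $k_m$, or simply by rerunning the Hadamard estimate directly on $k$.

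The main obstacle I anticipate is the diagonal approximation: ensuring that the approximating continuous kernels $k_m$ converge to $k$ \emph{both} in $L^2(G^2)$ \emph{and} in the sense that $\int_G k_m(s,s)\,ds\to\int_G k(s,s)\,ds$. Generic $L^2$ approximation says nothing about the diagonal (a measure-zero set), so one must use the specific structure — here $k$ is of the form \eqref{resint}, a sum of products $a(s)c(t)^t$ times indicators — or, in the abstract lemma, one approximates $a$ and $c$ (equivalently the factors whose product gives $k$) in $L^2(G)$ and $L^1(G)$ respectively, so that the diagonal integral $\int a^t c$ is automatically controlled. I would therefore either reduce immediately to kernels of rank-one-plus-indicator type, or carry the pair $(a,c)$ through the whole argument rather than the kernel $k$ alone; the Hadamard estimate is unaffected by this choice.
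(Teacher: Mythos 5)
Your strategy (prove the identity for nice kernels, then approximate) is not the paper's route, and as written it has a genuine gap that you yourself flag but do not close: the diagonal restriction $s\mapsto k(s,s)$ and the $L^2(G^2)$ class of $k$ are logically independent pieces of data (the diagonal has measure zero in $G^2$), so no amount of $L^2$ approximation controls the right-hand side of \eqref{eq:Fr_2}, whose integrands contain the diagonal values $k(t_i,t_i)$. Your two proposed fixes — restricting to semi-separable kernels, or carrying the pair $(a,c)$ instead of $k$ — would prove a weaker statement than the lemma, which is asserted for an arbitrary measurable $k$ with $\iint|k|^2<\infty$ and $\int|k(s,s)|\,ds<\infty$. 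A continuous-kernel approximation that simultaneously prescribes the $L^2$ bulk and the $L^1$ diagonal can be engineered (e.g.\ by adding a narrow bump supported near the diagonal), but you would also need the approximants to be trace class for your first step, and none of this is carried out. A second problem is the bound \eqref{coef_bnd}: Hadamard's inequality on the rows of $[k(t_i,t_j)]$ produces row norms $(\sum_j|k(t_i,t_j)|^2)^{1/2}\ge |k(t_i,t_i)|$, and the hypotheses give only $L^1$, not $L^2$, control of the diagonal, so the Hadamard route does not close under the stated assumptions.

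The paper avoids approximation entirely. It starts from the classical modified Fredholm expansion \eqref{det2}, valid for \emph{any} Hilbert--Schmidt integral operator with no continuity assumption, in which the determinants have their diagonal entries replaced by $0$. The elementary identity \eqref{FR1} — obtained by grouping permutations according to their fixed points — converts $\det[k(t_i,t_j)]$ into a sum of diagonal-killed minors times products of diagonal values $k(t_\ell,t_\ell)$; integrating and summing reproduces exactly the Cauchy product of the Taylor series of ${\det}_2(I+zA)$ and $e^{z\int_Gk(s,s)ds}$, which gives \eqref{eq:Fr_2} together with everywhere-convergence (note, as you correctly observed, that \eqref{coef_bnd} alone only gives a positive radius of convergence). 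The bound \eqref{coef_bnd} is then proved by expanding the determinant over permutations and decomposing each permutation into cycles: fixed points contribute factors $\int_G|k(s,s)|\,ds$, while each cycle of length $\ell\ge 2$ contributes $\Tr(B^\ell)\le\|A\|_2^{\ell}$ for the integral operator $B$ with kernel $|k|$. This cycle decomposition is the correct realization of the ``diagonal part plus Hilbert--Schmidt loop'' splitting you gesture at; I recommend adopting it in place of both the Hadamard estimate and the approximation scheme.
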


Note that we do not require $k$ to be continuous near the diagonal.

\begin{proof}
Since $A$ is a Hilbert-Schmidt integral operator, the  classical theory, see \cite{GGK}, \cite{SimonTrace}, implies that
\begin{align}\label{det2}
{\det}_2(I+z A)=1+\sum_{n=2}^\infty \frac{z^n}{n!}  \int_{G^n}\det \left[k(t_i,t_j) \ind(i\neq j)\right]_{i,j=1}^n dt_1\dots dt_n,
\end{align}
with the series on the right converging for all $z\in \CC$. In particular we also have that
\begin{equation}\label{finite-integral}\int_{G^n}\det \left[k(t_i,t_j) \ind(i\neq j)\right]_{i,j=1}^n dt_1\dots dt_n<\infty, \qquad \text{for $n\ge 2$.}
\end{equation}

Note that for any $n\ge 1$, and $t_1, \dots, t_n\in G$  we have
\begin{align}\label{FR1}
\det[k(t_i,t_j)]_{i,j=1}^n=\sum_{\substack{B\subset\{1,\dots, n\}\\
|B|\ge 2
}}\det\left[k(t_i,t_j) \ind(i\neq j)\right]_{i,j\in B} \prod_{\substack{\ell\notin B\\ 1\le \ell \le n}} k(t_\ell,t_\ell).
\end{align}
This identity follows by expanding the determinant on the left and collecting the terms based on the number of fixed points in the permutations.
Integrating the quantity \eqref{FR1} on $G^n$ we get
\begin{align}\label{qq1}
\sum_{\ell=2}^n  \binom{n}{\ell}\int_{G^\ell} \det\left[k(t_i,t_j) \ind(i\neq j)\right]_{i,j\le \ell} dt_1\dots dt_\ell \left( \int_G k(t,t) dt\right)^{n-\ell}.
\end{align}
This expression  is finite by \eqref{finite-integral}, and the assumption $\int_G |k(s,s)|ds<\infty$. The claim \eqref{eq:Fr_2} follows after multiplying  the Taylor expansions of the entire functions ${\det}_2(I+z A)$ and $e^{z \int_G k(s,s)ds}$.

To prove the bound (\ref{coef_bnd}) we first show that for $\ell\ge 1$ one has
\begin{align}\label{cycle_bnd}
\int_{G_n} |k(t_1, t_2) k(t_2, t_3) \cdots k(t_\ell, t_1)|dt_1\dots dt_\ell\le \left( \int_G |k(s,s)| ds+\|A\|_2\right)^{\ell}.
\end{align}
For $\ell=1$ this follows from the assumption. Define $B$ by  $$B f(x)=\int_G |k(x,y)| f(y) dy.$$
For $\ell\ge 2$ the integral operator $B^\ell$ is trace class, and its trace is equal to the left side of \eqref{cycle_bnd}. Moreover,  $\Tr(B^\ell)\le \Tr(B^2)^{\ell/2}=\|A\|_2^\ell$. This shows \eqref{cycle_bnd} for $\ell\ge 2$.
The bound (\ref{coef_bnd}) follows by expanding the determinant $\det \left[k(t_i,t_j) \right]_{i,j=1}^n$, and applying (\ref{cycle_bnd}) the the cycles of the permutation $\pi$ in $\int_{G^n} \prod_{i=1}^n |k(s_i,s_{\pi(i)}) |ds_1\dots ds_n$.
\end{proof}

\begin{proof}[Proof of Proposition \ref{prop:Fred_exp}]
We start by adapting  Lemma \ref{l:HStrace} to integral operators with matrix valued kernels. We can embed the space of $\R^2$-valued $L^2(0, \sigma]$ functions into the space of real valued $L^2(0,2 \sigma]$ functions using the following invertible isometry:
\begin{align}g=[g_1,g_2]^t, \qquad
\mathcal{J}g(t)=\begin{cases}
g_1(s), \quad& 0<s\le  \sigma\\
g_2(s- \sigma), \quad & \sigma<s\le 2 \sigma.
\end{cases}
\end{align}
If $B$ is a Hilbert-Schmidt integral operator acting on $\R^2$-valued functions on $(0, \sigma]$ with kernel $K=\mat{K_{11}}{K_{12}}{K_{21}}{K_{22}}$ then $A=\mathcal{J} B \mathcal{J}^{-1}$ is a Hilbert-Schmidt integral operator acting on  scalar functions on $(0,2 \sigma]$, and the integral kernel is
\begin{align}\label{eq:kK}
k(s,t)=K_{1+\ind(s\ge  \sigma), 1+\ind(t\ge  \sigma)}(s,t).
\end{align}
We have $\int_0^{ \sigma} \|K(s,t)\|^2_2 ds\, dt=\int_0^{2 \sigma} |k(s,t)|^2 ds\, dt$ and $\int_0^{2 \sigma} k(s,s)ds=\int_0^{ \sigma}  \Tr K(s,s) ds$. Assuming that both of these integrals are finite we may apply Lemma \ref{l:HStrace} to the integral operator $A$. Since the spectrum of $A$ is the same as the spectrum of $B$, we have ${\det}_2(I-z B)={\det}_2(I-z A)$ and hence
\begin{align}\label{sorfejtes}
{\det}_2(I-z B) e^{-z \int_0^{ \sigma}  \Tr K(s,s)}=1+\sum_{n=1}^\infty  (-1)^n \frac{z^n}{n!}  \int_{(0,2 \sigma]^n}\det \left[k(t_i,t_j) \right]_{i,j=1}^n dt_1\dots dt_n.
\end{align}
From (\ref{eq:kK}) we have
\begin{align*}
&\int_{(0,2 \sigma]^n}\det \left[k(t_i,t_j) \right]_{i,j=1}^n dt_1\dots dt_n=\sum_{i_1=1}^2 \cdots \sum_{i_n=1}^2 \int_{(0, \sigma]^n} \det(K_{i_a, i_b}(s_a,s_b))_{a,b=1}^n ds_1\dots ds_n,
\end{align*}
Now set $B=\res  \btau$. From (\ref{resint}) we get that the entries $K_{ij}$ of the matrix valued kernel are given by
\begin{align}\label{Kij}
K_{i,j}(s,t)=\tfrac{1}{2}\left(a_i(s) c_j(t)\ind(s<t)+c_i(s)a_j(t)\ind(t\le s)\right).
\end{align}
Note that $K(s,t)^t=K(t,s)$ and thus $K_{i,j}(s,t)=K_{j,i}(t,s)$. Because of this we have
\begin{align*}
&\frac{1}{n!}\sum_{i_1=1}^2 \cdots \sum_{i_n=1}^2 \int_{(0, \sigma]^n} \det(K_{i_j, i_\ell}(s_j,s_\ell))_{j,\ell=1}^n ds_1\dots ds_n\\&\hskip100pt=\sum_{i_1=1}^2 \cdots \sum_{i_n=1}^2\, \, \iiint\limits_{0<s_1<\dots<s_n\le  \sigma}  \det(K_{i_j, i_\ell}(s_j,s_\ell))_{j,\ell=1}^n ds_1\dots ds_n.
\end{align*}
%

Fix $i_1, \dots, i_n\in \{1,2\}$ and $0<s_1<\dots s_n\le \sigma$. Introduce the temporary notation
\[
p_k=a_{i_k}(s_k),\qquad q_k=c_{i_k}(s_k),
\]
then by \eqref{Kij} we have 
$2 K_{i_j, i_\ell}(s_j,s_\ell)= p_{\min(j,\ell)} \cdot  q_{\max(j,\ell)}$.
For example, for $n=3$ we have
\[
(2K_{i_j, i_\ell}(s_j,s_\ell))_{j,\ell=1}^n=\left(
\begin{array}{ccc}
 p_1 q_1 & p_1 q_2 & p_1 q_3  \\
 p_1 q_2 & p_2 q_2 & p_2 q_3 \\
 p_1 q_3 & p_2 q_3 & p_3 q_3
\end{array}
\right).
\]
We show that
\begin{align}\label{pq_det}
\det( p_{\min(j,\ell)} \cdot  q_{\max(j,\ell)})_{j,\ell=1}^n=p_1 q_n\prod_{j=1}^{n-1} (p_{j+1} q_{j}-p_{j}q_{j+1}).
\end{align}
Subtract row $n-1$ times $q_{n}/q_{n-1}$ from row $n$. Then the last row becomes $$[0,\dots,0, p_nq_n-p_{n-1} q_n^2/q_{n-1}].$$ The identity \eqref{pq_det} now follows by induction.

Note that $p_{j+1} q_{j}-p_{j}q_{j+1}=[p_j,q_j]J [p_{j+1},q_{j+1}]^t$, hence, with $v_{i_k}(s_k)=[p_k,q_k]^t$ we have
\begin{align}\label{111}
\det( p_{\min(j,\ell)} \cdot  q_{\max(j,\ell)})_{j,\ell=1}^n=\left[v_{i_1}(s_1) v_{i_1}(s_1)^t J v_{i_2}(s_2) v_{i_2}(s_2)^t J\cdots v_{i_n}(s_n) v_{i_n}(s_n)^t J\right]_{1,1}.
\end{align}
Note that
$$
v_{1}(s)v_1(s)^t+v_2(s)v_2(s)^t=2U^t R(s)U, \qquad  U=[\mathfrak  u_0  ,\mathfrak  u_1 ].
$$
Summing \eqref{111} for all choices of $i_1, \dots, i_n$ gives
\[
2^n\left[U^t R(s_1) U J U^t R(s_2) U J\cdots U^t R(s_n) U J\right]_{1,1}=-(-2)^n \mathfrak  u_0  ^t R(s_1)JR(s_2)J\dots R(s_n)\mathfrak  u_1 .
\]
In the last step we used $ U J U^t=-J$, which is equivalent to the assumption \eqref{unorm}.

The statement of the proposition now follows from \eqref{sorfejtes}.
\end{proof}

\begin{example}[Deterministic Sine operator, continued]\label{ex:bulk_2}
Consider $\btau$ from Example \ref{ex:bulk}. The eigenvalues are $\frac{2\pi k-\theta}{\sigma}, k\in \Z$, and $a(s)^t c(s)=-q=-\cot(\theta/2)$.
Definition (\ref{zeta}) gives
\begin{align*}
 \zeta(z)=   e^{\frac{\sigma z}{2}\cot(\theta/2)}\prod_{k} \left(1-\tfrac{\sigma z}{2\pi k-\theta}\right) e^{ \tfrac{\sigma z}{2\pi k-\theta}}.
\end{align*}
It is an exercise in complex analysis to show that
\begin{align}\label{zeta_Ex1}
\zeta(z)=
\frac{\sin \left(\tfrac{\theta +\sigma z}{2}\right)}{\sin(\theta/2)}.
\end{align}
This also follows from the power series representation of
Proposition \ref{prop:Fred_exp}:
\[
\zeta(z)=1+\sum_{n=1}^\infty r_n z^n , \qquad r_n=-2^{-n} \iiint_{0<s_1<\dots<s_n\le \sigma}\mathfrak  u_0  ^tJ^{n-1}\mathfrak  u_1  ds_1\dots ds_n.
\]
Using $J^{2k+1}=(-1)^k J$  and $J^{2k}=(-1)^k I$ we get
\[
r_{2k+1}=\frac{(\sigma/2)^{2k+1}}{(2k+1)!} (-1)^k q, \qquad r_{2k}=\frac{(\sigma/2)^{2k}}{(2k)!} (-1)^k
\]
which give the series expansion of $\zeta$, and proves  (\ref{zeta_Ex1}).
\end{example}

\begin{example}[Deterministic Bessel operator, continued]\label{ex:edge_2}
For $\btau$ from Example \ref{ex:edge} the definition (\ref{zeta}) gives
\begin{align}\label{Ex2_product}
    \zeta(z)=\prod_{k=1}^\infty \left(1-\tfrac{\sigma^2 z^2}{4\gamma_k^2}\right),
\end{align}
where $\gamma_k$ is the $k$th root of $\mathrm{J}_{\frac{\alpha-1}{2}}$. The infinite product representation of the Bessel function,  see 9.5.10 in \cite{AbSt} gives
\begin{align}\label{zeta_Ex2}
\zeta(z)=\Gamma(\tfrac{\alpha+1}{2})\left(\tfrac{\sigma z}{4}\right)^{-\frac{\alpha-1}{2}} \mathrm{J}_{\frac{\alpha-1}{2}} \left(\tfrac{\sigma z}{2}\right)=\, _0F_1\left(;\tfrac{\alpha +1}{2};-\tfrac{\sigma^2 z^2}{16}\right),
\end{align}
where $_0F_1(;a;w)=\sum _{k=0}^{\infty } \frac{ \Gamma(a)}{k!\Gamma(a+k)}w^k$ is the confluent hypergeometric function.

The series representation given by Proposition \ref{prop:Fred_exp} gives the coefficients
\[
r_{2n+1}=0, \qquad r_{2n}=(-1)^n 2^{-2n} \iiint_{0<s_1<\dots<s_{2n}\le \sigma} \tfrac{s_1^\alpha s_3^\alpha \cdots s_{2n-1}^\alpha}{s_2^\alpha s_4^\alpha\cdots s_{2n}^\alpha} ds_1\dots ds_{2n}.
\]
The multiple integral evaluates to 
\[
r_{2n}=(-1)^n 2^{-4n} \frac{\sigma^{2n} \Gamma(\frac{\alpha+1}{2})}{n! \Gamma(\frac{\alpha+1}{2}+n)},
\]
which agrees with (\ref{zeta_Ex2}) and the  series expansion of $_0F_1$.
\end{example}

\subsection{ODE representation for $\zeta$}

The function $\zeta$ can also be characterized using the solution of a vector-valued ODE, this is the statement of our next proposition.  
\begin{proposition}\label{prop:zetaODE}
Suppose that $R$ and $\mathfrak  u_0  $ satisfy Assumptions 1 and 2.
There is a  unique vector-valued function  $H: (0,\sigma]\times \CC\to \CC^2$ so that for every $z\in \CC$  the function $H(\cdot, z)$ is the solution of the ordinary differential equation
\begin{align}
\label{eq:E_1}
J\frac{d}{dt}H(t,z)&=z R(t) H(t,z), \qquad t\in(0,\sigma], \qquad\lim_{t\to 0} H(t,z)=  \mathfrak  u_0  .
\end{align}
For any $t\in (0,\sigma]$ the  function $H(t,z)$ satisfies $\|H(t,z)\|>0$, and its two coordinates are entire functions of $z$ mapping the reals to the reals.

Moreover for any $\mathfrak  u_1 $ with $\mathfrak  u_0  ^tJ\mathfrak  u_1 =1$ the corresponding $\zeta_{\btau}(z)$ satisfies
\begin{align}\label{zetaH}
\zeta_\btau(z)=H(\sigma,z)^tJ \mathfrak  u_1 .
\end{align}
\end{proposition}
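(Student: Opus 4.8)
The plan is to set up the ODE \eqref{eq:E_1}, establish existence/uniqueness of the solution $H$ together with its analyticity in $z$, and then match $H(\sigma,z)^tJ\mathfrak u_1$ to the Taylor expansion of $\zeta_\btau$ from Proposition \ref{prop:Fred_exp} coefficient by coefficient. First I would rewrite \eqref{eq:E_1} in integral form using $J^{-1}=-J$: the condition becomes $H(t,z)=\mathfrak u_0 - z\int_0^t JR(s)H(s,z)\,ds$. Under Assumption 2 the ``mass'' $\int_0^\sigma\|\mathfrak u_0^tR\|\,ds$ and the iterated integral are finite, so I would solve this Volterra-type equation by Picard iteration starting from $H^{(0)}\equiv\mathfrak u_0$, obtaining the series
\begin{equation}\label{eq:H-series}
H(t,z)=\mathfrak u_0+\sum_{n=1}^\infty (-z)^n\!\!\iiint\limits_{0<s_1<\cdots<s_n\le t}\!\! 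JR(s_1)JR(s_2)\cdots JR(s_n)\,\mathfrak u_0\,ds_1\cdots ds_n.
\end{equation}
Convergence and the entire dependence on $z$ (with real coefficients when $\mathfrak u_0\in\R^2$) follow from the same bound used for $r_n$ in \eqref{coef_bnd0}, applied with $\mathfrak u_1$ replaced by $\mathfrak u_0$; this is where Assumption 2 does its work, and I would need to check the $n$th iterate is bounded by the $n$th power of the bracketed quantity in \eqref{coef_bnd0} (with both boundary vectors equal to $\mathfrak u_0$), via the cycle/trace argument of Lemma \ref{l:HStrace} adapted to matrix kernels exactly as in the proof of Proposition \ref{prop:Fred_exp}.

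Next I would verify $\|H(t,z)\|>0$ for every $t,z$. This is a standard Wronskian-type argument: if $H(\cdot,z)$ and $\tilde H(\cdot,z)$ are two solutions of $J\frac{d}{dt}H=zRH$, then $\frac{d}{dt}(H^tJ\tilde H)=(H')^tJ\tilde H+H^tJ\tilde H'= -z H^tR\tilde H + z H^tR\tilde H=0$ using $J^tJ=I$ and symmetry of $R$, so $H^tJ\tilde H$ is constant in $t$. Taking $\tilde H$ the solution with initial data $\mathfrak u_0^{\perp}=J\mathfrak u_0$ (well-defined by the same argument, since the relevant integrability in Assumption 2 is stated symmetrically in $\mathfrak u_0$ and $\mathfrak u_0^\perp$), we get $H(t,z)^tJ\tilde H(t,z)\equiv \mathfrak u_0^tJ\mathfrak u_0^\perp=\mathfrak u_0^t J J\mathfrak u_0 =-\|\mathfrak u_0\|^2\ne 0$, forcing $H(t,z)\ne 0$.

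Finally I would prove \eqref{zetaH} by comparing power series in $z$. From \eqref{eq:H-series}, $H(\sigma,z)^tJ\mathfrak u_1=\mathfrak u_0^tJ\mathfrak u_1+\sum_{n\ge 1}(-z)^n\iiint_{0<s_1<\cdots<s_n\le\sigma}\mathfrak u_0^t(JR(s_1))\cdots(JR(s_n))J\mathfrak u_1\,ds_1\cdots ds_n$. The constant term is $\mathfrak u_0^tJ\mathfrak u_1=1$ by the normalization. For the $n$th term I would move the $n$ copies of $J$ to the right using the relation $JR=\tfrac12 X^{-1}JX$ from \eqref{eq:x-conj} — or more directly, note $(JR)^t=R^tJ^t=-RJ$, so $\mathfrak u_0^t(JR_1)\cdots(JR_n)J\mathfrak u_1=(-1)^n\mathfrak u_0^t R_1 J R_2 J\cdots R_n (J^2)^{?}\dots$; carefully, transposing the scalar $\mathfrak u_0^t(JR_1)\cdots(JR_n)J\mathfrak u_1$ and using $(JR_i)^t=-R_iJ$, $J^t=-J$ yields $\mathfrak u_1^t(-J)( -R_nJ)\cdots(-R_1J)\mathfrak u_0$, and one checks the signs combine to give exactly $-\mathfrak u_0^tR(s_1)JR(s_2)J\cdots R(s_n)\mathfrak u_1$ after also reversing the order of integration variables $s_k\mapsto s_{n+1-k}$. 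Thus the $n$th coefficient of $H(\sigma,z)^tJ\mathfrak u_1$ equals $(-1)^n\cdot(-1)\cdot(\text{the integral in }r_n)=r_n$, matching \eqref{eq:zeta_exp} term by term, and since both sides are entire we are done.

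The main obstacle is the bookkeeping in this last step: getting the signs and the $J$-transposition identity right, and confirming that reversing the simplex of integration variables produces precisely the integrand $\mathfrak u_0^tR(s_1)JR(s_2)J\cdots R(s_n)\mathfrak u_1$ of \eqref{eq:zeta_exp} rather than its reflection. A cleaner alternative, which I would fall back on if the direct comparison gets unwieldy, is to show both $H(\sigma,z)^tJ\mathfrak u_1$ and $\zeta_\btau(z)$ satisfy the same characterization: both are entire, both vanish exactly at the $\lambda_k$ (for $H^tJ\mathfrak u_1$ this is because $H(\cdot,\lambda_k)$ then satisfies both boundary conditions, hence is the eigenfunction), both have the same growth, and both have the same value and derivative at $z=0$ — but pinning down the multiplicative constant still requires a first-order computation, so the series comparison is probably the most honest route.
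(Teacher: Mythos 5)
Your overall strategy --- solve \eqref{eq:E_1} by an iterated-integral/power series and match coefficients against Proposition \ref{prop:Fred_exp} --- is exactly the paper's route (Propositions \ref{prop:unique} and \ref{prop:E}), and the sign chase in your last step does close correctly. But two of your sub-arguments have genuine gaps. First, the non-vanishing of $H$: you propose a Wronskian argument against a second solution $\tilde H$ with initial data $\mathfrak u_0^{\perp}=J\mathfrak u_0$, claiming Assumption 2 is ``stated symmetrically in $\mathfrak u_0$ and $\mathfrak u_0^\perp$.'' It is not: the first condition in \eqref{eq:assmpns} is $\int_0^\sigma\|\mathfrak u_0^tR\|\,ds<\infty$, which involves only $\mathfrak u_0$, and in the Bessel example (Example \ref{ex:edge}, $R=\operatorname{diag}(s^\alpha,s^{-\alpha})$) the vector $\mathfrak u_0^\perp=[0,1]^t$ gives $(\mathfrak u_0^\perp)^tR=[0,s^{-\alpha}]$, which is not integrable near $0$ for $\alpha\ge 1$. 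So $\tilde H$ need not exist and the argument collapses. The fix is much simpler and does not need a second solution: on any $[a,b]\subset(0,\sigma]$ the coefficient $R$ is bounded, so the linear ODE has unique solutions there; hence if $H(t_0,z)=0$ for some $t_0$ then $H\equiv 0$ on $(0,\sigma]$, contradicting $H(t,z)\to\mathfrak u_0\ne 0$.

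Second, uniqueness of $H$ subject to the boundary condition $\lim_{t\to 0}H(t,z)=\mathfrak u_0$ is asserted but never proved: standard Volterra/Picard uniqueness does not apply because $\|R\|$ may blow up at $0$ and the initial condition is only a limit. Ironically, the Wronskian identity you derive is precisely the right tool here (and is how the paper does it): for two solutions $H_1,H_2$ with the same limit, $H_2^tJH_1$ is constant and equals $\mathfrak u_0^tJ\mathfrak u_0=0$, forcing $H_2\parallel H_1$, and then linearity plus the common limit forces $H_2=H_1$. Two smaller points: (a) your displayed series has the factors in the wrong time order --- Picard iteration of $H(t)=\mathfrak u_0-z\int_0^tJR(s)H(s)\,ds$ produces $JR(s_n)\cdots JR(s_1)\mathfrak u_0$ on $\{s_1<\cdots<s_n\}$ (largest time leftmost), which is the transpose of the paper's $d_n(t)$; since the $R(s)$ do not commute, the reflected product is genuinely different, and only the correct order reproduces $r_n$ after transposing against $J\mathfrak u_1$. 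You flagged this risk yourself, but the formula as written is the wrong one. (b) Verifying that the series actually satisfies the ODE for \emph{all} $z$ requires some care, since term-by-term integration against $R$ is only directly justified for $|z|$ below the radius $r(t)^{-1}$; the paper handles this by solving the ODE from an interior time $t_0$ and invoking analyticity in $z$ to extend the identity.
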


We prove Proposition \ref{prop:zetaODE}  in two parts. Proposition \ref{prop:unique} shows that there is at most one function $H$ satisfying the conditions, while Proposition \ref{prop:E} provides a power series solution.

\begin{remark}
Our proof is self-contained and relies on the Taylor series expansion of $\zeta$ in Proposition \ref{prop:Fred_exp}. This proposition is a version of standard results in de Branges' theory of Hilbert spaces of analytic functions adapted to our setting. For example, 
Theorem 41 of de Branges \cite{deBranges} is a version  of the existence and uniqueness part of  Proposition \ref{prop:zetaODE} with slightly different assumptions. The proof of that theorem
relies on a deep understanding of  de Branges' theory.
\end{remark}

\begin{remark}
The function $H$ can be considered as the solution of the eigenvalue equation $\btau H=z H$ with initial condition $H(0,z)=\mathfrak  u_0  $. The expression $H(\sigma, z)^t J \mathfrak  u_1 $ gives a linear transform of $H(\sigma, z)$ which is zero exactly when $H(\sigma, z)\parallel \mathfrak  u_1 $, and it is equal to $1$ at $z=0$.
\end{remark}

\begin{remark}\label{rem:ODElin}
The ODE in \eqref{eq:E_1} is linear, and by Assumption 1 the function $\|R(s)\|$ is bounded for any compact subset of $(0,\sigma]$. Hence for any $0<a<b\le \sigma$, $z\in \CC$ and $v\in \CC^2$ the ODE
\begin{align}
J\frac{d}{dt}G(t,z)=z R(t) G(t,z), \qquad G(a,z)=v,
\end{align}
has a unique solution in $[a,b]$ by the standard theory of ordinary differential equations, and the solution is analytic in $z$ for any given $t\in [a,b]$. This holds also if we assume that the initial condition $v=v(z)$ is an analytic function of $z$.

Under the additional assumption  $\int_0^{\sigma} \|R(s)\|ds< \infty$ this extends to the $a=0$ case. Hence in this case existence and uniqueness of $H(t,z)$ in Proposition \ref{prop:zetaODE}  follows immediately. If $\|R(t)\|$ is bounded on  $(0,\sigma]$ as opposed to just locally bounded, then $\int_0^{\sigma} \|R(s)\|ds< \infty$. Our assumptions allow $\|R(t)\|$ to blow up near zero, and most of our applications have this property.

\begin{remark}
Gesztesy and Makarov  \cite{GM} give an explicit formula for the modified Fredholm determinant of a Hilbert-Schmidt integral operator with a semi-separable kernel $$M(s,t)=f_1(s) g_1(t)\ind(s<t)+f_2(s)g_2(t)\ind(t\le s)$$ on an interval $(a,b)$, assuming that the matrix valued functions $f_1, g_1, f_2, g_2$ are all in the appropriate $L^2$ spaces.

Note that $K_{\res \tau}$ is a semi-separable kernel by (\ref{resint}), and in the case when $\int_0^{\sigma} \|R(s)\|ds<\infty$ the vector valued functions $a(\cdot)$ and $c(\cdot)$ are both in $L^2(0,\sigma]$.
Hence in this case the results of \cite{GM} apply, and it can be checked that the derived formula leads to  (\ref{zetaH}) again. \end{remark}

\end{remark}

\begin{proposition}[Uniqueness of $H$]\label{prop:unique}
Suppose that for a given $z\in \CC$ the functions $H_1(t,z), H_2(t,z)$  both satisfy \eqref{eq:E_1}.
Then $H_1(t,z)=H_2(t,z)$. Moreover, $\|H_1(t,z)\|>0$ for all $t,z$.
\end{proposition}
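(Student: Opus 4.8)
The plan is to prove uniqueness by a Gronwall-type argument on the integral form of \eqref{eq:E_1}, then establish nonvanishing of the norm by exploiting the Wronskian-like structure of the ODE. The key point is that although Assumption 1 only guarantees $\|R\|$ is \emph{locally} bounded on $(0,\sigma]$, the difference of two solutions is controlled on intervals $[a,\sigma]$ for every $a>0$, and then we let $a\downarrow 0$ using the common initial condition.

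\textbf{Step 1: Uniqueness via Gronwall.} Fix $z\in\CC$ and let $D(t)=H_1(t,z)-H_2(t,z)$. Since $J^{-1}=-J$, the ODE \eqref{eq:E_1} is equivalent to $\frac{d}{dt}H=-zJR(t)H$, so $D$ satisfies $\frac{d}{dt}D=-zJR(t)D$. For any fixed $a\in(0,\sigma]$ and $t\in[a,\sigma]$ integrating gives
\begin{align}\notag
D(t)=D(a)-z\int_a^t JR(s)D(s)\,ds,
\end{align}
hence $\|D(t)\|\le \|D(a)\|+|z|\int_a^t\|R(s)\|\,\|D(s)\|\,ds$. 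By Assumption 1, $M_a:=\sup_{s\in[a,\sigma]}\|R(s)\|<\infty$, so Gronwall's inequality yields $\|D(t)\|\le \|D(a)\|e^{|z|M_a(t-a)}$ for $t\in[a,\sigma]$. Now the boundary condition $\lim_{t\to0}H_i(t,z)=\mathfrak u_0$ forces $\|D(a)\|\to 0$ as $a\downarrow 0$. For a fixed target point $t_0\in(0,\sigma]$ and any $a<t_0$ we have $\|D(t_0)\|\le \|D(a)\|e^{|z|M_a(t_0-a)}$; but here one must be slightly careful, because $M_a$ may blow up as $a\downarrow 0$. To avoid this, note that once $D(t_1)=0$ at a \emph{single} point $t_1\in(0,\sigma]$, uniqueness of solutions of the linear ODE on $[t_1/2,\sigma]$ (Remark \ref{rem:ODElin}, with locally bounded $\|R\|$) gives $D\equiv0$ on $[t_1/2,\sigma]$, and iterating downward covers $(0,\sigma]$. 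So it suffices to produce one such point. Fix any $b\in(0,\sigma)$; for $a<b$, $\|D(b)\|\le\|D(a)\|e^{|z|M_b(b-a)}\le \|D(a)\|e^{|z|M_b b}$, and letting $a\downarrow 0$ gives $\|D(b)\|=0$. Hence $D\equiv0$ on $(0,\sigma]$.

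\textbf{Step 2: Nonvanishing of $\|H\|$.} Suppose $H(t_0,z)=0$ for some $t_0\in(0,\sigma]$. Then $H(\cdot,z)$ and the zero function both solve the linear ODE on $[t_0/2,\sigma]$ with value $0$ at $t_0$, so by the uniqueness in Remark \ref{rem:ODElin} we get $H\equiv0$ on $[t_0/2,\sigma]$, and iterating the halving argument downward we find $H(\cdot,z)\equiv 0$ on $(0,\sigma]$. This contradicts the boundary condition $\lim_{t\to0}H(t,z)=\mathfrak u_0\neq 0$. Therefore $\|H(t,z)\|>0$ for all $t\in(0,\sigma]$ and all $z$.

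\textbf{Main obstacle.} The only delicate point is that $\|R\|$ is permitted to blow up near $0$, so a naive single application of Gronwall from $a$ to a point far from $0$ does not obviously close; the fix, as above, is to run the Gronwall estimate only up to a \emph{fixed} intermediate point $b$ (where the Gronwall constant $M_b$ is fixed) to get vanishing at $b$, and then propagate the vanishing both upward and downward using plain ODE uniqueness on compact subintervals of $(0,\sigma]$, where $\|R\|$ is bounded. Everything else is routine linear-ODE theory together with the stated assumptions.
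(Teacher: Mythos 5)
Your Step 2 (nonvanishing of $\|H\|$) is correct and is essentially the argument in the paper. Step 1, however, has a genuine gap. The Gronwall estimate from $a$ to $b$ reads $\|D(b)\|\le\|D(a)\|\exp\bigl(|z|\int_a^b\|R(s)\|\,ds\bigr)$; the constant in the exponent is governed by $\|R\|$ on the interval $[a,b]$, not on $[b,\sigma]$, so it is \emph{not} the fixed quantity $M_b=\sup_{s\in[b,\sigma]}\|R(s)\|$ — it is $\sup_{s\in[a,b]}\|R(s)\|$ (or $\int_a^b\|R\|$), which still depends on $a$ and may blow up as $a\downarrow 0$. Assumption 1 only makes $\|R\|$ \emph{locally} bounded on $(0,\sigma]$, and the paper's main examples really do have $\int_0^\sigma\|R(s)\|\,ds=\infty$ (e.g.\ the deterministic Bessel operator with $\alpha\ge1$, where $\|R(s)\|=s^{-\alpha}$; see also Remark \ref{rem:ODElin}, which singles out $\int_0^\sigma\|R\|<\infty$ as an \emph{additional} hypothesis under which the $a=0$ case is immediate). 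Since the boundary condition only gives $\|D(a)\|\to0$ with no rate, the product $\|D(a)\|\exp\bigl(|z|\int_a^b\|R\|\bigr)$ is not controlled, and "fixing the right endpoint $b$" does not fix the Gronwall constant. So your argument proves uniqueness only under the extra integrability hypothesis, which is exactly the case the proposition is designed to go beyond.

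The paper closes this gap with a conserved-quantity argument that requires no bound on $R$ at all: using $J^tJ=I$, $J^2=-I$ and $R^t=R$ one checks that $\frac{d}{dt}\bigl(H_2(t,z)^tJH_1(t,z)\bigr)=0$ identically on $(0,\sigma]$, so this pairing is constant and, by the boundary condition, equals $\mathfrak u_0^tJ\mathfrak u_0=0$. Hence $H_1(t,z)\parallel H_2(t,z)$ for every $t$; combined with your Step 2 this gives $H_2=f(t,z)H_1$ with $f$ nonvanishing, and linearity of the ODE on compact subintervals forces $f$ to be constant in $t$, hence equal to $1$ by the boundary condition. If you want to keep a Gronwall flavor, you must first extract some conserved or monotone quantity of this type; the raw norm of the difference is not controllable near the singular endpoint.
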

\begin{proof}
 From (\ref{eq:E_1}) we get for $t\in (0,\sigma]$
\begin{align*}
&\frac{d}{dt} \left(H_2(t,z)^t J H_1(t,z)\right)=
0,
\end{align*}
so $H_2(t,z)^t J H_1(t,z)$ is constant in $t$. Since $\lim_{t\to 0}H_k(t,z)=\mathfrak  u_0  $ for $k=1,2$, we see that this constant has to be $\mathfrak  u_0  ^t J \mathfrak  u_0  =0$. The identity $H_2(t,z)^t J H_1(t,z)=0$ implies that $H_2(t,z) \parallel H_1(t,z)$ for all $t\in (0,\sigma]$.

By Remark \ref{rem:ODElin} he equation \eqref{eq:E_1} is linear, so if  $H_k(t_0,z)=(0,0)^t$ for a particular $t_0\in (0,\sigma]$  then $H_k(t, z)=(0,0)^t$ would hold for all $t\in (0,\sigma]$, which would contradict the behavior at $t\to 0$. Hence $\|H_k(t,z)\|$ cannot be zero. Since  $H_2(t,z) \parallel H_1(t,z)$ for all $t\in (0,\sigma]$, it follows that  there exists a function $f(t,z)$ so that $H_2(t,z)=f(t,z) H_1(t,z)$, $f(t,z)\neq 0$ for $t\in (0,\sigma]$.

Using the linearity of \eqref{eq:E_1} again we see that $f(t,z)$ must be a constant in $t$. But the initial condition in (\ref{eq:E_1}) then implies that $f(t,z)=1$ and $H_1=H_2$.
\end{proof}

\begin{proposition}[A power series solution]\label{prop:E}
For $t\in (0,\sigma]$, $z\in \CC$ define $H(t,z)$ using the following series expansion:
\begin{align}\notag
&\qquad H(t,z)^t=\sum_{n=0}^{\infty} d_n(t)  z^n,\qquad\qquad  d_0(t)=\mathfrak  u_0  ^t\\
d_n(t)=&\!\!\iiint\limits_{0<s_1<s_2<\dots<s_n\le t} \mathfrak  u_0  ^t R(s_1)JR(s_2)J\cdots R(s_n) J ds_1\dots ds_n, \qquad n\ge 1.\label{eq:E_exp}
\end{align}
The series converges for all $z\in \CC$.  For any $t\in (0,\sigma]$ the two coordinates of the function $H(t,z)$ are entire functions of $z$ mapping the reals to the reals. The function $H(t,z)$  satisfies (\ref{eq:E_1}) and (\ref{zetaH}).
%
\end{proposition}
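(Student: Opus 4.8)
The plan is to separate two difficulties that the statement bundles together: (i) that the series $\sum_n d_n(t)z^n$ converges to an entire function of $z$ with the asserted reality and the identity \eqref{zetaH}, and (ii) that the resulting $H(t,z)$ solves the $t$-ODE \eqref{eq:E_1}. The only genuine obstacle is that Assumption~1 permits $\|R\|$ to blow up at $0$, so $\int_0^\sigma\|R\|$ may be infinite and the classical Peano--Baker/Picard theory does not apply on all of $(0,\sigma]$; in particular one cannot get convergence of $\sum_n d_n(t)z^n$ for all $z$ from a crude geometric coefficient bound. I circumvent this by reading off analyticity in $z$ from the modified-determinant side via Proposition~\ref{prop:Fred_exp}, and by running the ODE machinery only on compact subintervals of $(0,\sigma]$, where $\|R\|$ is bounded.

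\emph{Step 1: convergence, reality and \eqref{zetaH}.} Fix $t\in(0,\sigma]$. Restricting $R$ to $(0,t]$ preserves Assumptions 1--3 (the integrals in Assumption~2 only decrease), so Proposition~\ref{prop:Fred_exp} applies to the Dirac operator on $(0,t]$ with the same $\mathfrak{u}_0,\mathfrak{u}_1$ and shows that $\zeta^{(t)}(z):=1+\sum_{n\ge1}r_n^{(t)}z^n$ is entire, where $r_n^{(t)}$ is the integral in \eqref{eq:zeta_exp} with $\sigma$ replaced by $t$. Using $J^2=-I$ one checks $d_n(t)J\mathfrak{u}_1=r_n^{(t)}$ for $n\ge1$, while $d_0(t)J\mathfrak{u}_1=\mathfrak{u}_0^tJ\mathfrak{u}_1=1$ by \eqref{unorm}; hence $\sum_n(d_n(t)J\mathfrak{u}_1)z^n=\zeta^{(t)}(z)$, which converges for all $z$. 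Repeat with a second admissible endpoint, e.g. $\mathfrak{u}_1'=\mathfrak{u}_1+\mathfrak{u}_0$ (again $\mathfrak{u}_0^tJ\mathfrak{u}_1'=1$): since $\mathfrak{u}_0^tJ\mathfrak{u}_1=1\neq0$, the vectors $J\mathfrak{u}_1,J\mathfrak{u}_1'$ form a basis of $\R^2$, so solving a fixed $2\times2$ linear system expresses each coordinate of the row vector $H(t,z)^t=\sum_n d_n(t)z^n$ as a fixed linear combination of the entire functions $\zeta^{(t)}(z)$ and its $\mathfrak{u}_1'$-analogue. Thus the series converges for every $z$, its two coordinates are entire, and — the $d_n(t)$ being real — $H(t,\cdot)$ maps $\R$ to $\R^2$. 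Taking $t=\sigma$ gives $\zeta_\btau(z)=H(\sigma,z)^tJ\mathfrak{u}_1$, which is \eqref{zetaH}. For the boundary condition at $0$, the bound \eqref{coef_bnd0} (with $\sigma$ replaced by $t$) gives $|r_n^{(t)}|\le c(t)^n$ with $c(t)\to0$ as $t\downarrow0$, so $\zeta^{(t)}(z)\to1$ and likewise its $\mathfrak{u}_1'$-analogue; since the coordinate vector of $(1,1)$ in the basis $\{J\mathfrak{u}_1,J\mathfrak{u}_1'\}$ is exactly $\mathfrak{u}_0^t$ (because $\mathfrak{u}_0^tJ\mathfrak{u}_1=\mathfrak{u}_0^tJ\mathfrak{u}_1'=1$), we conclude $\lim_{t\downarrow0}H(t,z)=\mathfrak{u}_0$.

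\emph{Step 2: the ODE in $t$.} Fix $0<a<b\le\sigma$ and put $M=\sup_{[a,b]}\|R\|<\infty$ (Assumption~1). Splitting the simplex $\{0<s_1<\cdots<s_n\le t\}$ according to how many $s_i$ lie in $(a,t]$ — necessarily the top ones — yields the Chapman--Kolmogorov identity $d_n(t)=\sum_{k=0}^n d_{n-k}(a)\,I_k(t)$ for $t\in[a,b]$, where $I_0=I$ and $I_k(t)=\int_{a<s_1<\cdots<s_k\le t}R(s_1)JR(s_2)J\cdots R(s_k)J\,ds$ obeys $\|I_k(t)\|\le M^k(t-a)^k/k!$. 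Hence $\Phi(t,z):=\sum_{k\ge0}I_k(t)z^k$ converges absolutely and locally uniformly, is entire in $z$, satisfies $\Phi(a,z)=I$, and — using $I_k(t)=\int_a^t I_{k-1}(s)R(s)J\,ds$ and the factorial bound to exchange sum and integral — is absolutely continuous in $t$ with $\partial_t\Phi(t,z)=z\,\Phi(t,z)R(t)J$ for a.e.\ $t$. Since Step~1 shows $\sum_n d_n(a)z^n$ converges for every $z$, it converges absolutely for every $z$, so the Cauchy product of $\sum_n d_n(a)z^n$ and $\sum_k I_k(t)z^k$ is legitimate and gives $H(t,z)^t=H(a,z)^t\,\Phi(t,z)$ on $[a,b]\times\CC$. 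Differentiating in $t$ (the factor $H(a,z)^t$ is constant in $t$) yields $\partial_t H(t,z)^t=z\,H(t,z)^tR(t)J$ a.e.\ on $[a,b]$; transposing and left-multiplying by $J$, and using $J^t=-J$, $R^t=R$, $J^2=-I$, this is exactly $J\partial_t H(t,z)=z\,R(t)H(t,z)$. Since $[a,b]\subset(0,\sigma]$ was arbitrary, \eqref{eq:E_1} holds on all of $(0,\sigma]$ (in the absolutely continuous sense appropriate to these operators, and pointwise if $R$ is continuous).

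The crux I expect is the passage in Step~1 from the merely geometric bound $|r_n^{(t)}|\le c(t)^n$ (which alone gives convergence only in a disc) to full entirety: this is not achieved by sharpening the coefficient estimate but by recognizing the partial sums as linear combinations of secular functions $\zeta^{(t)}$, which are entire because they are modified Fredholm determinants times an exponential. Step~2 is then routine once the singular behavior near $0$ has been isolated inside the already-entire factor $H(a,z)$ via the Chapman--Kolmogorov factorization.
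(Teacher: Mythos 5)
Your proposal is correct. Step~1 is essentially the paper's argument: the paper likewise applies Proposition~\ref{prop:Fred_exp} on $(0,t]$ with two linearly independent admissible endpoint vectors to read off entirety of the coordinates of $H(t,\cdot)$, reality, the identity \eqref{zetaH}, and the limit $H(t,z)\to\mathfrak u_0$ via the bound \eqref{coef_bnd0} (your check that $\mathfrak u_1'=\mathfrak u_1+\mathfrak u_0$ still satisfies the hypotheses is harmless since Assumption~2 involves only $\mathfrak u_0$). Step~2 is where you genuinely diverge. The paper first verifies the integral equation only for $|z|<r(t)^{-1}$ (the explicit bound on $\|d_n(s)\|$ is not known to be uniform in $s$ beyond that radius, so the sum--integral exchange is restricted), and then removes the restriction by comparing $H$ with the regular ODE solution $G$ on $[t_0,\sigma]$ from Remark~\ref{rem:ODElin} and continuing analytically in $z$. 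You instead prove the transfer-matrix factorization $H(t,z)^t=H(a,z)^t\,\Phi(t,z)$, where $\Phi$ is the Peano--Baker series on $[a,t]$; the factorial bound $\|I_k\|\le M^k(t-a)^k/k!$ makes $\Phi$ entire and justifies the sum--integral exchange for \emph{all} $z$ at once, and the Cauchy product is legitimate because Step~1 already gives absolute convergence of $\sum_n d_n(a)z^n$ everywhere. This buys you a single uniform argument with no analytic continuation step, at the modest cost of verifying the Chapman--Kolmogorov splitting of the simplex; the paper's route is shorter on the page because Remark~\ref{rem:ODElin} is already in place. Both yield the ODE in the absolutely continuous sense, which is the sense used in $\dom_\btau$.
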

\begin{proof}
Let $\mathfrak u$ be any vector not parallel to $\mathfrak  u_0  $. Then $a=\mathfrak  u_0  ^tJ\mathfrak u\neq0$. Set $\mathfrak  u_1 =\mathfrak u/a$ so that  $\mathfrak  u_0  ^tJ\mathfrak  u_1 =1$. Set $r_n=d_n J \mathfrak  u_1 $.  Proposition \ref{prop:Fred_exp}  shows that the series
$$1+\sum_{n=1}^\infty r_nz^n= \frac{1}{a}\sum_{n=0}^\infty d_n(t)J \mathfrak  uz^n$$
converges everywhere. Using two linearly independent $\mathfrak u$-s we see that  $H$ is well-defined and its two coordinates are analytic in $z$. Since the coefficients $d_n$  are real vectors, the coordinates of $H$ map reals to reals for any  $t\in (0,\sigma]$.

Proposition \ref{prop:Fred_exp} also shows that the identity (\ref{zetaH}) holds. The only thing left is to show that $H$ satisfies the ODE (\ref{eq:E_1}) for all $z\in \CC$.

By estimate (\ref{coef_bnd0})  of Proposition \ref{prop:Fred_exp} we have
\begin{align}
|d_n(t)J \mathfrak  u_1 |\le r(t)^n, \qquad r(t)=\int_0^t \left|\mathfrak  u_0  R(s) \mathfrak  u_1 \right| ds+\int_0^{\sigma} \int_0^t \mathfrak  u_0  ^t R(s) \mathfrak  u_0   \mathfrak  u_1 ^t R(t) \mathfrak  u_1  ds dt.
\end{align}
By Assumption 2, the function $r(t)$ is finite, non-decreasing and $\lim_{t\downarrow 0} r(t)=0$. This implies the bound $\|d_n(t)\|\le c r(t)^n$ for a finite $c>0$ for all $n\ge 1$, from which we get  $\lim_{t\downarrow 0} H(t,z)=  \mathfrak  u_0  $ for all $z\in \CC$.

From  (\ref{eq:E_exp}) it  follows that for $ n\ge 1$ and $0\le t_1<t_2\le \sigma$ we have
\[
d_n(t_2)-d_n(t_1)=\int_{t_1}^{t_2} d_{n-1}(s)  R(s) J ds.
\]
Now assume $t_1>0$, then  $R$ is uniformly bounded in $[t_1,t_2]$.  Multiplying both sides by $z^n$ and summing over $n$ we get that both sides are absolutely convergent for $|z|<r^{-1}(t_2)$. Thus when this inequality holds, we have
\[
H(t_2,z)^t-H(t_1,z)^t =\int_{t_1}^{t_2} z H(t,z)^t R(t)J \, ds.
\]
Since $J^{-1}=J^t=-J$, the fundamental theorem of calculus gives that the  differential equation \eqref{eq:E_1} holds for $z\in \CC$, $t\in(0,\sigma]$ with $|z|<r(t)^{-1}$.

It suffices to  show that $H(t,z)$ satisfies the ODE for $|z|\le b$, $t\in(0,\sigma]$  for any fixed $b>0$.
Given $b>0$, pick $t_0$ so that $b<r(t_0)^{-1}$. Then $H(t,z)$ is a solution in $(0,t_0]$ for $|z|\le b$.

For all $|z|\le b$ let $G(t,z)$ be the solution of the ordinary differential equation \eqref{eq:E_1} on $[t_0,\sigma]$ with initial condition $H(t_0,z)$.
By Remark \ref{rem:ODElin} for $ t_0\le t\le\sigma$, the function $G(t,z)$ is analytic in $z$. Moreover, $G(t,z)=H(t,z)$ when $|z|<r^{-1}(\sigma)$, since $H(t,z)$ also solves \eqref{eq:E_1} there, and it agrees with $G(t,z)$ when $t=t_0$, $|z|\le r^{-1}(\sigma)\le b$.
Thus for $t\in [t_0,\sigma]$ the analytic functions $G(t,z)$ and $H(t,z)$ must agree for all $|z|\le b$, which implies that $H(t,z)$ solves the ODE for $|z|\le b$, $t\in(0,\sigma]$.
\end{proof}

The next proposition provides a way to approximate $H$ with the solutions of more regular ODE systems for which the uniqueness of the solution is immediate.

\begin{proposition}[Regular approximation of $H$]\label{prop:Heps-convergence}
Let $0<\eps<\sigma$, $z\in \CC$ and let $H_\eps(t,z)$ be the solution of the ODE
\begin{align}
J\frac{d}{dt}H_\eps(t,z)=z R(t) H_\eps(t,z), \qquad t\in[\eps,\sigma], \qquad H_\eps(\eps,z)&=  \mathfrak  u_0  . \label{eq:E_eps}
\end{align}
Extend the definition of $H_\eps(t,z)$ for $t\in(0,\eps)$ with $H_\eps(t,z)=\mathfrak  u_0  $.

Then as $\eps\to 0$ we have $H_\eps(\cdot,z)\to H(\cdot, z)$ uniformly on compact subsets of $(0,\sigma]\times \mathbb C$.
\end{proposition}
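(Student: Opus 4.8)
The plan is to compare $H_\eps$ with the exact solution $H$ on a small initial interval, then propagate the comparison to all of $(0,\sigma]$ using continuous dependence on initial conditions for the linear ODE.

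First I would fix a compact set $[c,\sigma]\times D\subset (0,\sigma]\times\CC$, say with $|z|\le b$ for $z\in D$, and recall from the proof of Proposition \ref{prop:E} the bound $\|d_n(t)\|\le C\, r(t)^n$, where $r(t)=\int_0^t|\mathfrak u_0^t R(s)\mathfrak u_1|\,ds+\int_0^{\sigma}\int_0^t \mathfrak u_0^t R(s)\mathfrak u_0\,\mathfrak u_1^t R(t)\mathfrak u_1\,ds\,dt$ is finite, nondecreasing, and tends to $0$ as $t\downarrow 0$. Choose $t_0\in(0,c)$ with $b\,r(t_0)<1/2$. On $[\eps,t_0]$ (for $\eps<t_0$) the function $H_\eps$ has a convergent power series expansion analogous to \eqref{eq:E_exp} but with lower limit $\eps$ instead of $0$ in every integral; call its coefficients $d_n^\eps(t)$. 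Since $d_n^\eps(t)\to d_n(t)$ as $\eps\to 0$ for each fixed $n$ and $t$ (the integrands are unchanged, only the domain of integration grows to the full simplex), and since both $\|d_n(t)\|$ and $\|d_n^\eps(t)\|$ are dominated by $C\,r(t_0)^n$ uniformly on $[\eps,t_0]$ once $\eps<t_0$, dominated convergence of the series gives $\sup_{t\in[\eps,t_0],\,z\in D}\|H_\eps(t,z)-H(t,z)\|\to 0$. In particular $H_\eps(t_0,z)\to H(t_0,z)$ uniformly on $D$; for $\eps\ge t_0$ we instead use that $H_\eps(t_0,z)$ solves the ODE on $[\eps,t_0]$ from $\mathfrak u_0$, but since $\eps<c\le\sigma$ eventually in the limit this case is irrelevant.

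Next, on the fixed compact interval $[t_0,\sigma]$ the matrix $R$ is bounded (Assumption 1), so the linear ODE $J\frac{d}{dt}G=zR(t)G$ satisfies a Gr\"onwall estimate: two solutions with initial data at $t_0$ differing by $\delta$ stay within $\delta\, e^{\,|z|\int_{t_0}^\sigma\|R\|}$ of each other on $[t_0,\sigma]$. Applying this with the two solutions $H_\eps(\cdot,z)$ and $H(\cdot,z)$ (both solve the ODE on $[t_0,\sigma]$ once $\eps<t_0$, and $H(\cdot,z)$ does by Proposition \ref{prop:E}) and initial discrepancy $\|H_\eps(t_0,z)-H(t_0,z)\|\to 0$ uniformly on $D$, we get $\sup_{t\in[t_0,\sigma],\,z\in D}\|H_\eps(t,z)-H(t,z)\|\to 0$. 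Combining the two intervals $[t_0,c]$ (hence $[c,\sigma]$) with the estimate already obtained on $[\eps,t_0]$, and noting that the extension $H_\eps(t,z)=\mathfrak u_0$ for $t<\eps$ together with $\|d_n(t)\|\le C r(t)^n$ forces $\|H(t,z)-\mathfrak u_0\|\to 0$ as $t\downarrow 0$ so that the sup over the truncated region $t<\eps$ also vanishes, yields uniform convergence on $[c,\sigma]\times D$. Since $[c,\sigma]\times D$ was an arbitrary compact subset of $(0,\sigma]\times\CC$, this is exactly the claim.

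The main obstacle is the behavior near $t=0$: $\|R(t)\|$ may blow up, so one cannot simply run Gr\"onwall on all of $(0,\sigma]$, and one must instead exploit the integrated smallness of $R$ encoded in $r(t)\downarrow 0$ through the power-series representation. The delicate point is the uniform domination of the series coefficients $d_n^\eps(t)$ by $C\,r(t_0)^n$ for all small $\eps$ simultaneously; this follows because the bound \eqref{coef_bnd0} from Proposition \ref{prop:Fred_exp} is monotone in the integration domain, so shrinking $(0,t]$ to $(\eps,t]$ only decreases it. Once that uniformity is in hand, everything else is routine dominated convergence and Gr\"onwall.
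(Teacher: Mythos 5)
Your proof is correct, but it takes a genuinely different route from the one in the paper. You work directly with the power-series representation of Proposition \ref{prop:E}: near $t=0$ you compare the iterated-integral coefficients $d_n^\eps(t)$ and $d_n(t)$, using that the bound \eqref{coef_bnd0} is monotone in the integration domain to get a summable majorant uniform in $\eps$, and then you propagate the resulting closeness at $t_0$ through $[t_0,\sigma]$ by Gr\"onwall, where $\|R\|$ is bounded. The paper instead identifies $H_\eps(t,z)^tJ\mathfrak u_1$ as the secular function of the operator restricted to $[\eps,t]$, writes both it and $H(t,z)^tJ\mathfrak u_1$ as regularized determinants ${\det}_2$ times exponentials of integral traces, and concludes from the Hilbert--Schmidt Lipschitz estimate \eqref{det2_tri} together with the convergence of the truncated kernels and integral traces. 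Your argument is more elementary and self-contained (no regularized-determinant estimates, only the coefficient bounds already established in Proposition \ref{prop:Fred_exp} and standard ODE continuity), and it yields an explicit rate in terms of the tail of $r$ at $t_0$ and the Gr\"onwall factor $e^{|z|\int_{t_0}^\sigma\|R\|}$; the paper's route reuses machinery it needs anyway for Proposition \ref{prop:zeta_tri} and the convergence of characteristic polynomials, so it comes essentially for free in context. Both proofs ultimately rest on the same smallness $r(t)\downarrow 0$ guaranteed by Assumption 2. The one point worth making explicit in your write-up is that the absolute integrability of the iterated integrands over the full simplex --- which your dominated-convergence step $\sup_{t\le t_0}\|d_n^\eps(t)-d_n(t)\|\to 0$ requires --- is supplied by the proof of \eqref{coef_bnd0} via Lemma \ref{l:HStrace}, not by the stated bound itself.
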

\begin{proof}
By Remark \ref{rem:ODElin}
the differential equation (\ref{eq:E_eps}) has a unique solution.

Moreover,  for any $\mathfrak  u_1 $ with $\mathfrak  u_0  ^tJ\mathfrak  u_1 =1$ and $\eps\le t\le \sigma$ by Proposition \ref{prop:zetaODE} the function $H_\eps(t,z)^t J \mathfrak  u_1 $ is the secular function of $\btau_{\eps,t,\mathfrak  u_1 }$, which is $\btau$ restricted to $[\eps, t]$ with boundary condition $\mathfrak  u_0  $ at $\eps$ and $\mathfrak  u_1 $ at $t$.

Note that $\res \btau_{\eps,t,\mathfrak  u_1 }$ is the integral operator with kernel given in (\ref{resint}), but restricted to $[\eps,t]\times[\eps, t]$. In other words, we have
\begin{align}
H_\eps(t,z)^t J \mathfrak  u_1 ={\det}_2(I-z \,\res \btau_{\eps, t, \mathfrak  u_1 }) e^{-\tfrac{z}{2} \int_{\eps}^{t} a(s)^t c(s) ds}.
\end{align}
For $0<t\le \sigma$ we denote by $\btau_{t,\mathfrak  u_1 }$ the version of $\btau$ on $(0,t]$ with boundary conditions $\mathfrak  u_0  , \mathfrak  u_1 $. This operator satisfies Assumptions 1 and 2, and Proposition \ref{prop:zetaODE} shows that
\begin{align}
H(t,z)^t J \mathfrak  u_1 ={\det}_2(I-z \,\res \btau_{ t, \mathfrak  u_1 }) e^{-\tfrac{z}{2} \int_{0}^{t} a(s)^t c(s) ds}.
\end{align}
Note that the Hilbert-Schmidt norm of $\res \btau_{ t, \mathfrak  u_1 }$ is uniformly bounded in $t$ as
\[
\|\res \btau_{ t, \mathfrak  u_1 }\|_2\le \|\res \btau\|_2<\infty.
\]
By the triangle inequality we have
 \begin{align}\notag
&\left|(H(t,z)-H_\eps(t,z)^t) J \mathfrak  u_1 \right|
  \le \left|{\det}_2(I-z \,\res \btau_{\eps, t, \mathfrak  u_1 })-{\det}_2(I-z \,\res \btau_{t, \mathfrak  u_1 })\right| e^{-\tfrac{z}{2} \int_{\eps}^{t} a(s)^t c(s) ds}\\
  &\qquad +\left|{\det}_2(I-z \,\res \btau_{t, \mathfrak  u_1 }) \right| e^{-\tfrac{z}{2} \int_{0}^{t} a(s)^t c(s) ds} \left|e^{-\tfrac{z}{2} \int_{0}^{\eps} a(s)^t c(s) ds}   -1\right|.\label{triangle1}
 \end{align}
 If $\kappa_1, \kappa_2$ are Hilbert-Schmidt  operators on the same domain then
\begin{align}\label{det2_tri}
|{\det}_2(I-z \kappa_1)-{\det}_2(I-z \kappa_2)|\le |z|\cdot  \|\kappa_1-\kappa_2\|_2 \exp(c |z|^2 (\|\kappa_1\|_2^2+ \|\kappa_2\|_2^2)+c)
\end{align}
with an absolute constant $c$, see Theorem 9.2(c) in \cite{SimonTrace}.
In particular, using this for $\kappa_2=0$ we get the bound
\begin{align}\label{det2_tri2}
|{\det}_2(I-z \kappa_1)-1|\le    |z|\cdot  \|\kappa_1\|_2 \exp(c |z|^2 \|\kappa_1\|_2^2+c).
\end{align}
This implies that the $\det_2$ part of the second term on the right of (\ref{triangle1}) is uniformly bounded in compact sets of $t,z$. Since $\int_0^{\sigma} \left| a(s)^t c(s) \right| ds<\infty$ by assumption, the entire term converges to $0$ as $\eps\to 0$ uniformly.

We will show that the same holds for the other term in (\ref{triangle1}).

Extend the domain of the integral operator $\res \tau_{\eps, t, \mathfrak  u_1 }$ to $(0,t]^2$ by setting the kernel equal to the constant 0 matrix on $(0,t]^2\setminus (\eps,t]^2$.
We use the temporary notation $\kappa_\eps$ for the new integral operator. The spectrum of   $\kappa_\eps$ is  given by  the spectrum of $\res \tau_{\eps, t, \mathfrak  u_1 }$ and the value 0 with infinite multiplicity.
This means that
\[
{\det}_2(I-z \kappa_\eps)={\det}_2(I-z \,\res \tau_{\eps, t, \mathfrak  u_1 }).
\]
Hence
\begin{align}\label{triangle2}
\left|{\det}_2(I-z \,\res \tau_{\eps, t, \mathfrak  u_1 })-{\det}_2(I-z \,\res \tau_{t, \mathfrak  u_1 })\right|=\left|{\det}_2(I-z \,\kappa_\eps)-{\det}_2(I-z \,\res \tau_{t, \mathfrak  u_1 })\right|.
\end{align}
Since $\res \tau_{t, \mathfrak  u_1 }$ is Hilbert-Schmidt and its kernel agrees with that of $\res \tau_{\eps, t, \mathfrak  u_1 }$ on $[\eps,t]^2$ we have
\begin{align}\label{K_appr}
\lim_{\eps\to 0}\|\res \tau_{t, \mathfrak  u_1 }-\kappa_\eps\|_2=0
\end{align}
By (\ref{det2_tri}) and (\ref{K_appr}) the  term on the right of (\ref{triangle2}) converges to 0 as $\eps\to 0$ uniformly for $t, z$ in a compact set.

Collecting all of our estimates, and recalling that $\int_0^{\sigma} \left|a(s)^t c(s) \right| ds<\infty$ by assumption, we get that for a given $\mathfrak  u_1 $ with $\mathfrak  u_0  ^tJ\mathfrak  u_1 =1$ we have
\[
\lim_{\eps\to 0} H_\eps(t,z)J\mathfrak  u_1 =H(t,z)J\mathfrak  u_1 ,
\]
and the convergence is uniform on compact sets of $t, z$. This statement is true for any $\mathfrak  u_1  \not \,\parallel \mathfrak  u_0  $, which implies the proposition.
\end{proof}


The bounds (\ref{det2_tri}, \ref{det2_tri2}) together with the  definition (\ref{zeta}) of $\zeta$ and the triangle inequality gives the following.

\begin{proposition}[Continuity of $\btau\mapsto \zeta_\btau$]\label{prop:zeta_tri}
Let $\btau_1 $, $\btau_2$ be two Dirac operators on $(0,\sigma]$ satisfying our assumptions.  Denote by $\zeta_i, \res_i, \mathfrak t_i$ the  secular function, resolvent and integral trace of $\btau_i$. 
Let  $\|\cdot \|$ denote the Hilbert-Schmidt norm.
Then there is a universal constant $a>1$ so that for all $z\in \CC$
\begin{align}
|\zeta_1(z)-\zeta_2(z)|\le \Big(e^{|z| |\mathfrak t_1-\mathfrak t_2|}-1+|z|\big\|\res_1-\res_2\big\|\Big)
a^{|z|^2(\|\res_1\|^2+\|\res_2\|^2)+|z|(|\mathfrak{t}_1|+|\mathfrak{t}_2|)+1}.
\end{align}
\end{proposition}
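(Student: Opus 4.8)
The plan is to reduce everything to the two modified-determinant estimates \eqref{det2_tri} and \eqref{det2_tri2}, using only the defining formula $\zeta_i(z)=e^{-z\mathfrak t_i}\,{\det}_2(I-z\,\res_i)$ and the triangle inequality. First I would insert a mixed term,
$$\zeta_1(z)-\zeta_2(z)=\big(e^{-z\mathfrak t_1}-e^{-z\mathfrak t_2}\big){\det}_2(I-z\,\res_1)+e^{-z\mathfrak t_2}\big({\det}_2(I-z\,\res_1)-{\det}_2(I-z\,\res_2)\big),$$
and estimate the two summands separately.

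For the first summand, since $\mathfrak t_1,\mathfrak t_2\in\R$, the elementary inequality $|e^{w}-1|\le e^{|w|}-1$ (compare Taylor coefficients) gives $|e^{-z\mathfrak t_1}-e^{-z\mathfrak t_2}|=|e^{-z\mathfrak t_2}|\,|e^{-z(\mathfrak t_1-\mathfrak t_2)}-1|\le e^{|z||\mathfrak t_2|}\big(e^{|z||\mathfrak t_1-\mathfrak t_2|}-1\big)$, and for the remaining factor I would use \eqref{det2_tri2} with $\kappa_1=\res_1$ to get $|{\det}_2(I-z\,\res_1)|\le 1+|z|\|\res_1\|\,e^{c|z|^2\|\res_1\|^2+c}$; bounding $|z|\|\res_1\|\le\tfrac12(1+|z|^2\|\res_1\|^2)$ and $1+u\le e^{u}$ turns this into $e^{c'|z|^2\|\res_1\|^2+c'}$ for an absolute constant $c'$. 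For the second summand I would bound $|e^{-z\mathfrak t_2}|\le e^{|z||\mathfrak t_2|}$ and apply \eqref{det2_tri} with $\kappa_1=\res_1$, $\kappa_2=\res_2$, producing the factor $|z|\,\|\res_1-\res_2\|\exp\big(c|z|^2(\|\res_1\|^2+\|\res_2\|^2)+c\big)$.

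Adding the two bounds, every factor other than $e^{|z||\mathfrak t_1-\mathfrak t_2|}-1$ and $|z|\|\res_1-\res_2\|$ is at most $\exp\big(|z||\mathfrak t_2|+C|z|^2(\|\res_1\|^2+\|\res_2\|^2)+C\big)$ for one absolute constant $C$; since $|z||\mathfrak t_2|\le|z|(|\mathfrak t_1|+|\mathfrak t_2|)$ and $C|z|^2(\|\res_1\|^2+\|\res_2\|^2)+C\le C(|z|^2(\|\res_1\|^2+\|\res_2\|^2)+1)$, this exponent is dominated by $\max(C,1)\big(|z|^2(\|\res_1\|^2+\|\res_2\|^2)+|z|(|\mathfrak t_1|+|\mathfrak t_2|)+1\big)$, so the claimed estimate holds with $a=e^{\max(C,1)}>1$. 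I do not expect a genuine obstacle here; the only points needing care are invoking \eqref{det2_tri2} (not \eqref{det2_tri}) for the standalone factor $|{\det}_2(I-z\,\res_1)|$, and the bookkeeping that absorbs all absolute constants into the single base $a$.
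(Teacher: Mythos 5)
Your proposal is correct and is precisely the argument the paper intends: the paper gives no written proof of Proposition \ref{prop:zeta_tri}, stating only that it follows from the bounds \eqref{det2_tri}, \eqref{det2_tri2}, the definition \eqref{zeta}, and the triangle inequality, which is exactly the decomposition and bookkeeping you carry out. The details (the mixed-term splitting, $|e^{w}-1|\le e^{|w|}-1$, absorbing $1+|z|\|\res_1\|$ into the exponential, and collecting constants into the base $a$) all check out.
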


Proposition \ref{prop:zeta_tri} provides a sufficient condition for the convergence of secular functions. It shows that if we have a sequence of Dirac operators on $(0,\sigma]$ for 
which the resolvents converge in Hilbert-Schmidt norm, and the integral traces converge, then the secular functions converge uniformly on compacts of $\CC$.

\subsection{The structure function $E$ and the functions $A,B$}

From this point we will assume that we are in the case of $q$-boundary conditions \eqref{eq:qbndry}.

Consider the unique vector-valued function $H$ described in Proposition \ref{prop:zetaODE}. Set
\begin{align}\label{def:AB}
   A(t,z):= [1,0] H(t,z), \qquad B(t,z):=[0,1] H(t,z),\\[3pt]
   E(t,z):=[1,-i] H(t,z)=A(t,z)-i B(t,z).\label{def:E}
\end{align}
For $t$ fixed, $z\mapsto E(t,z)$ is  called the {\bf structure function} in the context of de Branges' theory of Hilbert spaces of analytic functions. 

\begin{lemma}\label{lem:Polya}
The entire functions $A(t,z), B(t,z)$ are real for real $z$, and have only real zeros. If the entire function $E(t,z)$ has  zeros then they are in the open upper half plane. Moreover, 
\begin{align}\label{E_basic}
&E(t,0)=1, \qquad \bar A(t,z)=A(t, \bar z), \qquad \bar B(t,z)=B(t, \bar z),\\[5pt]
&|E(t,z)|\le  |E(t,\bar z)|, \qquad \Im z>0.\label{E_AB}
\end{align}
\end{lemma}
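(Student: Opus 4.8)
The plan is to read off every claim from a single scalar, the de Branges ``phase''
\[
W(t,z):=-i\,\overline{H(t,z)}^{\,t}\,J\,H(t,z),
\]
where the bar denotes entrywise conjugation. Several items are free: Proposition~\ref{prop:zetaODE} gives that $A(t,\cdot)=[1,0]H(t,\cdot)$ and $B(t,\cdot)=[0,1]H(t,\cdot)$ are entire and real on $\R$, so $\overline{A(t,z)}=A(t,\bar z)$, $\overline{B(t,z)}=B(t,\bar z)$, hence $\overline{H(t,z)}=H(t,\bar z)$; and since \eqref{eq:E_1} at $z=0$ forces $H(\cdot,0)\equiv\mathfrak u_0=[1,0]^t$, we get $A(t,0)=1$, $B(t,0)=0$, $E(t,0)=1$. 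The first real step is the algebraic identity
\[
|E(t,\bar z)|^2-|E(t,z)|^2=2\,W(t,z),
\]
obtained by expanding both moduli in terms of $A,B$: everything cancels except $4\,\Im\big(A(t,z)\overline{B(t,z)}\big)$, and this equals $-2i\,\overline{H(t,z)}^{\,t}JH(t,z)$. In particular $W$ is real, consistently with $-iJ$ being Hermitian.

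The second and central step is the evolution equation
\[
\partial_t W(t,z)=2\,(\Im z)\;\overline{H(t,z)}^{\,t}\,R(t)\,H(t,z).
\]
This is a short computation: substitute $\partial_t H=-zJRH$ (from $J\partial_tH=zRH$ and $J^{-1}=-J$), use $R^t=R$, $J^t=-J$, $J^2=-I$, and collect. The factor $\overline H^{\,t}RH$ is strictly positive since $R(t)$ is positive definite (Assumption~1) and $\|H(t,z)\|>0$ (Proposition~\ref{prop:zetaODE}), so $\partial_tW$ has the sign of $\Im z$ on all of $(0,\sigma]$. Together with $W(t,z)\to -i\,\mathfrak u_0^{\,t}J\mathfrak u_0=0$ as $t\downarrow 0$ — valid because $H(\cdot,z)$ extends continuously to $[0,\sigma]$ with $H(0,z)=\mathfrak u_0$ (Proposition~\ref{prop:E}) and $\mathfrak u_0^{\,t}J\mathfrak u_0=0$ — integrating $\partial_tW$ on subintervals $[a,t]\subset(0,\sigma]$ and letting $a\downarrow 0$ shows that for every $t\in(0,\sigma]$ the sign of $W(t,z)$ equals the sign of $\Im z$, and $W(t,z)=0$ whenever $z\in\R$.

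From here the conclusions are quick. The bound \eqref{E_AB} for $\Im z>0$ is just $|E(t,z)|^2=|E(t,\bar z)|^2-2W(t,z)<|E(t,\bar z)|^2$. For the location of the zeros of $E$: a real $x$ cannot be one, since $E(t,x)=0$ forces $A(t,x)=B(t,x)=0$, i.e.\ $H(t,x)=0$, contradicting $\|H\|>0$; and for $\Im z<0$ we get $|E(t,z)|^2=|E(t,\bar z)|^2-2W(t,z)>0$, so $E(t,z)\neq0$ — hence any zero of $E(t,\cdot)$ lies in the open upper half plane. For the zeros of $A,B$: if $A(t,z_0)=0$ then $H(t,z_0)=[0,B(t,z_0)]^t$, whence $\overline{H(t,z_0)}^{\,t}JH(t,z_0)=0$, i.e.\ $W(t,z_0)=0$, which forces $\Im z_0=0$; symmetrically, $B(t,z_0)=0$ gives $H(t,z_0)=[A(t,z_0),0]^t$, again $W(t,z_0)=0$ and $\Im z_0=0$.

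The one point I expect to require real care is the singular endpoint $t=0$: because $\|R\|$ may blow up there, $\partial_tW$ need not be integrable near $0$, so the monotonicity-to-limit argument has to be run on compact subintervals $[a,t]\subset(0,\sigma]$, using continuity of $W(\cdot,z)$ up to $0$ (via Proposition~\ref{prop:E}) rather than an improper integral. Everything else is routine $2\times2$ linear algebra and calculus, so I would keep the write-up terse and highlight only the identity $|E(t,\bar z)|^2-|E(t,z)|^2=2W$ and the evolution equation for $W$.
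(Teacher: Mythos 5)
Your proof is correct. For \eqref{E_basic}, \eqref{E_AB} and the location of the zeros of $E$ you follow essentially the paper's route: the paper computes the same derivative $-i\tfrac{d}{dt}\overline{H}^{\,t}JH=2\,\Im z\;\overline{H}^{\,t}RH$ and the same identity $-2i\overline{H}^{\,t}JH=|E(t,\bar z)|^2-|E(t,z)|^2$, and then rules out zeros with $\Im z<0$ exactly as you do. The paper packages the monotonicity as a differential inequality $\tfrac{d}{dt}f\ge (\Im z) f$ via the nonnegativity of $R+\tfrac{i}{2}J$, whereas you use only $R\succ 0$ to get $\partial_t W\ge 0$ and integrate from the boundary value $W\to 0$ as $t\downarrow 0$; your version is slightly more direct, and your remark about running the argument on compact subintervals $[a,t]$ and invoking continuity at $0$ (rather than integrability of $\partial_t W$) is precisely the right care at the singular endpoint. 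The genuine divergence is in the reality of the zeros of $A$ and $B$: the paper observes that $A+rB$ is the secular function of a self-adjoint Dirac operator for every real $r$, hence has only real zeros, and obtains $B$ as the Hurwitz limit $\lim_{r\to\infty}(A+rB)/r$; you instead note that a zero of $A$ or of $B$ makes $H$ proportional to a coordinate vector, forcing $\overline{H}^{\,t}JH=0$, i.e.\ $W=0$, which your strict sign statement for $W$ excludes off the real axis. Your variant is self-contained at the level of the ODE and avoids both the operator-spectrum input and Hurwitz's theorem; the paper's variant yields the stronger fact that every real combination $A+rB$ has only real zeros. Both arguments are valid.
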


\begin{proof}
Proposition  \ref{prop:zetaODE} implies that $E(t,z)$ is an entire function of $z$ for any $t$. The vector valued function $H(t,z)$ is not equal to $[0,0]^t$ for any $t,z$, and it takes values from $\R^2$ for $z\in \R$.
Hence $E$ is nonzero for real $z$, and $A$ and $B$ are real for $z\in \R$. 
The statements of (\ref{E_basic}) follow from $H(t,0)=\mathfrak u_0=[1,0]^t$ and the reflection principle. Since for every $r\in \R$ the function $A+rB$ is a secular function, it  only has real zeros. Moreover, $B=\lim_{r\to \infty} (A+rB)/r$ has only real zeros by Hurwitz's theorem.

From (\ref{eq:E_1}) we get
\begin{align*}
-i \frac{d}{dt}\overline{H}(t,z)^{t} J H(t,z)=2 \Im z \overline{H}(t,z)^t R(t) H(t,z).
\end{align*}
We also have
$$
-2i\overline{H}(t,z)^{t} J H(t,z)=
|E(t,\bar z)|^2-|E(t,z)|^2.
$$
Since $\det R(t)=\frac14$ and the entries of $R(t)$ are real, one can check that
$
    R(t)+\frac{i}2 J
$
is nonnegative definite, which implies that for $\Im z>0$ we have
$$  
\frac{d}{dt}\left(|E(t,\bar z)|^2-|E(t,z)|^2\right)\ge {\Im z} \left(|E(t,\bar z)|^2-|E(t,z)|^2\right),
$$
which shows (\ref{E_AB}). 

Finally, we note that if $E(t,z)=0$ for $\Im z<0$ then \eqref{E_AB} would imply $E(t,z)=E(t,\bar z)=0$ and $A(t,z)=B(t,z)=0$. By Proposition \ref{prop:zetaODE} this is not possible, hence $E(t,z)=0$ can only hold for $\Im z>0$.
\end{proof}

For the interested reader, the following  proposition summarizes  additional properties of $H$ and the structure function $E$. The proofs can be found in  de Branges \cite{deBranges} with  some  required exercises.  We will not use these results in the present paper. 

\begin{proposition}
\begin{enumerate}[(i)]
\item
The function $H(t,z)$ is in $L^2_R$ for any fixed $z\in \CC$. In particular, if $\lambda\in \CC$ is an eigenvalue of $\btau$ then the corresponding eigenfunction is $H(\cdot, \lambda)$.
\item 
The  function $|E(t,x-i y)|$ is non-decreasing in $y$ for $y\ge 0$.
\item
The structure function $E=A-iB$ satisfies the following bound with all derivatives  with respect to $z$:
\begin{align}\label{eq:Ebound}
\log|E(t,z)|\le \Re z A'(t,0)+\Im z B'(0)+\frac12\left(A'(t,0)-A''(t,0)+B'(t,0)\right)^2 |z|^2.
\end{align}
\end{enumerate}
\end{proposition}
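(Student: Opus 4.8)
These three items are of different character, so I treat them separately: (i) is an elementary self-contained computation with a Lagrange identity, while (ii) and (iii) are facts from de Branges' theory of the Pólya class of entire functions, whose proofs I will outline but not carry out. For \textbf{part (i)}, since the coordinates of $H(t,\cdot)$ are entire with real Taylor coefficients (Proposition \ref{prop:E}), $\overline{H(t,z)}=H(t,\bar z)$ solves $J\tfrac{d}{dt}\overline{H(t,z)}=\bar z R(t)\overline{H(t,z)}$, and using $J^t=-J$, $J^2=-I$ one gets $\tfrac{d}{dt}\big(\overline{H(t,z)}^tJH(t,z)\big)=(z-\bar z)\,\overline{H(t,z)}^tR(t)H(t,z)$. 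Integrating from $0$ with $\lim_{s\downarrow0}H(s,z)=\mathfrak u_0$ and $\mathfrak u_0^tJ\mathfrak u_0=0$ yields, for $\Im z\neq0$,
\[ \int_0^t \overline{H(s,z)}^tR(s)H(s,z)\,ds=\frac{\overline{H(t,z)}^tJH(t,z)}{z-\bar z}, \]
whose right side is finite for each fixed $t$; at $t=\sigma$ this reads $\|H(\cdot,z)\|_R^2<\infty$. For real $z$ I would instead differentiate the analogous identity in $z$ (differentiate \eqref{eq:E_1}, commute $\partial_t,\partial_z$) to get $\tfrac{d}{dt}\big(H(t,z)^tJ\partial_zH(t,z)\big)=H(t,z)^tR(t)H(t,z)$, and since $\partial_zH(s,z)\to0$ as $s\downarrow0$ by the coefficient bound of Proposition \ref{prop:E}, integration gives $\|H(\cdot,z)\|_R^2=H(\sigma,z)^tJ\partial_zH(\sigma,z)<\infty$ (alternatively the real case follows by Fatou). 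If $\lambda$ is an eigenvalue of $\btau$ then $\zeta_\btau(\lambda)=0$ by \eqref{zeta2}, so $H(\sigma,\lambda)^tJ\mathfrak u_1=0$ by \eqref{zetaH}; together with $\lim_{s\downarrow0}H(s,\lambda)^tJ\mathfrak u_0=0$, the membership $H(\cdot,\lambda)\in L^2_R\cap\ac$ just shown, and $\btau H(\cdot,\lambda)=\lambda H(\cdot,\lambda)\in L^2_R$ (which is \eqref{eq:E_1} at $z=\lambda$), we get $H(\cdot,\lambda)\in\dom_\btau$; as $H(\cdot,\lambda)$ is nowhere zero (Proposition \ref{prop:zetaODE}) and $\lambda$ is simple, it spans the eigenspace.

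For \textbf{parts (ii) and (iii)} the plan is to show $E(t,\cdot)$ is of the Pólya class and then invoke the structure theory of \cite{deBranges}. Lemma \ref{lem:Polya} gives that the zeros of $E(t,\cdot)$ lie in the closed upper half plane and $|E(t,z)|\le|E(t,\bar z)|$ for $\Im z>0$, and the bound $|E(t,z)|\le\exp(c|z|+c|z|^2)$ (from \eqref{zeta} and $|{\det}_2(I-zA)|\le e^{c|z|^2\|A\|_2^2}$) gives finite order at most $2$. What upgrades ``Hermite--Biehler'' to ``Pólya'' (ruling out a factor $e^{az^2}$ with $a>0$) is that $E(t,\cdot)$ is the value at $t$ of the chain $\{E(s,\cdot)\}_{s\le t}$ issuing from $E(0,\cdot)\equiv1$. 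To exploit this one reduces to $\int_0^\sigma\|R\|<\infty$ via Proposition \ref{prop:Heps-convergence}: $E_\eps(t,\cdot)=[1,-i]H_\eps(t,\cdot)\to E(t,\cdot)$ locally uniformly, $E_\eps(t,\cdot)$ equals the structure function of $\btau$ restricted to $[\eps,t]$ for $t>\eps$ (and $\equiv1$ for $t\le\eps$), and a pointwise limit of functions non-decreasing in $y$ is non-decreasing in $y$. When $\int_0^\sigma\|R\|<\infty$, the transfer matrix of \eqref{eq:E_1} is bounded by $e^{|z|\int_0^\sigma\|R\|}$, so $E(t,\cdot)$ has exponential type, hence genus at most $1$, with Hadamard factorization $E(t,z)=e^{E'(t,0)z}\prod_j(1-z/z_j)e^{z/z_j}$, $\Im z_j\ge0$, $\sum_j|z_j|^{-2}<\infty$.

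From here (ii) is equivalent to $\Im\tfrac{E'(t,w)}{E(t,w)}\ge0$ on $\{\Im w\le0\}$, since $\tfrac{d}{dy}\log|E(t,x-iy)|=\Im\tfrac{E'(t,x-iy)}{E(t,x-iy)}$; the factorization gives
\[ \Im\frac{E'(t,w)}{E(t,w)}=\Big(\Im E'(t,0)-\sum_j\frac{\Im z_j}{|z_j|^2}\Big)+\sum_j\frac{\Im z_j-\Im w}{|w-z_j|^2}, \]
where the second sum is nonnegative for $\Im w\le0$, so the point is the sum rule $\Im E'(t,0)\ge\sum_j\Im z_j/|z_j|^2$ (an equality when the genus is $0$, where it just says $B'(t,0)=\sum_j\Im(1/z_j)$). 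On the real axis the corresponding statement is elementary: $\Im\tfrac{E'(t,x)}{E(t,x)}=(A'B-AB')/|E|^2=(H^tJ\partial_zH)/|E|^2=\|H(\cdot,x)\|_{R,[0,t]}^2/|E(t,x)|^2\ge0$ by the real-$z$ identity of part (i). For (iii), the elementary inequality $\log|1-w|+\Re w\le\tfrac12|w|^2$ applied to the factorization bounds $\log|E(t,z)|$ by $\Re(zE'(t,0))+c'|z|^2$ with $\Re(zE'(t,0))=\Re z\,A'(t,0)+\Im z\,B'(t,0)$ (using $E'(t,0)=A'(t,0)-iB'(t,0)$, $A'(t,0),B'(t,0)\in\R$); identifying the sharp constant $c'=\tfrac12C_t^2$ with $C_t=A'(t,0)-A''(t,0)+B'(t,0)$, and checking the estimate is stable under differentiation in $z$, is done in \cite{deBranges} via a subharmonicity argument for $\log|E|$ that uses (ii).

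\textbf{Main obstacle.} Part (i) is routine. The real work is in (ii)--(iii): establishing the Pólya property of $E(t,\cdot)$ for \emph{general} $R$ (the reduction above handles $\int\|R\|<\infty$, and the sum rule still needs the chain structure), and pinning down the precise constant $C_t$ in \eqref{eq:Ebound}. Both are standard in de Branges' theory of Hilbert spaces of entire functions and are the ``required exercises'' referred to in the text; I would cite them rather than reproduce the arguments.
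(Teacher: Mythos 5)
The paper does not actually prove this proposition: it states explicitly that ``the proofs can be found in de Branges \cite{deBranges} with some required exercises'' and that the results are not used elsewhere, so there is no in-paper argument to match yours against. Measured against that, your proposal does strictly more than the paper. Your part (i) is a complete and correct self-contained proof: the Lagrange identity $\tfrac{d}{dt}\bigl(\overline{H}^tJH\bigr)=(z-\bar z)\overline{H}^tRH$ checks out (using $J^t=-J$, $J^2=-I$, $R^t=R$), the limit at $t\downarrow 0$ is handled by integrating from $\eps$ and using monotone convergence on the nonnegative integrand, and the real-$z$ identity $\tfrac{d}{dt}\bigl(H^tJ\partial_zH\bigr)=H^tRH$ together with $\partial_zH(s,z)\to 0$ (which does follow from the coefficient bound $\|d_n(t)\|\le c\,r(t)^n$ with $r(t)\to0$ in Proposition \ref{prop:E}) closes the case $z\in\R$. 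The eigenfunction claim then follows correctly from \eqref{zetaH} and the definition of $\dom_\btau$, plus simplicity of the eigenvalues. The only point worth flagging is the interchange of $\partial_t$ and $\partial_z$, which is legitimate here because of the locally uniform convergence of the power series \eqref{eq:E_exp}, but deserves a sentence.

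For (ii) and (iii) you defer to de Branges at essentially the same place the paper does, after a sensible reduction (via Proposition \ref{prop:Heps-convergence}) to the case $\int_0^\sigma\|R\|<\infty$ where $E(t,\cdot)$ has exponential type and a genus-$\le1$ Hadamard factorization. Your on-axis verification that $\Im(E'/E)=(A'B-AB')/|E|^2=\|H\|^2_{R,[0,t]}/|E|^2\ge0$ is correct and is the right seed for the monotonicity in (ii). The two ingredients you leave to \cite{deBranges} --- the sum rule $\Im E'(t,0)\ge\sum_j\Im z_j/|z_j|^2$ needed to extend $\Im(E'/E)\ge0$ to the closed lower half-plane, and the identification of the sharp constant $\tfrac12(A'(t,0)-A''(t,0)+B'(t,0))^2$ in \eqref{eq:Ebound} --- are precisely the ``required exercises'' the paper alludes to, so deferring them is consistent with the paper's own stance; just be aware that these are the genuinely nontrivial steps, and a fully self-contained write-up would need to supply them (the sum rule does not follow from the Hermite--Biehler property alone and really uses the chain structure $E(s,\cdot)$, $s\le t$, starting from $E\equiv1$, as you note).
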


\begin{example}[Deterministic Sine operator, part 3]\label{ex:bulk_3}
Let us return to  Example \ref{ex:bulk} with boundary conditions $\mathfrak  u_0  =[1,0]^t$, $\mathfrak  u_1 =[-\cot(\theta/2), -1]^t$. The solution of (\ref{eq:E_1}) is given by
\begin{align}
    H(t,z)=\left[
 \cos (t z/2),
 -\sin (t z/2)
\right]^t.
\end{align}
Proposition \ref{prop:zetaODE} gives
\[
\zeta(z)=H(\sigma,z) J \mathfrak  u_1 =\cot (\sigma/2)  \sin(\sigma z/2)+\cos(\sigma z/2)
\]
which  agrees with  the expression in  (\ref{zeta_Ex1}).
Note also that
\begin{align*}
A(t,z)=\cos(\tfrac{t z}{2}), \quad B(t,z)=-\sin(\tfrac{t z}{2}), \quad E(t,z)=e^{\frac{i tz}{2}}.
\end{align*}
\end{example}

\begin{example}[Deterministic Bessel operator, part 3]\label{ex:edge_3}
Now consider Example \ref{ex:edge} with boundary conditions $\mathfrak  u_0  =[1,0]^t$, $\mathfrak  u_1 =[0, -1]^t$.
Then the solution of (\ref{eq:E_1}) is given by
\begin{align}\label{eq:BesselH_2}
H(t,z)=\left(\tfrac{z t}{4}\right)^{\frac{1-\alpha}{2}} \Gamma(\tfrac{\alpha +1}{2})\left[ \mathrm{J}_{\frac{\alpha -1}{2}}\left(\tfrac{z t }{2}\right) , -t^{\alpha} \mathrm{J}_{\frac{\alpha +1}{2}}\left(\tfrac{z t }{2}\right)   \right]^t.
\end{align}
Proposition \ref{prop:zetaODE} leads to the expression (\ref{zeta_Ex2}).
We also have
\begin{align*}
&A(t,z)=\left(\tfrac{z t}{4}\right)^{\frac{1-\alpha}{2}} \Gamma(\tfrac{\alpha +1}{2})\mathrm{J}_{\frac{\alpha -1}{2}}\left(\tfrac{z t }{2}\right),  \qquad B(t,z)=-\left(\tfrac{z t}{4}\right)^{\frac{1-\alpha}{2}} \Gamma(\tfrac{\alpha +1}{2}) t^{\alpha} \mathrm{J}_{\frac{\alpha +1}{2}}\left(\tfrac{z t }{2}\right)\\[3pt]
& \hskip50pt E(t,z)=\left(\tfrac{z t}{4}\right)^{\frac{1-\alpha}{2}} \Gamma(\tfrac{\alpha +1}{2})\left( \mathrm{J}_{\frac{\alpha -1}{2}}\left(\tfrac{z t }{2}\right) + i t^{\alpha} \mathrm{J}_{\frac{\alpha +1}{2}}\left(\tfrac{z t }{2}\right)   \right).
\end{align*}
\end{example}

\subsection{Infinite product representation 
} \label{subs:prod}

An entire function $f$ is  of  {\bf finite exponential type} if there exists $c>1$ so that
\begin{align}\label{eq:exponential type}
|f(z)|\le c^{1+ |z|} ,\qquad \text{for all $z\in \CC$}
\end{align}
If, in addition,  the integral condition
\begin{align}\label{eq:log_int}
\int_{-\infty}^\infty \frac{\log_+|f(x)|}{1+x^2}dx<\infty.
\end{align}
holds, then we say that $f$ is of {\bf Cartwright class}. In this section we study $\zeta_\btau$ when it falls in the Cartwright class.  The role of the Cartwright class of functions in the world of Dirac operators is similar to the role of polynomials in the world of matrices.

When the secular function $\zeta_\btau$ is of Cartwright class, it can be  represented as a principal value product.

\begin{proposition}[Principle value product]\label{prop:inf_prod}
Under Assumptions 1-3, when the secular function  $\zeta=\zeta_\btau$ is of Cartwright class,  we have
\begin{align}\label{eq:zeta0}
\zeta(z)=\lim\limits_{r\to \infty} \prod_{|\lambda_k|<r}(1-z/\lambda_k)
\end{align}
where the convergence is uniform on compact sets of $z\in \CC$. In particular for $x\in \R$,
\begin{equation}\label{eq:triangle}
|\zeta(x+iy)|\qquad \text{ is increasing in }y\ge0.
\end{equation}
Moreover, the following three quantities are finite and  equal:
\begin{align}\label{eq:exp_order}
       \limsup_{|z|\to\infty}\frac{\log|\zeta(z)|}{|z|}=\limsup_{y\to\infty} \frac{\log|\zeta(iy)|}{y}=\lim_{|k|\to \infty} \frac{\pi k}{\lambda_k}.
\end{align}

We also have
 \begin{align}\label{eq:traceid}
 \lim_{r\to \infty} \sum_{|\lambda_k|<r}\lambda_k^{-1}=\frac12 \int_0^{\sigma} a(s)^t c(s) ds.
 \end{align}
\end{proposition}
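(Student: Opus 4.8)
The plan is to deduce all four assertions from classical results on the Cartwright class, together with the Hadamard-type factorization \eqref{zeta2} already in hand. Recall (see Levin, \emph{Lectures on Entire Functions}) that for an entire function $f$ of exponential type satisfying \eqref{eq:log_int} with $f(0)\neq 0$, the principal value sum $\lim_{r\to\infty}\sum_{|\lambda_k|<r}\lambda_k^{-1}$ over the zeros exists, the principal value product $\lim_{r\to\infty}\prod_{|\lambda_k|<r}(1-z/\lambda_k)$ converges locally uniformly, and
\[
f(z)=f(0)\,e^{i\alpha z}\lim_{r\to\infty}\prod_{|\lambda_k|<r}\Big(1-\frac{z}{\lambda_k}\Big),\qquad
\alpha=\tfrac12\big(h_f(-\tfrac\pi2)-h_f(\tfrac\pi2)\big)\in\R,
\]
where $h_f(\pm\tfrac\pi2)=\limsup_{y\to\infty}\log|f(\pm iy)|/y$ is the Phragm\'en--Lindel\"of indicator. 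Since $\zeta=\zeta_\btau$ is real on $\R$ (Proposition \ref{prop:zetaODE}), Schwarz reflection gives $\overline{\zeta(z)}=\zeta(\bar z)$, hence $|\zeta(iy)|=|\zeta(-iy)|$ and $h_\zeta(\tfrac\pi2)=h_\zeta(-\tfrac\pi2)$, so $\alpha=0$; combined with $\zeta(0)=1$ this is exactly \eqref{eq:zeta0}. I expect this to be the crux: the statement is really the assertion that the canonical product over the $\lambda_k$ carries \emph{no} extra linear exponential factor, and this is the one place where the logarithmic integral \eqref{eq:log_int} is used in an essential way — for a general exponential-type function the Hadamard factorization would come with an uncontrolled term $e^{bz}$, and it is precisely $\zeta(\R)\subset\R$ that forces $\alpha=0$.

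Given \eqref{eq:zeta0}, the rest is bookkeeping. For \eqref{eq:triangle}: for real $x$ and each $k$ one has $|1-(x+iy)/\lambda_k|=\sqrt{(\lambda_k-x)^2+y^2}/|\lambda_k|$, nondecreasing in $y\ge0$; this property passes to the finite products $\prod_{|\lambda_k|<r}(1-z/\lambda_k)$ and, monotonicity in $y$ being stable under locally uniform limits, to $\zeta$. For \eqref{eq:traceid}: since $\res\btau$ is Hilbert-Schmidt, $\sum_k\lambda_k^{-2}=\|\res\btau\|_2^2<\infty$, so the genus-one canonical product $Q(z)=\prod_k(1-z/\lambda_k)e^{z/\lambda_k}$ converges locally uniformly; writing the partial products as $\prod_{|\lambda_k|<r}(1-z/\lambda_k)=e^{-zS_r}\prod_{|\lambda_k|<r}(1-z/\lambda_k)e^{z/\lambda_k}$ with $S_r=\sum_{|\lambda_k|<r}\lambda_k^{-1}$ and letting $r\to\infty$ (the sequence $S_r$ converges, by the representation above), we obtain $\zeta(z)=e^{-zS}Q(z)$ with $S=\lim_r S_r$. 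Comparing with \eqref{zeta2}, which by \eqref{eq:int_tr} reads $\zeta(z)=e^{-z\mathfrak{t}_\btau}Q(z)$ with $\mathfrak{t}_\btau=\tfrac12\int_0^\sigma a(s)^tc(s)\,ds$, and using $Q\not\equiv0$, forces $S=\mathfrak{t}_\btau$, which is \eqref{eq:traceid}.

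Finally \eqref{eq:exp_order}. The first two quantities are finite since $\zeta$ has exponential type. For a Cartwright-class function the indicator is piecewise sinusoidal, equal to $h_f(\tfrac\pi2)\sin\theta$ on $[0,\pi]$ and $h_f(-\tfrac\pi2)|\sin\theta|$ on $[-\pi,0]$, so the exponential type $\limsup_{|z|\to\infty}\log|\zeta(z)|/|z|$ equals $\max(h_\zeta(\tfrac\pi2),h_\zeta(-\tfrac\pi2))$, which by the symmetry above equals $h_\zeta(\tfrac\pi2)=\limsup_{y\to\infty}\log|\zeta(iy)|/y$; this gives the first equality in \eqref{eq:exp_order}. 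For the last equality, \eqref{eq:zeta0} and $\sum_k\lambda_k^{-2}<\infty$ give the absolutely convergent product $|\zeta(iy)|^2=\prod_k(1+y^2/\lambda_k^2)$, whence $\log|\zeta(iy)|=\tfrac12\int_0^y\sum_k\tfrac{2t}{\lambda_k^2+t^2}\,dt$. The Cartwright theory further gives that $n(r):=\#\{k:|\lambda_k|<r\}$ has a density, $n(r)/r\to\Delta:=\tfrac1\pi\big(h_\zeta(\tfrac\pi2)+h_\zeta(-\tfrac\pi2)\big)$, and — again using $\zeta(\R)\subset\R$ — that the positive and negative zeros have equal density $\Delta/2$, so that $\lim_{|k|\to\infty}\pi k/\lambda_k=\tfrac{\pi\Delta}{2}$. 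Writing $\sum_k\tfrac{2t}{\lambda_k^2+t^2}=\int_0^\infty\tfrac{2t}{s^2+t^2}\,dn(s)$ and using $\int_0^\infty\tfrac{2t}{s^2+t^2}\,ds=\pi$, the density of $n$ gives $\sum_k\tfrac{2t}{\lambda_k^2+t^2}\to\pi\Delta$ as $t\to\infty$, hence $\log|\zeta(iy)|=\tfrac{\pi\Delta}{2}\,y+o(y)$ and $\limsup_{y\to\infty}\log|\zeta(iy)|/y=\tfrac{\pi\Delta}{2}=\lim_{|k|\to\infty}\pi k/\lambda_k$; in particular this $\limsup$ is a genuine limit. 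The only nonroutine steps in this paragraph are the interchange of sum and integral and the Abelian passage from $n(r)\sim\Delta r$ to $\sum_k\tfrac{2t}{\lambda_k^2+t^2}\to\pi\Delta$, both straightforward given $\sum_k\lambda_k^{-2}<\infty$ and $n(r)=O(r)$.
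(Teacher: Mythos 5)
Your proof is correct and follows essentially the same route as the paper: you invoke the classical Cartwright-class factorization, use reality of $\zeta$ on $\R$ to kill the exponential factor $e^{i\alpha z}$ and obtain \eqref{eq:zeta0}, get \eqref{eq:triangle} factor-by-factor, and deduce \eqref{eq:traceid} by comparing the principal-value product with the genus-one product in \eqref{zeta2} exactly as the paper does. The only real divergence is in \eqref{eq:exp_order}: the paper cites Levin's indicator-diagram results to identify $\lim \pi k/\lambda_k$ directly, whereas you reach the same conclusion from the zero-density form of Cartwright's theorem ($n(r)/r\to\Delta$) together with an explicit Abelian computation of $\log|\zeta(iy)|$ from the product; this is a legitimate and somewhat more self-contained variant resting on the same underlying theory, the only caveat being that the equal density of positive and negative zeros is already part of the Levinson--Cartwright density theorem rather than a consequence of $\zeta(\R)\subset\R$ as you suggest.
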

The final statement of the proposition shows that even though the operator $\res \tau$ might not be trace class, the integral trace $\int_0^{\sigma} \tr K_{\res \tau}(s,s)ds$ is equal to the principal value sum of $\lambda_k^{-1}$.
\begin{proof}
According to Theorem 11 of Section V.4.4 in Levin \cite{Levin} for an  entire function $\zeta$ of exponential type with (\ref{eq:log_int})  \begin{align}
    \zeta(z)=c z^m e^{i  b z} \lim\limits_{r\to \infty} \prod_{|\lambda_k|<r} (1-\frac{z}{\lambda_k}), \label{eq:C_product}
\end{align}
where $b$ is real, $m$ is a non-negative integer, and $\lambda_k$, $k=1,2,\dots$ are the nonzero zeros of $\zeta$.
We apply this to the secular function $\zeta$. Since  $\zeta(0)=1$,  we see that $m=0$ and $c=1$. Since $\zeta$ maps reals to the reals and has real zeros it follows that $b=0$. This completes the proof of (\ref{eq:zeta0}) for pointwise convergence. Claim \eqref{eq:triangle} holds factor-by-factor and is preserved in the limit.  

Recall $\mathfrak t = \tfrac12 \int_0^{\sigma} a(s)^t c(s) ds$, \eqref{eq:int_tr}. By the definition (\ref{zeta2}) of $\zeta$, we have
\begin{align}
  e^{\mathfrak t  z}\zeta(z) =  \lim\limits_{r\to \infty} e^{z\sum_{|\lambda_k|<r}1/\lambda_k}\prod_{|\lambda_k|<r} (1-z/\lambda_k),\label{eq:zeta_prod_99}
\end{align}
and the convergence is uniform on compacts in $z$. Choosing a  $z\in \CC$ with  $z\neq 0$,  $\zeta(z)\not=0$   now implies \eqref{eq:traceid}.
As a consequence,
\[
e^{-\mathfrak t z}=
 \lim\limits_{r\to \infty} e^{-z\sum_{|\lambda_k|<r}1/\lambda_k}
\]
uniformly on compacts. Multiplying this by \eqref{eq:zeta_prod_99} we get that the convergence in (\ref{eq:zeta0}) is uniform on compacts.

To prove \eqref{eq:exp_order} we use further results from Levin's books \cite{Levin,Levin2}. The relevant theorems can be difficult to find because this is a simple case of a general theory. 

Theorem 11 (2) of Section V.4.4, \cite{Levin} shows that the limit on the right of \eqref{eq:exp_order} exists and equals $d_h/2$, where $d_h$ is the length of the indicator diagram of $\zeta$.  The indicator diagram is a convex set given by a simple geometric transform of the exponential growth rate function (called indicator function)  
\begin{equation}\label{eq:indicator}
h(\theta)=\limsup_{r\to\infty} \log|\zeta(e^{i\theta}r)|/r,
\end{equation}
see equation (1.64) for the definition of $h$ and Section I.19 for a description of the transform. Theorem V.7 identifies $h(\theta)=k|\sin \theta|$ for some $k$.  By Section I.19 \cite{Levin} we have $d_h=2k$. Taking $\theta=\pi/2$ in \eqref{eq:indicator} gives the second equality of \eqref{eq:exp_order}, and the $\ge$ part of the first equality is clear. 

Since $h$ is continuous,  Theorem 2 of Section 8 on page 56 of \cite{Levin2} shows that for every $\eps>0$ there is $r_0$ so that for all $z=re^{i\theta}$ with $r\ge r_0$ we have $|\zeta(z)|\le e^{r(h(\theta)+\eps)}$, which shows the $\le$ part of the first equality in \eqref{eq:exp_order}. 
\end{proof}

The secular function $\zeta_\btau$ can also be defined as follows. This is analogous to the definition of the characteristic polynomial through its zeros. The proof of the proposition follows the argument in pages 156-157 of Chapter III of Levin, \cite{Levin}.

 \begin{proposition}\label{prop:inf_prod_1}
Suppose that $\btau$  satisfies Assumptions 1-3, and for some $0<\rho$ and $\eps<1$  the ordered sequence of eigenvalues $\lambda_n$  satisfies
\begin{align}
    |\lambda_n-\rho  n|n^{-\eps}\to 0\qquad \text{  as} \quad |n|\to \infty.\label{eq:lambda_gr}
\end{align}
Assume further that $\zeta_\btau$ satisfies the integral condition \eqref{eq:log_int}.
Then $\zeta_\btau$ is of  exponential type, and hence it is of Cartwright class.
 \end{proposition}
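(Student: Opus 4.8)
The plan is to show that the growth hypothesis \eqref{eq:lambda_gr} forces $\zeta_\btau$ to be of finite exponential type; once that is established, together with the assumed integral condition \eqref{eq:log_int} this is exactly the definition of Cartwright class, so nothing more is needed. The function $\zeta_\btau$ is given by the $\det_2$-product \eqref{zeta2}, namely $\zeta_\btau(z)=e^{-z\mathfrak t_\btau}\prod_k(1-z/\lambda_k)e^{z/\lambda_k}$, a canonical product of genus $1$ times an exponential factor, and it is this representation I would estimate directly.

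First I would use \eqref{eq:lambda_gr} to record the basic consequences of the eigenvalue spacing: there is a constant $c>0$ with $|\lambda_n|\ge c|n|$ for all $n$, so $\sum_k |\lambda_k|^{-1-\delta}<\infty$ for every $\delta>0$ (in particular the canonical product has genus at most $1$), and moreover the ``centering'' errors $\lambda_n-\rho n$ satisfy $\sum_{0<|\lambda_k|\le r}(1/\lambda_k-1/(\rho k))=O(1)$ uniformly in $r$ because $|1/\lambda_k-1/(\rho k)| = |\lambda_k-\rho k|/|\rho k\lambda_k| = O(|k|^{-2+\eps})$ is summable. Next I would split the estimate of $\log|\zeta_\btau(z)|$ into the comparison product $P(z)=\prod_k(1-z/(\rho k))e^{z/(\rho k)}$ — which is essentially $\frac{\rho}{\pi z}\sin(\pi z/\rho)$ up to the exponential normalizing factor, hence manifestly of exponential type $\pi/\rho$ — and the ratio $\zeta_\btau(z)/P(z)$ (absorbing the $e^{-z\mathfrak t}$ factor and the $e^{z\sum(1/\lambda_k-1/(\rho k))}$ discrepancy, which by the previous step contributes only $e^{O(|z|)}$). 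For the ratio I would pair up the $k$-th genuine factor with the $k$-th comparison factor and bound $\log\bigl|\frac{(1-z/\lambda_k)e^{z/\lambda_k}}{(1-z/(\rho k))e^{z/(\rho k)}}\bigr|$; away from the zeros this is controlled by $|z|^2\,|1/\lambda_k^2-1/(\rho k)^2| + |z|\,|1/\lambda_k-1/(\rho k)|$, both of which sum to $O(|z|^2)$ — not good enough by itself — so I would instead follow Levin's genus-$1$ comparison argument (Chapter III, pp.~156--157 of \cite{Levin}, cited in the statement): on the region where $z$ stays a bounded distance from every $\lambda_k$ and every $\rho k$ one gets $\log|\zeta_\btau(z)/P(z)| = o(|z|^2)$, and in fact $O(|z|)$, using that the exponents $n^{-\eps}$ beat the genus. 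Combined with the exponential type of $P$, this yields $|\zeta_\btau(z)|\le e^{O(|z|)}$ on that region.

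To pass from the restricted region to all of $\CC$ I would invoke the standard maximum-modulus / Phragmén–Lindeläl\-type device: $\zeta_\btau$ has order at most $1$ (clear from $\sum|\lambda_k|^{-1-\delta}<\infty$), and an entire function of order $\le 1$ whose modulus is bounded by $e^{C|z|}$ on a system of circles $|z|=r_m$ with $r_m\to\infty$ chosen to avoid all zeros (possible since the zeros have spacing bounded below) is of exponential type $\le C$ everywhere, by the maximum principle applied on the annuli between consecutive $r_m$. This gives \eqref{eq:exponential type}, and since \eqref{eq:log_int} holds by hypothesis, $\zeta_\btau$ is of Cartwright class.

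The main obstacle is the comparison estimate in the second paragraph: naively bounding the log of the ratio of canonical products term by term only gives $O(|z|^2)$, which is an order-$2$ bound, not exponential type. The point where real work is needed is in exploiting the extra decay $|\lambda_n-\rho n| = o(|n|^{\eps})$ with $\eps<1$ to upgrade this to $O(|z|)$ — this is precisely the content of Levin's argument on the cited pages, where one treats separately the ``near'' zeros $|\lambda_k|\lesssim 2|z|$ (finitely many, handled by the avoidance of zeros) and the ``far'' tail $|\lambda_k|\gtrsim 2|z|$ (where a second-order Taylor estimate of $\log(1-w)+w$ combined with summability of $|z|^2 k^{-2+\eps}$ over $k\gtrsim |z|$ gives a bound of order $|z|^{1+\eps}\cdot|z|^{1-\eps}=|z|$ after the $\eps$-decay is used). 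Everything else — the spacing consequences, the identification of $P$ with a sine, the maximum-modulus step, the final appeal to the definition of Cartwright class — is routine.
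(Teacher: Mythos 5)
Your overall strategy (reduce everything to showing finite exponential type, then quote the definition of the Cartwright class) is the right one, and you are pointing at the correct place in Levin. But the specific route you propose has a concrete gap in its last step. You justify the passage from ``$|\zeta_\btau|\le e^{O(|z|)}$ away from the zeros'' to a global bound by choosing circles $|z|=r_m$ avoiding all zeros, ``possible since the zeros have spacing bounded below.'' That spacing claim does not follow from \eqref{eq:lambda_gr}: the hypothesis only controls $\lambda_n-\rho n$ up to $o(n^{\eps})$, which says nothing about consecutive gaps. For instance $\lambda_{2m}=2m\rho$, $\lambda_{2m+1}=2m\rho+1/m$ is an ordered sequence satisfying \eqref{eq:lambda_gr} (the deviations are bounded, hence $o(n^\eps)$) whose gaps tend to $0$. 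One can still salvage a maximum-modulus argument by quantifying how close the exceptional circles come to the zero set and paying a $\log$ of the inverse distance, but that is additional work your plan does not contain; as written, the step fails. The same issue infects the ``away from the zeros'' estimate itself, since its constants depend on the distance to the zero set being bounded below.

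The deeper point is that the entire comparison-with-$\sin$ / exceptional-set / maximum-modulus apparatus is unnecessary, and the paper's proof avoids it. For an \emph{upper} bound on $\log|\zeta_\btau(z)|$ one never needs to stay away from the zeros: the factor-wise inequalities $\log|1-u|\le \log(1+|u|)$ and $\log|(1-u)e^u|\le c|u|^2/(1+|u|)$ hold for \emph{all} $u\in\CC$. The paper splits the product at $r=|z|$, writes $\zeta_\btau(z)=e^{(\mathfrak t_r-\mathfrak t)z}\prod_{|\lambda_k|\le r}(1-z/\lambda_k)\prod_{|\lambda_k|>r}(1-z/\lambda_k)e^{z/\lambda_k}$ (here \eqref{eq:lambda_gr} with $\eps<1$ gives summability of $|1/\lambda_k-1/(\rho k)|$ and hence boundedness of $|\mathfrak t_r-\mathfrak t|$, as you noted), and then converts both sums into Stieltjes integrals against the counting function $n(t)=|\{k:|\lambda_k|\le t\}|$, which by \eqref{eq:lambda_gr} satisfies $n(t)\le ct+c$ and $n(\delta)=0$ for some $\delta>0$. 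Integration by parts turns the near sum into $\int_\delta^r n(t)/t\,dt+O(n(r))=O(r)$ and the far sum into $O(r^2\int_r^\infty n(t)t^{-3}dt)=O(r)$, yielding $\log|\zeta_\btau(z)|\le c_0+c_1|z|+c_2\log(1+|z|)$ everywhere, with no exceptional set and no comparison product. I would redo your argument along these lines: it is shorter, and it removes the unjustified spacing claim entirely.
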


\begin{proof}
Let 
$
\mathfrak{t}_r=\sum_{|\lambda_k|\le r} \frac{1}{\lambda_k},
$
by  assumption (\ref{eq:lambda_gr}) the limit $ \mathfrak{t}=\lim_{r\to \infty} \mathfrak{t}_r$ exists. Then we can write 
\[
\zeta_\btau(z)=e^{(\mathfrak{t}_r-\mathfrak{t})z} \prod_{|\lambda_k|\le r} (1-\tfrac{z}{\lambda_k}) \prod_{|\lambda_k|>r} (1-\tfrac{z}{\lambda_k}) e^{z/\lambda_k},
\]
where both products are well defined. There is an absolute constant $c>0$ so that for any $u\in \CC$ we have the bounds
\[
\log|1-u|\le \log(1+|u|), \qquad \log\left|(1-u)e^{u} \right|\le c \frac{|u|^2}{1+|u|}.
\]
Hence 
\[
\log|\zeta_\btau(z)|\le |\mathfrak{t}_r-\mathfrak{t}||z|+\sum_{|\lambda_k|\le r} \log (1+\frac{|z|}{|\lambda_k|})+\sum_{|\lambda_k|>r} c \frac{|z/\lambda_k|^2}{1+|z/\lambda_k|}.
\]
Introduce 
$
n(t)=\left|\{k: |\lambda_k|\le t\}\right|,
$
by our assumptions there are finite positive constants $c, \delta$ so that $
n(t)\le \rho t + c$ for $t>0$, $n(\delta)=0.$
We set $r=|z|$. Then 
\begin{align*}
\sum_{|\lambda_k|\le r} \log (1+\frac{|z|}{|\lambda_k|})&= \int_0^r \log(1+\frac{r}{t}) d n(t)=r \int_0^r \frac{n(t)}{t(t+r)}dt+n(r) \log(2)\\
&\le  \int_\delta^r \frac{n(t)}{t} dt+n(t) \log(2).
\end{align*}
Similarly,
\begin{align*}
   \sum_{|\lambda_k|>r}  \frac{|z/\lambda_k|^2}{1+|z/\lambda_k|} &=\int_r^\infty \frac{r^2}{t(r+t)}dn(t)=\int_r^\infty \frac{r^2 (r+2 t)}{t^2 (r+t)^2} n(t) dt -\frac{1}{2} n(r)
   \le 3 r^2 \int_{r \vee \delta}^\infty \frac{n(t)}{t^3} dt.
\end{align*}
From these bounds we get
$
\log |\zeta(z)|\le c_0+c_1 |z|+c_2 \log(1+|z|),
$
which implies the statement of the lemma. 
\end{proof}

The following folklore proposition implies that when $\zeta_\btau$ is of Cartwright class, it is determined by its zeroes in this class.

\begin{proposition}\label{prop:Cart-unique}
Assume that $f,g$ are of Cartwright class, and have the same zeros with multiplicities. Assume further that $f(0)=g(0)\not=0$, and $f, g$ map reals to reals. Then $f=g$.
\end{proposition}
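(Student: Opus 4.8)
The plan is to reduce the statement to the classical fact that a Cartwright-class function is determined, up to a factor of the form $c\,z^m e^{ibz}$, by its zeros. Since both $f$ and $g$ are of Cartwright class with the same zeros $\{\lambda_k\}$ (and the same multiplicities), the Hadamard-type factorization for the Cartwright class — the same one invoked in the proof of Proposition \ref{prop:inf_prod}, namely Theorem 11 of Section V.4.4 in Levin \cite{Levin} — gives
\begin{align}
f(z)=c_f\, z^{m} e^{i b_f z}\lim_{r\to\infty}\prod_{|\lambda_k|<r}\Bigl(1-\tfrac{z}{\lambda_k}\Bigr),\qquad
g(z)=c_g\, z^{m} e^{i b_g z}\lim_{r\to\infty}\prod_{|\lambda_k|<r}\Bigl(1-\tfrac{z}{\lambda_k}\Bigr),
\end{align}
with the same $m$ (the common order of vanishing at $0$) and the same principal-value product, since the zero sets agree. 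Hence $f/g = (c_f/c_g)\,e^{i(b_f-b_g)z}$ is a nowhere-vanishing entire function.

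Next I would pin down the constants. Evaluating at $z=0$: since $f(0)=g(0)\neq 0$, first of all $m=0$, and then $c_f=c_g$, so $f(z)/g(z)=e^{i\gamma z}$ with $\gamma=b_f-b_g\in\R$. Finally, the hypothesis that $f$ and $g$ map reals to reals forces $f(x)/g(x)=e^{i\gamma x}$ to be real for every $x\in\R$; since $g(x)$ is real and nonzero for real $x$ near $0$ (by continuity and $g(0)\neq0$), we get $e^{i\gamma x}\in\R$ for all $x$ in a neighborhood of $0$, which is only possible if $\gamma=0$. Therefore $f\equiv g$.

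The only genuine content is the Hadamard factorization in the Cartwright class, i.e.\ that a Cartwright-class function has a convergent principal-value product over its zeros times a factor $c z^m e^{ibz}$ with $b\in\R$; this is exactly \eqref{eq:C_product} and is quoted from Levin \cite{Levin}. Given that, everything else is the elementary bookkeeping above. I expect the main (minor) obstacle to be stating the factorization at the right level of generality: one should note that it applies to $f$ and $g$ separately without assuming their zeros are real or regularly spaced, and that the reality of $b_f,b_g$ (hence of $\gamma$) is part of Levin's statement — the argument does not need the sharper conclusion $b_f=b_g$ from reality of the functions until the last step, where it is obtained for free. An alternative, slightly more self-contained route is to apply the argument to $f\pm g$ or to consider $h=f/g$ directly and show $h$ is entire of exponential type, bounded on $\R$ (being a ratio whose modulus is controlled via \eqref{eq:log_int} for $f$ and a lower bound for $g$), hence of the form $e^{ibz}$ by a Phragmén–Lindelöf argument; but invoking \eqref{eq:C_product} is cleanest and matches the paper's existing machinery.
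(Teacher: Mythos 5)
Your proposal is correct, but it takes a genuinely different route from the paper. The paper's proof is more elementary and self-contained: it sets $h=\log(f/g)$, which is entire because $f/g$ is nowhere-vanishing (and positive on $\R$, since it is real there, never zero, and equals $1$ at the origin), notes that $h$ has at most linear growth because $f,g$ are of exponential type, and applies Liouville's theorem to get $h(z)=rz$ with $r\in\R$ from the reality on the real axis; the integral condition \eqref{eq:log_int} is then invoked directly to force $r=0$. You instead apply Levin's factorization \eqref{eq:C_product} to $f$ and $g$ separately, match the common principal-value product over the shared zeros, and reduce everything to the prefactors $c\,z^m e^{ibz}$. The two arguments use the same ingredients but in different places: in your version the integral condition \eqref{eq:log_int} enters only through Levin's theorem, specifically through the assertion that $b_f,b_g$ are \emph{real} — as you correctly note, without that you would only get $\Re\gamma=0$ from reality of $f/g$ on $\R$ and would still need \eqref{eq:log_int} to kill a factor $e^{cz}$ with $c\in\R$, which is exactly the step the paper performs by hand. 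Your route buys consistency with the machinery already used in Proposition \ref{prop:inf_prod}, at the cost of invoking the full factorization theorem where a Liouville argument suffices; both are complete proofs.
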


\begin{proof}
The function $h=\log(f/g)$ is an entire function with at most linear growth, so by Liouville's theorem it is linear with $h(0)=0$. Since $h(\mathbb R)\subset \mathbb R$, we  have $h(z)=rz$ for real $r$. Since $h(x)_+\le \log_+|f|+\log_+|g|$, the function $h_+(x)/(x^2+1)$ is integrable by the condition \eqref{eq:log_int}. Hence $r=0$.
\end{proof}

\begin{proposition}\label{prop:logder}
Suppose that the assumptions of Proposition \ref{prop:inf_prod_1} hold. Let $N:\R\to \R$ be the counting function of the zeros of $\zeta_\btau$ with $N(0)=0$.  
Then for $z\notin \R$ we have
\begin{equation}\label{eq:logder}
 \frac{\zeta_\btau'(z)}{\zeta_\btau(z)}=\int_{-\infty}^\infty \frac{1}{(z-\lambda)^2} ( \lambda/\rho-N(\lambda)) d\lambda+\operatorname{sign}(\Im z) \pi i/\rho.
\end{equation}
\end{proposition}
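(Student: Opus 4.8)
The plan is to differentiate the principal value product of Proposition~\ref{prop:inf_prod} and rewrite the resulting regularized sum over the zeros as a Stieltjes integral against the counting function $N$, then integrate by parts to reach \eqref{eq:logder}. Since the hypotheses of Proposition~\ref{prop:inf_prod_1} hold, $\zeta_\btau$ is of Cartwright class, so Proposition~\ref{prop:inf_prod} gives $\zeta_\btau=\lim_{r\to\infty}P_r$ uniformly on compact subsets of $\CC$, where $P_r(z)=\prod_{|\lambda_k|<r}(1-z/\lambda_k)$. The zeros of $\zeta_\btau$ are exactly the real eigenvalues $\lambda_k$, so $\zeta_\btau$ is non-vanishing on $\CC\setminus\R$; on a compact $K\subset\CC\setminus\R$ the functions $P_r$ are then bounded away from $0$ for large $r$, and $P_r\to\zeta_\btau$, $P_r'\to\zeta_\btau'$, $1/P_r\to1/\zeta_\btau$ uniformly on $K$. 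Hence for $z\notin\R$,
\[
\frac{\zeta_\btau'(z)}{\zeta_\btau(z)}=\lim_{r\to\infty}\frac{P_r'(z)}{P_r(z)}=\lim_{r\to\infty}\sum_{|\lambda_k|<r}\frac{1}{z-\lambda_k}.
\]

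Next I would turn the last sum into an integral: for $r\notin\{|\lambda_k|\}$ we have $\sum_{|\lambda_k|<r}\frac1{z-\lambda_k}=\int_{-r}^{r}\frac{dN(\lambda)}{z-\lambda}$, since $N$ is the non-decreasing step function with $N(0)=0$ and unit jumps at the $\lambda_k$. Set $M(\lambda)=\lambda/\rho-N(\lambda)$; the eigenvalue asymptotics \eqref{eq:lambda_gr} give $N(\lambda)=\lambda/\rho+O(1)+o(|\lambda|^{\eps})$, hence $|M(\lambda)|\le C(1+|\lambda|^{\eps})$ with $\eps<1$. Splitting $dN=\rho^{-1}\,d\lambda-dM$ and integrating the $dM$ piece by parts,
\[
\int_{-r}^{r}\frac{dM(\lambda)}{z-\lambda}=\Big[\frac{M(\lambda)}{z-\lambda}\Big]_{-r}^{r}-\int_{-r}^{r}\frac{M(\lambda)}{(z-\lambda)^2}\,d\lambda .
\]
As $r\to\infty$ the boundary terms are $O(r^{\eps-1})\to 0$, and since $\eps-2<-1$ and $z\notin\R$ the integral converges absolutely to $\int_{\R}\frac{\lambda/\rho-N(\lambda)}{(z-\lambda)^2}\,d\lambda$. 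It remains to take the limit of the linear part $\rho^{-1}\int_{-r}^{r}\frac{d\lambda}{z-\lambda}=\rho^{-1}\big(\log(z+r)-\log(z-r)\big)$: the moduli cancel and the arguments contribute a purely imaginary multiple of $\pi$, so that $\rho^{-1}\int_{-r}^{r}\frac{d\lambda}{z-\lambda}$ converges to the $\operatorname{sign}(\Im z)\,\pi i/\rho$ term of \eqref{eq:logder}. Adding the two contributions gives the proposition.

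The computation is routine apart from two points requiring care. The first is the bound $|M(\lambda)|\le C(1+|\lambda|^{\eps})$: it must be extracted from \eqref{eq:lambda_gr} by a short estimate on the index of the eigenvalue closest to $\lambda$, and it is precisely this sub-linear growth of $M$ that makes the boundary terms vanish and the limiting integral absolutely convergent. The second — the main delicate point — is the evaluation of $\lim_{r\to\infty}\rho^{-1}\big(\log(z+r)-\log(z-r)\big)$, equivalently of the principal value $\rho^{-1}\int_{\R}\frac{d\lambda}{z-\lambda}$; here one tracks the branch of the logarithm along the horizontal segment $\{z-\lambda:\lambda\in(-r,r)\}$, or uses $\Im\frac1{z-\lambda}=-\Im z\,\big((\Re z-\lambda)^2+(\Im z)^2\big)^{-1}$, to see that the contribution is purely imaginary and produces the $\operatorname{sign}(\Im z)$ term with its stated coefficient. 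No input beyond Propositions~\ref{prop:inf_prod} and~\ref{prop:inf_prod_1} is needed.
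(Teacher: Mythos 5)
Your argument follows the paper's proof step for step: differentiate the principal value product of Proposition \ref{prop:inf_prod} to get $\zeta_\btau'/\zeta_\btau=\lim_{r\to\infty}\sum_{|\lambda_k|\le r}(z-\lambda_k)^{-1}$, integrate by parts against $N$, compensate by $\lambda/\rho$ so that the sublinear deviation $M(\lambda)=\lambda/\rho-N(\lambda)$ kills the boundary terms and makes the limiting integral absolutely convergent, and then evaluate the leftover linear piece. Splitting $dN=\rho^{-1}d\lambda-dM$ before integrating by parts rather than compensating afterwards is only cosmetic, as is evaluating $\int_{-r}^r(z-\lambda)^{-1}d\lambda$ by logarithms instead of $\int_{-r}^r\lambda(z-\lambda)^{-2}d\lambda$ by residues.

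The one substantive issue is the sign of the constant term, which you rightly single out as the delicate point but then resolve the wrong way. Your own identity $\Im\frac{1}{z-\lambda}=-\Im z\,\bigl((\Re z-\lambda)^2+(\Im z)^2\bigr)^{-1}$ integrates over $\R$ to $-\operatorname{sign}(\Im z)\,\pi$, so
\[
\lim_{r\to\infty}\rho^{-1}\int_{-r}^{r}\frac{d\lambda}{z-\lambda}=-\,\frac{\operatorname{sign}(\Im z)\,\pi i}{\rho},
\]
not $+\operatorname{sign}(\Im z)\,\pi i/\rho$ as you assert; equivalently, for $\Im z>0$ the argument of $z-\lambda$ increases by $\pi$ along the segment while the antiderivative is $-\log(z-\lambda)$. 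A sanity check on Example \ref{ex:bulk}, where $\zeta'/\zeta=\tfrac{\sigma}{2}\cot(\tfrac{\theta+\sigma z}{2})\to-\tfrac{i\sigma}{2}=-\tfrac{i\pi}{\rho}$ as $\Im z\to+\infty$ while the $M$-integral tends to $0$, confirms that the minus sign is the correct one. (The paper's own displayed computation in fact produces the same minus sign, since the compensating integral $\int_{-r}^r\lambda\rho^{-1}(z-\lambda)^{-2}d\lambda$, whose limit is $+\operatorname{sign}(\Im z)\pi i/\rho$, enters the decomposition with a minus sign; the $+$ in the displayed formula \eqref{eq:logder} appears to be a sign slip in the statement.) So your method is sound, but the final step as written contradicts the identity you invoke: carried out correctly, your argument proves \eqref{eq:logder} with $-\operatorname{sign}(\Im z)\pi i/\rho$ in place of $+\operatorname{sign}(\Im z)\pi i/\rho$.
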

\begin{proof}
The partial products converge uniformly on compacts in the upper/lower half plane. Because of that, the log-derivatives of the partial products converge as well:
\begin{align}\label{eq:partial_frac}
\frac{\zeta'(z)}{\zeta(z)}=\lim_{r\to \infty} \sum_{|\lambda_k|\le r} \frac{1}{z-\lambda_k}
\end{align}
For a fixed $r$, integration by parts gives
\begin{align*}
\sum_{|\lambda_k|\le r} \frac{1}{z-\lambda_k}&=
-\int_{-r}^r \frac{1}{(z-\lambda)^2} N(\lambda) d\lambda+ \frac{1}{z-r}N(r)-\frac{1}{z+r}N(-r).
\end{align*}
With a compensation term, the right hand side can be written as
\begin{align*}
&\int_{-r}^r \frac{1}{(z-\lambda)^2} (\lambda/\rho-N(\lambda) ) d\lambda  -\int_{-r}^r \frac{1}{(z-\lambda)^2} \cdot \frac{\lambda}{\rho}d\lambda\\
&+\left(\frac{1}{z-r}(N(r)-r/\rho)-\frac{1}{z+r}(N(-r)+r/\rho)\right) +\frac{r/\rho}{z-r}+\frac{r/\rho}{z+r}.
 \end{align*}
The terms in the second line vanish as $r\to \infty$ since $|N(\lambda)-\lambda/\rho|=O(\lambda^{1-\eps})$.
 The first term converges to the first term of our formula, which is absolutely integrable.
The claim about the second term follows by residue calculus with a contour integral over a radius $r$ semicircle $C_r$ in the closed upper half plane:
\[
\lim_{r\to \infty}\int_{-r}^r \frac{\lambda}{\rho(z-\lambda)^2} d\lambda=\lim_{r\to \infty}\int_{C_r} \left(\frac{1}{(z-\lambda)^2}-\frac{1}{(z+\lambda)^2}\right)\frac{\lambda}{2\rho}d\lambda=\frac{\operatorname{sign}(\Im z) \pi i}{\rho}. \qedhere
\]
\end{proof}

\begin{example}[Deterministic Sine and Bessel operators, part 4]\label{ex:part_4}
Returning to $\btau$ from Example \ref{ex:bulk} we see that $ \zeta(z)= \sin (\tfrac{\sigma z+\theta}{2})/\sin (\tfrac{\theta }{2})$ is of Cartwright class with zeros  $\lambda_k=\frac{2 \pi k -\theta}{\sigma}$. Proposition \ref{prop:inf_prod} leads to the following well-known identities:
\[
\frac{ \sin (\tfrac{\sigma z+\theta}{2})}{\sin (\tfrac{\theta }{2})}=\lim_{r\to \infty} \prod_{|k|\le r} \left(1-\frac{\sigma z}{2\pi k-\theta}\right), \qquad -\cot(\theta/2)=\lim_{r\to \infty} \sum_{|k|\le r} \frac{2}{2\pi k-\theta}.
\]
For $\btau$ from Example \ref{ex:edge} the well-known asymptotics of the Bessel function show that $\zeta_\btau$ given in (\ref{zeta_Ex2}) is of Cartwright class. Since the zero set of $\zeta_\btau$ is symmetric about 0, the identity (\ref{eq:zeta0}) is equivalent to (\ref{Ex2_product}) (which follows from the definition),
and (\ref{eq:traceid}) becomes trivial.
\end{example}

\begin{proposition}\label{prop:B_product}
Suppose that $\btau$  satisfies Assumptions 1-3,  and fix $t\in(0,\sigma]$. If the function function $B(z)= [0,1] H(t,z)$
is of Cartwright class, then it has the product representation
\begin{align}
    B(z)=  - z \int_0^t \frac{1}{2y_s}\,ds 
    \lim_{r\to \infty} 
    \prod_{0<|\lambda_k|<r} \left(1-\frac{z}{\lambda_k}\right),
\end{align}
where $\lambda_k$, $k\in \ZZ$ are the ordered sequence of zeros of $B(z)$ with $\lambda_0=0$.
\end{proposition}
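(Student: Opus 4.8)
The plan is to follow the proof of Proposition~\ref{prop:inf_prod} essentially verbatim, applying Levin's factorization theorem for Cartwright class functions (Theorem~11 of Section~V.4.4 in \cite{Levin}, used in the form \eqref{eq:C_product}) to $B$ in place of $\zeta_\btau$. The only genuinely new ingredient is the analysis of the zero of $B$ at the origin: one must check it is simple and extract its leading coefficient, so that the $c\,z^m$ factor of \eqref{eq:C_product} becomes exactly $-z\int_0^t \tfrac{1}{2y_s}\,ds$.

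First I would pin down the first two Taylor coefficients of $B$ using Proposition~\ref{prop:E}. Writing $H(t,z)^t=\sum_{n\ge0}d_n(t)z^n$ with $d_0(t)=\mathfrak u_0^t=[1,0]$, we have $B(z)=[0,1]H(t,z)=\sum_{n\ge0}\bigl(d_n(t)[0,1]^t\bigr)z^n$. The $n=0$ term is $d_0(t)[0,1]^t=0$, so $B(0)=0$. For $n=1$, since $d_1(t)=\int_0^t \mathfrak u_0^t R(s)J\,ds$ and $J[0,1]^t=-\mathfrak u_0$, the coefficient of $z$ equals $-\int_0^t \mathfrak u_0^t R(s)\mathfrak u_0\,ds=-\int_0^t R_{11}(s)\,ds=-\int_0^t\tfrac{1}{2y_s}\,ds$, the last equality coming from the parametrization \eqref{eq:Rxy}. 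This is finite by Assumption~2 (equivalently \eqref{eq:Asmp_2_1}) and strictly negative because $y_s>0$; in particular $B\not\equiv0$ and $z=0$ is a simple zero of $B$.

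Next I would invoke \eqref{eq:C_product} for the Cartwright class function $B$: one gets $B(z)=c\,z^m e^{ibz}\lim_{r\to\infty}\prod_{0<|\lambda_k|<r}(1-z/\lambda_k)$ with $b\in\R$, $m\in\Z_{\ge0}$, $c\in\CC$, the product running over the nonzero zeros, which are all real by Lemma~\ref{lem:Polya}; thus the enumeration agrees with that in the statement (with $\lambda_0=0$) and the convergence is uniform on compacts. The previous paragraph forces $m=1$. Since $B$ maps reals to reals (Lemma~\ref{lem:Polya}) and both $z^m$ and every partial product are real on the real axis, $e^{ibz}$ is real for real $z$, hence $b=0$. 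Dividing by $z$ and letting $z\to0$ (the product tends to $1$) gives $c=B'(0)=-\int_0^t\tfrac{1}{2y_s}\,ds$, which is the claimed identity. An equivalent route is to apply \eqref{eq:C_product} directly to $\tilde B(z)=B(z)/z$, which is again of Cartwright class, maps reals to reals, and satisfies $\tilde B(0)=B'(0)\ne0$, so that there $m=0$ and $c=\tilde B(0)$. The only point needing a little care --- the ``main obstacle'', such as it is --- is verifying that $B$ satisfies the hypotheses of Levin's theorem exactly as $\zeta_\btau$ does and that the zero at the origin is correctly absorbed by the $z^m$ factor; beyond that there is no further analytic difficulty.
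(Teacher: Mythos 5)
Your proof is correct and follows essentially the same route as the paper: compute $B(0)=0$ and $B'(0)=-\int_0^t\tfrac{1}{2y_s}\,ds<0$ from the series of Proposition \ref{prop:E}, then apply Levin's factorization \eqref{eq:C_product} together with Lemma \ref{lem:Polya} to conclude $m=1$, $b=0$, $c=B'(0)$. The extra detail you supply (the identity $J[0,1]^t=-\mathfrak u_0$ and the limit $z\to 0$ to identify $c$) is exactly what the paper's shorter proof implicitly relies on.
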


\begin{proof}
By Proposition \ref{prop:E} we have $B(0)=0$ and 
\[
B'(0)=[0,1] \int_0^1  J^t R(s) \mathfrak u_0 ds=-\int_0^t \frac{1}{2y_s}ds.
\]
Since $B$ is of Cartwright class by assumption, it has the product representation \eqref{eq:C_product}. By Lemma \ref{lem:Polya} $B(z)$ is real for $z\in \R$, and it has real zeros.  Since $y_s>0$ we have $B'(0)< 0$.  From these it follows   that $c=B'(0)$, $m=1$, $b=0$ in the product representation \eqref{eq:C_product}, which is the statement of the proposition. 
\end{proof}

\section{Characteristic polynomials of unitary matrices}
\label{s:Verblunsky}

Section 5 of \cite{BVBV_sbo} provides a connection between discrete measures on the circle and Dirac operators. Here we explain how this connection ties orthogonal polynomials to the structure function, and the characteristic polynomial to the secular function.


Let $\mu$ be a probability measure whose support is exactly $n$ points $e^{i\lambda_j}, 1\le j \le n$ on the unit circle centered at zero in $\mathbb C$, and assume $\mu(\{1\})=0$. The characteristic polynomial of $\mu$, normalized at $1$,  is be defined as
\begin{align}\label{eq:charpol}
p(z)=p_\mu(z)=\prod_{j=1}^n\frac{z-e^{i\lambda j}}{1-e^{i\lambda j}}.
\end{align}
For $k\le n$, the $k$th orthogonal polynomial $\psi_k(z)$ normalized at 1, is defined as the unique polynomial with $\psi_k(1)=1$ of degree $k$ that is orthogonal to $1,\ldots,z^{k-1}$ in $L^2(\mu)$. With this definition,  $p=\psi_n$. Let $[\psi_k]$ be the main coefficient of $\psi_k$, and for $0\le k\le n-1$ let $\gamma_k\in \mathbb C$, $w_k,v_k\in \mathbb R$  be so that
$$
\frac{2\gamma_k}{1-\gamma_k}= w_k-iv_k=-\frac{2\psi_{k+1}(0)}{\overline{[\psi_k]}}.
$$
The $\gamma_k$ are called the modified, or deformed, Verblunsky coefficients of the measure $\mu$ introduced by Bourgade, Najnudel and Rouault \cite{BNR2009}. They satisfy $|\gamma_k|<1$, for $0\le k\le n-2$, and $|\gamma_{n-1}|=1$. 

Let $x_0=0$, $y_0=1$, and define recursively
\begin{align}\label{xyrec}
x_{k+1}=x_k+v_k y_k, \qquad y_{k+1}=y_k(1+w_k).
\end{align}
Note that $y_k>0$ for $1\le k\le n-1$ and $y_n=0$. 

The next proposition shows how the orthogonal polynomials $\psi_k$ can be expressed using a Dirac operator built from the path $x_{\lfloor nt \rfloor}+i y_{\lfloor nt \rfloor}$,  $t\in [0,1]$.


\begin{proposition}
\label{prop:unitary_car2b}
Set $x(t)+i y(t)=x_{\lfloor nt \rfloor}+i y_{\lfloor nt \rfloor}$ for $t\in [0,1]$.
Let
\begin{align}\label{eq:dscrtDirop}
\btau = R^{-1} \mat{0}{-1}{1}{0} \frac{d}{dt}, \qquad R=\frac{X^t X}{2\det X}, \qquad X=\mat{1}{-x}{0}{y},
\end{align}
with boundary conditions $\mathfrak  u_0  =[1,0]^t$, $\mathfrak  u_1 =[-x(1),-1]^t$.%
For $(t,z)\in [0,1]\times \mathbb C$ let  $H(t,z)\in\CC^2$ be the unique solution of
\begin{equation}\label{eq:H-discrete}
\btau H= z H, \qquad H(0,z)=[1,0]^t.
\end{equation}
\begin{enumerate}[(i)]
\item The orthogonal polynomials $\psi_k, 0\le k\le n$ satisfy
\begin{equation}\label{e:orthogonal}
\psi_k(e^{iz/n})=e^{izk/(2n)}\,[1,-(x_k+iy_k)]\,H(k/n,z).
\end{equation}
In particular the normalized characteristic polynomial satisfies
$$p(e^{iz/n})=\psi_n(e^{iz/n})=e^{iz/2}\,[1,-x_n]\,H(1,z)
$$
\item The secular function of $\btau$ is $\zeta_\btau(z)=p(e^{iz/n})e^{-iz/2}$. 

\item The spectrum of $\btau$ is given by the set $\{n \lambda_k+2\pi n j: 1\le k\le n, j\in \ZZ\}$\, see also Proposition 17 from \cite{BVBV_sbo}.
 \item We have $$\zeta_\btau(z)=
 \prod_{j=1}^n \frac{\sin(\lambda_j/2-z/(2n))}{\sin(\lambda_j/2)}.
 $$
\end{enumerate}
\end{proposition}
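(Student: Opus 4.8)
The plan is to obtain part (iv) directly from part (ii) by a half-angle rewriting of the normalized characteristic polynomial \eqref{eq:charpol}. By part (ii) we have $\zeta_\btau(z)=p(e^{iz/n})e^{-iz/2}$, so it is enough to show
\[
p(e^{iz/n})=e^{iz/2}\prod_{j=1}^n\frac{\sin(\lambda_j/2-z/(2n))}{\sin(\lambda_j/2)}.
\]

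First I would treat each factor of $p$ separately. Applying the identity $e^{ia}-e^{ib}=2i\,e^{i(a+b)/2}\sin\!\big(\tfrac{a-b}{2}\big)$ to the numerator (with $a=z/n$, $b=\lambda_j$) and to the denominator (with $a=0$, $b=\lambda_j$) of $\tfrac{e^{iz/n}-e^{i\lambda_j}}{1-e^{i\lambda_j}}$, the factors $2i$ cancel and, using that $\sin$ is odd, one gets
\[
\frac{e^{iz/n}-e^{i\lambda_j}}{1-e^{i\lambda_j}}
= e^{iz/(2n)}\,\frac{\sin\!\big(\tfrac{z/n-\lambda_j}{2}\big)}{\sin(-\lambda_j/2)}
= e^{iz/(2n)}\,\frac{\sin(\lambda_j/2-z/(2n))}{\sin(\lambda_j/2)}.
\]
Multiplying these over $j=1,\dots,n$ collects $n$ copies of $e^{iz/(2n)}$ into $e^{iz/2}$, which is the displayed identity; combining with part (ii) then gives (iv). The one point worth noting is that the denominators $1-e^{i\lambda_j}$ never vanish, since $\mu(\{1\})=0$ forces $e^{i\lambda_j}\neq 1$ for all $j$; this is also what makes $p$ in \eqref{eq:charpol} well defined.

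There is no real obstacle here: the statement is a trigonometric reformulation of part (ii), and the only care needed is bookkeeping the factors of $i$ and the sign produced by the oddness of $\sin$. If one preferred a more structural argument, one could instead use part (iii): the spectrum $\{n\lambda_k+2\pi n j:1\le k\le n,\ j\in\ZZ\}$ is precisely the zero set of the right-hand product (each $e^{i\lambda_j}$ contributing a shifted copy of the sine grid of Example \ref{ex:bulk_2}), and after checking that both sides are of Cartwright class and equal $1$ at $z=0$ one could invoke Proposition \ref{prop:Cart-unique}; but since the exact $e^{\pm iz/2}$ normalization is already fixed by part (ii), the direct computation is shorter and self-contained.
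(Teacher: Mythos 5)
Your computation for part (iv) is correct and is in fact exactly the identity the paper uses: writing $e^{ia}-e^{ib}=2ie^{i(a+b)/2}\sin\bigl(\tfrac{a-b}{2}\bigr)$ in numerator and denominator of each factor of \eqref{eq:charpol} gives $\frac{e^{iz/n}-e^{i\lambda_j}}{1-e^{i\lambda_j}}=e^{iz/(2n)}\frac{\sin(\lambda_j/2-z/(2n))}{\sin(\lambda_j/2)}$, and multiplying over $j$ yields (iv) from (ii). The remark about $\mu(\{1\})=0$ guaranteeing nonvanishing denominators is also fine.

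The problem is that you have only proved one of the four claims, and the one that requires essentially no work. The statement to be proved is the whole proposition, and its substance lies in parts (i)--(iii), all of which you take as given. Part (i) is the heart of the matter: it requires showing that the modified Szeg\H{o} recursion \eqref{eq:mSzego} for $(\psi_k,\psi_k^*)$, after conjugating by $U=\mat{1}{-i}{1}{i}$ and by the accumulated affine matrices $X_k=A_{k-1}^U\cdots A_0^U$ and removing the phase $e^{izk/(2n)}$, becomes a discrete recursion whose piecewise interpolation solves the Dirac eigenvalue equation $\btau H=zH$ with the piecewise-constant coefficient $R$ built from $x_{\lfloor nt\rfloor}+iy_{\lfloor nt\rfloor}$; this uses the matrix identities \eqref{eq:x-conj} and the degenerate step $y_n=0$ to recover $\psi_n$ at $k=n$. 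Part (ii) then follows by checking Assumptions 1--2 and invoking Proposition \ref{prop:zetaODE} (the identification $\zeta_\btau(z)=H(1,z)^tJ\mathfrak u_1$ with $\mathfrak u_1=[-x_n,-1]^t$), and part (iii) follows because the eigenvalues of $\btau$ are the zeros of $\zeta_\btau$. None of this appears in your proposal, so as written it is not a proof of the proposition but only a derivation of (iv) from (ii).
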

Note that $\|R^{-1}\|$ is bounded on $[0,1]$, hence the solution of \eqref{eq:H-discrete} is indeed unique. 
\begin{proof}
Let $\psi^*_k(u)=u^k \overline{\psi_k(1/\bar u)}$ be the reversed polynomials. These polynomials satisfy the modified Szeg\H{o} recursion, see \cite{BVBV_sbo} and also \cite{BNR2009}:
\begin{align}\label{eq:mSzego}
\binom{\psi_{k+1}}{\psi_{k+1}^*}= A_k\mat{u}{0}{0}{1} \binom{\psi_k}{\psi_k^*}, \qquad \binom{\psi_0}{\psi_0^*}=\binom{1}{1}, \qquad 0\le k\le n-1,
\end{align}
with  
\[
A_k=\mat{\frac{1}{1-\gamma_k}}{-\frac{\gamma_k}{1-\gamma_k}}{-\frac{\bar \gamma_k}{1-\bar \gamma_k}}{\frac{1}{1-\bar \gamma_k}}
=
\mat{1}{0}{0}{1}+\frac12\mat{w_k-iv_k}{-w_k+iv_k}{-w_k-iv_k}{w_k+iv_k}.
\]
With  $U=\mat{1}{-i}{1}{i}$ and the notation  $Y^X=X^{-1}YX$  we have
\begin{align}\label{eq:Apathrec}
A_k^{U}=\mat{1}{-v_k}{0}{1+w_k}, \qquad 0\le k\le n-1,
\end{align}
and we set
\begin{align}
 X_0:=I, \qquad X_k:=A_{k-1}^U\cdots A_0^U=\mat{1}{-x_k}{0}{y_k}, \qquad 1\le k\le n.  \label{eq:X_k}
\end{align}
With $u=e^{iz/n}$ let
\begin{equation}\label{eq:H-def}
H_k(z)=e^{-\frac{iz k}{2n}}X_k^{-1}U^{-1}\binom{\psi_k(u)}{\psi_k^*(u)}, \qquad 0\le k\le n-1.
\end{equation}
The functions $H_k$ satisfy the recursion
\begin{align}\label{eq:H_rec}
 H_{k+1}=X_k^{-1}\mat{e^{\frac{iz }{2n}}}{0}{0}{e^{\frac{-iz }{2n}}}^{U} X_k H_k, \qquad H_0=\binom{1}{0},\qquad 0\le k\le n-1.
\end{align}
Define $H(t,z)$ for $0\le t\le 1$ as follows:
\begin{align*}
H(\tfrac{k}{n},z)&=H_k(z), \qquad k=0,1,\dots,n-1,\\
H(t,z)&=X_{k}^{-1} \mat{e^{\frac{iz}{2}(t-k/n))}}{0}{0}{e^{\frac{-iz }{2}(t-k/n)}}^{ U} X_k\, H_k, \qquad t\in (k/n,(k+1)/n].
\end{align*}
The function $H(t,z)$ is continuous in $t$ and analytic in $z$. For $t$ in $t\in[k/n,(k+1)/n]$ differentiating in $t$ we get
\begin{align}\label{eq:ODEHdef}
H'=\frac{z}{2} X_{k}^{-1}\mat{i}{0}{0}{-i}^{U} X_k H=\frac{z}{2} {\mat{0}{1}{-1}{0}}^{X_k} H=z \mat{0}{1}{-1}{0} R_t H,
\end{align}
where we used \eqref{eq:dscrtDirop} and \eqref{eq:x-conj}. This means that $H$ is the unique solution of \eqref{eq:H-discrete}, and \eqref{eq:H-def} implies { (i)} for $0\le k\le n-1$. From \eqref{eq:mSzego}-\eqref{eq:ODEHdef} and the fact that $y_n=0$ it follows that 
\[
\binom{\psi_n}{\psi_n^*}=e^{\frac{iz}{2}} U X_n H(1,z)=e^{\frac{iz}{2}} \mat{1}{-x_n}{1}{-x_n} H(1,z).
\]
This implies { (i)} for $k=n$. The operator $\btau$ with $\mathfrak  u_0  =[1,0]^t$, $\mathfrak  u_1 =[-x_n,-1]^t$ satisfies Assumptions 1 and 2, hence by Proposition \ref{prop:zetaODE} we have {(ii)}.
Since the eigenvalues of $\btau$ are the zeros of $\zeta_\btau$, we get {(iii)}. Finally, {(iv)} follows from the identity
\[
\frac{e^{iz/n}-e^{i\lambda}}{1-e^{i\lambda}}=e^{iz/2n} \frac{\sin(\lambda/2-z/(2n))}{\sin(\lambda/2)}.
\qedhere \]
\end{proof}

\section{The stochastic zeta function}\label{s:sineop}

The $\Sineop$ operator was introduced in \cite{BVBV_sbo}. It is a Dirac operator on $[0,1)\to \R^2$ functions built from  standard hyperbolic Brownian motion. In \cite{BVBV_sbo}  it was shown that its spectrum is given by the bulk scaling limit of the Gaussian and circular beta ensembles. In \cite{BVBV_op} it was shown that the operator can be derived as the limit Dirac operators constructed from finite circular beta ensembles. 

Recall the definition of the $\Sineop$ operator from Theorem 25 of \cite{BVBV_sbo}. 
\begin{definition}\label{def:Sinop}
Let $\Xi_u, u\ge 0$ be standard hyperbolic Brownian motion in $\HH=\{z\in \CC: \Im z>0\}$ with $\Xi_0=i$. Let $\Xi_{\infty}=\lim_{u\to \infty} \Xi_u$ be the a.s.~limit of $\Xi$. Then $\Sineop$ is the Dirac operator built from the path $\Xi_{-\frac{4}{\beta}\log(1-t)}, t\in [0,1)$ with boundary conditions $\mathfrak  u_0  =[1,0]^t$, $\mathfrak  u_1 =[-\Xi_\infty, -1]$.
\end{definition}

We will study a conjugate of this operator that fits into our framework.

\subsection{$\btau_\beta$ and the stochastic zeta function}

Let $b_1,b_2$ be independent copies of two-sided Brownian motion on $\R$, and set 
\begin{align}\label{eq:xy}
y_u=e^{b_2(u)-u/2}, \qquad     x_u=\begin{cases}
-\int_u^0 e^{b_2(s)-\tfrac{s}{2}} d b_1,\qquad & u\le 0,\\
\;\;\;\,\int_0^u e^{b_2(s)-\tfrac{s}{2}} d b_1 &u\ge0.
\end{cases}
\end{align}
Note that $x_u+i y_u, u\in \R$ is an almost surely continuous process in $\HH$. 
\begin{remark}
The process 
\begin{equation}
\label{eq:X}
X_u=\left(\begin{array}{cc}
 1 &-x_u \\ 0 & y_u 
 \end{array}
\right), \qquad u\in \R,
\end{equation}
is two-sided Brownian motion in the affine group of matrices of the form 
\begin{align}\label{eq:affine}\left(
\begin{array}{cc}
 1 &-x \\ 0 & y 
 \end{array}
\right)\qquad x,y\in \mathbb R, y>0.
\end{align}
The increments $X_uX_s^{-1}$ are stationary and independent over disjoint time intervals $(s,u)$. In particular, $y$ is two-sided geometric Brownian motion.  Moreover, $x_u+i y_u, u\ge 0$ is standard hyperbolic Brownian motion in $\HH$ started from $i$, while  $x_{-u}+i y_{-u}, u\ge 0$ is the same process conditioned to converge to $\infty$.
\end{remark}
Consider the time-change
\begin{align}\label{eq:timechng}
   u(t)=\tfrac{4}{\beta} \log t .
\end{align}
Let $q$ be a standard Cauchy distributed random variable independent of $b_1, b_2$. We consider the  random Dirac operator built from $X_{u(t)}, t\in(0,1]$ with $q$-boundary conditions \eqref{eq:qbndry}
and its secular function.
\begin{definition}\label{def:zetab}
Let $\mathfrak{u}_0=[1,0]^t$, $\mathfrak{u}_1=[-q,-1]^t$, and set
$$
\btau_\beta=\Dirop(X_{u(\cdot)},\mathfrak  u_0  , \mathfrak  u_1 ),\qquad  \zeta_\beta=\zeta_{\btau_\beta}.
$$
We call $\zeta_\beta$ the \textbf{stochastic zeta function}.
\end{definition}
To see that $\zeta_\beta$ is well defined, we  need to check  Assumptions 1 and 2, this will be done in Section \ref{subs:stochzeta} below. 
To motivate the definition we first show the  $\btau_\beta$ is orthogonal equivalent to the  $\Sineop$ operator. 


\subsection{The $\Sineop$ operator is orthogonal equivalent to $\btau_\beta$ }

Our goal is to show that $\Sineop$ and $\btau_\beta$ are orthogonal equivalent operators. To do this we first review  how a Dirac operator behaves under simple transformations of its parameters. Then we discuss the relationship between affine and hyperbolic Brownian motion and their time reversal.  

We consider three transformations. The time reversal transformation $\rho$ maps a function from $(0,1]$ to any space to a function from $[0,1)$ by reversing its time $\rho f(t)= f(1-t)$. The transformation from the affine group \eqref{eq:affine} to itself
\begin{align}
\iota: X\mapsto SXS,\qquad S=S^{-1}=\mat{1}{0}{0}{-1} 
\end{align}
simply reverses the sign of the $(1,2)$ entry of $X$. This is just the reflection $z\to -\bar z$ for the corresponding element of $\HH$.  It is an automorphism of the group. Finally, given a $2\times 2$ orthogonal matrix $Q$ of determinant 1, the corresponding linear fractional transformation $\mathcal Q$ maps $z\in \overline \HH$ to the ratio of entries of $Q[z,1]^t$. If we identify the matrix \eqref{eq:affine} with the complex number  $x+iy$, then $\mathcal Q$ acts on the affine group. Further, it acts on paths in the affine group pointwise.

The proof of the following lemma is just straightforward arrow chasing and simple calculation, so we omit it. 
\begin{lemma}\label{l:ort-eq}
Given $X$ and boundary conditions $\mathfrak  u_0  ,\mathfrak  u_1 \in \mathbb R\cup \{\infty\}$, we have the following identities.
\begin{align*}
\rho^{-1} \Dirop(X,\mathfrak  u_0  ,\mathfrak  u_1 ) \rho&=-\Dirop(\rho  X,\mathfrak  u_1 ,\mathfrak  u_0  ),\\
S\, \Dirop(X,\mathfrak  u_0  ,\mathfrak  u_1 ) S&=-\Dirop(\iota X,-\mathfrak  u_0  ,-\mathfrak  u_1 ),\\
Q \Dirop(X, \mathfrak  u_0  ,\mathfrak  u_1 ) Q^{-1}&=\Dirop(\mathcal Q X,\mathcal Q \mathfrak  u_0  ,\mathcal Q \mathfrak  u_1 ).
\end{align*}
In particular, the operator $\Dirop(X, \mathfrak  u_0  ,\mathfrak  u_1 )$ is orthogonally equivalent to the operators $\Dirop(\mathcal Q X,\mathcal Q \mathfrak  u_0  ,\mathcal Q \mathfrak  u_1 )$ and $\Dirop(\rho \iota X,-\mathfrak  u_1  ,- \mathfrak  u_0 )$ in the respective $L^2$ spaces, and they have the same integral traces \eqref{eq:int_tr} as well. 
\end{lemma}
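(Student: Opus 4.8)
The plan is to verify each of the three conjugation identities directly from the definitions, since the lemma asserts exactly that ``straightforward arrow chasing'' works. I would set up notation first: recall that $\Dirop(X,\mathfrak u_0,\mathfrak u_1)$ acts by $\btau v = R^{-1}Jv'$ on the domain determined by the boundary vectors via the conditions $\lim_{s\downarrow 0}v(s)^tJ\mathfrak u_0=0$ and $v(\sigma)^tJ\mathfrak u_1=0$, where $R=X^tX/(2\det X)$. The three transformations act on the three ingredients of a Dirac operator — the interval/time parametrization, the matrix-valued path $R$ (equivalently $X$), and the boundary vectors — so for each I would track how $R^{-1}J\frac{d}{dt}$ transforms and how the domain transforms.

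First I would do the time-reversal identity. Under $\rho f(t)=f(1-t)$, one has $(\rho f)' = -\rho(f')$ by the chain rule, so conjugating $\btau=R^{-1}J\frac{d}{dt}$ by $\rho$ produces $-(\rho R)^{-1}J\frac{d}{dt}$, giving the overall minus sign; and the boundary conditions at $0$ and $\sigma=1$ get swapped, so $\mathfrak u_0$ and $\mathfrak u_1$ exchange roles. Next, for the sign-flip $\iota: X\mapsto SXS$, note $R=X^tX/(2\det X)$ transforms to $SRS$ (using $S=S^t=S^{-1}$ and $\det(SXS)=\det X$), and the key matrix fact is $SJS=-J$; so $S\btau S = S R^{-1}J\frac{d}{dt}S = (SRS)^{-1}(SJS)\frac{d}{dt} = -(\iota R)^{-1}J\frac{d}{dt}$, again producing a minus sign, while the boundary conditions $v^tJ\mathfrak u_i=0$ become $(Sv)^tJ(-\mathfrak u_i)=0$ after using $SJ = -JS$, explaining the $-\mathfrak u_i$. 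Finally, for an orthogonal $Q$ with $\det Q=1$, I would use $QJQ^{-1}=J$ (true for any rotation) and that $R$ transforms to $(Q^{-1})^tRQ^{-1}$ up to the scalar that $\mathcal Q$ absorbs, so that conjugation by $Q$ turns $\btau$ into $\Dirop(\mathcal QX,\mathcal Q\mathfrak u_0,\mathcal Q\mathfrak u_1)$ with no sign change; here the boundary vectors transform covariantly because $v^tJ\mathfrak u=0 \iff (Qv)^tJ(Q\mathfrak u)=0$.

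Having the three identities, the ``in particular'' statement follows by composition: applying first $Q$ gives orthogonal equivalence with $\Dirop(\mathcal QX,\mathcal Q\mathfrak u_0,\mathcal Q\mathfrak u_1)$; and composing $S$ with $\rho$ (both of which are, up to the harmless global sign of the operator, implemented by isometries between the relevant $L^2_R$ spaces) gives equivalence with $\Dirop(\rho\iota X,-\mathfrak u_1,-\mathfrak u_0)$ — the two minus signs on the operator cancel, so the resulting operators are genuinely orthogonally equivalent, not merely equivalent up to sign. For the claim that integral traces agree, I would invoke the formula $\mathfrak t_\btau=\tfrac12\int_0^\sigma a(s)^tc(s)\,ds$ with $a=X\mathfrak u_0/\sqrt{\det X}$, $c=X\mathfrak u_1/\sqrt{\det X}$, and check that each transformation leaves the scalar $a(s)^tc(s)$ invariant (for $Q$ because $Q$ is orthogonal; for $\rho$ because it only reindexes the integration variable; for $\iota$ because $S$ is orthogonal and the sign flips on both boundary vectors cancel).

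The only mild obstacle is bookkeeping: keeping the direction of each conjugation straight (the lemma writes $\rho^{-1}\Dirop\,\rho$, so one must be careful which way the intertwiner goes), correctly handling the $\infty$ boundary values under $\mathcal Q$ (a linear-fractional transformation may send a finite boundary parameter to $\infty$ or vice versa, which is why the statement allows $\mathfrak u_i\in\mathbb R\cup\{\infty\}$), and confirming that the time-change/interval is $(0,1]$ versus $[0,1)$ consistently between $\Sineop$ and $\btau_\beta$. None of this is deep — it is exactly the reason the authors omit the proof — so I would present it as a short verification rather than belabor it.
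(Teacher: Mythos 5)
The paper omits the proof of this lemma, describing it as straightforward arrow chasing, and your direct verification is exactly the intended argument: the three identities follow from $(\rho f)'=-\rho(f')$, from $SJS=-J$ together with $R\mapsto SRS$, and from $QJQ^{-1}=J$ together with $R\mapsto QRQ^{t}=R_{\mathcal QX}$ (for orthogonal $Q$ this last identification is exact --- there is no scalar to absorb), with the boundary conditions tracked projectively in $\mathbb R\cup\{\infty\}$. Your composition of the $S$- and $\rho$-identities for the ``in particular'' claim (the two operator signs cancelling) and the invariance of $a^{t}c=2\mathfrak u_0^{t}R\,\mathfrak u_1$ under each transformation, using the normalization $\mathfrak u_0^{t}J\mathfrak u_1=1$ to pin down representatives, correctly handle the remaining points.
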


\begin{lemma}\label{l:HBM}
Let $X$ be two-sided affine Brownian motion defined via \eqref{eq:xy}-\eqref{eq:X}, and let $q$ an independent standard Cauchy random variable. The orthogonal matrix  
$$
Q=\frac{1}{\sqrt{q^2+1}}\left(
\begin{array}{cc}
 q&1 \\ -1 &q \end{array}
\right).
$$ 
corresponds to a fractional linear transformation   $\mathcal Q$ that maps $q$ to $\infty$.
Then 
$$ u\mapsto\mathcal Q X_{- u}, \qquad  u\ge 0$$ 
is hyperbolic Brownian motion started from $i$, and  $\iota \mathcal Q X_{- u}$ has the same law. Moreover, as $u\to \infty$ the hyperbolic Brownian motions $(\mathcal Q X)_{- u}$,  $(\iota \mathcal Q X)_{- u}$ converge to the boundary points $\mathcal Q(\infty)$, and $-\mathcal Q(\infty)$, respectively. 
\end{lemma}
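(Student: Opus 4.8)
## Proof plan for Lemma \ref{l:HBM}

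The plan is to verify each assertion by reducing it to known facts about hyperbolic and affine Brownian motion, using the explicit description of $\mathcal Q$ and the time-reversal structure of $X$. First I would record the elementary facts about $\mathcal Q$: since $Q\in SO(2)$, the associated M\"obius transformation $\mathcal Q(w)=(qw+1)/(-w+q)$ is an isometry of $\HH$, and a direct substitution gives $\mathcal Q(q)=\infty$, $\mathcal Q(i)=i$ (using $q\in\R$), and $\mathcal Q(\infty)=-1/q$ — a finite real boundary point. The reflection $\iota$ corresponds to $w\mapsto -\bar w$, which is an anti-isometry of $\HH$ fixing $i$ and sending the boundary point $b\in\R\cup\{\infty\}$ to $-b$; hence $\iota$ applied pointwise to a hyperbolic Brownian motion started at $i$ produces another hyperbolic Brownian motion started at $i$ (same law, by invariance of the generator under this reflection), and it sends the limit point $\mathcal Q(\infty)$ to $-\mathcal Q(\infty)$.

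Next I would handle the core claim that $u\mapsto \mathcal Q X_{-u}$, $u\ge 0$, is standard hyperbolic Brownian motion started from $i$. By the Remark following \eqref{eq:X}, $x_u+iy_u$ for $u\ge 0$ is standard hyperbolic Brownian motion started at $i$, and $x_{-u}+iy_{-u}$ for $u\ge 0$ is that same process \emph{conditioned to converge to $\infty$}, i.e.\ a Doob $h$-transform of hyperbolic Brownian motion by the harmonic function whose boundary mass sits at $\infty$. The key point is that applying the isometry $\mathcal Q$ transports this conditioned diffusion to the hyperbolic Brownian motion conditioned to converge to $\mathcal Q(\infty)$; but conditioning hyperbolic Brownian motion to exit at a prescribed boundary point and then recording the path is, by the strong symmetry of $\HH$ (transitivity of the isometry group on interior points together with the fact that all boundary points are equivalent), again a time-homogeneous diffusion with the \emph{same} law as unconditioned hyperbolic Brownian motion started at the same interior point — this is the standard fact that hyperbolic Brownian motion conditioned on its exit point is itself a hyperbolic Brownian motion run for an independent exponential-type time reparametrization / equivalently the statement that the $h$-transform by a single Poisson kernel is isometric to the original motion after moving the exit point to $\infty$ by an isometry. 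Since $\mathcal Q X_{-u}|_{u=0}=\mathcal Q(i)=i$, we conclude $u\mapsto \mathcal Q X_{-u}$ is standard hyperbolic Brownian motion from $i$. The claim for $\iota\mathcal Q X_{-u}$ then follows immediately from the reflection argument of the previous paragraph.

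Finally, the convergence statements: since $x_{-u}+iy_{-u}$ converges a.s.\ to $\infty$ as $u\to\infty$ (it is hyperbolic Brownian motion conditioned to do so), continuity of $\mathcal Q$ on $\overline\HH$ gives $\mathcal Q X_{-u}\to\mathcal Q(\infty)$, and applying $\iota$ gives $\iota\mathcal Q X_{-u}\to -\mathcal Q(\infty)$. I expect the main obstacle to be making the middle step rigorous without hand-waving: one must pin down precisely in what sense $x_{-u}+iy_{-u}$ is ``conditioned to converge to $\infty$'' (an $h$-transform, as recorded in the cited Remark / in \cite{BVBV_sbo}) and then invoke — or prove by a one-line SDE computation in the disk or half-plane model — that conjugating such an $h$-transformed diffusion by the isometry carrying the exit point to $\infty$ returns ordinary hyperbolic Brownian motion. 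In the half-plane model this is transparent: hyperbolic Brownian motion conditioned to exit at $\infty$ solves $dz = \text{(hyperbolic noise)} + \nabla\log P_\infty\,d\langle\cdot\rangle$ where $P_\infty(x+iy)=y$ is the Poisson kernel at $\infty$, i.e.\ it is the diffusion generated by $\tfrac{y^2}{2}(\partial_x^2+\partial_y^2) + y\,\partial_y$, which up to the isometry $\mathcal Q$ is the drift that appears in the generator of hyperbolic Brownian motion reparametrized — so the cleanest route is to write both generators explicitly and observe they are intertwined by $\mathcal Q$.
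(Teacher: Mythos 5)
Your treatment of the reflection $\iota$ and of the boundary limits (continuity of $\mathcal Q$ on $\overline\HH$ applied to $X_{-u}\to\infty$) is fine and matches the paper, but the core step of your argument is not correct. You assert that hyperbolic Brownian motion conditioned to exit at a prescribed boundary point ``has the same law as unconditioned hyperbolic Brownian motion started at the same interior point.'' This is false: the conditioned process converges a.s.\ to the prescribed point, while unconditioned HBM from $i$ converges to a random boundary point with Cauchy (harmonic-measure) distribution, so the two laws on path space differ. At the generator level the same problem appears: the $h$-transform by the Poisson kernel at $\infty$ has generator $\tfrac{y^2}{2}(\partial_x^2+\partial_y^2)+y\,\partial_y$, and since isometries of $\HH$ commute with the hyperbolic Laplacian, conjugating by $\mathcal Q$ produces $\tfrac{y^2}{2}\Delta_{\HH}/1$ plus a drift toward $\mathcal Q(\infty)$ --- it never removes the drift. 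No deterministic isometry can intertwine the conditioned and unconditioned generators, and no time reparametrization can repair a difference in the a.s.\ exit behaviour. Note also that your proposed argument never uses that $q$ is Cauchy distributed and independent of $X$; if it were valid it would prove the statement for a deterministic $q$, in which case $\mathcal Q X_{-u}$ would visibly be the $h$-process toward the fixed point $\mathcal Q(\infty)=-q$ (not $-1/q$, by the way), not HBM.

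The missing ingredient is the disintegration theorem for hyperbolic Brownian motion (Proposition X.3.1 of \cite{hypBM}), which is exactly what the paper invokes: HBM started from $i$ decomposes into two \emph{independent} pieces, namely its exit point (Cauchy distributed) and the path rotated about $i$ so as to exit at $\infty$ (which is HBM conditioned to converge to $\infty$). The lemma is the reassembly direction of this statement: $X_{-u}$ supplies the conditioned-to-$\infty$ piece, the independent Cauchy variable $q$ supplies the exit point, and the random rotation $\mathcal Q=\mathcal Q_q$ about $i$ glues them back together into unconditioned HBM from $i$. The randomness and the precise law of $q$ are therefore essential, not decorative. To salvage your write-up you should replace the ``conditioned $=$ unconditioned'' step by this disintegration (either citing it or proving the reassembly direction directly, e.g.\ by checking that the random rotation re-randomizes the exit point with the correct harmonic measure and that the conditional law given the exit point is the correct $h$-process).
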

\begin{proof}
The transformation $\mathcal Q$ is a hyperbolic rotation of $\HH$ about the point $i$, mapping $q$ to $\infty$ when extended to $\bar \HH$.  

The first claim follows from the well-known disintegration theorem about hyperbolic Brownian motion into two independent pieces,  Proposition X.3.1 in \cite{hypBM}.   The first is the boundary point that it converges to. This has Cauchy distribution.  The second is the process rotated to converge to $\infty$, which has the same distribution as hyperbolic Brownian motion conditioned to converge to $\infty$.

The second claim follows from the fact that the law of hyperbolic Brownian motion is invariant under the reflection $z\mapsto -\bar z$. $X_{- u}$,$u\ge 0$ is hyperbolic Brownian motion conditioned to converge to $\infty$, and the limit points follow the transformations, demonstrating the last two claims. 
\end{proof}

\begin{proposition}\label{prop:Sine_tau}
Let $\btau_\beta$ be as in Definition \ref{def:zetab} and let $Q$ and the corresponding fractional linear transformation $\mathcal Q$ be defined as in Lemma \ref{l:HBM}. 
Then the operator $\rho^{-1} (S \mathcal Q)\btau_\beta (S \mathcal Q)^{-1} \rho$ is orthogonal equivalent to $\btau_\beta$ and it has the same distribution as the $\Sineop$ operator defined in Definition \ref{def:Sinop}. In particular, its eigenvalues agree with those of $\btau_\beta$ and have the law of the  $\Sineb$ process.
\end{proposition}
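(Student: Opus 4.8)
The plan is to assemble the proposition from the three transformation identities in Lemma \ref{l:ort-eq} together with the probabilistic input of Lemma \ref{l:HBM}. First I would note that each of the maps $\rho$, $S$, $\mathcal Q$ is an orthogonal (unitary) transformation between the relevant $L^2$ spaces: $\mathcal Q$ and $S$ act pointwise by orthogonal matrices, and $\rho$ is a time reversal, which is an isometry of $L^2$; composing them, $(S\mathcal Q)$ followed by conjugation by $\rho$ is again a unitary equivalence. Therefore $\rho^{-1}(S\mathcal Q)\btau_\beta(S\mathcal Q)^{-1}\rho$ is automatically orthogonally equivalent to $\btau_\beta$, and by the final sentence of Lemma \ref{l:ort-eq} it has the same integral trace; in particular it has the same spectrum as $\btau_\beta$.

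Next I would identify the transformed operator explicitly. Starting from $\btau_\beta=\Dirop(X_{u(\cdot)},\mathfrak u_0,\mathfrak u_1)$ with $\mathfrak u_0=[1,0]^t$, $\mathfrak u_1=[-q,-1]^t$, I apply the third identity of Lemma \ref{l:ort-eq} with $Q$ as in Lemma \ref{l:HBM}: conjugation by $\mathcal Q$ yields $\Dirop(\mathcal Q X_{u(\cdot)},\mathcal Q\mathfrak u_0,\mathcal Q\mathfrak u_1)$. Since $\mathcal Q$ is chosen to send $q\mapsto\infty$, the boundary point $\mathfrak u_1$ (which corresponds to the complex number $q$) maps to $\infty$, i.e.\ to $[1,0]^t$ up to scalar, while $\mathfrak u_0$ (corresponding to $\infty$) maps to $\mathcal Q(\infty)$. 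Then I apply the $S$-identity and the $\rho$-identity (reading Lemma \ref{l:ort-eq} in the appropriate direction, absorbing the harmless overall sign $-1$, which does not change the operator up to the conventions used — both $-\btau$ and $\btau$ have the same eigenvalues and the statement about laws is unaffected, but I would be careful to track that the two sign flips from $S$ and $\rho$ cancel). After the time reversal $\rho$ the time variable $t\in(0,1]$ becomes $1-t\in[0,1)$, the roles of the two boundary vectors swap, and the underlying path becomes $t\mapsto \iota\mathcal Q X_{u(1-t)}$.

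The third step is to match this with Definition \ref{def:Sinop} of the $\Sineop$ operator. The time change is $u(t)=\tfrac{4}{\beta}\log t$, so $u(1-t)=\tfrac{4}{\beta}\log(1-t)$; writing $v=-u(1-t)=-\tfrac{4}{\beta}\log(1-t)\ge 0$, the relevant path is $t\mapsto \iota\mathcal Q X_{-v}$ with $v=-\tfrac{4}{\beta}\log(1-t)$, which is exactly the time parametrization appearing in Definition \ref{def:Sinop}. By Lemma \ref{l:HBM}, $v\mapsto \iota\mathcal Q X_{-v}$ is standard hyperbolic Brownian motion started at $i$ (it has the same law as $\mathcal Q X_{-v}$), and it converges as $v\to\infty$ to the boundary point $-\mathcal Q(\infty)$. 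This identifies the transformed path, in law, with $\Xi_{-\frac{4}{\beta}\log(1-t)}$, and identifies the transformed boundary conditions: one endpoint becomes $[1,0]^t$ and the other becomes the limit point $\Xi_\infty=-\mathcal Q(\infty)$, i.e.\ $[-\Xi_\infty,-1]^t$, matching Definition \ref{def:Sinop} exactly. Hence the transformed operator has the law of $\Sineop$; since $\Sineop$ has spectrum given by the $\Sineb$ process (the bulk limit result recalled at the start of Section \ref{s:sineop}), so does $\btau_\beta$.

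The main obstacle, and the part most prone to error, is the bookkeeping in the second step: correctly composing the three identities of Lemma \ref{l:ort-eq} in the right order and the right direction, tracking where each boundary vector goes under $\mathcal Q$, $S$ and the swap induced by $\rho$, verifying that the two sign changes from $S$ and from $\rho$ cancel (so that one genuinely gets $\btau_\beta$-type operators rather than their negatives, or else explaining why the residual sign is immaterial for the spectral and distributional conclusions), and checking that the reflection $\iota$ lands on the correct boundary limit point ($-\mathcal Q(\infty)$ rather than $\mathcal Q(\infty)$) — which is precisely what the ``$\iota\mathcal Q X_{-u}$ has the same law'' clause of Lemma \ref{l:HBM} is there to guarantee. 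Once the arrow chasing is set up carefully, the probabilistic identification is immediate from Lemma \ref{l:HBM}.
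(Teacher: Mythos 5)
Your proposal is correct and follows essentially the same route as the paper: apply the three identities of Lemma \ref{l:ort-eq} to identify the conjugated operator as $\Dirop(\rho\iota\mathcal Q(X_{u(\cdot)}),\infty,-\mathcal Q(\infty))$, then use Lemma \ref{l:HBM} to recognize the path as $\Xi_{-\frac{4}{\beta}\log(1-\cdot)}$ with limit point $-\mathcal Q(\infty)$, matching Definition \ref{def:Sinop}. The only difference is that you spell out the boundary-condition and sign bookkeeping that the paper leaves implicit, and you do it correctly (the two sign flips from $S$ and $\rho$ indeed cancel).
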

\begin{proof} 
By Lemma \ref{l:ort-eq} the operator $\rho^{-1} (S \mathcal Q)\btau_\beta (S \mathcal Q)^{-1} \rho$ is given by 
\begin{align}\label{eq:transformed_tau}
    \Dirop(\rho \iota \mathcal Q(X_{u(\cdot)}),\infty, -\mathcal Q(\infty)).
\end{align}
By Lemma \ref{l:HBM}, $\mathcal (\iota \mathcal QX)_{-u}, u\ge 0$ is standard hyperbolic Brownian motion. Hence
\[\rho \iota \mathcal Q(X_{u(\cdot)})\ed\rho \Xi_{-u(\cdot)}=\Xi_{-\frac{4}{\beta}\log(1-\cdot)}. \]
where $\Xi$ is standard hyperbolic Brownian motion. 
\end{proof}

\subsection{The stochastic zeta function and its approximations } \label{subs:stochzeta}

The operator $\btau_\beta$ has approximate versions that will be useful in the sequel. They are defined in terms of the increment 
\begin{equation}\label{eq:Xnu}
    X^\nu_u=X_uX_\nu^{-1}, \qquad u\ge \nu
\end{equation}
of the process $X$ on the interval $[\nu, 0]$ with $\nu<0$. We have
\begin{equation}
\label{eq:X_nu}
X_u^{\nu}=\left(\begin{array}{cc}
 1 &-x_u^\nu \\ 0 & y_u^\nu 
 \end{array}
\right),\qquad 
x_u^\nu=\frac{x_u-x_{\nu}}{y_\nu}, \quad y_u^\nu= \frac{y_u}{y_\nu}. 
\end{equation}
We set  
\begin{align*}
\btau_{\beta,\nu}=\Dirop(X^{\nu}_{u(\cdot)},\infty, q),\qquad  \zeta_{\beta,\nu}=\zeta_{\btau_{\beta,\nu}}.
\end{align*}
Note that the operator  $\tau_{\beta,\nu}$ acts on $\mathbb R^2$-valued functions on the  interval 
$[t(\nu),1]$, 
where 
\begin{align}\label{eq:inversetimechange}
  t(u)=e^{4u /\beta}  
\end{align}
is the inverse of the time-change function $u$ \eqref{eq:timechng}.

By convention, the $\nu=-\infty$ case will refer to the undecorated $\btau_\beta$. The finite-$\nu$ operators and the corresponding stochastic zeta functions are better behaved than $\btau_\beta$ and $\zeta_\beta$. See, for example  Section \ref{s:moments}.

\begin{proposition}\label{prop:tau-beta}
Almost surely  the operator $\btau_{\beta,\nu}$ satisfies Assumptions 1-3 for all  $\nu\in [-\infty,0)$. In particular,  $\zeta_{\beta, \nu}$, $\nu\in [-\infty,0)$ are well-defined entire functions with probability one.  
\end{proposition}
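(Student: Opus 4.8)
The plan is to verify Assumptions 1, 2, and 3 for $\btau_{\beta,\nu}$ for each fixed $\nu \in [-\infty, 0)$ almost surely, and then handle the simultaneous-in-$\nu$ statement by a countable-intersection argument together with monotonicity in $\nu$. Throughout, one works with the parametrization $R = X^t X/(2\det X)$ coming from $X^\nu_{u(t)}$, so that in the notation of \eqref{eq:Rxy} the relevant path is $\tl x(t) = x^\nu_{u(t)}$, $\tl y(t) = y^\nu_{u(t)}$ on $t \in (0,1]$ (extended to the full interval $[t(\nu),1]$ in the finite-$\nu$ case, where $t(\nu)>0$ and the path is a genuine continuous semimartingale on a compact interval away from $0$).

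First, Assumption 1: since $X^\nu_{u}$ is an a.s.\ continuous path in the affine group, $\tl y(t) > 0$ and both $\|R\|, \|R^{-1}\|$ are continuous, hence locally bounded on $(0,1]$; the normalization $\det R = 1/4$ is automatic from the definition $R = X^tX/(2\det X)$. For the finite-$\nu$ case this is immediate since $[t(\nu),1]$ is compact and away from $0$. For $\nu = -\infty$ the only subtlety is the behavior as $t \downarrow 0$, i.e.\ as $u \to -\infty$: here $\tl y(t) = y_{u(t)} = e^{b_2(u(t)) - u(t)/2}$, and $u(t) = \tfrac{4}{\beta}\log t \to -\infty$, so the drift term $-u/2 \to +\infty$ dominates the Brownian fluctuation $b_2(u)$ by the law of the iterated logarithm, giving $\tl y(t) \to \infty$ (which is allowed — $\|R^{-1}\|$ may blow up at $0$, as the remark after Assumption 1 and Remark \ref{rem:ODElin} emphasize). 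So Assumption 1 holds.

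Next, the heart of the matter: Assumptions 2 and 3. Assumption 3 (the condition $\mathfrak u_0^t J \mathfrak u_1 = 1$) holds by inspection for $q$-boundary conditions $\mathfrak u_0 = [1,0]^t$, $\mathfrak u_1 = [-q,-1]^t$, and likewise for the $\Dirop(\cdot,\infty,q)$ convention after the appropriate identification — this is just the computation in \eqref{eq:qbndry}–\eqref{unorm}, essentially no work. For Assumption 2, by \eqref{eq:Asmp_2_1}–\eqref{eq:Asmp_2_2} it suffices to show
\[
\int_0^1 \frac{1 + |\tl x(t)|}{\tl y(t)}\, dt < \infty
\qquad\text{and}\qquad
\int_0^1 \int_0^t \frac{\tl x(t)^2 + \tl y(t)^2}{\tl y(s)\,\tl y(t)}\, ds\, dt < \infty.
\]
Changing variables back to $u = u(t)$, $dt = \tfrac{\beta}{4} e^{\beta u/4} du$, the first integral becomes $\tfrac{\beta}{4}\int_{-\infty}^0 e^{\beta u/4}\,\frac{1 + |x_u|}{y_u}\, du$ (up to the $y_\nu$ normalization, which is an a.s.\ finite constant for fixed $\nu$). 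Now $1/y_u = e^{-b_2(u) + u/2}$ and $x_u = -\int_u^0 e^{b_2(s) - s/2}\, db_1$; near $u \to -\infty$ the factor $e^{\beta u/4}$ decays exponentially while $1/y_u$ and $|x_u|$ grow at most like $e^{\eps|u|}$ for any $\eps > 0$ (by the law of the iterated logarithm applied to $b_2$, and a standard $L^p$ or Borel–Cantelli estimate for the stochastic integral $x_u$, using that $\mathrm{Var}$ of the integrand is controlled by $e^{2b_2(s)-s}$ which again has sub-exponential growth). Hence the integrand is a.s.\ integrable near $-\infty$; near $u = 0$ everything is continuous and bounded. The double integral is handled the same way: after the change of variables it is a constant times $\iint_{s<u<0} e^{\beta u/4}\, \frac{x_u^2 + y_u^2}{y_s\, y_u}\, ds\, du$, and the same exponential-decay-beats-subexponential-growth comparison closes it. The cleanest way to package "a.s.\ finite" is to show the expectation is finite — using $E[e^{a b_2(u)}] = e^{a^2 u^2/2}$ type bounds (note $b_2$ is two-sided Brownian motion, so for $u<0$, $b_2(u)$ is Gaussian with variance $|u|$) and the Itô isometry for the $x_u$ terms — but one must check that these moment bounds are uniform enough; if a clean moment bound is awkward, a pathwise argument via the law of the iterated logarithm works just as well and avoids integrability-of-moments subtleties.

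Finally, for the "for all $\nu \in [-\infty,0)$" clause: the finite-$\nu$ operators are all built from the same two-sided path $X$ and the same Cauchy variable $q$, with $X^\nu_u = X_u X_\nu^{-1}$, so Assumptions 1–3 for $\btau_{\beta,\nu}$ follow from the corresponding integrability statements for the underlying process on $(-\infty, 0]$, which are $\nu$-independent once one absorbs the a.s.\ finite constant $y_\nu$. Concretely, I would first prove the $\nu = -\infty$ case, which is the strongest (it requires integrability all the way to $u = -\infty$); for finite $\nu$ the integrals only run over $[\nu, 0]$, a compact interval on which the path is a continuous semimartingale bounded away from the bad point, so Assumptions 1–3 are essentially trivial there. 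Thus a single almost-sure event — the one on which the two displayed integrals above are finite for $\nu = -\infty$ — suffices for all $\nu$ simultaneously, and on that event all $\zeta_{\beta,\nu}$ are well-defined entire functions by the Definition of the secular function and Proposition \ref{prop:Fred_exp}. The main obstacle is the $\nu = -\infty$ integrability near $u \to -\infty$: one needs the quantitative fact that the exponential weight $e^{\beta u/4}$ introduced by the time change \eqref{eq:timechng} strictly beats the at-most-subexponential growth of $1/y_u$, $x_u$, and their squares, and getting this cleanly — whether by a uniform moment bound or by the law of the iterated logarithm — is the one place requiring genuine (if routine) stochastic-analysis care.
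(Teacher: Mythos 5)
Your overall strategy matches the paper's: Assumption 1 from path continuity (with the allowed blow-up of $\|R^{-1}\|$ at $0$), Assumption 3 by inspection of the $q$-boundary conditions, the finite-$\nu$ case from continuity of the path on a compact interval bounded away from the singular endpoint, and the $\nu=-\infty$ case from pathwise growth bounds on $x_u$, $y_u$ (law of the iterated logarithm for $b_2$, plus a time-change argument for the stochastic integral $x_u$) combined with the exponential weight $e^{\beta u/4}$ produced by the time change. Reducing everything to a single almost-sure event valid for all $\nu$ simultaneously is also how the paper proceeds.

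However, the key quantitative claim you rely on is false as stated. You assert that $|x_u|$ grows at most like $e^{\eps|u|}$ because the variance of the integrand, $e^{2b_2(s)-s}$, ``has sub-exponential growth.'' For $s\le 0$ one has $e^{-s}=e^{|s|}$, so $e^{2b_2(s)-s}$ grows like $e^{|s|(1+o(1))}$; hence the quadratic variation $\int_u^0 e^{2b_2(s)-s}\,ds$ is of order $e^{|u|(1+o(1))}$ and $|x_u|$ is of order $e^{|u|/2}$ up to subexponential corrections --- exponentially large, not subexponential. (Your bound $1/y_u\le Ce^{\eps|u|}$ is true but far from sharp: $1/y_u$ in fact decays like $e^{u/2}$ up to subexponential factors.) The integrals \eqref{eq:Asmp_2_1}--\eqref{eq:Asmp_2_2} do converge, but not by the mechanism you describe: the exponential growth of $|x_u|$ and of $x_u^2+y_u^2$ is cancelled by the matching exponential growth of $y_u\asymp e^{|u|/2}$ in the denominators (for \eqref{eq:Asmp_2_2} one must also use that the inner variable satisfies $|w|\ge |v|$ on the domain of integration), and only the subexponential remainder is left for $e^{\beta u/4}$ to absorb. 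This is exactly why the paper states its estimates \eqref{eq:hypBM_bnd} for exponentially normalized versions of $y_u$ and $x_u$ rather than for the processes themselves. So the skeleton of your argument is right, but the growth estimates must be corrected and the exponential cancellation made explicit before the final exponent comparison closes the proof.
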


\begin{proof}
Since $x_u+i y_u, u\in \R$ is a continuous process in $\HH$ with probability one, this is also true for $x^\nu+i y^\nu$ for all $\nu\in (-\infty,0)$. The continuity of $x+i y$ and $x^\nu +i y^\nu$ implies Assumption 1. 
Assumption 3 is  satisfied  as we are using $q$-boundary conditions \eqref{eq:qbndry}.

For $\nu\in (-\infty,0)$ Assumption 2 follows from the fact that $x^\nu_{u(\cdot)}+i y^\nu_{u(\cdot)}$ is continuous on the compact interval $[e^{4\nu/\beta},1]$ with probability one. Hence we only need to verify the assumption for $\nu=-\infty$. 
For this we need to check  \eqref{eq:Asmp_2_1} and \eqref{eq:Asmp_2_2} on the interval  $(0,1]$ using the process $x_{u(\cdot)}+i y_{u(\cdot)}$.
We will show that  for a given $\eps>0$ there is a random constant $C>0$ so that for all $u\in (-\infty,0]$ we have
\begin{align}\label{eq:hypBM_bnd}
C^{-1} e^{-\eps |u|}\le y_{u} e^{u/2}\le C e^{\eps  |u|}, \qquad |x_{ u} e^{- u/2}|\le C e^{\eps  |u|}.
\end{align}
From these the integral bounds of \eqref{eq:Asmp_2_1} and \eqref{eq:Asmp_2_2} follow almost surely for $x_{u(\cdot)}+i y_{u(\cdot)}$. 

The first bound in \eqref{eq:hypBM_bnd} follows from the definition \eqref{eq:xy} and the law of iterated logarithm for Brownian motion, see Theorem \ref{thm:lil} in the appendix. From \eqref{eq:xy} it follows that $x_{-u}, u\ge 0$ has the same distribution as a Brownian motion run with the time $u\to \int_{-u}^0 y^2_s ds$. Using the first bound of \eqref{eq:hypBM_bnd} with the law of iterated logarithm again we obtain the bound on $x$ in \eqref{eq:hypBM_bnd}.
\end{proof}

\begin{remark}\label{rem:large_int}
We defined $\btau_\beta$ from $X_{u(\cdot)}, t\in (0,1]$. The same argument as in the proof of Proposition \ref{prop:tau-beta} shows that for any fixed $\sigma>1$ the operator defined from $X_{u(\cdot)}, t\in (0,\sigma]$ also satisfies Assumptions 1-3, and the same holds for the operators defined from $X^\nu_{u(\cdot)}, t\in (t(\nu),\sigma]$.
\end{remark}


\subsection{Convergence of random characteristic polynomials } \label{subs:char_circ_beta}

The size $n$ circular beta ensemble is a random vector of unit length complex numbers with joint density proportional to 
\[
\prod_{1\le j< k\le n} |z_j-z_k|^\beta.
\]
on $\{|z|=1\}^n$. For $\beta=2$ this has the same  distribution as the eigenvalues of a Haar distributed unitary $n\times n$ matrix. 
For general $\beta$, there are explicit five-diagonal unitary matrices $U_\beta$ of CMV type \cite{CMV} with this eigenvalue distribution, see Killip and Nenciu \cite{KillipNenciu}. We denote by 
\begin{align}\label{eq:char_pol}
p_n(z)=p_{\beta,n}(z)=\prod_{j=1}^n \frac{z-z_j}{1-z_j}
\end{align}
the normalized   characteristic polynomial of the size $n$ circular beta ensemble, see \eqref{eq:charpol}.

The next theorem shows that the stochastic zeta function $\zeta_\beta$ is the  limit of the characteristic polynomials of the circular beta ensembles. This restates Theorem \ref{thm:intro-limit}.

\begin{theorem}\label{thm:char_conv} Fix $\beta>0$.
There exists a coupling of the random  polynomials $p_{\beta, n}$ and the  stochastic zeta function $\zeta_\beta$ together with a random variable $C$ so that 
for all $z\in \mathbb C$ and all $n>1$
\begin{align}
|p_n(e^{i z/n})e^{-i z/2}-\zeta_\beta(z)| \le  \left(e^{|z| \frac{\log^3 n}{\sqrt{n}}}-1\right)
C^{|z|^2+1}.\label{circ_cp_bnd}
\end{align}
\end{theorem}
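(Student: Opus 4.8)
The plan is to use the continuity estimate for the secular function, Proposition \ref{prop:zeta_tri}, together with the description of the characteristic polynomial as a secular function from Proposition \ref{prop:unitary_car2b}. Concretely, the circular beta ensemble carries, via its deformed Verblunsky coefficients and the recursion \eqref{xyrec}, a path $x^{(n)}_{\lfloor nt\rfloor}+iy^{(n)}_{\lfloor nt\rfloor}$ in the affine group, and by Proposition \ref{prop:unitary_car2b}(ii) the function $p_n(e^{iz/n})e^{-iz/2}$ is the secular function of the Dirac operator $\btau^{(n)}$ built from this piecewise-constant path with $q$-boundary condition $\mathfrak u_1=[-x_n^{(n)},-1]^t$ (note $x_n^{(n)}$ plays the role of the Cauchy variable $q$). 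Meanwhile $\zeta_\beta$ is the secular function of $\btau_\beta$, built from the affine Brownian path $X_{u(\cdot)}$ on $(0,1]$ after the time change $u(t)=\tfrac4\beta\log t$. So the statement reduces to: (a) coupling the discrete path and the Brownian path so that the corresponding Dirac operators have resolvents close in Hilbert–Schmidt norm and integral traces close, with explicit rates; and then (b) feeding those rates into the bound of Proposition \ref{prop:zeta_tri}.

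The key steps, in order. First, invoke (or recall from \cite{BVBV_op}) the strong coupling: the embedded walk from the circular $\beta_n$ ensemble's Verblunsky coefficients, suitably interpolated and time-changed, converges to the affine Brownian motion $X$ on the relevant interval, with a quantitative rate of order $\log^3 n/\sqrt n$ in an appropriate path norm (this is essentially the operator-level convergence theorem of \cite{BVBV_op}, made quantitative — one expects a KMT-type embedding of the martingale part giving $\log n$ factors, and the extra powers come from controlling $y=e^{b_2-u/2}$ and its reciprocal uniformly, i.e. the blow-up near $t=0$). Second, translate path closeness into closeness of the functions $a(\cdot),c(\cdot)$ of \eqref{eq:ac}, hence of the semi-separable kernels \eqref{resint}, hence into $\|\res_1-\res_2\|_2\le |z|$-free quantities of size $\log^3 n/\sqrt n$ times a random constant depending on the path, and likewise $|\mathfrak t_1-\mathfrak t_2|$ of the same order; here one uses the explicit formulas $a(s)=y(s)^{-1/2}[1,0]^t$, $c(s)=y(s)^{-1/2}[x(s)-q,-y(s)]^t$ and the integrability bounds \eqref{eq:Asmp_2_1}--\eqref{eq:Asmp_2_2}, together with uniform control of $\|\res_i\|_2$ along the coupling (so the random exponent $\|\res_1\|^2+\|\res_2\|^2+|\mathfrak t_1|+|\mathfrak t_2|+1$ is a.s.\ finite and can be absorbed into $C^{|z|^2+1}$). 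Third, apply Proposition \ref{prop:zeta_tri}: with $\varepsilon_n=\log^3 n/\sqrt n$ and random constants, $|\zeta_1(z)-\zeta_2(z)|\le\big(e^{|z|\,O(\varepsilon_n)}-1+|z|\,O(\varepsilon_n)\big)a^{O(|z|^2)+O(1)}$; absorbing $|z|\,O(\varepsilon_n)\le e^{|z|\,O(\varepsilon_n)}-1$ and redefining the random constant $C$ to swallow $a$, the multiplicative $O(\varepsilon_n)$ constant (into a larger $\varepsilon_n$, using $n>1$ so $\log^3 n>0$), and the $O(1)$ in the exponent, yields exactly \eqref{circ_cp_bnd}. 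One small wrinkle: Proposition \ref{prop:unitary_car2b} uses the interval $[0,1]$ with a slightly different time parametrization than $u(t)=\tfrac4\beta\log t$; I would either work on $(0,\sigma]$ for a fixed $\sigma$ as permitted by Remark \ref{rem:large_int} and check the time-change matches in the limit, or note that the two Dirac operators are related by the stated time change, which does not affect the Hilbert–Schmidt/trace comparison.

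\textbf{Main obstacle.} The hard part is the quantitative strong coupling in step one — establishing that the Verblunsky-walk path and the affine Brownian path can be built on one probability space with a.s.\ error $O(\log^3 n/\sqrt n)$ in a norm strong enough to control the kernels. This is where the real work lies: one needs a dyadic (Komlós–Major–Tusnády) embedding of the discrete martingale driving $x^{(n)}$ and a matching embedding for the multiplicative part driving $y^{(n)}$, plus uniform-in-$t$ estimates near the singular endpoint $t\to0$ (equivalently $u\to-\infty$), which is precisely where the extra logarithmic powers beyond the naive $\sqrt{\log n/n}$ Donsker rate enter; the law-of-iterated-logarithm control from Theorem \ref{thm:lil} used in Proposition \ref{prop:tau-beta} is the deterministic analogue of what must be made quantitative here. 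Everything downstream — converting path bounds to kernel bounds and then invoking Proposition \ref{prop:zeta_tri} — is routine, if bookkeeping-heavy, and I would not grind through it in detail.
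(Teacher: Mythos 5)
Your proposal is correct and follows essentially the same route as the paper: identify $p_n(e^{iz/n})e^{-iz/2}$ as the secular function of the $\Circop$ operator via Proposition \ref{prop:unitary_car2b}, invoke the coupling of \cite{BVBV_op} giving $\|\res \Sineop-\res\Circop\|_{\textup{HS}}\le C_0\log^3 n/\sqrt n$, supplement it with the matching integral-trace estimate (the paper's Proposition \ref{prop:circtrace}, proved exactly by the kernel bookkeeping you describe), and conclude with Proposition \ref{prop:zeta_tri}. The only point worth noting is that the hard coupling you flag as the main obstacle is a prior result of \cite{BVBV_op}, so the genuinely new work in the paper is precisely the trace comparison you dismissed as routine.
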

\begin{proof}
Recall the definition of the modified Verblunsky coefficients of a discrete measure supported on $n$ points on the unit circle from Section \ref{s:Verblunsky}.

Consider the following random discrete measure $\sigma_{n,\beta}$: the support of the measure is given by a size $n$ circular beta ensemble $z_1, \dots, z_n$, and the vector of weights $(\sigma_{n,\beta}(z_1), \dots, \sigma_{n,\beta}(z_n))$ is independent and has Dirichlet distribution with parameter  $(\beta/2, \dots, \beta/2)$. 

Bourgade, Najnudel and Rouault  \cite{BNR2009}  -- building on the work of Killip and Nenciu in \cite{KillipNenciu} --  showed that the  modified Verblunsky coefficients corresponding to  $\sigma_{n,\beta}$ are independent of each other, and identified their marginal distributions. 

The construction given in Proposition \ref{prop:unitary_car2b} defines a Dirac operator $\Circop$ corresponding to $\sigma_{n,\beta}$ acting on functions $[0,1]\to \R^2$,  with eigenvalues given by $n \Lambda_{n,\beta}+2\pi n \Z$ where $e^{i\Lambda_{n,\beta}}$ is a size $n$ circular beta ensemble.  
By Proposition \ref{prop:unitary_car2b} the secular function of the $\Circop$ operator is given by $\zeta_n(z)=p_n(e^{iz/n}) e^{-iz/2}$.

In \cite{BVBV_op} it was shown that there is a coupling of the $\Circop$ operators for $n\ge 1$ and the $\Sineop$ operator so that 
\begin{equation}\label{resbound}
\|\res \Sineop - \res \Circop\|^2_{\textup{HS}} \le  \frac{\log^{6} n}{n}
\end{equation}
holds for all $n\ge N$ with a random variable $N$. 
In this coupling the operators all share the same starting and end conditions $\mathfrak  u_0  =[1,0]^t$ and $\mathfrak  u_1 =[-q,-1]^t$, where $q$ is Cauchy distributed. 


We will use Proposition \ref{prop:zeta_tri} to estimate $|\zeta_n(z)-\zeta_\beta(z)|$. To apply the proposition to operators satisfying Assumptions 1-3 we  first need to conjugate $\Circop$, $\Sineop$ with the transformations  appearing in Proposition \ref{prop:Sine_tau}. However,  Lemma \ref{l:ort-eq} shows that this conjugation does not change the Hilbert-Schmidt norm of the resolvent and the integral trace, hence we can estimate the appropriate quantities directly for $\Circop$, $\Sineop$. 

By  \eqref{resbound} and since  $\|\res \Sineop\|<\infty$, there is a random  $C_0$ with 
\begin{align}\label{eq:resbnd_2}
\|\res \Sineop - \res \Circop\|_{\textup{HS}} \le C_0 \frac{\log^{3} n}{\sqrt{n}}, \qquad \|\res \Sineop\|_{\textup{HS}},  \| \res \Circop\|_{\textup{HS}}\le C_0
\end{align}
for all $n> 1$. We  need similar bounds for the integral traces of $\res \Sineop$ and $\res \Circop$,  denoted by $\mathfrak{t}_\beta$ and $\mathfrak{t}_n$, respectively. The tools developed in  \cite{BVBV_op} to prove \eqref{resbound}  also imply 
\begin{align}\label{tr_bnd}
|\mathfrak{t}_\beta-\mathfrak{t}_n|\le \frac{\log^3 n}{\sqrt{n}}
\end{align}
for $n\ge N_1$, see Proposition \ref{prop:circtrace} below. Since $\mathfrak t_\beta$ is a.s.~finite, this implies the existence of a random  $C_1$ with 
\begin{align}\label{eq:tr_bnd_2}
|\mathfrak{t}_\beta-\mathfrak{t}_n| \le C_1 \frac{\log^{3} n}{\sqrt{n}}, \qquad |\mathfrak{t}_\beta|, |\mathfrak{t}_n|\le C_1
\end{align}
for all $n> 1$.
The bound \eqref{circ_cp_bnd} now follows from  \eqref{eq:resbnd_2}, \eqref{eq:tr_bnd_2} and Proposition \ref{prop:zeta_tri}.  
\end{proof}
\begin{proposition}\label{prop:circtrace} In the coupling of \cite{BVBV_op} the bound \eqref{tr_bnd} holds for $n\ge N_1$ random.
\end{proposition}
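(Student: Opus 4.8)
The plan is to reuse the path-level estimates behind \eqref{resbound} rather than the resolvent bound itself. First, by Lemma \ref{l:ort-eq} the orthogonal equivalences used in the proof of Theorem \ref{thm:char_conv} preserve integral traces, so $\mathfrak t_\beta=\mathfrak t_{\Sineop}$ and $\mathfrak t_n=\mathfrak t_{\Circop}$. Under $q$-boundary conditions one has $a(s)^tc(s)=2\,\mathfrak u_0^tR(s)\,\mathfrak u_1=(x(s)-q)/y(s)$, so by \eqref{eq:int_tr} each of these traces is the diagonal integral of the corresponding resolvent kernel. I would write $x_\beta(s)+iy_\beta(s)=\Xi_{-\frac4\beta\log(1-s)}$ for the (time-changed) hyperbolic Brownian path of $\Sineop$ and $x_n(s)+iy_n(s)=x_{\lfloor ns\rfloor}+iy_{\lfloor ns\rfloor}$ for the embedded deformed Verblunsky path of $\Circop$; then, since in the coupling of \cite{BVBV_op} both operators carry the same $q$-boundary condition,
\begin{align*}
\mathfrak t_\beta-\mathfrak t_n=\frac12\int_0^1\left(\frac{x_\beta(s)-q}{y_\beta(s)}-\frac{x_n(s)-q}{y_n(s)}\right)ds=\frac12\int_0^1\left(\frac{x_\beta-x_n}{y_\beta}+\frac{(x_n-q)(y_n-y_\beta)}{y_\beta\,y_n}\right)ds .
\end{align*}

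The coupling of \cite{BVBV_op} is constructed at the level of these two paths, and \eqref{resbound} is derived from weighted $L^2$ estimates on $x_\beta-x_n$ and $y_\beta-y_n$ (with weights built from $\|a\|^2=1/y$) fed into the semi-separable-kernel formula for the Hilbert--Schmidt norm \eqref{resint}. I would feed the displayed identity into those same estimates. Cauchy--Schwarz splits each of the two terms into a product of a weighted $L^2$ distance between the $\Sineop$ and $\Circop$ paths, which the analysis in \cite{BVBV_op} bounds by $O(\log^3 n/\sqrt n)$, and a dual factor that is a.s.\ finite: for the $\Sineop$ side this finiteness is Assumption 2 for $\btau_\beta$ (Proposition \ref{prop:tau-beta}) together with the a priori bounds \eqref{eq:hypBM_bnd}, and for the $\Circop$ side the analogous bounds for the Verblunsky path from \cite{BVBV_op}. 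Since these comparison constants are a.s.\ finite but random, the bound $|\mathfrak t_\beta-\mathfrak t_n|\le\log^3 n/\sqrt n$ can only be claimed once $n$ is beyond a random threshold $N_1$, which is exactly the form of \eqref{resbound}.

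The hard part will be the singular endpoint $s=1$, where $y_\beta(s),y_n(s)\to 0$ and the vector $c$ need not be square integrable, so a crude Cauchy--Schwarz over all of $[0,1]$ is unavailable; this is the same obstruction one meets in proving \eqref{resbound}, and I would resolve it the same way. Namely, truncate at $s=1-\delta_n$, estimate the bulk interval $[0,1-\delta_n]$ by path closeness and the tail $[1-\delta_n,1]$ using the a.s.\ integrability of the two integrands with a rate uniform in $n$ (supplied by \eqref{eq:hypBM_bnd} and its discrete counterpart), and then optimize over $\delta_n$. This is precisely the mechanism that produces the rate $\log^3 n/\sqrt n$ in \eqref{resbound}, and it yields the claimed bound here.
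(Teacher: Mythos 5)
Your proposal is correct and follows essentially the same route as the paper: both reduce the trace difference to the path-level coupling estimates of \cite{BVBV_op}, truncate at $T_n=1-\tfrac1n\log^6 n$ to isolate the singular endpoint $s=1$, control the bulk via the hyperbolic closeness of the two paths (equivalently, relative errors of $a,c$), and control the tail via the a.s.\ integrability bounds $|a(s)^tc(s)|\le C(1-s)^{-2\eps}$ near $s=1$. Your algebraic splitting of $\tfrac{x_\beta-q}{y_\beta}-\tfrac{x_n-q}{y_n}$ plus Cauchy--Schwarz is just a repackaging of the paper's triangle-inequality argument with the relative-difference bounds $\tfrac{|a-a_n|}{|a|},\tfrac{|c-c_n|}{|c|}\le 2\sinh(\tfrac12 d_{\HH}(\cB_n,\cB))$.
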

\begin{proof}
 Let  $ \cB_n(t)\in \HH, t\in [0,1]$ denote the path corresponding to $\Circop$ built from the random modified Verblunsky coefficients according to \eqref{xyrec}. Let $ \cB(t)\in \HH, t\in [0,1)$ denote the time-changed hyperbolic Brownian motion in the construction of $\Sineop$,  see Definition \ref{def:Sinop}. In \cite{BVBV_op} the coupling of the $\Circop$, $\Sineop$  operators is constructed in a way that the  processes $ \cB_n,  \cB$ are close  to each other in the hyperbolic metric.  More precisely, Proposition 13 of \cite{BVBV_op} states that there is a random $N_0$ so that for all $n\ge N_0$ we have the uniform bounds
 \begin{align}\label{eq:coupling_1}
     d_{\HH}(\cB_n(t), \cB(t))\le& \frac{\log^{3-1/8}n}{\sqrt{(1-t)n}}, 
    \qquad 0\le t\le T_n=1-\tfrac{1}{n}\log^6 n,\\
    d_{\HH}(\cB_n(T_n), \cB(t))\le& \frac{144}{\beta} (\log \log n)^2, \qquad T_n\le t<1.\label{eq:coupling_2}
 \end{align}
 Here $d_{\HH}$ denotes the hyperbolic distance.

Proposition 15 of \cite{BVBV_op} gives a way to estimate the effect of a truncation on the Hilbert-Schmidt norm of an integral operator constructed from a path that converges to a boundary point of the hyperbolic plane. Lemma 21  of \cite{BVBV_op} shows that $\cB$ satisfies the conditions of Proposition 15  of \cite{BVBV_op}.

Proposition 16  of \cite{BVBV_op} gives an estimate on the Hilbert-Schmidt norm of the difference of two integral operators where the paths from which they are constructed are close enough. The bound (\ref{resbound}) is proved by putting together the results of Propositions 13, 14, 15 and 16  of \cite{BVBV_op}. To get the estimate (\ref{tr_bnd}) we can use some of the intermediate steps in these propositions. We need to estimate
\begin{align*}
\left|\int_0^1 (a_n(s)^t c_n(s) -a(s)^t c(s) ) ds \right|,
\end{align*}
where $a_n, c_n, a, c$ are the vector-valued functions \eqref{eq:ac} defined for $\Circop$ and $\Sineop$. By the triangle inequality  it is enough to bound the following three integrals:
\begin{align}\label{eq:three_terms}
\int_0^{T_n} |a_n(s)^t c_n(s) -a(s)^t c(s) |ds , \qquad \int_{T_n}^1\left| a_n(s)^t c_n(s)\right| ds , \qquad \int_{T_n}^1\left| a(s)^t c(s)\right| ds.
\end{align}
The proofs of Propositions 14, 15, and Lemma 21 of \cite{BVBV_op} imply the  following bounds on $a, c$:
\begin{align}
  |a(s)|\le C (1-s)^{\frac{1}{\beta}-\eps}, \qquad |c(s)|\le C (1-s)^{-\frac1{\beta}-\eps}, \qquad s\in [0,1), \label{eq:acbounds}  
\end{align}
 with $\eps>0$ arbitrarily small, and $C=C_\eps$  a finite random variable.  
 
The arguments in the proof of  Proposition 16 of \cite{BVBV_op} imply the bounds
 \begin{align*}
 \frac{|a(s)-a_n(s) |}{|a(s)|}, \frac{|c(s)-c_n(s) |}{|c(s)|}&\le 2 \sinh(\tfrac12 d_{\HH}(\cB_n(s), \cB(s))), \qquad 0\le s<1,\\[3pt]
 \frac{|a_n(s)-a_n(t) |}{|a_n(s)|},  \frac{|c_n(s)-c_n(t) |}{|c_n(s)|}&\le 2 \sinh(\tfrac12 d_{\HH}(\cB_n(s), \cB_n(t))), \qquad 0\le s<t<1.
 \end{align*}
The coupling bounds \eqref{eq:coupling_1} and \eqref{eq:coupling_2}
give 
\begin{align*}
    \frac{|a(s)-a_n(s) |}{|a(s)|}, \frac{|c(s)-c_n(s) |}{|c(s)|}&\le c \frac{\log^{3-1/8} n}{\sqrt{(1-s) n}}, \qquad 0\le s\le T_n,\\
    \frac{|a_n(T_n)-a_n(t) |}{|a_n(T_n)|},  \frac{|c_n(T_n)-c_n(t) |}{|c_n(T_n)|}&\le 
    e^{c (\log \log n)^2}
    , \qquad  T_n<t<1,
  \end{align*}
for all $n\ge N_0$. Together with \eqref{eq:acbounds} these bounds are sufficient to estimate all three terms in \eqref{eq:three_terms} with a repeated use of the triangle inequality. Taking $\eps$ small enough in \eqref{eq:acbounds}  leads to the bound \eqref{tr_bnd}.
\end{proof}

\section{SDE characterization of $\zeta_\beta$} \label{s:SDE_zeta}

Proposition \ref{prop:zetaODE} gives an ordinary differential equation description of $\zeta_\beta$. In this section we obtain a description using stochastic differential equations.




\subsection{Stochastic differential equation description of  $\zeta_\beta$}

Let $X$ be defined as in \eqref{eq:X}, and $u=u(t)$ as in \eqref{eq:timechng}. 
We set
\begin{align}\label{eq:R_sde}
R_t=-\frac12  JX_{u}^{-1} J X_{u}, 
\end{align}
according to \eqref{eq:Rxy} and \eqref{eq:x-conj}.
Proposition \ref{prop:zetaODE} states that for every $z\in \CC$ there is a  unique vector-valued solution  $H: (0,1]\times \CC\to \CC^2$ of the ordinary differential equation
\begin{align}\label{eq:ODE_11}
R^{-1} J \frac{d}{dt} H  =z H, \qquad t\in (0,1], \qquad \lim_{t\to 0} H(t,z)=[1,0]^t
\end{align}
and
\begin{align}\label{zetaH2}
\zeta_\beta(z)=-H(1,z)^tJ [q,1]= [1,-q] H(1,z).
\end{align}

We consider two approximations of $H$. The first one, $H_\eps$, for  $0<\eps<1$ is the approximation introduced in Proposition \ref{prop:Heps-convergence}. This is the unique solution of the differential equation \eqref{eq:ODE_11} on $[\eps,1]$ with initial condition $H_\eps(\eps,z)=[1,0]^t$.

The second approximation is constructed from the process $X^\nu$ for $\nu<0$, introduced in  \eqref{eq:Xnu}. Recall the definition of $t(\cdot)$ from \eqref{eq:inversetimechange}, 
and define
\begin{align}\label{eq:R_nu}
R_t^{t(\nu)}=-\frac12  J(X^\nu_{u})^{-1} J X^\nu_{u}.
\end{align}
Now define the functions $H^{t(\nu)}$ as  the unique solutions of the  differential equation
\begin{align}\label{eq:nu_ODE}
(R^{t(\nu)})^{-1} J \frac{d}{dt} H  =z H, \qquad t\in [t(\nu),1],   \qquad H^{t(\nu)}(t(\nu),z)=[1,0]^t.
\end{align}
Note that we have $\zeta_{\beta,\nu}=[1,-q] H^{t(\nu)}(1,z)$.

The two approximations are connected via the identity 
\begin{align}\label{eq:H_H}
H^{t(\nu)}(t,z)= X_\nu  H_\eps(t,z),\qquad  t\in [t(\nu),1], \eps=t(\nu).
\end{align}
Define
\begin{align}\label{def:V}
\cH_u(z)=X_{u}H(t(u),z),\qquad \cH_u^\nu(z)=X^\nu_{u}H^{t(\nu)}(t(u),z).
\end{align}
The process $\cH$ also describes $\zeta_\beta$, since $\cH_0=H(1,\cdot)$, and so
\begin{align}
\zeta_\beta=[1,-q]\cH_0. \label{eq:zeta_cH}
\end{align}



\begin{proposition}\label{prop:HSDE}
Consider the independent copies of two-sided Browninan motion $b_1, b_2$ from \eqref{eq:xy}, and let  $\mathcal F_u$ be the $\sigma$-field generated by the  increments  $b_k(u)-b_k(s), s<u$, $k=1,2$. 

The processes $X^\nu$, $\mathcal H^\nu$ and $\mathcal H$ are all adapted to the filtration $\mathcal F_u, u\in \mathbb R$, and they satisfy the stochastic differential equations 
\begin{align}\label{XnuSDE}
d X^\nu&= \mat{0}{-db_1}{0}{db_2} X^\nu, \qquad u>\nu,\\[5pt]
\label{HSDE}
d \cH&= \mat{0}{-db_1}{0}{db_2} \cH-z\tfrac{\beta}{8}e^{\beta u/4} J \cH \, du, \qquad u\in \R.
\end{align}
The processes $\mathcal H^\nu$ satisfy the equation \eqref{HSDE} on $[\nu,\infty)$.  

The processes $\mathcal H, \cH^\nu$ can also be determined as follows.
$\cH^\nu$ 
is the unique strong solution of the  SDE family \eqref{HSDE}  on $[\nu,\infty)$ with boundary condition $\cH^{\nu}_\nu=[1,0]^t$.
Moreover, a.s.~as $\nu\to -\infty$ we have $\cH^{\nu}\to \cH$ uniformly on compacts of $\mathbb R\times \CC$. 
\end{proposition}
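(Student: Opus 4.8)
The plan is to prove the last paragraph of the proposition. The strong-solution characterization of $\cH^\nu$ is essentially immediate: on every finite interval $[\nu,T]$ the equation \eqref{HSDE} is a \emph{linear} SDE whose coefficients are bounded (the only non-constant one is $z\tfrac{\beta}{8}e^{\beta u/4}J$, which is bounded on $[\nu,T]$ for each fixed $z$), so by the standard theory it has a unique strong solution with any prescribed initial value. Since the parts of the proposition already proved show that $\cH^\nu$ solves \eqref{HSDE} on $[\nu,\infty)$, and the definition \eqref{def:V} gives $\cH^\nu_\nu=X^\nu_\nu H^{t(\nu)}(t(\nu),z)=[1,0]^t$, the process $\cH^\nu$ is that unique strong solution.

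The heart of the matter is the convergence $\cH^\nu\to\cH$, and the key first step is to eliminate $X_\nu$ using the identity \eqref{eq:H_H}. Since $X^\nu_u=X_uX_\nu^{-1}$ and $H^{t(\nu)}(t,z)=X_\nu H_{t(\nu)}(t,z)$ (this holds on the whole interval where both ODEs are defined, by uniqueness of ODE solutions), the definition \eqref{def:V} gives
\[
\cH^\nu_u(z)=X^\nu_u\,H^{t(\nu)}(t(u),z)=X_u\,H_{t(\nu)}(t(u),z),
\]
whereas $\cH_u(z)=X_u H(t(u),z)$, again by \eqref{def:V}. Hence
\[
\cH^\nu_u(z)-\cH_u(z)=X_u\big(H_{t(\nu)}(t(u),z)-H(t(u),z)\big).
\]
In other words, shifting the left endpoint of the underlying Dirac operator from $0$ to $t(\nu)$ is, after the built-in conjugation by $X_\nu$, exactly the $\eps$-truncation of Proposition \ref{prop:Heps-convergence} with $\eps=t(\nu)$.

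To conclude, I would fix a compact $K\subset\R\times\CC$, enlarge it to a box $[a,b]\times D$ with $D\subset\CC$ compact, and choose $\sigma>\max(1,e^{4b/\beta})$. By Remark \ref{rem:large_int} the operator $\btau_\beta$ extended to $(0,\sigma]$ still satisfies Assumptions 1--3 almost surely, and its $H$ and $H_\eps$ restrict by ODE uniqueness to the ones appearing above. For $u\in[a,b]$ the time change $t(u)=e^{4u/\beta}$ lies in the compact subinterval $[e^{4a/\beta},e^{4b/\beta}]$ of $(0,\sigma]$, which is bounded away from $0$, while $t(\nu)=e^{4\nu/\beta}\to 0$ as $\nu\to-\infty$. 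Applying Proposition \ref{prop:Heps-convergence} pathwise — legitimate since $\btau_\beta$ a.s.\ satisfies its hypotheses by Proposition \ref{prop:tau-beta} — yields
\[
\sup_{(t,z)\in[e^{4a/\beta},e^{4b/\beta}]\times D}\big\|H_{t(\nu)}(t,z)-H(t,z)\big\|\;\longrightarrow\;0 .
\]
Since $X$ has a.s.\ continuous paths, $M:=\sup_{u\in[a,b]}\|X_u\|<\infty$ a.s., so
\[
\sup_{(u,z)\in K}\big\|\cH^\nu_u(z)-\cH_u(z)\big\|\le M\sup_{(t,z)\in[e^{4a/\beta},e^{4b/\beta}]\times D}\big\|H_{t(\nu)}(t,z)-H(t,z)\big\|\longrightarrow 0 ,
\]
which is the asserted uniform convergence on compacts (for $\nu<a$ the process $\cH^\nu$ is defined on all of $[a,b]$, so the comparison is meaningful for all sufficiently small $\nu$).

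The only genuine work is already packaged inside Proposition \ref{prop:Heps-convergence}, which absorbs the possible blow-up of $\|R\|$ near $t=0$; the ``hard part'' here is merely recognizing that the change of left endpoint is precisely that truncation once \eqref{eq:H_H} is invoked, after which a.s.\ local boundedness of $\|X_u\|$ finishes the argument. The one piece of bookkeeping to watch is keeping the mutually inverse time changes $u(t)=\tfrac{4}{\beta}\log t$ and $t(u)=e^{4u/\beta}$ straight and enlarging $\sigma$ beyond $1$ so that the $u$-range of $K$ is covered.
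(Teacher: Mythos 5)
Your argument for the convergence $\cH^\nu\to\cH$ is correct and is essentially the paper's own route: rewrite $\cH^\nu_u(z)=X_u H_\eps(t(u),z)$ via \eqref{eq:H_H} with $\eps=t(\nu)$, invoke Proposition \ref{prop:Heps-convergence} pathwise (justified by Proposition \ref{prop:tau-beta} and Remark \ref{rem:large_int} to cover $u$-ranges beyond $0$), and absorb the factor $X_u$ by a.s.\ local boundedness. Your bookkeeping with the time changes and the choice of $\sigma$ is careful and matches what the paper leaves implicit; the uniqueness of the strong solution for $\cH^\nu$ via linearity and local boundedness of the coefficients is also the intended argument.

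The one substantive omission is that you prove only the last paragraph and treat the rest as given, whereas the proposition also asserts adaptedness of $X^\nu$, $\cH^\nu$ and $\cH$ to $\mathcal F_u$ and the SDEs \eqref{XnuSDE}--\eqref{HSDE}. Most of that is routine It\^o calculus, but the adaptedness of $\cH$ is genuinely non-obvious and is in fact the payoff of the convergence you established: for $u<0$ the matrix $X_u$ is built from $b_k(s)-b_k(u)$, $s\in[u,0]$, so it is \emph{independent} of $\mathcal F_u$ rather than measurable with respect to it, and the defining formula $\cH_u=X_uH(t(u),\cdot)$ does not by itself exhibit $\cH$ as adapted. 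The paper's proof deduces adaptedness of $\cH$ precisely from the fact that it is the a.s.\ limit of the $\mathcal F$-adapted processes $\cH^\nu$ (each of which is built from the increment process $X^\nu$). Adding one sentence to this effect, plus the observation that \eqref{HSDE} for $\cH$ then follows from It\^o's formula, would make your write-up a complete proof of the statement.
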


\begin{proof}
$X^\nu_u$ is  $\mathcal F_u$-measurable by the definitions \eqref{eq:X} and \eqref{eq:Xnu}.  It\^o's formula shows that it satisfies \eqref{XnuSDE}. 

Note that for $u<0$, $X_u$ is not $\mathcal F_u$-measurable, in fact it is independent of $\mathcal F_u$ as it is built from $b_1(s), b_2(s), s\in [u,0]$.  This issue already arises with two-sided Brownian motion: $b_1(u), b_2(u)$ are not $\mathcal F_u$-measurable either for $u<0$. 
It is not a priori clear that $\cH$ is $\mathcal F$-adapted, because it is  defined in terms of $X$. 
However, $\cH^\nu$ in \eqref{def:V} is defined in terms of $X^\nu$ (see also  \eqref{eq:nu_ODE}), so it is  $\mathcal F$-adapted. The definition and It\^o's formula implies that it satisfies the SDE \eqref{HSDE} with the stated boundary conditions. 

By \eqref{eq:H_H} we have
\[
\cH_u^\nu(z)=X^\nu_u H^{t(\nu)}(t(u),z)=X_u H_\eps(t(u), z)
\]
where $H_\eps$ solves \eqref{eq:ODE_11} on $[t(\nu),1]$ with $H_\eps(t(\nu),z)=[1,0]^t$.
Proposition \ref{prop:Heps-convergence} now implies that as $\nu\to-\infty$, we have $\mathcal H^\nu\to \cH$ uniformly on compact subsets of $\mathbb R \times \mathbb C$. In particular, this shows that  $\mathcal H$ is $\mathcal F$-adapted.  It\^o's formula implies that $\mathcal H$ satisfies \eqref{HSDE}. 

Note that $H$ and $H^{t(\nu)}$ are defined on $(0,1]$ and $[t(\nu), 1]$, respectively, so $\cH, \cH^\nu$ are only defined on $(-\infty,0]$ and $[\nu,0]$ a priori. However, by Remark \ref{rem:large_int} we can extend the definitions for $(-\infty,u(\sigma)]$, $\nu,u(\sigma)]$ for any $\sigma>1$, which allows us to extend the definitions to $\R$ and $(\nu,\infty)$, respectively. 
\end{proof}

Equation \eqref{HSDE} and It\^o's formula implies the following. 

\begin{proposition}\label{prop:H_stat}
Consider  $\cH$ defined in \eqref{def:V}. The random function 
\begin{equation}\label{eq:stat}
(u,z)\mapsto \cH_u(ze^{-\beta u/4})
\end{equation}
is stationary under $u$-shifts. In particular, for each $u\in \R$ the random analytic function $z\to \cH_u(ze^{-\beta u/4})$ has the same distribution as $z\to \cH_0(z)$. 
\end{proposition}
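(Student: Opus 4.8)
The plan is to reduce the statement to the finite-$\nu$ approximations $\cH^\nu$ from Proposition~\ref{prop:HSDE}, for which everything is adapted and there is no measurability subtlety. Recall that $\cH^\nu$ is the unique \emph{strong} solution on $[\nu,\infty)$ of the linear SDE~\eqref{HSDE} with $\cH^\nu_\nu=[1,0]^t$; in particular it is a fixed measurable functional of the two-sided Brownian motion $(b_1,b_2)$. The guiding principle is that a time shift $u\mapsto u+h$ in~\eqref{HSDE} can be undone at the cost of (i) replacing $(b_1,b_2)$ by its $h$-shift, which is equidistributed, and (ii) rescaling the spectral parameter $z\mapsto ze^{\beta h/4}$, which absorbs the factor $e^{\beta(u+h)/4}=e^{\beta h/4}e^{\beta u/4}$ in the drift. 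The compensating change $z\mapsto ze^{-\beta u/4}$ in the statement is precisely what cancels this rescaling.

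Concretely, I would set $\Phi_u(w):=\cH^\nu_{u+h}(w)$ for $u\ge\nu-h$. Substituting $u\mapsto u+h$ in~\eqref{HSDE} shows that $\Phi$ solves the same SDE driven by $\hat b_k(\cdot):=b_k(\cdot+h)-b_k(h)$ with drift coefficient $w\tfrac{\beta}{8}e^{\beta(u+h)/4}J=(we^{\beta h/4})\tfrac{\beta}{8}e^{\beta u/4}J$ and initial value $\Phi_{\nu-h}(w)=[1,0]^t$. Reading this as the defining equation for $\cH^{\nu-h}$ at the parameter $we^{\beta h/4}$ but driven by $\hat b$ rather than $b$, and using that $\hat b\ed(b_1,b_2)$ together with the fact that $\cH^{\nu-h}$ is a fixed measurable functional of its driving noise, one obtains the identity of continuous random fields
\[
\big(\cH^\nu_{u+h}(w)\big)_{u\ge\nu-h,\ w\in\CC}\ \ed\ \big(\cH^{\nu-h}_u(we^{\beta h/4})\big)_{u\ge\nu-h,\ w\in\CC}.
\]

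Finally I would evaluate this identity along the deterministic curve $w=ze^{-\beta(u+h)/4}$, for which $we^{\beta h/4}=ze^{-\beta u/4}$, obtaining $\big(\cH^\nu_{u+h}(ze^{-\beta(u+h)/4})\big)_{u,z}\ed\big(\cH^{\nu-h}_u(ze^{-\beta u/4})\big)_{u,z}$, and then let $\nu\to-\infty$. By Proposition~\ref{prop:HSDE} (and Remark~\ref{rem:large_int} for the extension of the domain to all of $\R$) we have $\cH^\nu\to\cH$ uniformly on compacts of $\R\times\CC$ almost surely, so both sides converge and we get $\big(\cH_{u+h}(ze^{-\beta(u+h)/4})\big)_{u,z}\ed\big(\cH_u(ze^{-\beta u/4})\big)_{u,z}$, which is exactly the asserted $u$-shift stationarity of $(u,z)\mapsto\cH_u(ze^{-\beta u/4})$; the ``in particular'' follows by taking the one-time marginal at $u=0$ after an $h$-shift. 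The step requiring the most care is the change of driving noise, which is legitimate precisely because $\cH^\nu$ is a genuine strong solution (Proposition~\ref{prop:HSDE}), together with the minor bookkeeping of restricting the random-field identity to the curve $u\mapsto(u,ze^{-\beta(u+h)/4})$, which is valid since all the processes in question are jointly continuous. An alternative route is to check directly via It\^o's formula that $(u,z)\mapsto\cH_u(ze^{-\beta u/4})$ satisfies an SDE with $u$-independent coefficients, but making that computation rigorous at the moving evaluation point needs the same joint-regularity input.
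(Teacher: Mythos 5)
Your argument is correct, and it supplies essentially all the detail that the paper compresses into the single sentence ``Equation \eqref{HSDE} and It\^o's formula implies the following.'' The substance is the same in both cases --- stationarity comes from the shift-invariance in law of the increments of $(b_1,b_2)$ together with the fact that a time shift by $h$ multiplies the drift coefficient $e^{\beta u/4}$ by $e^{\beta h/4}$, which is exactly undone by the rescaling $z\mapsto ze^{-\beta u/4}$ --- but you implement it differently. The paper's intended route is to apply It\^o's formula to $G_u(z)=\cH_u(ze^{-\beta u/4})$ and observe that $G$ satisfies an autonomous SDE (with an extra $-\tfrac{\beta}{4}z\,\partial_z G\,du$ transport term), leaving implicit why $G$ is \emph{the} stationary solution of that equation; your route instead works entirely at the level of the adapted strong solutions $\cH^\nu$, where the noise-shift substitution is unambiguous, and transfers the conclusion to $\cH$ via the a.s.\ uniform-on-compacts convergence $\cH^\nu\to\cH$ of Proposition~\ref{prop:HSDE}. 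This buys you a cleaner treatment of the two measurability issues that the It\^o route glosses over: that the field $(u,z)\mapsto\cH_u(z)$ is a fixed measurable functional of the driving noise increments (true for $\cH^\nu$ by strong uniqueness, hence for the limit), and that restricting a distributional identity of jointly continuous random fields to the deterministic curve $w=ze^{-\beta(u+h)/4}$ is legitimate. One small point worth making explicit: when you let $\nu\to-\infty$, the two sides of your field identity involve $\cH^{\nu}$ and $\cH^{\nu-h}$ respectively, and both indices tend to $-\infty$, so both converge to $\cH$; this is exactly what you need and is covered by the convergence statement in Proposition~\ref{prop:HSDE} (extended to all of $\R$ via Remark~\ref{rem:large_int}).
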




\subsection{The Taylor expansion of $\cH$ and $\zeta_\beta$}

Proposition \ref{prop:HSDE} gives a characterisation of $\cH$ as the uniform on compacts limit of strong solutions \eqref{HSDE} on $[\nu,\infty)$ with $\nu\to -\infty$ with an initial condition $[1,0]^t$. In Corollary \ref{cor:H_char} below we show that $\cH$ is the unique strong solution of \eqref{HSDE} under some additional conditions. The main ingredient for this characterisation is the analysis of the  Taylor expansion of $\cH$ in the variable $z$.

Let 
\begin{align}\label{def:cA_cB}
\cH=[\cA, \cB]^t,    
\end{align}
and recall from \eqref{eq:zeta_cH} that $\zeta_\beta=[1,-q]\cH=\cA-q \cB$, where $\cH, \cA, \cB$ are evaluated at time $u=0$. 
Let $\cA_{n,s},\cB_{n,s}$ be the Taylor coefficients  of $\cA$, $\cB$ at $z=0$ and time $s$. Then 
$$
\zeta_\beta(z)=\sum_{n=0}^\infty(\cA_{n,0}-q \cB_{n,0})z^n.
$$
Surprisingly, any initial sequence of $\cB_1,\cA_1,\cB_2,\ldots$ has a closed SDE description! Moreover, the SDEs can be explicitly solved. 

To describe the solution we first introduce a sequence of processes in Proposition \ref{prop:Taylor}, show that they are well-defined, and provide a.s.~growth bounds. Proposition \ref{prop:Taylor_1} below shows that the introduced processes are actually the Taylor coefficients $\cA_n, \cB_n$.

\begin{proposition}\label{prop:Taylor} Let $b_1, b_2$ be  independent copies of two-sided Brownian motion on $\R$, and set $y_u=e^{b_2-u/2}$ as in \eqref{eq:xy}.  The recursive system $A_0\equiv 1, B_0 \equiv 0$, and 
\begin{align}\nonumber
B_{n}&=y_u\int_{-\infty}^u \tfrac{\beta}{8}\,e^{\beta s/4 }A_{n-1,s}\,  y_s^{-1}\,ds,\\
A_{n}&=\int_{-\infty}^u \tfrac{\beta}{8}\, e^{\beta s/4}\,B_{{n-1},s}\,ds- B_{n,s} \, db_1.\label{eq:AnBn}
\end{align}
is well-defined for $u\le 0$. Moreover, given $a<1/4$ and $\beta_0\in(0,1]$ there is a random constant $C$, so that for all $\beta\ge \beta_0$ a.s.\;for all  $n\ge 0$ and $u\le 0$ we have
\begin{equation}
    \label{eq:induc}
    |A_{n,u}|,|B_{n,u}|\le  \frac{C^n}{n!} e^{a \beta n u}. 
\end{equation}
\end{proposition}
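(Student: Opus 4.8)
The plan is to prove the well-definedness and the bound \eqref{eq:induc} simultaneously by induction on $n$, using the law of the iterated logarithm for the Brownian motions $b_1,b_2$ to control the random prefactors that appear. First I would fix $a<1/4$, pick $a'\in(a,1/4)$, and invoke the law of iterated logarithm (Theorem~\ref{thm:lil} in the appendix) to produce a random constant $C_0\ge 1$ such that, almost surely for all $\beta\ge\beta_0$ and all $s\le 0$,
\begin{align}\label{eq:lil-use}
C_0^{-1}e^{a'\beta s/4}\le y_s\le C_0\,e^{-a'\beta s/4},\qquad C_0^{-1}e^{a'\beta s/4}\le y_s^{-1}\le C_0 e^{-a'\beta s/4},
\end{align}
where the point is that $b_2(s)=o(|s|)$ so $y_s=e^{b_2(s)-s/2}$ is squeezed between exponentials with rate arbitrarily close to $-s/2$; note $a'\beta/4<\beta/8$ is exactly what makes the integrals $\int_{-\infty}^u e^{\beta s/4}e^{\pm a'\beta s/4}\,ds$ converge at $-\infty$. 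I would similarly record a bound on the Itô integral $\int_{-\infty}^u B_{n,s}\,db_1$ by writing it via integration by parts or by the Dambis--Dubins--Schwarz time change, reducing it to the law of iterated logarithm for a time-changed Brownian motion whose clock is $\int B_{n,s}^2\,ds$; this is where the exponential decay of the integrand in $s$ is essential so the clock is finite.

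The inductive step is then a direct estimate. Assuming $|A_{n-1,s}|,|B_{n-1,s}|\le \frac{C^{n-1}}{(n-1)!}e^{a\beta s}$ for all $s\le 0$, I would bound
\begin{align}\notag
|B_{n,u}|&\le C_0 e^{-a'\beta u/4}\int_{-\infty}^u \tfrac{\beta}{8}e^{\beta s/4}\,\frac{C^{n-1}}{(n-1)!}e^{a\beta s}\,C_0 e^{-a'\beta s/4}\,ds
= \frac{C_0^2 C^{n-1}}{(n-1)!}\,\tfrac{\beta}{8}\,e^{-a'\beta u/4}\int_{-\infty}^u e^{(\beta/4-a'\beta/4+a\beta)s}\,ds,
\end{align}
and the remaining integral equals $e^{(\beta/4-a'\beta/4+a\beta)u}$ divided by the (positive) exponent rate, so after absorbing $e^{-a'\beta u/4}$ one gets a prefactor $e^{a\beta u}\cdot e^{(\beta/4-a'\beta/4)u}\le e^{a\beta u}$ for $u\le 0$ (since $\beta/4-a'\beta/4>0$), times a constant of the form $c\,C_0^2/(\tfrac14-\tfrac{a'}{4})$ which I can absorb into one more factor of $C$; this produces the claimed $\frac{C^n}{n!}$ after choosing $C$ large enough relative to $C_0$ and the fixed absolute constants, using $1/(n-1)!\le n/n!$. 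The $A_n$ bound is analogous: the drift term $\int_{-\infty}^u\tfrac{\beta}{8}e^{\beta s/4}B_{n-1,s}\,ds$ is handled exactly like $B_n$ without the $y_u$ prefactor, and the martingale term $\int_{-\infty}^u B_{n,s}\,db_1$ is controlled using the bound on $B_n$ just obtained together with the Itô-integral estimate set up in the first paragraph, again yielding a prefactor $e^{a\beta n u}$ with a constant absorbable into $C^n/n!$. Well-definedness (finiteness of every integral, including the stochastic one, down to $-\infty$) falls out of the same estimates applied with $n-1$ in place of $n$.

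I expect the main obstacle to be the stochastic integral term $\int_{-\infty}^u B_{n,s}\,db_1$ in the definition of $A_n$: unlike the Lebesgue integrals it is not controlled pathwise by the integrand bound alone, and one must argue that an improper Itô integral over $(-\infty,u]$ makes sense and obeys an almost sure growth bound uniform in $n$ and $\beta\ge\beta_0$. The cleanest route is to note that $\int_{-\infty}^u B_{n,s}\,db_1 = B_{n,u}b_1(u)-B_{n,0}b_1(0)-\int_{\dots}$ is not quite available since $B_n$ is itself an Itô process, so instead I would use the Dambis--Dubins--Schwarz representation $\int_{-\infty}^u B_{n,s}\,db_1 = W\!\big(\int_{-\infty}^u B_{n,s}^2\,ds\big)$ for a Brownian motion $W$, observe the total clock $\int_{-\infty}^0 B_{n,s}^2\,ds\le (C^n/n!)^2\int_{-\infty}^0 e^{2a\beta s}\,ds = (C^n/n!)^2/(2a\beta)$ is finite and exponentially small in $n$, and then bound $|W(t)|$ for $t\le (C^n/n!)^2/(2a\beta_0)$ by a random constant via the law of the iterated logarithm at $0$; keeping track of how this constant may depend on $n$ (it should not, after one extra $C^n/n!$ is pulled out) is the only delicate bookkeeping. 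One subtlety worth a sentence in the writeup is that a single random $C$ must work simultaneously for all $\beta\ge\beta_0$, which is why \eqref{eq:lil-use} is stated with the $\beta$-dependence explicit and why we take $a'$ strictly below $1/4$ with room to spare.
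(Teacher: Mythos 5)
Your overall strategy --- induction on $n$, law of the iterated logarithm to control $y$, and Dambis--Dubins--Schwarz plus the LIL for the improper stochastic integral --- is the same as the paper's, but two steps as written do not work. First, your displayed bound on $y_s$ is false: for $s\le 0$ we have $y_s=e^{b_2(s)+|s|/2}$, which grows like $e^{|s|/2}$ and is not bounded by $C_0e^{a'\beta|s|/4}$ unless $a'\beta\ge 2$, so the bound fails for all $\beta<8$. What the LIL actually gives is that $e^{s}y_s^{2}=e^{2b_2(s)}$ is squeezed between $e^{\pm\eps|s|}$, i.e.\ $y_u\le Ce^{-(1/2+\eps)u}$ and $y_s^{-1}\le Ce^{(1/2-\eps)s}$; the factors $e^{-u/2}$ (outside) and $e^{s/2}$ (inside) must both be carried along and only cancel after the $s$-integration, because the integral is dominated by $s$ near $u$. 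Your computation drops exactly these factors, which is why your exponent arithmetic comes out too clean.

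Second, and more seriously, the mechanism producing the $1/n!$ is absent. The induction hypothesis has to be $|A_{n-1,s}|\le \frac{C^{n-1}}{(n-1)!}e^{a\beta (n-1)s}$ (you wrote $e^{a\beta s}$, and correspondingly your step only yields $e^{a\beta u}$, not $e^{a\beta n u}$). With the correct hypothesis the integrand at step $n$ decays at rate $\approx \beta/4+a\beta(n-1)$, so $\int_{-\infty}^{u}(\cdots)\,ds$ contributes a denominator of order $n$; it is this $O(1/n)$ gain per step, accumulated over the induction, that gives $C^n/n!$. Your bookkeeping instead produces an $n$-independent constant $K$ per step and tries to close the induction via ``$1/(n-1)!\le n/n!$ and choose $C$ large''; but $\frac{C^{n-1}}{(n-1)!}K\le\frac{C^{n}}{n!}$ requires $nK\le C$, which fails for all $n>C/K$ however $C$ is chosen. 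The same uniformity-in-$n$ issue is the real content of the stochastic-integral term: the LIL constant for the time-changed Brownian motion depends on $n$, and one needs a quantitative tail bound (the paper's Proposition \ref{prop:BR_mart}, giving $P(Z_n>y)\le ce^{-y}$) together with Borel--Cantelli to get $Z_n\le Z+2\log n$ simultaneously for all $n$, with the $\log n$ then absorbed using the $1/\sqrt{n}$ saved from the clock $\int_{-\infty}^{u}B_{n,s}^{2}\,ds\lesssim (C^n/n!)^2e^{2a\beta nu}/(a\beta n)$. Flagging this as ``delicate bookkeeping'' and suggesting the constant ``should not'' depend on $n$ after pulling out one factor of $C^n/n!$ does not address the simultaneity over $n$, which is where the actual work lies.
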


By the scaling properties of the processes $A_k,B_k$, Proposition \ref{prop:Taylor} extends to arbitrary $u\in \R$, and \eqref{eq:induc} holds for $u\le u_0$  with $C$ depending on $u_0$.

\begin{remark}[Dufresne's identity for $B_1$] \label{rem:Dufresne}
We have
\[
B_1(0)=\frac{\beta}{8}\int_{-\infty}^0 e^{-b_2(s)+(\frac{\beta}{4}+\frac12)s}ds\ed \frac{\beta}{2} \int_0^\infty e^{2(b_s-(\frac{\beta}{2}+1) s)}ds\ed \frac{\beta}{4\, G}, \]
where $G$ has Gamma distribution with rate 1 and shape parameter $1+\frac{\beta}{2}$. The last step follows from Dufresne's identity \cite{Dufresne}.
\end{remark}

\begin{proof}[Proof of Proposition \ref{prop:Taylor}.]
We first show by induction that $A_n,B_n$ are well-defined. 
Set $a< 1/4$ and $\beta_0>0$. Set $\eps=\min(1,(1-a)\beta_0/8)$ so that 
\begin{align}
-a\beta+\beta/4-\eps\ge \eps,
\end{align}
for $\beta\ge \beta_0$. For $n\ge 1$ define
$\bar A_n=\sup_{u\le 0} |A_n|/e^{a\beta nu}$, and define $\bar B_n$ similarly. In addition, set $\bar B_n^\eps=\sup_{u\le 0} |B_n|/e^{(a\beta n+\eps)u}$; we set these quantities to $\infty$ if the corresponding $A_n,B_n$ is not well-defined. 

Brownian motion is sublinear, so for each $\eps>0$ there is a random variable $C_\eps$ so that
 \begin{align}\label{eq:C_eps}
 C_{\eps}^{-1} e^{\eps u} \le e^{u} y_u^2\le C_\eps e^{-\eps u}, \qquad u\le 0.
 \end{align} 
If $\bar A_{n-1}<\infty$  then 
\[
|B_{n,u}|\le C_\eps e^{-(1+\eps)/2 u} \int_{-\infty}^u \frac{\beta}{8} e^{(\beta/4+1/2-\eps/2)s} |A_{n-1,s}| ds
\]
so we conclude that 
\begin{align}\label{eq:B_bnd_000}
\bar B_{n}\le 
\bar B_{n}^\eps\le \frac{\beta}{8} \frac{ C_\eps\bar A_{n-1}}{a(n-1)\beta +\beta/4} \le  \frac{ C_\eps\bar A_{n-1}}{8an}. 
\end{align}
The first term in the definition \eqref{eq:AnBn}  of $A_n$ for $u\le 0$ can be bounded as 
\begin{align}
\Big|\int_{-\infty}^u \tfrac{\beta}{8}\, e^{\beta s/4}\,B_{{n-1},s}\,ds\Big|\le \bar A_{n,1} e^{(a+\beta/4) u}, \qquad \bar A_{n,1}:=\frac{\beta\bar B_{n-1}}{8a+2\beta}.
    \label{eq:A_bnd_001}
\end{align}
By Proposition \ref{prop:BR_mart} there is a random variable $Z_n$ with rate 1 exponential tails  \eqref{eq:Z_tail} so that \begin{align}\notag
\Big|\int_{-\infty}^u B_{n,u} db_1\Big|\le
 \frac{2\bar B_n^\eps}{\sqrt{a'}}  e^{a'u} \left(Z_n+ \log(1+2a'|u|)+|\hspace{-0.15em}\log a'| +\log(1+2|\log \bar B_n^\eps|)\right)
\end{align}
with $a'=a'_n=an\beta+\eps$.
This and \eqref{eq:A_bnd_001} implies that $\bar A_n \le \bar A_{n.1}+\bar A_{n,2}$ with 
$$
\bar A_{n,2}=\frac{2\bar B_n^\eps}{\sqrt{a'}}\left(Z_n+|\hspace{-0.15em}\log a'| +\log(1+2|\hspace{-0.15em}\log \bar B_n^\eps|)+\sup_{u\le 0}e^{\eps u} \log(1+2a'
|u|)\right).
$$
We conclude that all $\bar A_n,\bar B_n$ are finite and so all $A_n,B_n$ are well defined. 

The tail bound $P(Z_n>y)<ce^{-y}$ and the Borel-Cantelli lemma shows that 
\begin{align}
  Z:=\sup_{n \ge 1} (Z_n-2\log n)  <\infty. \label{eq:Z_n_bnd}
\end{align}
Recall the definition of $C_\eps\ge 1$ from \eqref{eq:C_eps}, and define  
\begin{align}\notag
    C=\frac{C_\eps^2}{16 a^2}+10^8\left(Z+\log (1+\tfrac1\eps)+\frac{1}{a\beta_0}\right)^2.
\end{align}
We will show by induction that $\bar A_n,\bar B_n \le C^n/n!$, which is equivalent to  \eqref{eq:induc}. This holds for $n=0$ since $C\ge 1$. Assuming the bounds for $n-1$ by  \eqref{eq:B_bnd_000} we get
\begin{align}\label{eq:Bbound_2}
    \bar B_n\le \bar B_n^\eps
    \le \frac{C^{n-1} C_\eps}{n!8a}\le  \frac{C^{n-1/2} }{n!}.
\end{align}
This is stronger than the hypothesis for $B_{n}$, but we need  extra room to bound  $A_n$. Next,
$$
\bar A_{n,1}= \frac{1}{2}\frac{\bar B_{n-1}}{4a/\beta+1}\le \frac{C^{n-1}}{2(n-1)!(4a/\beta_0+1)}\le \frac{C^{n-1}}{2(n-1)!}.
$$
For $\bar A_{n,2}$, change variables $u\to \eps u$ and use $\log(1+ab)\le \log(1+a)+\log(1+b)$ to get
$$\Big(\sup_{u\le 0}e^{\eps u} \log(1+2a'
|u|)\Big) -\log(1+a')-\log(1+\tfrac1\eps) \le \sup_{u\le 0}e^u\log(1-2u)\le 1.$$ Use $\log(1+x)\le 1+|\hspace{-0.15em}\log x|$, \eqref{eq:Z_n_bnd} and  \eqref{eq:Bbound_2} to get 
$$
\bar A_{n,2} \le \frac{2C^{n-1/2}}{n!\sqrt{a'}}\Big(2+Z+\log(1+\tfrac1\eps)+2\log n+2|\hspace{-0.15em}\log a'| +\log(1+2|\log (C^{n-1/2}/n!)|)
\Big).
$$
Finally, use  $|\hspace{-0.15em}
\log x|x^{-1/2}\le 1+x^{-1}$ and $\log\left(1+2\left|\log (x^{n-1/2}/n!)\right|\right)\le 4 x^{1/4} n^{1/2}$
to get
$$
\frac{n!\bar A_{n,2} }{2C^{n-1/2}}\le \frac{2+Z+\log(1+\tfrac1\eps)}{\sqrt{a \beta}}+\frac{4}{\sqrt{a\beta}}+2+\frac{2}{\sqrt{an\beta}}+
\frac{4C^{1/4}}{\sqrt{a\beta}}\le \frac{C^{1/2}}{4}.
$$
Thus  $\bar A_n \le C^{n}/n!$, closing the induction.
\end{proof}

\begin{proposition}\label{prop:Taylor_1}
Consider the Taylor coefficient processes $\cA_{n,u}, \cB_{n,u}$ at $z=0$ for the  $\cA, \cB$ with $\cH=[\cA, \cB]^t$.
They 
satisfy
the following system of stochastic differential equations:
\begin{align}\nonumber
 \cB_0&\equiv 0, \qquad \cA_0\equiv 1,\\ 
 \nonumber 
d\cB_n&=\cB_n db_2-\tfrac{\beta}{8}e^{\beta u/4}\cA_{n-1} du,\\
d\cA_n&=- \cB_n db_1+\tfrac{\beta}{8}e^{\beta u/4}\cB_{n-1} du.   
\label{eq:derivatives}
\end{align}
For any $k$, the equations for the first $k$ elements of the sequence $\cB_0, \cA_0, \cB_1, \cA_1, \dots$ form an autonomous system. In fact, $\cA_n, \cB_n$, $n\ge 0$ is the unique solution of this system so that for $n \ge 1$   $\{|\cB_{n,u}|,u\le 0\}$ is a tight family and  $\cA_{n,u} \to 0$ in probability as $u\to -\infty$. Moreover, $\cA_n=A_n$ and $\cB_n=B_n$, as defined in \eqref{eq:AnBn}.
\end{proposition}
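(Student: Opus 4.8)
The plan is to (i) derive the stochastic differential system \eqref{eq:derivatives} for the $z$-Taylor coefficients of $\cH$ from the SDE \eqref{HSDE}, (ii) observe that the resulting system is triangular and hence autonomous, (iii) show that the stated tightness/decay conditions single out a unique solution, and (iv) verify that the processes $A_n,B_n$ of Proposition \ref{prop:Taylor} form such a solution, so that $\cA_n=A_n$ and $\cB_n=B_n$. Step (ii) is essentially free: since $J\binom{\cA}{\cB}=\binom{-\cB}{\cA}$, matching powers of $z$ in \eqref{HSDE} produces precisely \eqref{eq:derivatives}, and there the increments of $(\cB_n,\cA_n)$ involve only $(\cB_n,\cA_n)$, $(\cB_{n-1},\cA_{n-1})$, and the deterministic factor $e^{\beta u/4}$; hence $(\cB_0,\cA_0,\dots,\cB_k,\cA_k)$ is a closed system for every $k$.

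For step (i) I would go through the finite-$\nu$ approximations $\cH^\nu$ of Proposition \ref{prop:HSDE}, restricted to a compact interval $[\nu,T]$. On such an interval \eqref{HSDE} is a linear SDE whose forcing term $-z\tfrac{\beta}{8}e^{\beta u/4}J\cH^\nu\,du$ is entire in $z$, so standard linear SDE theory (or a Picard iteration carried out jointly in $u$ and $z$) shows that $z\mapsto\cH^\nu_u(z)$ is entire with locally uniform control in $u$, and that its Taylor coefficients $\cA^\nu_n,\cB^\nu_n$ satisfy \eqref{eq:derivatives} with $\cA^\nu_0\equiv1$, $\cB^\nu_0\equiv0$, and $\cA^\nu_n,\cB^\nu_n$ vanishing at $u=\nu$ for $n\ge1$. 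Letting $\nu\to-\infty$ and using the uniform-on-compacts convergence $\cH^\nu\to\cH$ of Proposition \ref{prop:HSDE}, Cauchy's integral formula gives $\cA^\nu_n\to\cA_n$ and $\cB^\nu_n\to\cB_n$ uniformly on compacts in $u$; the drift integrals pass to the limit by uniform convergence and the stochastic integrals by a martingale maximal inequality, so $\cA_n,\cB_n$ solve \eqref{eq:derivatives}.

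For step (iii) I would induct on $n$, the case $n=0$ being forced by $\cB_0\equiv0$, $\cA_0\equiv1$. Assuming $\cA_{n-1},\cB_{n-1}$ are already determined, the equation $d\cB_n=\cB_n\,db_2-\tfrac{\beta}{8}e^{\beta u/4}\cA_{n-1}\,du$ is linear, so the difference $\delta$ of two solutions solves $d\delta=\delta\,db_2$, i.e. $\delta_u=\delta_{u_0}\,y_u/y_{u_0}$; since $y_u=e^{b_2(u)-u/2}\to\infty$ almost surely as $u\to-\infty$, a nonzero $\delta$ is not a tight family on $(-\infty,0]$, so two tight solutions must coincide. Given $\cB_n$, two solutions of $d\cA_n=-\cB_n\,db_1+\tfrac{\beta}{8}e^{\beta u/4}\cB_{n-1}\,du$ differ by a constant in $u$, which must vanish because both tend to $0$ in probability as $u\to-\infty$. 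Variation of constants (with no It\^o cross-term, since the integrating factor multiplies a finite-variation process) then identifies the unique tight/decaying solution with the formulas \eqref{eq:AnBn}, the improper integrals converging thanks to the a.s.\ growth bounds on $y$, exactly as in the proof of Proposition \ref{prop:Taylor}.

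For step (iv), a direct application of It\^o's formula, using $dy_u=y_u\,db_2$, shows that the $A_n,B_n$ of \eqref{eq:AnBn} solve \eqref{eq:derivatives}, while the bound \eqref{eq:induc} gives $|A_{n,u}|,|B_{n,u}|\le \tfrac{C^n}{n!}e^{a\beta n u}\to0$ as $u\to-\infty$ for $n\ge1$; hence $\{|B_{n,u}|:u\le0\}$ is tight and $A_{n,u}\to0$, so $(A_n,B_n)$ is a solution with the required properties and step (iii) yields $\cA_n=A_n$, $\cB_n=B_n$, and in particular $\zeta_\beta(z)=\sum_n(A_{n,0}-qB_{n,0})z^n$, recovering \eqref{e:zeta-intro}. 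I expect the main obstacle to be step (i): converting the formal term-by-term differentiation of \eqref{HSDE} in $z$ into a rigorous statement. The finite-$\nu$ reduction makes the analyticity and the coefficient identities routine on a compact interval, and the growth estimates of Propositions \ref{prop:Taylor} and \ref{prop:E} supply the domination needed both for Cauchy's formula and for passing the SDE to the $\nu\to-\infty$ limit; carefully assembling these is the bulk of the argument, while the autonomy and uniqueness parts are comparatively soft.
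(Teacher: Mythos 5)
Your steps (i), (ii) and the uniqueness mechanism in (iii) are sound, and your route to \eqref{eq:derivatives} via the finite-$\nu$ approximations and Cauchy's integral formula is a legitimate, more hands-on alternative to the paper's one-line appeal to the theory of differentiating SDEs in an analytic parameter (Protter, Section V.7). However, there is a genuine gap in how you close the argument. Your uniqueness statement is ``two \emph{tight} solutions must coincide'': the difference $\delta$ of two solutions of the $\cB_n$-equation satisfies $d\delta=\delta\,db_2$, hence $\delta=C\,y_u/y_{u_0}$, and since $y_u\to\infty$ a.s.\ as $u\to-\infty$ this forces $C=0$ \emph{only if both solutions are tight on $(-\infty,0]$}. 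In step (iv) you verify, via \eqref{eq:induc}, that $(A_n,B_n)$ is a tight/decaying solution, but you never verify that the actual Taylor coefficients $(\cA_n,\cB_n)$ of $\cH$ satisfy the tightness of $\cB_{n,u}$ and the decay $\cA_{n,u}\to0$. Without that, the conclusion ``step (iii) yields $\cA_n=A_n$'' does not follow: knowing one of the two solutions is tight tells you nothing about the constant $C$ attached to the other. Note also that this property of $(\cA_n,\cB_n)$ is itself part of the assertion of the proposition (``$\cA_n,\cB_n$ is \emph{the} unique solution \ldots so that \ldots''), so it must be proved in any case.

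The paper supplies exactly this missing ingredient from Proposition \ref{prop:H_stat}: the random function $(u,z)\mapsto \cH_u(ze^{-\beta u/4})$ is stationary in $u$, hence $\cB_{n,u}e^{-n\beta u/4}$ and $\cA_{n,u}e^{-n\beta u/4}$ are stationary processes, which immediately gives tightness of $\{\cB_{n,u},\,u\le 0\}$ and $\cA_{n,u}\to 0$ in probability as $u\to-\infty$. Your proof would be complete if you inserted this observation (or some substitute, e.g.\ a uniform-in-$\nu$ bound on $\cB^\nu_{n,u}$ over $u\in[\nu,0]$, which is more work). The remaining ingredients of your write-up --- the autonomy of the triangular system, the It\^o verification that \eqref{eq:AnBn} solves \eqref{eq:derivatives}, and the inductive uniqueness --- match the paper's proof.
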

\begin{proof}
Since the SDE system \eqref{HSDE} depends analytically on its parameter $z$, It\^o's formula can be applied to get SDEs for derivatives in this parameter as well, see Protter \cite{Protter} Section V.7. 
Differentiate \eqref{HSDE} $n$ times in $z$ and evaluate at $z=0$ to get 
\begin{align}\nonumber 
d \cH^{(n)}= \mat{0}{-db_1}{0}{db_2} \cH^{(n)}-n\frac{\beta e^{\beta u/4}}{8} J \cH^{(n-1)} \, du.
\end{align}
By definition, $\cH^{(n)}_u=\cH^{(n)}_u(0)=n![\cA_{n,u},\cB_{n,u}]^t$. This shows that $\cA_n,\cB_n$ satisfy  \eqref{eq:derivatives}.

It\^o's formula shows that $A_n, B_n$ from Proposition \ref{prop:Taylor} solve the equations \eqref{eq:derivatives}. Our goal is to show that these are equal to $\cA_n, \cB_n$. 

It follows from Proposition \ref{prop:H_stat} that $\cH_u(e^{-\beta u/4}z)$ is time-stationary. As a consequence,  for $n\ge 1$, $\cB_{n,u}e^{-n\beta u/4}$ is also time-stationary, so $\{\cB_{n,u},u\le 0\}$ is tight. Similarly, $\cA_{n,u}e^{-n\beta u/4}$ is tight, so $\cA_{n,u}\to 0$ in probability as $u\to -\infty$. Using the definition of $A_n, B_n$ we check that $B_{n,u} e^{-n \beta u/4}, A_{n,u} e^{-n \beta u/4}$ are time-stationary, hence $\{B_{n,0}, u\le 0\}$ is tight, and $\cA_{n,u}\to 0$ in probability as $u\to -\infty$.

So it suffices to show that solutions with the stated tightness and convergence conditions are unique. Consider two such solutions, played by $A_n,B_n$ and $\cA_n,\cB_n$ here. 

We show first that $Y=B_1-\cB_1\equiv 0$. Indeed, we have $dY=Y db_2$, so $Y=Cy$ for some random constant $C$ with $y$ as in \eqref{eq:xy}. But $Y$ is tight and $y$ is not, so $C=0$.
Now let $Z=A_1-\cA_1$. Then $dZ=0$, so $Z$ is constant. But $Z_u\to 0$ in probability, so $Z=0$. Repeating this argument inductively for each $n$ gives uniqueness. 
\end{proof}

\begin{corollary}\label{cor:H_char}
Almost surely, for all $(u,z) \in \mathbb R_-\times \mathbb C$ we have
\begin{equation}\label{eq:Hbound}
\|\cH_u(z) - [1,0]^t\|\le e^{Ca^u|z|} -1,
\end{equation}
for some fixed $a=a_\beta\ge 1$ and a random constant $C>0$. In particular, 
we have $\cH_u\to [1,0]^t$ a.s.~as $u\to -\infty$ uniformly on compact subsets of $\mathbb C$. 

Conversely, $\cH$ defined in \eqref{def:V}  is the unique solution of  the system $\eqref{HSDE}$ consisting of entire functions in $z$ so that 
\begin{equation}\label{eq:zinD}
    \sup_{z\in D} |\cH_u(z)-[1,0]^t| \to 0
\end{equation}
in probability as $u\to \infty$ for some open neighborhood $D$ of $0$.
In particular, $\cH$ is the unique solution, consisting of entire functions, so that  the entire function valued process $t\mapsto \cH_u(ze^{-\beta u/4})$ is time-stationary. 
\end{corollary}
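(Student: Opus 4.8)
The plan is to extract all four assertions from the power–series description of $\cH$, using the a priori coefficient bounds of Proposition \ref{prop:Taylor} together with the uniqueness statement of Proposition \ref{prop:Taylor_1}. Recall from \eqref{def:V} and Proposition \ref{prop:E} that for each fixed $u$ the map $z\mapsto\cH_u(z)$ is entire and equals its Taylor series $\cH_u(z)=\sum_{n\ge0}[\cA_{n,u},\cB_{n,u}]^t z^n$, and that by Proposition \ref{prop:Taylor_1} the coefficient processes are $\cA_n=A_n$, $\cB_n=B_n$. For the bound \eqref{eq:Hbound}, I would apply Proposition \ref{prop:Taylor} with $a:=\tfrac{1}{8}$ and $\beta_0:=\min(1,\beta)$ to obtain a random $C_0\ge1$ with $|\cA_{n,u}|,|\cB_{n,u}|\le C_0^n e^{\beta n u/8}/n!$ for all $n\ge0$, $u\le0$. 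Since $\cB_{0,u}=0$, $\cA_{0,u}=1$, summing over $n\ge1$ and using $\|v\|\le|v_1|+|v_2|$ gives
\[
\|\cH_u(z)-[1,0]^t\|\ \le\ 2\sum_{n\ge1}\frac{\big(C_0 e^{\beta u/8}|z|\big)^n}{n!}\ =\ 2\Big(e^{\,C_0 e^{\beta u/8}|z|}-1\Big)\ \le\ e^{\,2C_0 e^{\beta u/8}|z|}-1,
\]
using $2(e^x-1)\le e^{2x}-1$ for $x\ge0$; this is \eqref{eq:Hbound} with $C:=2C_0$ and $a:=e^{\beta/8}>1$. Since $a^u\to0$ as $u\to-\infty$, the right side tends to $0$ uniformly for $z$ in any fixed disk, which is the stated a.s.\ uniform–on–compacts convergence of $\cH_u$ to $[1,0]^t$.

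For the converse, let $\tilde\cH$ be any adapted solution of \eqref{HSDE} with each $z\mapsto\tilde\cH_u(z)$ entire and with $\sup_{z\in D}|\tilde\cH_u(z)-[1,0]^t|\to0$ in probability as $u\to-\infty$, for some open disk $D\ni0$; write $\tilde\cH_u(z)=\sum_n[\tilde\cA_{n,u},\tilde\cB_{n,u}]^t z^n$. The first step is to push the hypothesis down to the coefficients: fixing $r>0$ with $\{|z|\le r\}\subset D$, Cauchy's estimates give, on the event $\{\sup_{z\in D}|\tilde\cH_u(z)-[1,0]^t|\le\eps\}$, the bounds $|\tilde\cA_{0,u}-1|,|\tilde\cB_{0,u}|\le\eps$ and $|\tilde\cA_{n,u}|,|\tilde\cB_{n,u}|\le\eps\,r^{-n}$ for $n\ge1$; letting $\eps\downarrow0$ this yields $\tilde\cA_{0,u}\to1$, $\tilde\cB_{0,u}\to0$ and, for every $n\ge1$, $\tilde\cA_{n,u},\tilde\cB_{n,u}\to0$ in probability as $u\to-\infty$. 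The second step is to identify the dynamics: as in the proof of Proposition \ref{prop:Taylor_1} (It\^o's formula for parameter derivatives, Protter \cite{Protter}, V.7), matching powers of $z$ in \eqref{HSDE} shows that $\tilde\cB_0,\tilde\cA_0$ satisfy $d\tilde\cB_0=\tilde\cB_0\,db_2$, $d\tilde\cA_0=-\tilde\cB_0\,db_1$, and that $\tilde\cA_n,\tilde\cB_n$ satisfy the SDEs of \eqref{eq:derivatives} for $n\ge1$. From $d\tilde\cB_0=\tilde\cB_0\,db_2$ we get $\tilde\cB_{0,u}=\tilde\cB_{0,0}\,y_u$ with $y_u$ as in \eqref{eq:xy}; since $y_u\to\infty$ a.s.\ as $u\to-\infty$ while $\tilde\cB_{0,u}\to0$ in probability, $\tilde\cB_0\equiv0$, hence $\tilde\cA_0$ is constant and, being $\to1$, equals $1$. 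Thus $\tilde\cA_n,\tilde\cB_n$ solve exactly the system of Proposition \ref{prop:Taylor_1}, and the coefficientwise convergence above, combined with the a.s.\ boundedness of the continuous processes $\tilde\cB_n$ on every compact time interval, shows that for $n\ge1$ the family $\{\tilde\cB_{n,u}:u\le0\}$ is tight and $\tilde\cA_{n,u}\to0$ in probability as $u\to-\infty$. The uniqueness part of Proposition \ref{prop:Taylor_1} then forces $\tilde\cA_n=\cA_n$, $\tilde\cB_n=\cB_n$ for all $n$, so $\tilde\cH=\cH$, both being entire with identical Taylor coefficients.

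For the stationarity characterization: $\cH$ itself has the stationarity property by Proposition \ref{prop:H_stat}, so I only need that any entire–valued solution $\tilde\cH$ of \eqref{HSDE} with $u\mapsto\tilde\cH_u(ze^{-\beta u/4})$ stationary satisfies the hypothesis of the converse. With $D$ the open unit disk, $\sup_{z\in D}|\tilde\cH_u(z)-[1,0]^t|=\sup_{w\in e^{\beta u/4}D}|\tilde\cH_u(we^{-\beta u/4})-[1,0]^t|$ has, by stationarity, the same law as $\sup_{w\in e^{\beta u/4}D}|\tilde\cH_0(w)-[1,0]^t|$; as $u\to-\infty$ the disk $e^{\beta u/4}D$ shrinks to $\{0\}$ and $\tilde\cH_0$ is continuous with $\tilde\cH_0(0)=[1,0]^t$, so this tends to $0$ a.s., hence in probability. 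The converse then applies and $\tilde\cH=\cH$. The only genuinely delicate point in all of this is the converse: one must convert a soft hypothesis — sup–norm convergence in probability on a neighborhood of $0$ — into the exact per–coefficient tightness and $\to0$ statements demanded by Proposition \ref{prop:Taylor_1}. The Cauchy–estimate step handles this uniformly in $n$, but the two $n=0$ coefficients are not covered by a "$\to0$" condition and must be pinned down by hand using the explicit geometric–Brownian–motion solution of the degenerate equation and the fact that $y_u\to\infty$.
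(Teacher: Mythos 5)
Your proof follows the paper's argument essentially step for step: the bound \eqref{eq:Hbound} by summing the coefficient estimates \eqref{eq:induc} of Proposition \ref{prop:Taylor}, the converse by Cauchy estimates on a disk in $D$ followed by the uniqueness clause of Proposition \ref{prop:Taylor_1}, and the stationarity claim by reduction to \eqref{eq:zinD}; your explicit treatment of the $n=0$ coefficients (pinning down $\tilde\cB_0\equiv 0$, $\tilde\cA_0\equiv 1$ via the geometric Brownian motion) fills in a step the paper leaves implicit. The one caveat is that in the stationarity part you assert $\tilde\cH_0(0)=[1,0]^t$ without justification, and this normalization is not forced by stationarity alone (e.g.\ $2\cH$ is also an entire stationary solution of the linear system \eqref{HSDE}); however, the paper's own one-line argument for this part relies on the same implicit normalization, so this is a defect inherited from the statement rather than introduced by your proof.
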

\begin{proof}
The bound \eqref{eq:Hbound} follows from writing $H=[\cA,\cB]^t$ as a convergent power series in $z$ and the inequalities \eqref{eq:induc}:
$$
\|\cH_u(z)-[1,0]^t\| \le \sum_{n=1}^\infty |\cA_nz^n|+|\cB_n z^n|\le \sum_{n=1}^\infty 
2 \frac{C^n e^{\beta a n u}|z|^n}{n!}\le e^{2 C e^{\beta a u}|z|}-1.
$$
This also implies \eqref{eq:zinD} for any bounded set $D$.

Now consider an entire function solution of \eqref{HSDE} for which $\sup_{z\in D}|\cH_u-[1,0]^t|\to 0$ in probability. By Cauchy's identity, the Taylor coefficients converge to those of $[1,0]$ in probability, in particular each  $\cB_n$ is tight  and each $\cA_n\to 0$ in probability as $u\to -\infty$. But \eqref{HSDE} implies that the Taylor coefficients satisfy \eqref{eq:derivatives}, which has a unique solution with these conditions. The Taylor coefficients determine $\cH$, so it is also unique. 

For the last statement, time-stationarity implies \eqref{eq:zinD} for any bounded set $D$.
\end{proof}

\subsection{Differential equations for the Taylor coefficient densities}

By Proposition \ref{prop:H_stat}  the random function $(u,z)\mapsto \cH_u(ze^{-\beta u/4})$
is stationary under $u$-shifts. In fact, as a function of $u$, this process is stationary and measurable with respect to the filtration generated by the increments of $b_1, b_2$. 
This observation allows us to give a finite closed system of partial differential equations for the joint density of the first finitely many Taylor coefficients. 

\begin{proposition}\label{prop:StationaryTaylor}
The stationary Taylor coefficients $A_n=e^{-n\beta u/4}\cA_n$, 
$B_n=e^{-n\beta u/4}\cB_n$
satisfy
the following system of stochastic differential equations:
\begin{align}\nonumber
 B_0&\equiv 0, \qquad A_0\equiv 1,\\ 
 \nonumber 
dB_n&=B_n db_2+\left(\frac{\beta}{8} A_{n-1}-\frac{\beta n}{4} B_n\right)du,
\\
dA_n&=-B_n db_1+\left(\frac{\beta}{8}B_{n-1}-\frac{n \beta}{4} A_n \right)du.
\end{align}
In particular, for a given $n\ge 1$ the joint density $p_n(x_1, y_1, \dots, x_n, y_n)$ of $B_1, A_1 \dots, B_n, A_n$ satisfies the following partial differential equation:
\begin{align}\label{e:PDE}
     \sum_{k=1}^n \;\tfrac{1}{2}(\partial^2_{x_k}+\partial^2_{y_k})(x_k^2 p_n)-\tfrac{\beta}{8} \partial_{x_k}\left((y_{k-1}-2k x_k)p_n\right)-
    \tfrac{\beta}{8} \partial_{y_k}\left((x_{k-1}-2k y_k)p_n\right)=0.
\end{align}
Here $x_0=0, y_0=1$, and $x_k\in (0,\infty), y_k\in \R$. The same PDE with the $x_n$ terms dropped is satisfied by the joint density of $B_1, A_1, \dots, B_n$.
\end{proposition}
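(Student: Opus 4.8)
The plan is to combine It\^o's formula with the stationarity supplied by Proposition \ref{prop:H_stat}. First I would derive the SDEs for the rescaled coefficients $A_n=e^{-n\beta u/4}\cA_n$, $B_n=e^{-n\beta u/4}\cB_n$ by the product rule, starting from the equations \eqref{eq:derivatives} of Proposition \ref{prop:Taylor_1}. The prefactor $e^{-n\beta u/4}$ has finite variation, so there is no It\^o correction; differentiating it produces the extra drift $-\tfrac{n\beta}{4}B_n\,du$ (respectively $-\tfrac{n\beta}{4}A_n\,du$); the identity $e^{\beta u/4}e^{-n\beta u/4}=e^{-(n-1)\beta u/4}$ converts the drift $\tfrac{\beta}{8}e^{\beta u/4}\cA_{n-1}\,du$ into $\tfrac{\beta}{8}A_{n-1}\,du$, and likewise for $\cB_{n-1}$; and the martingale parts $\cB_n\,db_2$, $-\cB_n\,db_1$ rescale to $B_n\,db_2$, $-B_n\,db_1$. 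This gives the displayed SDE system, and, exactly as in Proposition \ref{prop:Taylor_1}, for each $k$ the equations for $B_0,A_0,\dots,B_k,A_k$ form an autonomous subsystem.

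Next I would use stationarity. By Proposition \ref{prop:H_stat} the analytic-function-valued process $u\mapsto\bigl(z\mapsto\cH_u(ze^{-\beta u/4})\bigr)$ is invariant under $u$-shifts, and the $(A_n,B_n)$ are precisely its Taylor coefficients in $z$. Hence for each fixed $n$ the finite-dimensional process $u\mapsto(B_{1,u},A_{1,u},\dots,B_{n,u},A_{n,u})$ is a stationary solution of the time-homogeneous diffusion above, so its one-time marginal is an invariant measure. To obtain a density I would verify H\"ormander's condition: the noise acts through the two vector fields $\sum_k x_k\partial_{x_k}$ (driven by $b_2$) and $-\sum_k x_k\partial_{y_k}$ (driven by $b_1$), and the ``shift'' terms $\tfrac{\beta}{8}y_{k-1}\partial_{x_k}$, $\tfrac{\beta}{8}x_{k-1}\partial_{y_k}$ in the drift, together with the conventions $x_0=0$, $y_0=1$, let iterated Lie brackets with the drift produce first the constant fields $\partial_{x_1},\partial_{y_1}$ and then, cascading in $k$, all of $\partial_{x_k},\partial_{y_k}$; so the invariant law has a smooth density $p_n$ on $(0,\infty)^n\times\R^n$.

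Finally, the invariant density of a diffusion solves the stationary forward Kolmogorov equation $L^{*}p_n=0$: test $\frac{d}{du}\ev\,f(B_{1,u},A_{1,u},\dots,B_{n,u},A_{n,u})=0$ against smooth compactly supported $f$ and integrate by parts. Reading the generator $L$ off the SDEs --- first-order part from the drift, second-order part from the quadratic covariations of the coefficients --- and taking its formal adjoint yields the forward equation \eqref{e:PDE}. For the reduced statement, I would note that $(B_{1},A_{1},\dots,A_{n-1},B_{n})$ is itself an autonomous diffusion, since the equation for $B_n$ uses only $B_n$ and $A_{n-1}$; rerunning the argument for it produces the version of \eqref{e:PDE} in the $2n-1$ variables $x_1,y_1,\dots,x_{n-1},y_{n-1},x_n$, with the $y_n$-derivative terms of the $k=n$ summand deleted.

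The routine-looking steps conceal two genuine points. One is checking the H\"ormander bracket condition carefully enough to know the invariant measure has a density regular enough for a classical PDE --- the subtlety being that the diffusion matrix is only rank one in each of the $x$- and $y$-blocks, so the hypoellipticity has to be generated entirely by the drift. The other is controlling the tails of the invariant law and the linear growth of the coefficients so that the integration by parts producing the weak forward equation carries no boundary contribution, in particular none along $\{x_k=0\}$; the exponential moment bounds \eqref{eq:induc} are the natural tool here. I expect the first to be the main obstacle.
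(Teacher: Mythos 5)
Your proposal is correct and takes exactly the paper's route: the paper's entire proof of this proposition is the single sentence ``This follows from It\^o's formula, Proposition \ref{prop:Taylor_1} and Kolmogorov's forward equation for Markov processes,'' and your rescaling computation, the appeal to stationarity via Proposition \ref{prop:H_stat}, and the stationary Fokker--Planck step are precisely that argument written out in detail. The hypoellipticity and integration-by-parts issues you single out as the genuine obstacles are not addressed in the paper at all, so on those points you are being more careful than the source.
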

For $n=1$ we get the following PDE for $p=p_1$. 
\[
x^2 (p_{xx}+p_{yy})+\tfrac{\beta}{2} y p_y+ ((\tfrac{\beta}{2} +4) x-\tfrac{\beta}{4} ) p_x+(\beta +2) p=0.
\]

\begin{proof}
This follows from It\^o's formula, Proposition \ref{prop:Taylor_1} and Kolmogorov's forward equation for Markov processes. 
\end{proof}
The distribution of $B_1$ is given in Remark \ref{rem:Dufresne} by the  Dufresne identity \cite{Dufresne}.

\section{Moment and tail bounds} \label{s:moments_tails}

The goal of this section is to understand the tails of the structure function. The results here will be used later to show analytic properties of $\zeta_\beta$, and to rigorously justify moment computations based on SDEs.

The main tool is the phase function  $\alpha_{\lambda, \nu}$, which, essentially, counts eigenvalues. The bounds will be uniform in the approximation parameter $\nu\in (-\infty,0)$.   They imply similar bounds for the limiting processes.

\subsection{The structure function and the phase function}

Recall the definition of $\cH=[\cA,\cB]^t$ from \eqref{def:V},  and introduce the  processes
\begin{align}\nonumber
\cE&=[1,-i]\cdot \cH=\cA-i \cB,  \\
    \cE^*&=[1,i]\cdot \cH=\cA+i \cB. \nonumber
\end{align}
Note that we have $\cE_0(z)=E(1,z)$ where $E$ is the structure function defined in \eqref{def:E},
we will use the same name for the entire process $\cE$. The name originates in the theory of canonical systems, and refers to an  infinite-dimensional analogue of  orthogonal polynomials on the unit circle. Proposition \ref{prop:H_stat} implies that for $u\in \R$ the function $z\to \cE_u(z)$ has the same distribution as $z\to E(1,e^{\beta u/4} z)$.

From \eqref{eq:zeta_cH} we have
\begin{align}\label{eq:zeta_cE}\zeta_\beta=\mathcal A-q\mathcal B=\frac{1-iq}2\mathcal E + \frac{1+iq}2\mathcal E^*,\end{align}
where the processes are taken at time $u=0$.

For a given $\nu\in (-\infty, 0)$ we use $\cH^\nu$ in the above definitions to obtain $ \cE_\nu$ and $\cE^{*}_\nu$.

The following is an immediate consequence of  Proposition \ref{prop:HSDE}.
\begin{corollary} \label{cor:cE_SDE} Let $\nu\in [-\infty,0)$, and  
\begin{align}
f_\beta(u)=\frac{\beta}{4}e^{\frac{\beta}{4}u}.
\end{align}
The processes $\cE_\nu, \cE^*_{\nu}$ satisfy the stochastic differential equations
\begin{align}\nonumber
d\cE&= \phantom{-}\frac{i z }{2}f_\beta \cE   du\;+ \frac{\cE^*-\cE}{2}(idb_1-db_2),
\\
d\cE^*&=- \frac{i z}{2} f_\beta \cE^*   du+ \frac{\cE^*-\cE}{2}(i db_1+db_2).\label{eq:cE}
\end{align}
For any fixed time $u$ the entire function $z\to \cE_{\nu,u}(z)$   has no zeros on the real line, and $\cE^*=\bar \cE$ there.
For $\Im z<0, u\in \R$ we have 
\begin{align}\label{eq:cE_Polya}
 |\cE_{\nu,u}(z)|\ge |\cE_{\nu,u}^*(z)|.  
\end{align}
For $\nu\not=-\infty$, the processes $\cE_\nu, \cE_\nu^*$ are the unique strong solutions of \eqref{eq:cE} on $[\nu,\infty)$ with initial condition $\cE_{\nu,0}=\cE^*_{\nu,0}=1$. The processes $\cE, \cE^*$ can be obtained as uniformly on compact limits of  $\cE_\nu, \cE_\nu^*$ as $\nu\to -\infty$.
\end{corollary}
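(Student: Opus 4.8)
The plan is to transport every assertion across the invertible linear change of variables
\[
\Phi:\ \cH=[\cA,\cB]^t\ \longmapsto\ (\cE,\cE^*)=\bigl([1,-i]\cH,\ [1,i]\cH\bigr),
\]
whose inverse is $\cA=\tfrac12(\cE+\cE^*)$, $\cB=\tfrac{i}{2}(\cE-\cE^*)$. First I would read off \eqref{eq:cE} from \eqref{HSDE}: multiplying \eqref{HSDE} on the left by $[1,\mp i]$ and using $[1,\mp i]\,\mat{0}{-db_1}{0}{db_2}=(-db_1\mp i\,db_2)\,[0,1]$ together with $[1,\mp i]J=\mp i\,[1,\mp i]$, the martingale part becomes $(-db_1\mp i\,db_2)\cB$ and the drift becomes $\tfrac{iz}{2}f_\beta\cE$, resp.\ $-\tfrac{iz}{2}f_\beta\cE^*$, with $f_\beta(u)=\tfrac{\beta}{4}e^{\beta u/4}$; substituting $\cB=\tfrac{i}{2}(\cE-\cE^*)$ into the martingale part reproduces \eqref{eq:cE} exactly. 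Since Proposition \ref{prop:HSDE} provides \eqref{HSDE} for $\cH$ on $\R$ and for every $\cH^\nu$ on $[\nu,\infty)$, this yields \eqref{eq:cE} for $(\cE,\cE^*)$ and for each $(\cE_\nu,\cE^*_\nu)$.

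The uniqueness and convergence statements are then formal. For $\nu\neq-\infty$ the initial condition $\cH^\nu_\nu=[1,0]^t$ corresponds under $\Phi$ to $\cE_\nu=\cE^*_\nu=1$ at the left endpoint, so any strong solution of \eqref{eq:cE} on $[\nu,\infty)$ with that data pulls back under $\Phi^{-1}$ to a strong solution of \eqref{HSDE} with $\cH_\nu=[1,0]^t$, which Proposition \ref{prop:HSDE} declares unique; hence $(\cE_\nu,\cE^*_\nu)$ is the unique solution of \eqref{eq:cE}. Likewise $\cH^\nu\to\cH$ uniformly on compacts of $\R\times\CC$ (Proposition \ref{prop:HSDE}) is carried by the bounded linear map $\Phi$ to $\cE_\nu\to\cE$, $\cE^*_\nu\to\cE^*$ uniformly on compacts. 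For the reality statement, at real $z$ the vector $\cH^\nu_u(z)$ is real --- either because $\cH^\nu_u(z)=X^\nu_u\,H^{t(\nu)}(t(u),z)$ with $X^\nu_u$ a real matrix and $H^{t(\nu)}(t,\cdot)$ real-valued on $\R$ (Proposition \ref{prop:zetaODE}), or straight from the real-coefficient series \eqref{eq:AnBn} --- so $\cE^*_{\nu,u}(z)=\cA+i\cB=\overline{\cA-i\cB}=\overline{\cE_{\nu,u}(z)}$, and $\cE_{\nu,u}(z)=0$ would force $\cA=\cB=0$, i.e.\ $\cH^\nu_u(z)=0$, impossible since $X^\nu_u$ is invertible and $\|H^{t(\nu)}\|>0$. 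The identical argument applies to $\cE,\cE^*$.

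The substantive step, and the one I expect to be the main obstacle, is the P\'olya‑type inequality \eqref{eq:cE_Polya}. Here I would mimic the proof of Lemma \ref{lem:Polya} in It\^o calculus: fix $z$, put $h=h(u)=\cH^\nu_u(z)$, and apply It\^o's formula to $w_u:=\overline{h}^{t}J\,h$. Using \eqref{HSDE}, the identities $J^tJ=I$, $J^2=-I$, and the vanishing of the It\^o correction coming from the martingale term $\mat{0}{-db_1}{0}{db_2}h=h_2v$ of \eqref{HSDE} --- it equals $|h_2|^2\,v^tJv$ with $v=[-db_1,db_2]^t$ and is zero because $J$ is antisymmetric --- one obtains
\[
dw_u=w_u\,db_2(u)+\tfrac{i\beta}{4}\,e^{\beta u/4}\,(\Im z)\,\|h(u)\|^2\,du .
\]
A short computation with $h=[\cA,\cB]^t$ gives $D_u:=|\cE_{\nu,u}(z)|^2-|\cE^*_{\nu,u}(z)|^2=2i\,w_u$, so $D$ solves the \emph{linear} equation $dD_u=D_u\,db_2(u)-\tfrac{\beta}{2}e^{\beta u/4}(\Im z)\|h(u)\|^2\,du$ with $D_\nu=0$ (since $[1,0]\,J\,[1,0]^t=0$). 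Variation of parameters (Dol\'eans--Dade) then writes $D_u$ as a strictly positive stochastic exponential of $\int db_2$ times $\int_\nu^u(\cdots)\,ds$ with integrand nonnegative precisely when $\Im z<0$; hence $D_u\ge0$ for $\Im z<0$, which is \eqref{eq:cE_Polya}. The delicate points are the sign bookkeeping in this computation and, more seriously, promoting the pointwise‑in‑$(z,u)$ almost sure inequality to one holding simultaneously for all $(z,u)$ with $\Im z<0$; this is done by running the argument on a countable dense set and invoking continuity of $(z,u)\mapsto\cE_{\nu,u}(z),\cE^*_{\nu,u}(z)$. Finally, the case $\nu=-\infty$ follows by letting $\nu\to-\infty$ in the inequality, using the uniform‑on‑compacts convergence $\cE_\nu\to\cE$, $\cE^*_\nu\to\cE^*$.
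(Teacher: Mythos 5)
Your proposal is correct. The derivation of \eqref{eq:cE} by applying $[1,\mp i]$ to \eqref{HSDE}, and the transport of uniqueness, convergence and reality through the invertible linear map $\Phi$, is exactly how the paper treats these parts (it simply declares them an ``immediate consequence of Proposition \ref{prop:HSDE}''); your sign bookkeeping checks out. The only place where you diverge is \eqref{eq:cE_Polya}: the paper deduces it from \eqref{E_AB} of Lemma \ref{lem:Polya}, which was already proved by a pathwise (deterministic, Gr\"onwall-type) argument for the ODE \eqref{eq:E_1} --- the extra observation needed, which the paper leaves implicit, is that $|\cE^*_{\nu,u}|^2-|\cE_{\nu,u}|^2=-2i\,\overline{\cH^\nu_u}^{\,t}J\cH^\nu_u=\det(X^\nu_u)\,\bigl(|E^*|^2-|E|^2\bigr)$ with $\det X^\nu_u=y^\nu_u>0$, so the inequality for $E$ transfers to $\cE_\nu$. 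You instead rerun the same Wronskian computation in the $u$-parametrization with It\^o's formula, obtaining the linear equation $dD=D\,db_2-\tfrac{\beta}{2}e^{\beta u/4}(\Im z)\|h\|^2\,du$ and concluding by Dol\'eans--Dade positivity; I verified the It\^o correction indeed vanishes by antisymmetry of $J$ and that $D=2i\,\overline h^tJh$ with the stated drift sign. Both arguments rest on the same mechanism (sign control of $\overline{\cH}^tJ\cH$); yours is self-contained and avoids the conjugation step, at the cost of the density-plus-continuity argument to get the inequality simultaneously for all $(z,u)$, which you correctly flag and which is also needed (implicitly) in the paper's route.
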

The inequality \eqref{eq:cE_Polya} follows from \eqref{E_AB} of Lemma \ref{lem:Polya}.

The real and imaginary parts of $\log \cE_\nu$ for $\nu\in [-\infty,0)$ will play an important role in the upcoming analysis. Since $\cE_\nu$ has no zeros on $\R$ and $\cE_\nu(0)=1$, the branch of the logarithm can be chosen  as the unique continuous function which takes the value  $0$ at $0$. 

\begin{corollary}[The phase function]\label{cor:phase}
For $\nu\in [-\infty,0)$ and $\lambda\in \R$ we can write  $2\log \cE_{\nu,u}(\lambda)=\cL_{\lambda,\nu}(u)+i \alpha_{\lambda,\nu}(u)$, where $\alpha,\mathcal L\in \R$ satisfy the stochastic differential equations
\begin{align}\notag
 d \alpha&=\lambda f_\beta\,du-\Re\left[(e^{-i \alpha}-1) (d b_1+i d b_2)\right]\\&=\lambda f_\beta\, du+2 \sin(\alpha/2) dW_1  \label{eq:SDE_alpha_l}\\
 d \cL&=\Im[(e^{-i \alpha}-1) (d b_1+i db_2)]=2 \sin(\alpha/2) dW_2.\notag
\end{align}
Here the $W_j$ defined above are independent copies of Brownian motion that depend on $\lambda$.

For $\nu\neq-\infty$ the processes $\alpha_{\lambda,\nu}$, $\cL_{\lambda,\nu}$ are the unique strong solutions of \eqref{eq:SDE_alpha_l} with initial conditions $\alpha_{\lambda,\nu}(\nu)=\cL_{\lambda,\nu}(\nu)=0$. As $\nu\to -\infty$ these processes converge a.s.~uniformly on compacts to $\alpha_{\lambda}$, $\cL_{\lambda}$. 

For $\nu\in [-\infty,0)$, the eigenvalues of $\btau_{\beta,\nu}$ are of multiplicity one and are given by 
\begin{align}\label{eq:Lambda}
    \Lambda=\{\lambda\in \R: \alpha_{\lambda, \nu}(0)=U \mod 2\pi\},
\end{align}
where $U$ is a uniform random variable on $[0,2\pi]$, independent of $b_1, b_2$. Moreover, for $\lambda>0$ we have
\begin{align}\label{eq:counting}
|\#(\Lambda\cap [0,\lambda])-\tfrac{1}{2\pi}\,\alpha_{\lambda,\nu}(0)|\le 1.
\end{align}
\end{corollary}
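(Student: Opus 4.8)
The plan is to read off all three parts of the corollary from the scalar complex semimartingale $u\mapsto\cE_{\nu,u}(\lambda)$ at a fixed real argument $\lambda$. By Corollary~\ref{cor:cE_SDE} this process is nonvanishing, satisfies $\cE^{*}_{\nu,u}(\lambda)=\overline{\cE_{\nu,u}(\lambda)}$, and equals $1$ at the initial time (at $u=\nu$ for finite $\nu$, and as $u\to-\infty$ when $\nu=-\infty$, by $\cH_u\to[1,0]^{t}$ from Corollary~\ref{cor:H_char}); hence the branch of $\log\cE_{\nu,u}(\lambda)$ vanishing there is well defined and continuous in $u$, and we may write $2\log\cE_{\nu,u}(\lambda)=\cL_{\lambda,\nu}(u)+i\alpha_{\lambda,\nu}(u)$ with $\cL,\alpha$ real. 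Feeding the $\cE$-equation \eqref{eq:cE} into It\^o's formula for $\log$, the martingale part of $d\cE/\cE$ is a multiple of $db_1+i\,db_2$, and since $(db_1+i\,db_2)^{2}=(db_1)^{2}+2i\,db_1\,db_2-(db_2)^{2}=0$ the second-order correction vanishes identically, so $d\log\cE=d\cE/\cE$. Using $\overline{\cE}/\cE=e^{-i\alpha}$ on the real axis this reduces to $d\cL+i\,d\alpha=i\lambda f_\beta\,du+i(e^{-i\alpha}-1)(db_1+i\,db_2)$, and separating real and imaginary parts (once the branch orientation is fixed) yields the first displayed form of \eqref{eq:SDE_alpha_l} together with the equation for $\cL$.

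For the Brownian form: the $db$-part of $d\alpha$ has quadratic variation $\big((\cos\alpha-1)^{2}+\sin^{2}\alpha\big)\,du=4\sin^{2}(\alpha/2)\,du$, so by L\'evy's characterization it equals $2\sin(\alpha/2)\,dW_1$ for a Brownian motion $W_1$; the noise in $d\cL$ equals $2\sin(\alpha/2)\,dW_2$ likewise, and a cross-variation computation gives $d[W_1,W_2]=0$, so $W_1\perp W_2$, both depending on $\lambda$ through the path $\alpha_{\lambda,\nu}$. For $\nu\neq-\infty$, the coefficients of \eqref{eq:SDE_alpha_l} are globally Lipschitz in $\alpha$ with a deterministic continuous drift, so there is a unique strong solution on $[\nu,\infty)$ with $\alpha_{\lambda,\nu}(\nu)=0$, and $\cL_{\lambda,\nu}$ is recovered by integration. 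The a.s.\ convergence $\alpha_{\lambda,\nu}\to\alpha_\lambda$, $\cL_{\lambda,\nu}\to\cL_\lambda$ as $\nu\to-\infty$ follows from $\cE_\nu\to\cE$ uniformly on compacts (Corollary~\ref{cor:cE_SDE}) by passing to continuous logarithms, using $\cE_u\to1$ as $u\to-\infty$ to control the argument near $-\infty$; passing to the limit in the SDEs identifies $\alpha_\lambda,\cL_\lambda$ as their solutions.

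For the eigenvalue description, $\btau_{\beta,\nu}$ satisfies Assumptions 1--3 (Proposition~\ref{prop:tau-beta}), so its eigenvalues are real, simple, and equal to the zeros of the secular function $\zeta_{\beta,\nu}$ (by Proposition~\ref{prop:zetaODE} and the definition of $\zeta$). From \eqref{eq:zeta_cE} in its $\nu$-version and $\cE^{*}_{\nu,0}(\lambda)=e^{\cL_{\lambda,\nu}(0)/2}e^{-i\alpha_{\lambda,\nu}(0)/2}$ for $\lambda\in\R$ one finds
\begin{equation*}
\zeta_{\beta,\nu}(\lambda)=e^{\cL_{\lambda,\nu}(0)/2}\big(\cos(\alpha_{\lambda,\nu}(0)/2)+q\sin(\alpha_{\lambda,\nu}(0)/2)\big),\qquad\lambda\in\R.
\end{equation*}
Since $e^{\cL/2}>0$, this vanishes exactly when $\tan(\alpha_{\lambda,\nu}(0)/2)=-1/q$, i.e.\ when $\alpha_{\lambda,\nu}(0)\equiv 2\arctan(-1/q)\pmod{2\pi}$; putting $U=2\arctan(-1/q)\bmod 2\pi$ gives \eqref{eq:Lambda}. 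As $q$ is standard Cauchy and independent of $b_1,b_2$, so is $-1/q$, whence $\arctan(-1/q)$ is uniform on $(-\pi/2,\pi/2)$ and $U$ is uniform on $[0,2\pi)$ and independent of $b_1,b_2$.

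Finally \eqref{eq:counting}. The point is that $\mu\mapsto\alpha_{\mu,\nu}(0)$ is continuous (indeed $C^{1}$ in the parameter, which is standard), strictly increasing, and vanishes at $\mu=0$. For strict monotonicity, differentiate \eqref{eq:SDE_alpha_l} in $\mu$ (legitimate as in Proposition~\ref{prop:Taylor_1}, see \cite{Protter}): since $\mu$ enters only the drift $\mu f_\beta$, the derivative $Y=\partial_\mu\alpha_{\mu,\nu}$ solves a linear equation $dY=f_\beta\,du+Y\,dM$ with $Y(\nu)=0$ and a continuous martingale $M$ built from $b_1,b_2$, so $Y_u=\int_\nu^{u}(\Phi_u/\Phi_s)\,f_\beta(s)\,ds$ with $\Phi=\exp(M-\tfrac12[M])>0$; as $f_\beta>0$ this forces $Y_u>0$. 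Consequently $\Lambda\cap[0,\mu]=\{\mu'\in[0,\mu]:\alpha_{\mu',\nu}(0)\in U+2\pi\Z\}$ meets each level $U+2\pi m$ lying in $[0,\alpha_{\mu,\nu}(0)]$ in exactly one point, and the number of such integers $m$ differs from $\alpha_{\mu,\nu}(0)/(2\pi)$ by at most $1$ since $U\in[0,2\pi)$; this proves \eqref{eq:counting} for finite $\nu$, and the $\nu=-\infty$ case follows by letting $\nu\to-\infty$, using $\alpha_{\mu,\nu}(0)\to\alpha_\mu(0)$ and $\zeta_{\beta,\nu}\to\zeta_\beta$ uniformly on compacts (Proposition~\ref{prop:zeta_tri}) together with Hurwitz's theorem and the fact that $\mu$ is a.s.\ not an eigenvalue. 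The main obstacle is exactly this monotonicity step, the oscillation-theoretic core: one must carefully justify differentiating the SDE in the spectral parameter, pin down the sign, and rule out flat levels of the phase (here automatic from $Y>0$); the branch-of-logarithm bookkeeping in the $\nu=-\infty$ limit is the secondary point needing care.
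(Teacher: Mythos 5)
Your proposal is correct and follows the same overall route as the paper: It\^o's formula applied to $\log\cE_{\nu,\cdot}(\lambda)$ via Proposition \ref{prop:HSDE} for the SDE, the identification of the zero set of $\cA-q\cB$ with the phase condition $\cot(\alpha/2)=-q$ and the Cauchy-to-uniform computation for \eqref{eq:Lambda}, and monotonicity of $\lambda\mapsto\alpha_{\lambda,\nu}(0)$ for \eqref{eq:counting}. The one place where you genuinely diverge is the monotonicity step: the paper disposes of it by citing the standard oscillation-theory fact that $2\Im\log E(t,\lambda)$ is nondecreasing in $\lambda$ for the deterministic canonical system \eqref{eq:E_1} (pathwise, for each realization of $R$), whereas you differentiate the SDE in the spectral parameter and solve the linearized equation $dY=f_\beta\,du+Y\,dM$ by variation of constants to get $Y_u=\int_\nu^u(\Phi_u/\Phi_s)f_\beta(s)\,ds>0$. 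Both are sound; your version is self-contained and even yields strict monotonicity, while the paper's is a one-line appeal to a classical deterministic fact that also covers $\nu=-\infty$ without a limiting argument. Two cosmetic remarks: your It\^o computation produces the noise term with the opposite sign to the first display in \eqref{eq:SDE_alpha_l}, which is immaterial since both $\pm\Re[(e^{-i\alpha}-1)(db_1+i\,db_2)]$ have quadratic variation $4\sin^2(\alpha/2)\,du$ and hence equal $2\sin(\alpha/2)\,dW_1$ for a suitable Brownian motion; and your care about the branch of the logarithm in the $\nu\to-\infty$ limit and about level sets of the phase is more than the paper supplies, not less.
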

\begin{proof}
Proposition \ref{prop:HSDE} and It\^o's formula implies the SDE characterization of $\alpha$ and $\cL$.

The eigenvalues of $\btau_{\beta,\nu}$ are given by the zero set of $\cA_{\nu,0}-q \cB_{\nu,0}$. By definition $\cA=q\cB$ if and only if 
$\alpha=2\Im \log(\cA-i\cB)=2\Im \log(q-i)$ mod $2\pi$. Since $q$ has Cauchy distribution, $2\Im \log (q-i)=-2\operatorname{arccot} q$ is uniform on $[-\pi,\pi]$.

Note that $\alpha=2\Im\log E(1,\lambda)$. By \eqref{eq:E_1} this is nondecreasing in $\lambda$, a standard result in oscillation theory. Since $\alpha$ equals $0$ for $\lambda=0$, equation \eqref{eq:Lambda}  implies \eqref{eq:counting}.
\end{proof}

Note that  $\alpha$ is the relative phase function introduced by Killip and Stoiciu \cite{KS}, see also \cite{BVBV_op} for the connection to the Brownian carousel \cite{BVBV}.



\subsection{Quadratic variation bounds for the process $\alpha$}

Our goal is to estimate the exponential moment of the quadratic variation of the phase function   $\alpha_{\lambda, \nu}$ introduced in  
Corollary \ref{cor:phase}. This will be needed for bounding moments of $\zeta$.
In this section $\lambda\in \R, \nu\in (-\infty,0)$ are fixed, and we often abbreviate $\alpha=\alpha_{\lambda,\nu}$.

Our first lemma controls the tails of the time when $\alpha$  could start collecting  significant quadratic variation. 

\begin{lemma}[Early behavior  of $\alpha$]\label{lem:E_mom_1}
For $\eps>0$ define the stopping time %
\begin{align}\label{def:taueps}
\tau_\eps=\tau_{\eps, \lambda, \nu}=\inf\{t\le 0: \sin^2(\alpha(t)/2)\ge e^{\eps t}\}.
\end{align}
For all $\beta>0$, $\lambda\in \R$ and $a<\tfrac{\beta(\beta+2)}{8}$ there exists $\eps, c$ so that for all $\nu$ and all $t\le 0$ we have
\begin{align*}
P(\tau_{\eps}<t)\le c e^{at}.
\end{align*}
\end{lemma}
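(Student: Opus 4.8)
The plan is to translate the statement about $\alpha_{\lambda,\nu}$ into a statement about the structure function. At a real argument $z=\lambda$ the two components $\cA_u,\cB_u$ of $\cH^\nu$, so that $\cE_{\nu,u}(\lambda)=\cA_u-i\cB_u$, are real, and since $2\log\cE_{\nu,u}(\lambda)=\cL_{\lambda,\nu}(u)+i\alpha_{\lambda,\nu}(u)$ we have $e^{\cL_{\lambda,\nu}(u)}=\cA_u^2+\cB_u^2$ and
\[
\sin^2\!\big(\alpha_{\lambda,\nu}(u)/2\big)=\frac{\cB_u^2}{\cA_u^2+\cB_u^2}=\cB_u^2\,e^{-\cL_{\lambda,\nu}(u)} .
\]
Moreover at a real argument the system \eqref{HSDE} for $(\cA,\cB)$ is linear, so dividing $\cB$ by $y_u$ and integrating (the only martingale part of $\cB$ is $\cB\,db_2$) gives the explicit representation $\cB_u=-\lambda\int_\nu^u(y_u/y_s)\tfrac\beta8 e^{\beta s/4}\cA_s\,ds$, while $|\cA_s|=e^{\cL_{\lambda,\nu}(s)/2}|\cos(\alpha_{\lambda,\nu}(s)/2)|\le e^{\cL_{\lambda,\nu}(s)/2}$. (We may assume $\lambda\neq0$, since otherwise $\cB\equiv0$ and $\tau_\eps=+\infty$, and $\nu<t$, since otherwise $\{\tau_\eps<t\}=\emptyset$.)

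First I would control $\cL_{\lambda,\nu}$, which by Corollary \ref{cor:phase} is a continuous martingale with $\cL_{\lambda,\nu}(\nu)=0$ and quadratic variation $\int_\nu^u 4\sin^2(\alpha_{\lambda,\nu}(s)/2)\,ds$. On $[\nu,\tau_\eps)$ the integrand is at most $e^{\eps\cdot}$, so this variation is at most $\tfrac4\eps e^{\eps u}$ for $u\le\tau_\eps$; by the Dambis--Dubins--Schwarz representation and the reflection principle, $P\big(\sup_{u\le\tau_\eps\wedge t}|\cL_{\lambda,\nu}(u)|\ge1\big)\le 4\exp(-\tfrac\eps8 e^{\eps|t|})$, which decays faster than any $e^{at}$. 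Hence it suffices to bound $P\big(\tau_\eps<t,\ \sup_{u\le\tau_\eps}|\cL_{\lambda,\nu}(u)|\le1\big)$. On this event the representation of $\cB_u$ yields $|\cB_u|\le e^{1/2}|\lambda|\,\tfrac\beta8 e^{\beta u/4}I_u$ for $u\le\tau_\eps$, where
\[
I_u=\int_0^\infty \exp\!\big((b_2(u)-b_2(u-r))-(\tfrac12+\tfrac\beta4)r\big)\,dr ,
\]
while by the definition of $\tau_\eps$ one has $\sin^2(\alpha_{\lambda,\nu}(\tau_\eps)/2)=e^{\eps\tau_\eps}$, so $\cB_{\tau_\eps}^2=e^{\eps\tau_\eps}e^{\cL_{\lambda,\nu}(\tau_\eps)}\ge e^{-1}e^{\eps\tau_\eps}$. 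Comparing the two bounds on $|\cB_{\tau_\eps}|$, the event $\tau_\eps<t$ forces $I_u\ge c_\lambda\,e^{(\beta/4-\eps/2)|u|}$ for some $u\le t$, with $c_\lambda$ depending only on $\beta,\lambda$.

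It then remains to bound $P\big(\exists\,u\le t:\ I_u\ge c_\lambda e^{(\beta/4-\eps/2)|u|}\big)$. The crucial input is Dufresne's identity, as in Remark \ref{rem:Dufresne}: for each fixed $u$ one has $I_u\ed 2/\gamma$ with $\gamma$ a Gamma variable of rate $1$ and shape $1+\tfrac\beta2$, so $P(I_u\ge M)\le C\,M^{-(1+\beta/2)}$ for $M\ge1$. To pass from a fixed $u$ to the supremum over $u\le t$ I would split $(-\infty,t]$ into the unit intervals $[t-k-1,\,t-k]$, $k\ge0$. Writing $I_u=e^{b_2(u)-cu}J_u$ with $c=\tfrac12+\tfrac\beta4$ and $J_u=\int_{-\infty}^u e^{cv-b_2(v)}dv$ increasing in $u$, one gets $\sup_{[t-k-1,t-k]}I_u\le e^{c}e^{S_k}I_{t-k}$, where $S_k$ has the law of the running maximum of Brownian motion on $[0,1]$; peeling off the first unit of the Brownian motion $r\mapsto b_2(t-k)-b_2(t-k-r)$ further gives $I_{t-k}\le e^{S_k}(1+\tilde I_k)$ with $\tilde I_k\ed 2/\gamma$ independent of $S_k$. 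Thus the bad event on interval $k$ forces $\tilde I_k\ge c_\lambda'\,e^{(\beta/4-\eps/2)(k+|t|)-2S_k}$; conditioning on $S_k$, the Gamma tail bound together with the exponential moments of $S_k$ give $P(\text{bad on interval }k)\le C'e^{-(1+\beta/2)(\beta/4-\eps/2)(k+|t|)}+C''e^{-c''(k+|t|)^2}$ provided $|t|$ exceeds a constant depending on $\beta,\lambda,\eps$. Summing the geometric and Gaussian series over $k$ bounds $P(\exists\,u\le t:\ldots)$ by $C\,e^{-(1+\beta/2)(\beta/4-\eps/2)|t|}$ for $|t|$ large, and the trivial bound $P\le1$ handles bounded $|t|$. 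Since $(1+\tfrac\beta2)(\tfrac\beta4-\tfrac\eps2)\to\tfrac{\beta(\beta+2)}{8}$ as $\eps\downarrow0$, choosing $\eps$ small makes this exponent exceed any prescribed $a<\tfrac{\beta(\beta+2)}{8}$; combining with the $\cL$-estimate yields $P(\tau_\eps<t)\le c\,e^{at}$ uniformly in $\nu$.

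The main obstacle is exactly this passage from a single time $u$ to the supremum over $u\le t$: the process $u\mapsto I_u$ is not monotone, and a union bound over a fine grid would cost a constant factor per point and destroy the uniformity in $t$. The saving device is the unit-interval decomposition together with the decoupling that extracts a fresh independent copy of $2/\gamma$ from each interval, which both preserves the sharp exponent $\tfrac{\beta(\beta+2)}{8}$ and keeps all constants independent of $\nu$.
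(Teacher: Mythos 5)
Your proof is correct, and it takes a genuinely different route from the paper's. The paper works directly with the phase diffusion: it substitutes $X=\log\tan(\alpha/4)$, so that $\sin^2(\alpha/2)=\sech^2 X\le 4e^{2X}$, shows that in the relevant region the drift of $X$ is at most $-q$ with $q$ close to $1/2$, and dominates $X$ pathwise by $\theta t$ plus a Brownian motion with drift $-(q+\theta)$ reflected at $0$ (with $\theta$ close to $\beta/4$); the exponential stationary law of that reflected process, with rate $2(q+\theta)$, yields a hitting-time tail with exponent $2(q+\theta)(\theta-\eps)\to\beta(\beta+2)/8$. You instead exploit the linearity of the $(\cA,\cB)$ system at fixed real $\lambda$ to write $\cB$ as an explicit exponential functional of the affine Brownian motion, reduce the event $\sin^2(\alpha/2)\ge e^{\eps t}$ to the perpetuity $I_u$ being exponentially large (after disposing of the fluctuation of $\cL$, which is negligible because its quadratic variation before $\tau_\eps$ is of order $e^{-\eps|t|}$, making that error doubly exponentially small), and then invoke Dufresne's identity, $I_u\ed 2/G$ with $G$ Gamma of shape $1+\tfrac\beta2$ --- exactly the law in Remark \ref{rem:Dufresne} --- to get the tail $M^{-(1+\beta/2)}$; combined with the threshold $e^{(\beta/4-\eps/2)|u|}$ this reproduces the same sharp exponent $(1+\tfrac\beta2)\tfrac\beta4=\tfrac{\beta(\beta+2)}8$. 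Your unit-interval chaining, which extracts from each interval a fresh copy $\tilde I_k$ independent of the local supremum $S_k$, correctly converts the pointwise Dufresne tail into a supremum bound without degrading the exponent or the uniformity in $\nu$. The paper's argument is softer --- it needs only the exponential tail of reflected drifted Brownian motion rather than an exact law, and it stays entirely inside the $\alpha$-SDE --- while yours is more computational but makes explicit the structural link between the early behavior of $\alpha$ and the Dufresne law of the first Taylor coefficient $\cB_{1,0}$, a nice complement to Remark \ref{rem:Dufresne}.
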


\begin{proof}
Note that
$X=\log (\tan(\alpha_{\lambda,\nu}/4))$ satisfies
\begin{align}\label{eq:XSDE}
dX=\frac{\lambda\beta}{8}e^{\frac{\beta}{4}t}\cosh X\,dt+\frac12 \tanh X\, dt+dB, \qquad X(\nu)=-\infty,
\end{align}
and $\sin^2(\alpha/2)=\sech^2(X)\le 4e^{2X}.$ (This process is studied in detail in  \cite{BVBV2}.) It suffices to show a bound of the form
\begin{align}\label{eq:tauX}
P(\tau_X\le t)\le c e^{at}
\qquad\text{ for }\qquad \tau_X =\tau_{X,\eps}= \inf\{t\le 0 : X(t)\ge \eps t\}.
\end{align}
Moreover, it suffices to show \eqref{eq:tauX} for $t\le k$ with a fixed $k<0$, as the general case follows by changing $c$ accordingly.

Let $k<0,\eps<\theta<\tfrac{\beta}{4}$.
The drift term for $X$ in the region  $t\le k$ with $X(t)\in [\theta t,\eps t]$, satisfies
\begin{align}\label{eq:Xdrift}
\frac{\lambda\beta}{8}e^{\frac{\beta}{4}t}\cosh X +\frac12 \tanh X \le \frac{\lambda\beta}{8}e^{(\frac{\beta}{4}-\theta) k}+\frac{1}{2}(2e^{2\eps k}-1)\le -q,
\end{align}
where $q<1/2$ can be made arbitrarily close to $1/2$ for  $\theta, \lambda$ fixed and appropriate choice of $k$.

For the  Brownian motion $B$ driving $X$ in \eqref{eq:XSDE}, let $Y$ be $B-(\theta+\q)t$ reflected at $0$. This is the nonnegative process defined as
\begin{align*}
Y(t)&=\sup_{u\le t}\left(B(t)-B(u)-(q+\theta)(t-u)\right).
\end{align*}
We claim that
\begin{equation}\label{E:XY}
    X(s)\le Y(s)+\theta s \qquad \text{  for }s\le \tau_X\wedge k.
\end{equation}Clearly this holds if $X(s)\le \theta s$. Otherwise, since $X$ starts at $-\infty$,  there is a maximal time  $\sigma$  at most $s$ so that $X(\sigma)=\theta \sigma$. By \eqref{eq:Xdrift}  for $\sigma\le s\le \tau_X\wedge k$ we have
\begin{align*}
X(s)&=X(\sigma)+\int_\sigma^s \frac{\lambda\beta}{8}e^{\frac{\beta}{4}u}\cosh X_u+\frac12 \tanh X_u \,du+B(s)-B(\sigma)\\
&\le \theta \sigma-q(s-\sigma) +B(s)-B(\sigma)\le Y(s)+\theta s.
\end{align*}
The process $Y$ is a stationary Markov process and $Y(s)$ has exponential distribution with rate $2(q+\theta)$ see, for example \cite{MY_PS2}.  Let $\tau_Y$ be the first hitting time of the line $(\eps-\theta)s$ by $Y$.  By \eqref{E:XY} we have $\tau_Y\le \tau_X \wedge k$.

Let $m(s)=(\eps-\theta)s-1$. For $t\le -1$ 
we have
\begin{align}\notag
1(\tau_Y\le t)1(Y(s)>m(s) \text{ for all }s\in[\tau_Y, \tau_Y+1])
\le \int_{-\infty}^{t+1} 1(Y(s)>m(s))ds.
\end{align}
By the strong Markov property, the conditional expectation of the left hand side given $\mathcal F_{\tau_Y}$ is at least $r1(\tau_Y\le t)$ with
$$
r=r_{\theta-\eps}=P\left(B(u)>m(u) \text{ for all } u\in[0,1]\right)>0,$$
where $B$ is a standard Brownian motion. Taking  expectations we get \begin{align*}
rP( \tau_Y\le t)&\le \int_{-\infty}^{t+1} P( Y(s)>m(s))ds\\
&=  \int^{t+1}_{-\infty} e^{2(q+\theta) ((\theta-\eps) s+1)}ds= \frac{e^{2(q+\theta)}}{2(q+\theta)(\theta-\eps)}e^{2(q+\theta) (\theta-\eps)(t+1)} .
\end{align*}
By choosing $q<1/2$, $\theta<\beta/4$, and $\eps>0$ appropriately we can make the coefficient of $t$ in the exponent equal to $a<\tfrac{\beta(\beta+2)}{8}$. Since $ \tau_Y\le \tau_X\wedge k$ this gives  the desired bound \eqref{eq:tauX} on $t\le k$, and the lemma follows.
\end{proof}

The next lemma is about a generalized version of the process $\alpha$. It controls the amount of quadratic variation collected on a finite time interval.
\begin{lemma}[Quadratic variation of $\alpha$ on a finite interval]\label{lem:E_mom_2}
For $p\ge 0$ there is a constant $c_p$ so that the following holds. Consider the SDE
\begin{align}\label{eq:SDE_alpha_99}
 d \alpha&=\lambda(t)dt+2 \sin(\alpha/2) dW
\end{align}
on $[0,\sigma]$
with deterministic $0<\lambda(t)\le \eps$ and arbitrary initial condition. Then
\begin{align}
E\,e^{p \int_ 0^{\sigma} \sin^2(\alpha(s)/2) ds}\le (1+\tfrac{1}{\eps}) e^{((p-\sqrt{p/2})^++\eps c_p) \sigma}.
\end{align}
\end{lemma}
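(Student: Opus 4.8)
The plan is to run an exponential supermartingale argument. Writing $V_t=\sin^2(\alpha(t)/2)$, I will produce a strictly positive, $2\pi$-periodic, $C^2$ function $g$ and a constant $\kappa=\kappa(p,\eps)$ so that for every $\alpha\in\R$ and every $\lambda\in(0,\eps]$
\[
2\sin^2(\alpha/2)\,g''(\alpha)+\lambda\,g'(\alpha)+p\sin^2(\alpha/2)\,g(\alpha)\le\kappa\,g(\alpha).
\]
Since $0<\lambda(t)\le\eps$, It\^o's formula then shows that $M_t=g(\alpha(t))\exp\!\big(\int_0^t(pV_s-\kappa)\,ds\big)$ has nonpositive drift; the martingale part has bounded coefficient $2\sin(\alpha/2)g'(\alpha)$, so $M$ is a genuine nonnegative supermartingale. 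The supermartingale property gives $E[M_\sigma]\le E[M_0]=E[g(\alpha(0))]\le\sup g$, and since $M_\sigma=g(\alpha(\sigma))\,e^{p\int_0^\sigma V_s\,ds-\kappa\sigma}\ge(\inf g)\,e^{p\int_0^\sigma V_s\,ds-\kappa\sigma}$, we obtain
\[
E\,e^{p\int_0^\sigma V_s\,ds}\le\frac{\sup g}{\inf g}\;e^{\kappa\sigma}.
\]
Everything thus reduces to building $(g,\kappa)$ with $\kappa\le(p-\sqrt{p/2})^+ +\eps c_p$ and $\sup g/\inf g\le 1+1/\eps$.

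The shape of $g$ is dictated by the coefficient $\sin^2(\alpha/2)$, which equals $1$ at $\alpha=\pi$ — where the potential $pV$ is largest — and vanishes at the gates $\alpha\in2\pi\Z$. Near $\alpha=\pi$ I take $g$ to solve $2g''+(p-\kappa)g=0$, i.e.\ $g(\alpha)=\cos\!\big(\sqrt{(p-\kappa)/2}\,(\alpha-\pi)\big)$, which reduces the displayed inequality to $\lambda g'-\kappa g\le 0$ there. The value $\kappa=(p-\sqrt{p/2})^+ +O(\eps)$ is exactly what lets this cosine profile be continued to a nearly flat profile away from $\pi$: once $\sin^2(\alpha/2)<\kappa/p$ one has $(p\sin^2(\alpha/2)-\kappa)g<0$, so $g$ may be taken almost constant, and — because $\sin^2(\alpha/2)$ is small near the gates — the residual contributions of $2\sin^2(\alpha/2)g''$ and of $\lambda g'$ are only $O(\eps)$ and are absorbed into the $\eps c_p$ term. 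The rate $(p-\sqrt{p/2})^+$ is the sharp one: putting $\alpha=\pi+\theta$, the phase satisfies $\theta\approx 2W$ and $pV\approx p-\tfrac p4\theta^2$, and the Gaussian identity $E\exp\!\big(-p\int_0^\sigma W_s^2\,ds\big)=(\cosh(\sqrt{2p}\,\sigma))^{-1/2}$ produces the factor $e^{-\sqrt{p/2}\,\sigma}$; for $p\le\tfrac12$ the quadratic well is too shallow to beat the trivial rate $0$, which is why the positive part appears. The prefactor $1+1/\eps$ comes from optimizing how deep $g$ must dip between its maximum near $\alpha=\pi$ and the gates while keeping $\kappa$ within $O(\eps)$ of $(p-\sqrt{p/2})^+$; after adjusting $c_p$ this yields $\sup g/\inf g\le 1+1/\eps$.

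I expect the main obstacle to be precisely this construction: exhibiting a single $C^2$, $2\pi$-periodic $g$ that is concave enough near $\alpha=\pi$ to cancel the potential up to rate $(p-\sqrt{p/2})^+$, transitions to an almost-flat profile near the gates with curvature small enough that — after multiplying by $\sin^2(\alpha/2)$ and after adding $\eps|g'|$ for the drift — the leftover is at most $\eps c_p\,g$, and has oscillation at most $\log(1+1/\eps)$. It is convenient to treat $p\le\tfrac12$ and $p>\tfrac12$ separately: for $p<\tfrac12$ the global cosine $g(\alpha)=\cos(\sqrt{(p-\kappa)/2}(\alpha-\pi))$ already stays positive on all of $(0,2\pi)$, so one only needs to flatten it slightly near the gates to control the ratio, whereas for large $p$ the cosine bump is narrow and must be glued to the plateau, which is where the bookkeeping for $c_p$ is heaviest. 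In either regime the statement has been reduced to the explicit one-dimensional estimate above.
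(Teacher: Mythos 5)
Your reduction is sound and is exactly the paper's strategy: build a positive test function $g$ and a rate $\kappa$ satisfying the pointwise inequality $2\sin^2(\alpha/2)g''+\lambda g'+p\sin^2(\alpha/2)g\le\kappa g$, turn $g(\alpha_t)\exp(\int_0^t(pV_s-\kappa)\,ds)$ into a nonnegative supermartingale, and read off $E e^{p\int V}\le(\sup g/\inf g)e^{\kappa\sigma}$. But the proof is not complete: the entire content of the lemma lives in the construction of $g$, and you stop at a description of a ``cosine bump glued to a plateau'' while explicitly flagging that you have not carried it out. As written there is no verification that a $C^2$, $2\pi$-periodic, positive $g$ with $\sup g/\inf g\le 1+1/\eps$ and $\kappa\le(p-\sqrt{p/2})^++\eps c_p$ actually exists, and the gluing is genuinely delicate: for large $p$ your cosine $\cos(\sqrt{(p-\kappa)/2}\,(\alpha-\pi))$ has its first zero at distance $O(p^{-1/4})$ from $\pi$, so it must be matched to the plateau while still decreasing steeply, and near the gates the term $\lambda g'$ must be dominated by $\eps c_p\,g$ even though $g$ is there at its minimum $\approx\eps$ — i.e.\ you need $|g'|\lesssim c_p\,g$ uniformly, which is a nontrivial design constraint you never address.

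The paper closes exactly this gap with a closed-form choice: $f(\alpha)=|\sin(\alpha/2)|^{\sqrt{2p}}$ is an exact eigenfunction, $\bigl(2\sin^2(\alpha/2)\partial_\alpha^2+p\sin^2(\alpha/2)\bigr)f=(p-\sqrt{p/2})f$, which is the ``bump'' you are trying to build by hand; taking $g=f+\eps$ gives $\sup g/\inf g=(1+\eps)/\eps\le 1+1/\eps$, and the residual inequality reduces to showing $\bigl(\sqrt{p/2}-p\cos^2(\alpha/2)\bigr)+|f'|\le c_p f$, which holds for $p>1/2$ because near $\alpha\in2\pi\Z$ the left side tends to $\sqrt{p/2}-p<0$ while $|f'|=\sqrt{p/2}\,|\cos(\alpha/2)||\sin(\alpha/2)|^{\sqrt{2p}-1}\to0$; the case $p\le1/2$ is then deduced from $e^{px}\le 1\vee e^{(1/2+\delta)x}$. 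Two further points your write-up glosses over: $f$ is not $C^2$ at $2\pi\ZZ$, so one must justify It\^o's formula across those levels (the paper uses that $\alpha$ crosses multiples of $2\pi$ only upward, so the crossings decompose the time axis); and ``bounded martingale coefficient'' alone is not what you want to invoke — the clean statement is that a nonnegative local supermartingale with integrable initial value is a supermartingale. If you substitute the explicit $f$ above for your undetermined $g$, your argument becomes the paper's proof.
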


\begin{proof}[Proof of Lemma \ref{lem:E_mom_2}]
First assume $p>1/2$.
Set
$$
M_t=\exp\left(\int_{0}^t g(\alpha_s) ds-v t\right) (f(\alpha_t)+\eps),
$$
where
$$
v=p-\sqrt{p/2}+\delta, \qquad g(\alpha)=p \sin^2(\alpha/2), \qquad f(\alpha)=|\hspace{-0.15em}\sin(\alpha/2)|^{\sqrt{2p}},
$$
and $\delta=c_p \eps$. We will show that  for an appropriate choice of $c_p$ the process $M_t, t\in [0,\sigma]$ is a supermartingale. If this holds then
\[
E M_\sigma\le E M_0=f(\alpha_0)+\eps\le 1+\eps.
\]
We also have
\begin{align*}
EM_\sigma=E\exp\left(\int_ 0^{\sigma} g(\alpha_s) ds-v \sigma \right) (f(\alpha_{\sigma})+\eps)
\ge e^{-v \sigma} \eps  E\exp\left(\int_ 0^{\sigma} p \sin^2(\alpha_s/2) ds \right).
\end{align*}
This gives
\[
 E\exp\left(\int_ 0^{\sigma} p \sin^2(\alpha_s/2) ds \right)\le \tfrac{1}{\eps} e^{(p-\sqrt{p/2}+\delta)\sigma}(1+\eps),
\]
which implies the statement of the lemma.

A coupling argument,  see e.g.~Proposition 9 of \cite{BVBV}, shows that if $\tau_k$ is the first hitting time of $2\pi k$, 
then  $\alpha(t)> 2k\pi$ for $t> \tau_k$. This can be seen from the SDE \eqref{eq:SDE_alpha_99}: when $\alpha$ is a multiple of $2\pi$, then the noise term vanishes and the drift term is positive. 

The function $f$ is twice continuously differentiable apart from the set $2\pi \ZZ$. 
It\^o's formula applies to $M$ between times $\tau_k,\tau_{k+1}$ and hence over the whole interval $[0,\sigma]$. Let  $\cG=2 \sin^2(\alpha/2) \partial_{\alpha}^2$
be the generator of the process $\alpha$ from (\ref{eq:SDE_alpha_99}) with no drift. 
We get
\[
dM_t=e^{\int_{0}^t g(\alpha_s) ds-v t} (\cG f+(g(\alpha_t)-v)(f(\alpha)+\eps)+\lambda(t) f'(\alpha_t))dt+dW \text{ terms}.
\]
A nonnegative local supermartingale with constant initial condition is a supermartingale, so it suffices to show that
\begin{equation}\label{eq:op-ineq}
\cG f+(g-v)(f+\eps)+\lambda(t) f'\le 0.
\end{equation}
Since
$(\cG+g) f=(p-\sqrt{p/2})f$,
the inequality \eqref{eq:op-ineq} reduces to
\begin{align}\label{eq:supmart}
\left(\sqrt{p/2}- p \cos^{2}(\alpha/2)\right)\eps + \lambda(t) f'\le \delta f+\delta \eps.
\end{align}
We will show that for $p>1/2$ with the appropriate choice of  $c_p$ so that  the following holds:
\[
\sqrt{p/2}- p \cos^{2}(\alpha/2) + |f'|\le c_p f,
\]
this implies  \eqref{eq:supmart}. We have $|f'(\alpha)|=\sqrt{p/2} |\cos \left(\frac{\alpha}{2}\right)| \left|\sin \left(\frac{\alpha}{2}\right)\right|^{\sqrt{2p}-1}$, so it is enough to show that the function
\[
\left(\sqrt{p/2}- p \cos^{2}(\alpha/2)\right)\left|\sin(\alpha/2)\right|^{-\sqrt{2p}}+\sqrt{p/2} \left|\cos \left(\alpha/2\right)\right| \left|\sin \left(\alpha/2\right)\right|^{-1}
\]
is bounded from above. By continuity, it suffices to check the behavior of this function near $\alpha\in 2\pi \ZZ$, and since $p>1/2$ we see that the function converges to $-\infty$ there. The statement of the lemma follows for $p>1/2$.

For $p\le 1/2$, we use that for $x\ge 0$, we have $e^{px}\le 1\vee e^{(1/2+\delta)x}$ to reduce to the $p>1/2$ case. 
\end{proof}
We now combine the results in the previous lemmas and give a bound on the quadratic variation of $\alpha$.
\begin{proposition}[Quadratic variation of $\alpha$]\label{prop:E_mom}
If $0<p<\frac12 (1+\tfrac{\beta}{2})^2$  then
$$E e^{p\int_{\nu}^0 \sin^2(\alpha_{\lambda,\nu}/2) ds}<c_{p,\beta}(1+|\lambda|)^{\frac{4}{\beta}p},$$
with a constant $c_{p,\beta}$ which does not depend on $\nu$.
\end{proposition}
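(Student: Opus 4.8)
The idea is to use the exact scaling symmetry of the phase function to reduce to a fixed value of $\lambda$, where the constants in Lemmas \ref{lem:E_mom_1} and \ref{lem:E_mom_2} become uniform. First, the invariance of the SDE \eqref{eq:SDE_alpha_l} under $(\alpha,\lambda)\mapsto(-\alpha,-\lambda)$ and the evenness of $\sin^2$ show that $\int_\nu^0\sin^2(\alpha_{\lambda,\nu}/2)\,ds$ has the same law for $\lambda$ and $-\lambda$, so I may assume $\lambda\ge 0$; the case $\lambda=0$ is trivial since then $\alpha\equiv 0$. Next, shifting time by $c$ in \eqref{eq:SDE_alpha_l} replaces $\lambda f_\beta(u)$ by $\lambda e^{\beta c/4}f_\beta(u)$, so for $\lambda>0$ we get the distributional identity $\int_\nu^0\sin^2(\alpha_{\lambda,\nu}/2)\,ds\ed \int_{\nu'}^{(4/\beta)\log\lambda}\sin^2(\alpha_{1,\nu'}/2)\,ds$ with $\nu'=\nu+\tfrac4\beta\log\lambda$. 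Bounding $\sin^2\le 1$ on the interval between $0$ and $\tfrac4\beta\log\lambda$ (empty when $\lambda\le 1$) pulls out a deterministic factor at most $e^{p(\frac4\beta\log\lambda)_+}\le(1+\lambda)^{\frac{4p}{\beta}}$, and reduces the proposition to the single bound $M:=\sup_{\nu\le 0}E\,e^{p\int_\nu^0\sin^2(\alpha_{1,\nu}/2)\,ds}<\infty$, i.e.\ the case $\lambda=1$ with a constant uniform in $\nu$.

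To bound $M$, fix $p<\tfrac12(1+\tfrac\beta2)^2$ and note $\tfrac12(1+\tfrac\beta2)^2=\tfrac{(\beta+2)^2}{8}$, with $(p-\sqrt{p/2})^+=\tfrac{\beta(\beta+2)}{8}$ at the endpoint $p=\tfrac{(\beta+2)^2}{8}$; since $p\mapsto p-\sqrt{p/2}$ is increasing there, $(p-\sqrt{p/2})^+<\tfrac{\beta(\beta+2)}{8}$, and I may choose $a$ with $(p-\sqrt{p/2})^+<a<\tfrac{\beta(\beta+2)}{8}$. Lemma \ref{lem:E_mom_1} applied with $\lambda=1$ provides $\eps_0,c_1$ with $P(\tau_{\eps_0}<t)\le c_1e^{at}$ for all $\nu,t\le 0$; since $\eps\mapsto\tau_\eps$ is non-increasing (the threshold $e^{\eps t}$ grows as $\eps\downarrow0$ for $t<0$), the bound $P(\tau_\eps<t)\le c_1e^{at}$ persists for every $\eps\le\eps_0$, so I shrink $\eps$ below $\eps_0$ until $a_0:=(p-\sqrt{p/2})^++\eps c_p<a$ ($c_p$ from Lemma \ref{lem:E_mom_2}), and set $T_0=\tfrac4\beta\log\tfrac{\beta}{4\eps}$ so that $f_\beta(u)\le\eps$ for $u\le-T_0$. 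For $\nu>-T_0$ the crude bound $\int_\nu^0\sin^2\le T_0$ gives $E\,e^{p\int}\le e^{pT_0}$. For $\nu\le-T_0$ split $\int_\nu^0=\int_\nu^{-T_0}+\int_{-T_0}^0$, where the second integral is $\le T_0$ deterministically; writing $\tau=\tau_\eps$, split further $\int_\nu^{-T_0}=\int_\nu^{\tau\wedge(-T_0)}+\int_{\tau\wedge(-T_0)}^{-T_0}$. On $[\nu,\tau)$ one has $\sin^2(\alpha/2)<e^{\eps u}$, so the first piece is $\le\int_{-\infty}^0 e^{\eps u}\,du=1/\eps$; on $\{\tau<-T_0\}$, conditionally on $\mathcal F_\tau$ the process on $[\tau,-T_0]$ satisfies \eqref{eq:SDE_alpha_l} with $\lambda=1$ hence drift $\le\eps$, and with arbitrary initial value $\alpha(\tau)$, so Lemma \ref{lem:E_mom_2} gives conditional expectation $\le(1+\tfrac1\eps)e^{a_0(-T_0-\tau)}\le(1+\tfrac1\eps)e^{a_0(-\tau)}$. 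Hence $M\le e^{pT_0}e^{p/\eps}\bigl(1+(1+\tfrac1\eps)\,E\,e^{a_0(-\tau)}\bigr)$, and $E\,e^{a_0(-\tau)}<\infty$ uniformly in $\nu$ because $P(-\tau>r)\le c_1e^{-ar}$ with $a>a_0$. (If $\tau$ is never attained, $\int_\nu^0\sin^2<1/\eps$ and the bound is immediate.) Combined with the scaling step this gives the proposition with $c_{p,\beta}=M$.

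The conceptual core is the scaling reduction: after it, the entire $\lambda$-dependence of the final estimate comes from the explicit, crudely-bounded ``extra time'' interval of length $(\tfrac4\beta\log\lambda)_+$, which produces precisely the factor $(1+|\lambda|)^{\frac{4p}{\beta}}$ — attempting the estimate directly for general $\lambda$ founders on the $\lambda$-dependence of the constant in Lemma \ref{lem:E_mom_1}. I expect the only real care to be bookkeeping the order of choices — $a$ first, strictly between $(p-\sqrt{p/2})^+$ and $\tfrac{\beta(\beta+2)}{8}$; then $\eps_0$ from Lemma \ref{lem:E_mom_1}; then $\eps\le\eps_0$ small enough that $a_0<a$ — together with the minor edge cases $\lambda\le 0$ (symmetry), $\lambda=0$ (then $\alpha\equiv0$), and $\tau$ not attained.
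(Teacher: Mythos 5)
Your proof is correct and follows essentially the same route as the paper's: the same symmetry and time-shift scaling to reduce to $\lambda=1$ (with the $(1+|\lambda|)^{4p/\beta}$ factor coming from crudely bounding $\sin^2\le 1$ on the shifted interval), and the same decomposition at $\tau_\eps$ combining Lemma \ref{lem:E_mom_1}, the strong Markov property, and Lemma \ref{lem:E_mom_2}. Your explicit bookkeeping of the order in which $a$, $\eps_0$, and $\eps$ are chosen, via the monotonicity of $\tau_\eps$ in $\eps$, is a slightly more careful rendering of a step the paper leaves implicit.
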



\begin{proof}[Proof of Proposition \ref{prop:E_mom}]

Let $\eps>0$, $k<0$, $\lambda=1$.  For  $t\le \tau=\tau_\eps$ as in \eqref{def:taueps} we have  $\sin^2(\alpha_{\lambda,\nu}/2)\le e^{\eps t}$, so
\begin{align*}
\int_{\nu}^0 \sin^2(\alpha/2) ds&\le \tfrac{1}{\eps}+\int_{\tau\wedge k}^{k} \sin^2(\alpha/2) ds + |k|.
\end{align*}
Set $\ell(t)=2 \arcsin(e^{\eps t/2})$. 
By Lemma \ref{lem:E_mom_2} for arbitrarily small $\delta>0$ if $|k|,\tfrac{1}{\eps}$ are large enough then
$$
E_{t,\ell(t)}e^{p\int_{t}^{ k} \sin^2(\alpha/2) ds}\le c e^{-((p-\sqrt{p/2})^++\delta)t} \qquad \text{ for all } t\le k.
$$
Here $E_{t_0,\ell_0}$ is the expectation for  the process $\alpha$ satisfying  (\ref{eq:SDE_alpha_l}) on $[t_0,\infty)$ with initial condition $\alpha(t_0)=\ell_0$. By the strong Markov property
\begin{align*}
E e^{p\int_{\tau \wedge k}^{k} \sin^2(\alpha_\lambda/2) ds}&\le c E e^{-((p-\sqrt{p/2})^++\delta)\tau},
\end{align*}
where $c$ does not depend on $\nu$.
 Lemma  \ref{lem:E_mom_1} provides an  upper bound 
on $E e^{-((p-\sqrt{p/2})^++\delta)\tau}$ if   $\frac{\beta(\beta+2)}{8}>p-\sqrt{p/2}$ and  $\delta$ is small enough. Hence there is a constant $c_{p,\beta}$ so that for $\lambda=1$
\begin{align}\label{eq:alpha_bnd_00}
    E e^{p\int_{\nu}^0 \sin^2(\alpha_{\lambda,\nu}/2) ds}\le c_{p,\beta}.
\end{align}

Now let $0<\lambda$. Note that by the scaling properties of the drift term of the SDE \eqref{eq:SDE_alpha_l} we have
\[
\{\alpha_{\lambda,\nu}(t), \nu\le t\le 0\}\ed \{\alpha_{1,\nu_\lambda}(t+\tfrac{4}{\beta}\log \lambda), \nu\le t\le 0\}.
\]
where $\nu_\lambda=\nu+\tfrac{4}{\beta}\log \lambda$.
This implies
\[
E e^{p\int_{\nu}^0 \sin^2(\alpha_{\lambda,v}/2) ds}=E e^{p\int_{\nu_\lambda}^{\frac{4}{\beta}\log \lambda} \sin^2(\alpha_{1,\nu_\lambda}/2) ds}.
\]
If $0<\lambda<1$ then we have
\[
E e^{p\int_{\nu_\lambda}^{\frac{4}{\beta}\log \lambda} \sin^2(\alpha_{1,\nu_\lambda}/2) ds}\le E e^{p\int_{\nu_\lambda}^{0} \sin^2(\alpha_{1,\nu_\lambda}/2) ds},
\]
and the statement follows from the $\lambda=1$ case  \eqref{eq:alpha_bnd_00}.

If $1<\lambda<e^{-\frac{\beta}{4}\nu}$ then $\nu_\lambda<0$ and we have
\[
E e^{p\int_{\nu_\lambda}^{\frac4{\beta}\log \lambda} \sin^2(\alpha_{1,\nu_\lambda}/2) ds}\le E e^{p(\frac{4}{\beta}\log \lambda+\int_{\nu_\lambda}^{0} \sin^2(\alpha_{1,\nu_\lambda}/2) ds)} =\lambda^{\frac{4}{\beta}p} E e^{p(\int_{\nu_\lambda}^{0} \sin^2(\alpha_{1,\nu_\lambda}/2) ds)}
\]
which yields the desired bound. 

Finally, if $1<e^{-\frac{\beta}{4}\nu}\le \lambda$ then  $\nu\ge -\frac{4}{\beta}\log \lambda$ and we get
\[
E e^{p\int_{\nu}^{0} \sin^2(\alpha_{1,\nu_\lambda}/2) ds}\le e^{-p \nu}\le \lambda^{\frac{4}{\beta} p}.
\]
The $\lambda<0$ case follows by symmetry.
\end{proof}






\subsection{Moment bounds for $\cE$}

We can use the quadratic variation bounds of the previous section to bound the moments of $\mathcal E$.

\begin{theorem}\label{thm:E_mom}
For $\lambda,\gamma\in \mathbb{R}$, $\nu\in [\infty,0)$ and $|\gamma|<1+\tfrac{\beta}{2}$,  we have 
\begin{align}\label{eq:Emom}
E[|\cE_{\nu,0}(\lambda)|^\gamma]&<c(1+|\lambda|)^{2\gamma^2/\beta},\\
\label{eq:alphamom}
Ee^{\tfrac{\gamma}{4} |\alpha_{\lambda,\nu}(0)-\lambda(1-e^{\beta\nu/4})|}&<c(1+|\lambda|)^{\gamma^2/\beta}.
\end{align}
Here $c$ depends on $\beta, \gamma$, but not on $\nu$.
For all $\gamma\in \mathbb R$ with $c$ depending on $\beta$ only we have 
\begin{align}
  E\,e^{\gamma \alpha_{\lambda, \nu}(0)}\le    
2e^{c |\gamma| \left(1+|\lambda|+(|\gamma|+\log |\lambda|)^+\right)}. \label{eq:Emom_bnd}
\end{align}
\end{theorem}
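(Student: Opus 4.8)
The plan is to obtain everything from the SDE \eqref{eq:SDE_alpha_l} for the phase $\alpha_{\lambda,\nu}$ and the companion SDE for $\cL_{\lambda,\nu}$ in Corollary \ref{cor:phase}, together with the quadratic variation bound of Proposition \ref{prop:E_mom}. First recall that $2\log\cE_{\nu,0}(\lambda)=\cL_{\lambda,\nu}(0)+i\alpha_{\lambda,\nu}(0)$, so $|\cE_{\nu,0}(\lambda)|^\gamma=e^{(\gamma/2)\cL_{\lambda,\nu}(0)}$; thus \eqref{eq:Emom} is a statement about $Ee^{(\gamma/2)\cL_{\lambda,\nu}(0)}$. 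From \eqref{eq:SDE_alpha_l} we have $\cL_{\lambda,\nu}(0)=\int_\nu^0 2\sin(\alpha/2)\,dW_2$, a martingale (in the $\nu\to0$ direction) whose quadratic variation is $Q:=\int_\nu^0 4\sin^2(\alpha_{\lambda,\nu}/2)\,ds$. Similarly, writing $\alpha_{\lambda,\nu}(0)-\lambda(1-e^{\beta\nu/4})=\int_\nu^0\lambda f_\beta\,ds - \lambda(1-e^{\beta\nu/4}) + \int_\nu^0 2\sin(\alpha/2)\,dW_1$, and noting $\int_\nu^0\lambda f_\beta\,ds=\lambda(1-e^{\beta\nu/4})$ exactly (since $f_\beta(u)=\tfrac{\beta}{4}e^{\beta u/4}$ integrates to this), the drift cancels and $\alpha_{\lambda,\nu}(0)-\lambda(1-e^{\beta\nu/4})=\int_\nu^0 2\sin(\alpha/2)\,dW_1$ is also a martingale with the same quadratic variation $Q$.

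For \eqref{eq:Emom} and \eqref{eq:alphamom}, I would apply the standard Gaussian-type bound for a martingale $M$ with quadratic variation $Q$: by Cauchy--Schwarz, for $s\in\mathbb R$,
\[
E e^{sM}=E\Big[e^{sM-\tfrac{s^2}{2}Q}e^{\tfrac{s^2}{2}Q}\Big]\le \big(E e^{2sM-2s^2Q}\big)^{1/2}\big(E e^{s^2 Q}\big)^{1/2}\le \big(E e^{s^2 Q}\big)^{1/2},
\]
where the exponential $e^{2sM-2s^2Q}$ has expectation $\le 1$ because it is a nonnegative local supermartingale (the Novikov/exponential supermartingale); one should justify this via localization and Fatou as in the proof of Lemma \ref{lem:E_mom_2}. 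Taking $s=\gamma/4$ (so the $\cE$ exponent $(\gamma/2)\cL$ matches, recalling $\cL=\int 2\sin(\alpha/2)dW_2$, hence $s=\gamma/4$ gives $2sM=(\gamma/2)\cL$ — a small bookkeeping check), we get $E|\cE_{\nu,0}(\lambda)|^\gamma\le(Ee^{(\gamma^2/8)Q})^{1/2}$ and $Ee^{(\gamma/4)|\,\alpha_{\lambda,\nu}(0)-\lambda(1-e^{\beta\nu/4})|}\le 2(Ee^{(\gamma^2/8)Q})^{1/2}$ (the absolute value costing a factor $2$ by using $\pm$). Now $Q=\int_\nu^0 4\sin^2(\alpha/2)\,ds$, so $(\gamma^2/8)Q$ corresponds to $p\int_\nu^0\sin^2(\alpha/2)ds$ with $p=\gamma^2/2$; the hypothesis $|\gamma|<1+\tfrac\beta2$ translates exactly into $p<\tfrac12(1+\tfrac\beta2)^2$, the range of Proposition \ref{prop:E_mom}, which gives $Ee^{(\gamma^2/8)Q}\le c_{p,\beta}(1+|\lambda|)^{\frac4\beta p}=c(1+|\lambda|)^{2\gamma^2/\beta}$. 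Taking square roots yields precisely \eqref{eq:Emom} and \eqref{eq:alphamom}.

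For the last bound \eqref{eq:Emom_bnd}, valid for all $\gamma\in\mathbb R$ (no restriction), the Gaussian trick above fails because $Q$ has only finitely many exponential moments. Instead I would split $\alpha_{\lambda,\nu}(0)=\lambda(1-e^{\beta\nu/4})+\int_\nu^0 2\sin(\alpha/2)dW_1$ and bound the martingale part crudely: $|\int_\nu^0 2\sin(\alpha/2)dW_1|$ is controlled using that $|\sin(\alpha/2)|\le 1$ so the quadratic variation is at most $4|\nu|$ on any fixed window, combined with the stopping-time control of Lemma \ref{lem:E_mom_1} which says $\alpha$ cannot accumulate variation before a time with exponential tails, plus the time-change/scaling identity $\{\alpha_{\lambda,\nu}(t)\}\ed\{\alpha_{1,\nu_\lambda}(t+\tfrac4\beta\log\lambda)\}$ with $\nu_\lambda=\nu+\tfrac4\beta\log\lambda$ used exactly as at the end of the proof of Proposition \ref{prop:E_mom}, to reduce to $\lambda=1$ up to the polynomial-in-$\lambda$ (hence $\log|\lambda|$ in the exponent) corrections. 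The term $\lambda(1-e^{\beta\nu/4})$ contributes the $|\gamma||\lambda|$ in the exponent directly. The main obstacle I anticipate is organizing this last estimate cleanly: one needs a uniform-in-$\nu$ bound on $Ee^{\gamma\alpha_{\lambda,\nu}(0)}$ for \emph{arbitrarily large} $|\gamma|$, which requires combining the early-behavior stopping time $\tau_\eps$ (Lemma \ref{lem:E_mom_1}, whose exponential-moment range is $a<\tfrac{\beta(\beta+2)}{8}$, fixed) with a trivial deterministic bound $\sin^2\le 1$ on the post-$\tau_\eps$ window, and then tracking how the window length feeds into the $(|\gamma|+\log|\lambda|)^+$ factor; getting the constants to depend on $\beta$ only, and not on $\nu$ or $\gamma$, is the delicate bookkeeping step.
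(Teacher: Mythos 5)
Your overall strategy --- reduce everything to exponential moments of $\int_\nu^0\sin^2(\alpha_{\lambda,\nu}/2)\,ds$ via Proposition \ref{prop:E_mom} --- is the paper's, and for \eqref{eq:alphamom} your exponential-supermartingale-plus-Cauchy--Schwarz argument is essentially what the paper does (including the observation that the drift integrates exactly to $\lambda(1-e^{\beta\nu/4})$). But there are two genuine gaps. First, your central inequality is wrong as written: with $A=e^{sM-\frac{s^2}{2}Q}$ you have $A^2=e^{2sM-s^2Q}$, which is \emph{not} the exponential supermartingale of $2sM$ (that would be $e^{2sM-2s^2Q}$), so $EA^2\le 1$ does not follow; the correct split $e^{sM}=e^{sM-s^2Q}\cdot e^{s^2Q}$ gives $Ee^{sM}\le\big(Ee^{2s^2Q}\big)^{1/2}$. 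For \eqref{eq:alphamom} this correction is harmless (with the target $e^{\gamma N/4}$ one lands on $\big(Ee^{\frac{\gamma^2}{2}\int\sin^2}\big)^{1/2}$, exactly the paper's bound), but for \eqref{eq:Emom} it is fatal: $E|\cE_{\nu,0}(\lambda)|^\gamma=Ee^{(\gamma/2)\cL_{\lambda,\nu}(0)}$ forces $s=\gamma/2$ (your choice $s=\gamma/4$ bounds $E|\cE|^{\gamma/2}$, not $E|\cE|^\gamma$), and the corrected Cauchy--Schwarz then yields $\big(Ee^{2\gamma^2\int\sin^2}\big)^{1/2}$, which requires $|\gamma|<\tfrac12(1+\tfrac{\beta}{2})$ and gives exponent $4\gamma^2/\beta$ --- strictly weaker than the statement. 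The paper avoids any Cauchy--Schwarz loss here by using that $W_2$ is independent of $\alpha$ (and of $W_1$): conditionally on $\alpha$, the real part $\cL_{\lambda,\nu}(0)$ is exactly centered Gaussian with variance $4\int_\nu^0\sin^2(\alpha/2)\,ds$, so $E|\cE|^\gamma=Ee^{\frac{\gamma^2}{2}\int_\nu^0\sin^2(\alpha/2)\,ds}$ is an identity, and the full range $|\gamma|<1+\tfrac{\beta}{2}$ with exponent $2\gamma^2/\beta$ follows. This asymmetry is also why the two bounds in the theorem carry different exponents ($2\gamma^2/\beta$ versus $\gamma^2/\beta$).

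Second, your plan for \eqref{eq:Emom_bnd} does not close. Lemma \ref{lem:E_mom_1} only controls $Ee^{-a\tau_\eps}$ for $a<\tfrac{\beta(\beta+2)}{8}$, a fixed range, so it cannot produce exponential moments of $\alpha$ of arbitrarily large order $\gamma$, and the scaling identity alone does not generate the $(|\gamma|+\log|\lambda|)^+$ term. The paper's route is different: positivity of $\alpha$ and Markov's inequality give $P(\alpha_{\lambda,\nu}(t)\ge 2\pi)\le\lambda e^{\beta t/4}$, and the strong Markov property at the hitting times of $2\pi k$ upgrades this to the geometric tail $P(\alpha_{\lambda,\nu}(t)>2\pi k)\le(\lambda e^{\beta t/4})^k$; one then splits at a time $t_0$ chosen so that $\tfrac{\beta}{4}t_0\le -4\gamma\pi-\log\lambda-1$, sums the geometric tail at time $t_0$, and controls the increment on $[t_0,0]$ with the exponential supermartingale and the crude bound $\sin^2\le 1$. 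That choice of $t_0$ is precisely where the $(|\gamma|+\log|\lambda|)^+$ in the exponent comes from.
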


%
%
%
%

\begin{proof} 
First let $\nu\neq-\infty$. Recall $2 \log \cE=\cL+i \alpha$, where $\mathcal L, \alpha$ satisfy \eqref{eq:SDE_alpha_l}. 
Thus we have 
$$
\tfrac12 \cL_{\lambda, \nu,0}=\int_{\nu}^0 2\sin(\alpha_{\lambda,\nu}/2)dW_2\ed
\mathcal{N}\cdot \sqrt{\int_{\nu}^0 \sin^2(\alpha_{\lambda,\nu}/2) ds},
$$ 
since $W_2$ is independent of $\alpha$. Here 
$\mathcal{N}$ is a standard normal random variable  independent of $\alpha$. This shows that
$$
E |\cE_{\nu,0}(\lambda)|^\gamma=E e^{\gamma \mathcal{N}\cdot \sqrt{\int_{\nu}^0 \sin^2(\alpha_{\lambda\nu}/2) ds}}
=E e^{\frac{\gamma^2}{2} \int_{\nu}^0 \sin^2(\alpha_{\lambda,\nu}/2) ds}.
$$
The bound \eqref{eq:Emom} follows from Proposition \ref{prop:E_mom}.  

Without loss of generality, we assume $\lambda\ge 0$ for the rest of the proof. 
\begin{align*}
M(t)=\exp\left(\tfrac{\gamma}{2} \left(\alpha_{\lambda,\nu}(t)-\lambda e^{\frac{\beta}{4} t}+\lambda e^{\frac{\beta}{4}\nu}\right)-\tfrac{\gamma^2}{2} \int_{\nu}^t  \sin^2(\alpha_{\lambda,\nu}/2)ds\right), \qquad t\ge \nu,
\end{align*}
is a martingale and $EM(0)=EM(\nu)=1$. We have
\begin{align}\notag
 E\exp\left(\tfrac{\gamma}{4}( \alpha_{\lambda,\nu}(0)-\lambda+\lambda e^{\frac{\beta}{4}\nu})\right)=
 E\left[\sqrt{M(0)} \exp\left(\tfrac{\gamma^2}{4} \int_{\nu}^{0}  \sin^2(\alpha_{\lambda,\nu}(s)/2)ds\right) \right]
\end{align}
Since $EM(0)=1$, Cauchy-Schwarz gives  the upper bound
\begin{align}\notag \left(E\exp\left(\frac{\gamma^2}{2} \int_{\nu}^{0} \sin^2(\alpha_{\lambda,\nu}(s)/2)\right)\right)^{1/2}.
\end{align}
This inequality applied with $\pm\gamma$ together with Proposition \ref{prop:E_mom} yields \eqref{eq:alphamom}.


To prove \eqref{eq:Emom_bnd} we may assume $
\gamma>0$. 
A coupling argument, Proposition 9 of \cite{BVBV} shows that for $\lambda>0$ we have $\alpha_{\lambda,\nu}(t)>0$ for $t>\nu$.
Markov's inequality implies 
\begin{align}\label{eq:Markovbnd1}
    P(\alpha_{\lambda,\nu}(t) \ge 2\pi )\le \frac{\lambda}{2\pi} (e^{\frac{\beta}{4}t}-e^{\frac{\beta}{4} \nu})\le \lambda e^{\frac{\beta}{4}t}.
\end{align}
 Let $k$ be a positive integer, and let $\tau$ be the hitting time of $2\pi k$ for $\alpha_{\lambda, \nu}$.
The strong Markov property implies that given $\tau=\nu_1$ the conditional distribution of $\alpha_{\lambda,\nu}(t-\tau)-2k\pi, t\ge \nu_1 $ is the same as that of $\alpha_{\lambda,\nu_1 }(t), t\ge \nu_1 $. Hence from \eqref{eq:Markovbnd1} we get  
\begin{align*}
P\left(\alpha_{\lambda,\nu}(t)\ge 2\pi (k+1) \,\big|\,\tau<t\right)\le \lambda e^{\frac{\beta}{4}t}.
\end{align*}
It follows that 
\begin{align}
P(\alpha_{\lambda, \nu}(t)>2\pi  k)\le   \big(\lambda e^{\frac{\beta}{4}t}\big)^k. \label{eq:exp_tail_bnd}
\end{align}
By Cauchy-Schwarz for any $\nu<t_0<0$ we have
\begin{align}\label{eq:CS_mom}
E e^{\gamma \alpha_{\lambda,\nu}(0)}\le (E e^{2\gamma \alpha_{\lambda,\nu}(t_0)})^{1/2} (Ee^{2\gamma (\alpha_{\lambda,\nu}(0)-\alpha_{\lambda,\nu}(t_0))})^{1/2}.
\end{align}
From \eqref{eq:exp_tail_bnd} we get
\begin{align}\notag
Ee^{2\gamma \alpha_{\lambda,\nu}(t_0)}&= \sum_{k=0}^\infty E[e^{2\gamma \alpha_{\lambda,\nu}(t_0)}\, \ind(2k\pi\le \alpha_{\lambda,\nu}(t_0) <2(k+1)\pi)]\\
&\le \sum_{k=0}^\infty e^{4 \gamma (k+1) \pi} P(\alpha_{\lambda,\nu}(t_0)\ge 2k \pi)\le \sum_{k=0}^\infty e^{4 \gamma (k+1) \pi} \lambda^k e^{\frac{\beta}{4} k t_0}.\notag
\end{align}
If $\tfrac{\beta}{4} t_0\le -4\gamma \pi-\log \lambda-1$,
then we get the bound
\begin{align}
 E\,e^{2\gamma \alpha_{\lambda,\nu}(t_0)}  \le 2 e^{4 \gamma \pi}.\label{eq:mom_1st_term}
\end{align}
We now use a variant of the martingale in the first part. For $\nu\le t_0\le 0$ the process 
\begin{align*}
M(t)=\exp\left(4\gamma \left(\alpha_{\lambda,\nu}(t)-\alpha_{\lambda,\nu}(t_0)-\lambda( e^{\frac{\beta}{4} t}-e^{\frac{\beta}{4}t_0})\right)-8\gamma\int_{t_0}^t  \sin^2(\alpha_{\lambda,\nu}/2)ds\right), \qquad t\ge t_0.
\end{align*}
is a martingale with $EM(0)=EM(t_0)=1$. By Cauchy-Schwarz we have 
\begin{align}\notag
     Ee^{2\gamma \left(\alpha_{\lambda,\nu}(0)-\alpha_{\lambda,\nu}(t_0)-\lambda(1-e^{\frac{\beta}{4}t_0})\right)}&=E[\sqrt{M(0)} e^{4\gamma \int_{t_0}^{0}  \sin^2(\alpha_{\lambda,\nu}(s)/2)ds} ]\\ 
&\le (Ee^{8\gamma \int_{t_0}^{0} \sin^2(\alpha_{\lambda,\nu}(s)/2)ds} )^{1/2} 
\le e^{-4 \gamma t_0}, \label{eq:mart_bnd_2}
\end{align}
where the last step uses the upper bound 1 on $\sin(\cdot)^2$.
Now set 
\[
t_0=\begin{cases}
\nu, \qquad &\text{if }\quad -4\gamma \pi-\log \lambda-1<\frac{\beta}{4}\nu,\\
-\tfrac4{\beta}(4\gamma \pi+\log \lambda+1), \qquad &\text{if }\quad
\frac{\beta}{4}\nu \le -4\gamma \pi-\log \lambda-1<0,\\
0, \qquad &\text{if }\quad 0\le -4\gamma \pi-\log \lambda-1.
\end{cases}
\]
The estimates \eqref{eq:CS_mom}, \eqref{eq:mom_1st_term} and \eqref{eq:mart_bnd_2} yield
$  E\,e^{\gamma \alpha_{\lambda,\nu}(0)}\le \left(2 e^{4\gamma \pi}\right)^{1/2} e^{-2\gamma t_0+\gamma \lambda}$, and \eqref{eq:Emom_bnd} follows.

The $\nu=\infty$ case of all three bounds follows from Fatou's lemma. 
\end{proof}

\subsection{Precise growth of the number of zeros}

The following corollary provides regularity bounds for the $\Sineb$ process and its approximations. It is closely related to the work of  Holcomb and Paquette \cite{holcomb2018maximum}, where the optimal constant in front of the $\log$ is also determined. 
Bounds of this type that are less precise can be derived from the variance bounds on the counting function in  \cite{KVV}. Such a bound is explicitly given in  \cite{Najnudel-Virag}.

\begin{corollary}[Regularity of the $\Sineb$ process]\label{c:sineb-regularity}
For $\nu \in [-\infty,0)$ let $N_\nu(\lambda)$ be the counting function of the eigenvalue process of the operator $\btau_{\beta, \nu}$. 
Then almost surely, for all large enough $|\lambda|$ we have
\[
|N_\nu(\lambda)-\tfrac{\lambda}{2\pi}(1-e^{\frac{\beta}{4} \nu})|<(1+\beta^{-1})\log |\lambda|.\]
\end{corollary}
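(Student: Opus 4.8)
The plan is to reduce the claim to the pointwise exponential-moment estimate for the centered phase supplied by Theorem \ref{thm:E_mom}, and then run a Borel--Cantelli argument along the integers, filling in the non-integer values by monotonicity. Write $D_\nu(\lambda):=\alpha_{\lambda,\nu}(0)-\lambda(1-e^{\beta\nu/4})$. First I would observe that $D_\nu$ is centered: integrating the drift term $\lambda f_\beta$ of \eqref{eq:SDE_alpha_l} over $[\nu,0]$ produces $\lambda(1-e^{\beta\nu/4})$, while $\int_\nu^0 2\sin(\alpha_{\lambda,\nu}/2)\,dW_1$ has mean zero; this is why $\tfrac{\lambda}{2\pi}(1-e^{\beta\nu/4})$ is the correct linear normalization. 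Combining \eqref{eq:counting} of Corollary \ref{cor:phase} with this, for $\lambda>0$,
$$
\Bigl|N_\nu(\lambda)-\tfrac{\lambda}{2\pi}\bigl(1-e^{\beta\nu/4}\bigr)\Bigr|\le\Bigl|N_\nu(\lambda)-\tfrac{1}{2\pi}\alpha_{\lambda,\nu}(0)\Bigr|+\tfrac{1}{2\pi}|D_\nu(\lambda)|\le 1+\tfrac{1}{2\pi}\,|D_\nu(\lambda)|,
$$
and the negative-$\lambda$ case is analogous by symmetry. So it suffices to find a constant $K=K_\beta$ with $K<2\pi(1+\beta^{-1})$ for which, almost surely, $|D_\nu(\lambda)|\le K\log(2+|\lambda|)$ for all large $|\lambda|$.

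Next I would invoke Theorem \ref{thm:E_mom}: taking $\gamma=1$ in \eqref{eq:alphamom} (allowed since $1<1+\beta/2$) and applying Markov's inequality gives $P(|D_\nu(m)|>t)\le c\,(1+|m|)^{1/\beta}e^{-t/4}$ with $c$ independent of $\nu$ and $m$. Choosing $t=K\log(2+|m|)$ with any $K>4(1+\beta^{-1})$ makes the exponent $\tfrac1\beta-\tfrac K4<-1$, so $\sum_{m\in\mathbb Z}P(|D_\nu(m)|>K\log(2+|m|))<\infty$, and Borel--Cantelli yields $|D_\nu(m)|\le K\log(2+|m|)$ for all but finitely many integers $m$, almost surely. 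Since $4<2\pi$ we have $4(1+\beta^{-1})<2\pi(1+\beta^{-1})$, so such a $K$ can be chosen to moreover satisfy $K<2\pi(1+\beta^{-1})$.

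Then I would fill in the non-integer $\lambda$ by monotonicity. The map $\lambda\mapsto\alpha_{\lambda,\nu}(0)$ is nondecreasing: it is the phase $2\Im\log\cE_{\nu,0}(\lambda)$ of the Dirac operator $\btau_{\beta,\nu}$, monotone in the spectral parameter by oscillation theory, exactly as used in the proof of Corollary \ref{cor:phase}. Since $0\le 1-e^{\beta\nu/4}\le 1$, for $\lambda\in[m,m+1]$ with $m\ge 0$ one gets $D_\nu(m)-1\le D_\nu(\lambda)\le D_\nu(m+1)+1$, hence $|D_\nu(\lambda)|\le K\log(3+|\lambda|)+1$ for all large $|\lambda|$ (the negative axis is symmetric). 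Plugging this into the reduction above and using $\log(3+|\lambda|)=\log|\lambda|+O(1/|\lambda|)$ together with $K/(2\pi)<1+\beta^{-1}$ gives $|N_\nu(\lambda)-\tfrac{\lambda}{2\pi}(1-e^{\beta\nu/4})|<(1+\beta^{-1})\log|\lambda|$ for all large $|\lambda|$. The case $\nu=-\infty$ follows verbatim, since \eqref{eq:alphamom} holds there by Fatou's lemma as noted in Theorem \ref{thm:E_mom}.

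There is no genuine obstacle here: the analytic heart of the matter, the exponential moment bound on $|\alpha_{\lambda,\nu}(0)-\lambda(1-e^{\beta\nu/4})|$, is already established in Theorem \ref{thm:E_mom}, and what remains is the standard scheme for turning such a bound into an almost-sure logarithmic fluctuation estimate. The only point requiring care is the constant bookkeeping: one must check that the moment exponent $\tfrac{\gamma^2}{\beta}-\tfrac{\gamma K}{4}$ can be pushed below $-1$ with $K$ still small enough that $\tfrac{K}{2\pi}<1+\beta^{-1}$; the choice $\gamma=1$ achieves this uniformly in $\beta>0$, so the (non-optimal) constant $1+\beta^{-1}$ comes out with room to spare.
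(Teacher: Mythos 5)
Your argument is correct and follows essentially the same route as the paper's proof: reduce to the phase via \eqref{eq:counting}, apply the exponential Markov inequality to \eqref{eq:alphamom} with $\gamma=1$, run Borel--Cantelli along the integers, and interpolate by monotonicity (the paper uses monotonicity of $N_\nu$ where you use monotonicity of $\alpha_{\lambda,\nu}(0)$, which is the same observation). The only difference is bookkeeping: the paper fixes the constant $6(1+\beta^{-1})$ inside the probability and divides by $2\pi$ at the end, while you track a general $K\in\bigl(4(1+\beta^{-1}),\,2\pi(1+\beta^{-1})\bigr)$.
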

\begin{proof}
We show the statement for $\lambda\to \infty$, the $\lambda\to -\infty$ case follows by symmetry. 
By \eqref{eq:counting} we have $|N_\nu(\lambda)-\frac{1}{2\pi}\alpha_{\lambda,\nu}(0)|\le 1$. Using \eqref{eq:alphamom} with $\gamma=1$  and $\lambda\ge 1$ integer we get
\[
P(|\alpha_{\lambda,\nu}(0)-\lambda(1-e^{\frac{\beta}{4}\nu})|\ge 6(1+\beta^{-1}) \log \lambda)\le \frac{E e^{\frac14 |\alpha_{\lambda,\nu}(0)-\lambda(1-e^{\frac{\beta}{4}\nu})|}}{e^{\frac32(1+\beta^{-1}) \log \lambda}}\le \frac{c (1+\lambda)^{1/\beta}} {\lambda^{3/2(1+\beta^{-1})}}
\]
by the exponential Markov's inequality. By the Borel-Cantelli lemma we get that 
\[
\frac{1}{2\pi}|\alpha_{\lambda,\nu}(0)-\lambda(1-e^{\frac{\beta}{4}\nu})|\le \frac{6}{2\pi}(1+\beta^{-1}) \log \lambda
\]
if $\lambda$ is a large enough integer. Since $N_\nu(\lambda)$ is non-decreasing, the claim follows. 
\end{proof}

\section{Analytic properties of $\zeta_{\beta}$}

In this section we show that $\zeta_\beta$ is of Cartwright class. This allows us to show that logarithmic derivative has exponential moments off the real line. 

\subsection{Cartwright class,  exponential type and uniqueness}\label{subs:C_class}


Theorem \ref{thm:E_mom} directly implies that the integral condition \eqref{eq:log_int} needed for Cartwright class is satisfied by $\zeta_\beta$.
\begin{proposition}\label{prop:zeta_C} With  $\zeta=\zeta_{\beta,\nu}$ we have
\begin{equation}\label{eq:elogz}
\ev \log^+|\zeta(x)|\le c_\beta + \tfrac{2}{\beta}\log^+|x|
\end{equation}
for all real $x$, and so 
$$ \ev \int \frac{\log^+|\zeta(x)|}{1+x^2}dx<\infty.$$
\end{proposition}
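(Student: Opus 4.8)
The statement to prove is Proposition~\ref{prop:zeta_C}: the bound $\ev \log^+|\zeta_{\beta,\nu}(x)| \le c_\beta + \tfrac{2}{\beta}\log^+|x|$ for real $x$, and consequently the integrability of $\ev\log^+|\zeta_{\beta,\nu}(x)|/(1+x^2)$. The plan is to reduce the pointwise bound on $\ev\log^+|\zeta|$ to the moment bounds on $\mathcal E$ already established in Theorem~\ref{thm:E_mom}, using the representation \eqref{eq:zeta_cE}, and then obtain the integral bound by a trivial application of Fubini.

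First I would fix $x\in\R$ and use the identity $\zeta_\beta(x) = \tfrac{1-iq}{2}\mathcal E_{\nu,0}(x) + \tfrac{1+iq}{2}\mathcal E^*_{\nu,0}(x)$ from \eqref{eq:zeta_cE}. Since $\mathcal E^* = \overline{\mathcal E}$ on the real line (Corollary~\ref{cor:cE_SDE}), we have $|\zeta_\beta(x)| \le |1-iq|\,|\mathcal E_{\nu,0}(x)| = \sqrt{1+q^2}\,|\mathcal E_{\nu,0}(x)|$. Hence $\log^+|\zeta_\beta(x)| \le \tfrac12\log(1+q^2) + \log^+|\mathcal E_{\nu,0}(x)| + \log 2$, say (being slightly generous with constants). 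Taking expectations and using that $q$ is Cauchy — so $\ev\tfrac12\log(1+q^2)$ is a finite absolute constant — it remains to bound $\ev\log^+|\mathcal E_{\nu,0}(x)|$. Here I would use the elementary inequality $\log^+ t \le \tfrac{1}{\gamma}\log^+(t^\gamma) \le \tfrac{1}{\gamma}(t^\gamma)$ valid for $t\ge 0$ and any fixed $\gamma>0$, applied with some $\gamma \in (0, 1+\tfrac\beta2)$; then $\ev\log^+|\mathcal E_{\nu,0}(x)| \le \tfrac1\gamma \ev|\mathcal E_{\nu,0}(x)|^\gamma \le \tfrac{c}{\gamma}(1+|x|)^{2\gamma^2/\beta}$ by \eqref{eq:Emom}. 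Taking $\gamma$ small — any fixed small $\gamma$ works, but to land on the exponent $\tfrac2\beta$ in the statement one wants $2\gamma^2/\beta \le \tfrac2\beta$, i.e.\ $\gamma\le 1$, and then $\log(1+|x|)^{2\gamma^2/\beta}\le \tfrac{2}{\beta}\log^+|x| + O(1)$ after absorbing the $\log(1+|x|)$ vs $\log^+|x|$ discrepancy into $c_\beta$ — gives $\ev\log^+|\zeta_{\beta,\nu}(x)| \le c_\beta + \tfrac{2}{\beta}\log^+|x|$, with $c_\beta$ depending only on $\beta$ and not on $\nu$ since the constant $c$ in \eqref{eq:Emom} does not depend on $\nu$. (One should double-check the exact bookkeeping: using $\log^+ t \le \tfrac1\gamma t^\gamma$ and $(1+|x|)^{2\gamma^2/\beta} = e^{(2\gamma^2/\beta)\log(1+|x|)}$ doesn't directly give a $\log^+|x|$ term, so it is cleaner to instead write $\ev|\mathcal E|^\gamma\le c(1+|x|)^{2\gamma^2/\beta}$, note $\log$ of this is $\le \log c + \tfrac{2\gamma^2}{\beta}\log(1+|x|)$, and bound $\ev\log^+|\mathcal E|\le \ev\log|\mathcal E^\gamma|^{1/\gamma}$... — actually the clean route is: for $t\ge 0$, $\gamma \ev \log^+ t \le \ev\log^+(t^\gamma) \le \log^+\ev t^\gamma \le \log(1+\ev t^\gamma)$ by Jensen applied to the concave function $\log^+(1+\cdot)$ or just $\log(1+\cdot)$, so $\ev\log^+|\mathcal E|\le \tfrac1\gamma\log(1+c(1+|x|)^{2\gamma^2/\beta}) \le \tfrac1\gamma\log(1+c) + \tfrac{2\gamma}{\beta}\log(1+|x|)$, and choosing $\gamma=1$ gives the $\tfrac2\beta$ exponent exactly.)

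Finally, for the integral bound: by Tonelli's theorem (everything is nonnegative), $\ev\int_{-\infty}^\infty \tfrac{\log^+|\zeta(x)|}{1+x^2}\,dx = \int_{-\infty}^\infty \tfrac{\ev\log^+|\zeta(x)|}{1+x^2}\,dx \le \int_{-\infty}^\infty \tfrac{c_\beta + \tfrac2\beta\log^+|x|}{1+x^2}\,dx < \infty$, since $\int (1+x^2)^{-1}dx = \pi$ and $\int \log^+|x|/(1+x^2)\,dx < \infty$ (the integrand decays like $\log|x|/x^2$). This shows $\zeta_{\beta,\nu}$ satisfies the integral condition \eqref{eq:log_int} almost surely, which is the half of the Cartwright-class membership that cannot be read off from exponential type alone.

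The main obstacle, if any, is minor: it is purely the constant-chasing to recover the precise exponent $\tfrac2\beta$ in \eqref{eq:elogz} rather than just \emph{some} finite exponent — which requires choosing $\gamma=1$ in \eqref{eq:Emom} and being careful with the Jensen step so the $\gamma$-dependence of the constant stays controlled and $\nu$-independent. There is no analytic difficulty; the real content was all packed into Theorem~\ref{thm:E_mom}, and this proposition is essentially a corollary.
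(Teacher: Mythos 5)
Your argument is correct and follows essentially the same route as the paper: write $\zeta$ on the real line in terms of $\mathcal E$, $\overline{\mathcal E}$ and the independent Cauchy variable $q$, reduce $\log^+|\zeta|$ to $\log^+|\mathcal E|$ plus a term with finite expectation, and then apply the first-moment bound $\ev|\mathcal E(x)|\le c(1+|x|)^{2/\beta}$ from Theorem~\ref{thm:E_mom} together with Jensen's inequality for the concave function $\log(1+\cdot)$; the Tonelli step for the integral is the same. The parenthetical self-correction in your middle paragraph lands on exactly the right bookkeeping (the $\gamma=1$ case), so nothing is missing.
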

\begin{proof}
On the real line, we have $2\zeta=(E+\bar E)-qi(E-\bar E)$. Since $\log^+|a+b|\le \log^+|a|+\log^+|b|+1$, and $\log^+|a|\le\log(1+ |a|)$, we get the bound
$$
\log^+|\zeta| \le 3+4\log^+|E|+\log^+q \le  3+4\log(1+ |E|)+\log^+q.
$$
By Theorem \ref{thm:E_mom}, $\ev |E(x)|\le c(1+|x|)^{2/\beta}$.  Since  $\log^+$ of a Cauchy distribution has finite expectation, and $\log$  is concave, Jensen's inequality gives \eqref{eq:elogz}.
\end{proof}

This shows the integral condition for Cartwright class.  We have two proof that $\zeta$ is of finite exponential type. First, it follows from the bounds \eqref{eq:Hbound} in Corollary \eqref{cor:H_char}. It also follows from the regularity of zeros, Corollary \ref{c:sineb-regularity} and Proposition \ref{prop:inf_prod_1}. Hence $\zeta$ is of Cartwright class. 

Next, by Proposition \ref{prop:inf_prod} it satisfies
\begin{align}\label{eq:zeta_prod}
\zeta(z)=\lim\limits_{r\to \infty} \prod_{|\lambda_k|<r}(1-z/\lambda_k)
\end{align}
In particular, $\zeta_2$ agrees with the random analytic function constructed in \cite{CNN}. 

By Proposition \ref{prop:inf_prod} we also have
$$
    \limsup_{x\to\infty}
    \frac{\log|\zeta(ix)|}{x}=
     \limsup_{|z|\to\infty}\frac{\log|\zeta(z)|}{|z|}=1/2,
$$
so $\zeta_\beta$ has exponential type 1/2.

Finally, by Proposition \ref{prop:Cart-unique} we have the following.
\begin{proposition} \label{prop:zetaunique} The law of $\zeta_\beta$ is the unique distribution on the Cartwright class, satisfying $\zeta_\beta(\mathbb R)\subset \mathbb R$,  $\zeta_\beta(0)=1$, with zero distribution $\Sineb$ of multiplicity one.
\end{proposition}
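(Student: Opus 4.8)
The plan is to combine the folklore uniqueness statement Proposition \ref{prop:Cart-unique} with the properties of $\zeta_\beta$ assembled earlier in this section. First I would record that $\zeta_\beta$ belongs to the asserted family. It is of Cartwright class by the discussion preceding \eqref{eq:zeta_prod}: finite exponential type from the bounds of Corollary \ref{cor:H_char}, and the integral condition \eqref{eq:log_int} from Proposition \ref{prop:zeta_C}. By the power series \eqref{e:zeta-intro} it has real coefficients with constant term $\cA_{0,0}-q\cB_{0,0}=1$, so $\zeta_\beta(0)=1$ and $\zeta_\beta(\mathbb R)\subset\mathbb R$. Finally, the zeros of $\zeta_\beta=\zeta_{\btau_\beta}$ are exactly the eigenvalues of $\btau_\beta$, which have multiplicity one and are distributed as the $\Sineb$ process by Proposition \ref{prop:Sine_tau} and the discussion in Section \ref{s:secular}.

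For uniqueness, the key observation is that every member of this family is the \emph{same} deterministic, measurable function of its zero set. Indeed, if $f$ is of Cartwright class with $f(0)=1$, $f(\mathbb R)\subset\mathbb R$, and all zeros real --- which holds almost surely here, the zeros being $\Sineb$-distributed --- then the argument of Proposition \ref{prop:inf_prod}, i.e.\ Levin's factorization theorem (with $m=0$, $c=1$, $b=0$ forced by $f(0)=1$, realness, and real zeros), yields the principal value product $f(z)=\lim_{r\to\infty}\prod_{|\lambda_k|<r}(1-z/\lambda_k)$, a pointwise limit of Borel functions of the configuration $\{\lambda_k\}$. Writing $\Psi$ for the resulting Borel map from simple configurations on $\mathbb R$ to entire functions (with the uniform-on-compacts topology), we have $f=\Psi(\mathrm{zeros}(f))$ for every such $f$, the single-valuedness of $\Psi$ being precisely Proposition \ref{prop:Cart-unique}.

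To conclude, let $\Phi$ be any random entire function whose law is a probability measure on the Cartwright class with $\Phi(0)=1$, $\Phi(\mathbb R)\subset\mathbb R$, and zero process $\Sineb$. Then almost surely $\Phi=\Psi(\mathrm{zeros}(\Phi))$ and $\zeta_\beta=\Psi(\mathrm{zeros}(\zeta_\beta))$, while the two zero processes share the law $\Sineb$; pushing this law forward under $\Psi$ gives $\mathrm{Law}(\Phi)=\Psi_*(\Sineb)=\mathrm{Law}(\zeta_\beta)$, so the law of $\zeta_\beta$ is the unique such distribution. The main point requiring care is the middle step --- checking that Levin's theorem genuinely applies to an arbitrary Cartwright member of the family rather than only to secular functions $\zeta_\btau$, and that the product is jointly measurable in the configuration for a suitable Borel structure on the space of entire functions; both are routine but should be spelled out.
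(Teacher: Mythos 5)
Your proposal is correct and follows essentially the same route as the paper: Section \ref{subs:C_class} establishes that $\zeta_\beta$ is of Cartwright class (exponential type from Corollary \ref{cor:H_char}, the integral condition from Proposition \ref{prop:zeta_C}), real on the reals with $\zeta_\beta(0)=1$ and zeros given by $\Sineb$, and then invokes Proposition \ref{prop:Cart-unique} to conclude that any such function is determined by its zeros, so the law is the pushforward of $\Sineb$ under the principal-value product map. The only difference is that you make explicit the measurability of that map and the pushforward of laws, which the paper leaves implicit; that added care is harmless and correct.
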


\subsection{Product identities and Cauchy trace}

The simple identity 
$$
2\sin(x)=- \prod_{j=1}^{k} 2\sin((j\pi+x)/k)
$$
has the following analogue for  $\zeta_\beta$.
\begin{corollary}\label{c:forrester}
Let $k\ge 2$ an integer. There exists a coupling of $k$ copies $\zeta_{2k,1},\ldots, \zeta_{2k,k}$ of $\zeta_{2k}$
so that
$$
\zeta_{2/k}(z)=\prod_{j=1}^k\zeta_{2k,j}(z/k).
$$
\end{corollary}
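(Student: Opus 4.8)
The plan is to combine the principal value product representation \eqref{eq:zeta_prod} of the stochastic zeta function with a superposition identity for the $\Sine_\beta$ process due to Forrester \cite{ForresterDec}. The input I would use from \cite{ForresterDec} is the following: if $\Lambda_1,\dots,\Lambda_k$ are independent copies of the $\Sine_{2k}$ process, then the dilated superposition $\bigcup_{j=1}^{k} k\Lambda_j$ has the law of $\Sine_{2/k}$. The dilation by $k$ restores unit density, since each $k\Lambda_j$ has density $1/k$. Pinning down the exact form and scaling of this identity is really the only delicate point; once it is in hand the rest is bookkeeping. Note that, being independent simple point processes with non-atomic one-point intensities, the $k\Lambda_j$ are a.s.\ pairwise disjoint, so their union is again simple (this is in any case forced by the identity).

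To set up the coupling, recall that by \eqref{eq:zeta_prod} --- a consequence of the Cartwright class theory behind Proposition \ref{prop:zetaunique} --- for every $\beta$ the function $\zeta_\beta$ equals the principal value product over its own zero set. Equivalently, there is a measurable map $\Phi$ which assigns to a locally finite real configuration $S$ in the (full-measure) support of the $\Sine_\beta$ laws the entire function $\Phi(S)(z)=\lim_{r\to\infty}\prod_{\lambda\in S,\,|\lambda|<r}(1-z/\lambda)$, and the law of $\zeta_\beta$ is the image of the law of $\Sine_\beta$ under $\Phi$. On a probability space carrying independent $\Sine_{2k}$ processes $\Lambda_1,\dots,\Lambda_k$, I then set $\zeta_{2k,j}:=\Phi(\Lambda_j)$, each a copy of $\zeta_{2k}$, and $\zeta_{2/k}:=\Phi\bigl(\bigcup_{j=1}^k k\Lambda_j\bigr)$, which is a copy of $\zeta_{2/k}$ by Forrester's identity.

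It remains to check the identity pointwise. Fix $z\in\CC$ and $r>0$. A point $\lambda$ of $\bigcup_j k\Lambda_j$ with $|\lambda|<r$ is of the form $\lambda=k\mu$ with $\mu\in\Lambda_j$, $|\mu|<r/k$, for exactly one index $j$, and then $1-z/\lambda=1-(z/k)/\mu$. Hence the symmetric truncation factors as
\[
\prod_{\substack{\lambda\in\bigcup_j k\Lambda_j\\ |\lambda|<r}}\Bigl(1-\frac{z}{\lambda}\Bigr)
=\prod_{j=1}^{k}\ \prod_{\substack{\mu\in\Lambda_j\\ |\mu|<r/k}}\Bigl(1-\frac{z/k}{\mu}\Bigr).
\]
Letting $r\to\infty$, the left side tends to $\zeta_{2/k}(z)$ by \eqref{eq:zeta_prod} for $\Sine_{2/k}$, while each of the finitely many inner products on the right tends to $\Phi(\Lambda_j)(z/k)=\zeta_{2k,j}(z/k)$ by \eqref{eq:zeta_prod} for $\Sine_{2k}$; this gives $\zeta_{2/k}(z)=\prod_{j=1}^k\zeta_{2k,j}(z/k)$, as desired. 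The key structural point making this work cleanly is that a radius-$r$ cutoff in $\bigcup_j k\Lambda_j$ corresponds to a simultaneous radius-$(r/k)$ cutoff in every $\Lambda_j$, so that the principal value products on the two sides are truncated consistently and no subtle interchange of limits is needed beyond taking limits of two sequences known to be equal for each finite $r$.
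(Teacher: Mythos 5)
There is a genuine gap, and it lies exactly in the step you flag as ``the only delicate point'': the input you attribute to Forrester is not what \cite{ForresterDec} proves, and it is in fact false. Forrester's theorem is a \emph{decimation} result: starting from a single $\Sine_{2/k}$ configuration and splitting its ordered points into the $k$ residue classes mod $k$ (with a uniform random shift of the labelling), each class is marginally distributed as $k$ times a $\Sine_{2k}$ process. These $k$ classes are highly dependent --- they interlace deterministically --- and their union is tautologically the original $\Sine_{2/k}$ set. It does \emph{not} follow, and is not true, that the superposition of $k$ \emph{independent} copies of $k\cdot\Sine_{2k}$ has the law of $\Sine_{2/k}$: points coming from different independent components have two-point intensity $\rho(x)\rho(y)>0$ as $x\to y$, whereas $\Sine_{2/k}$ exhibits level repulsion, so the independent superposition has the wrong local correlations. (Compare the remark in the paper after this corollary: even for the classical $\beta=1\to\beta=2$ relation one must take \emph{every second point} of the superposition of two independent $\Sine_1$ processes; the bare superposition is not a Sine process.) As a result, the random function you call $\zeta_{2/k}$ is not a copy of $\zeta_{2/k}$, and the coupling you construct does not prove the corollary.

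The remedy is to run your argument in the opposite direction, which is what the paper does: realize all $k$ copies on the zero set of a single $\Sine_{2/k}$ process via the decimation, define $\zeta_{2k,j}$ by the product formula \eqref{eq:zeta_prod} applied to each (rescaled) residue class, and then identify $\prod_j\zeta_{2k,j}(z/k)$ with $\zeta_{2/k}$. Your truncation bookkeeping (radius $r$ for the union versus radius $r/k$ for each class) is correct and would go through verbatim for this dependent decomposition; the paper instead sidesteps the matching of principal-value truncations by noting that the product of the $k$ Cartwright functions is again Cartwright, real on $\R$, equal to $1$ at $0$, with zero set exactly $\Sine_{2/k}$, and then invoking the uniqueness statement of Proposition \ref{prop:zetaunique}. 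Either finishing step is fine; the construction of the coupling is what must change.
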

\begin{proof}
Let $\lambda_\ell$ be the ordered points of the Sine$_{2/k}$ process. Let $K$ be a uniform random element of $\{1,\ldots,k\}$.
Let $\Lambda_j=\{\lambda_\ell:\ell=K+j \mod k\}$. By a result of Forrester \cite{ForresterDec}, each  $\Lambda_j$ has the same distribution as $k$ times a Sine$_{2k}$ process. The product formula \eqref{eq:zeta_prod} applied to each $\Lambda_j$ gives the analytic functions $\zeta_{2k,j}(z/k)$. Their product is a Cartwright function with zeros $\lambda_j$. By Proposition \ref{prop:zetaunique} it has the same law as $\zeta_{2/k}$.
\end{proof}
By taking limits of Theorem 2.5.17 in \cite{AGZ}, we see that the distribution of 
every second point in the superposition of two independent $\Sine_1$ processes is a scaled $\Sine_2$ process. It follows that given two independent copies of $\zeta_1$, there exists two dependent copies of $\zeta_2$ so that
$
\zeta_{1,1}\zeta_{1,2}=\zeta_{2,1}\zeta_{2,2}.
$

The next theorem identifies the distribution of the integral trace $\mathfrak{t}_\beta$ for $\btau_\beta$.
\begin{theorem}\label{t:AW} Consider the $\Sineb$ process, and denote the points in the process with $\lambda_k, k\in \ZZ$. Then the  principal value sum $$\lim_{r\to \infty} \sum_{|\lambda_k|\le r} \lambda_k^{-1}$$ has  Cauchy distribution with density $1/(2\pi (x^2+1/4))$.
\end{theorem}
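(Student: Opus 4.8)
The plan is to identify the principal value sum $\lim_{r\to\infty}\sum_{|\lambda_k|\le r}\lambda_k^{-1}$ with the integral trace $\mathfrak t_\beta$ of $\btau_\beta$, and then compute the distribution of $\mathfrak t_\beta$ directly from the explicit formula for the integral trace under $q$-boundary conditions. By \eqref{eq:traceid} of Proposition \ref{prop:inf_prod} — which applies since $\zeta_\beta$ is of Cartwright class, as established in Section \ref{subs:C_class} — we have
\[
\lim_{r\to\infty}\sum_{|\lambda_k|\le r}\lambda_k^{-1}=\tfrac12\int_0^\sigma a(s)^t c(s)\,ds=\mathfrak t_{\btau_\beta}.
\]
Here the eigenvalues $\lambda_k$ of $\btau_\beta$ are exactly the points of the $\Sineb$ process, by Proposition \ref{prop:Sine_tau}. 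So it remains to compute the law of $\mathfrak t_{\btau_\beta}$.

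Next I would use the explicit expression for the integral trace under $q$-boundary conditions. Recall from Definition \ref{def:zetab} that $\btau_\beta$ is built from the path $X_{u(t)}$, $t\in(0,1]$, with $u(t)=\tfrac4\beta\log t$, and $q$-boundary conditions $\mathfrak u_0=[1,0]^t$, $\mathfrak u_1=[-q,-1]^t$ with $q$ standard Cauchy, independent of $b_1,b_2$. The formula just below \eqref{eq:int_tr} gives
\[
\mathfrak t_{\btau_\beta}=\int_0^1\frac{x(t)-q}{2\,y(t)}\,ds,
\]
where $x(t)+iy(t)=x_{u(t)}+iy_{u(t)}$; changing variables $t\mapsto u$ via $dt=\tfrac\beta4 e^{\beta u/4}du$ this becomes $\mathfrak t_{\btau_\beta}=\tfrac\beta8\int_{-\infty}^0 e^{\beta u/4}\,\frac{x_u-q}{y_u}\,du$. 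The key structural observation is that this is an affine function of $q$: writing $T_0=\tfrac\beta8\int_{-\infty}^0 e^{\beta u/4}x_u/y_u\,du$ and $S_0=\tfrac\beta8\int_{-\infty}^0 e^{\beta u/4}/y_u\,du$, we have $\mathfrak t_{\btau_\beta}=T_0-q\,S_0$, with $(T_0,S_0)$ measurable with respect to $b_1,b_2$ and hence independent of $q$. The integrability of these integrals is part of Assumption 2, verified in Proposition \ref{prop:tau-beta}.

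The cleanest way to finish is to invoke the general translation-invariance principle of Aizenman and Warzel \cite{AW_2014} cited in the text (Theorem \ref{t:AW} itself refers to it): for a translation-invariant point process, the principal value trace has a Cauchy distribution. The only thing left is to pin down the scale parameter, which must be $1/2$: this is forced by the exponential type of $\zeta_\beta$. Indeed, by \eqref{eq:exp_order} of Proposition \ref{prop:inf_prod} the principal value trace determines, via $\lim_{|k|\to\infty}\pi k/\lambda_k$, the same quantity $=1/2$ computed in Section \ref{subs:C_class} from the exponential type; and more directly, the density of the $\Sineb$ process is $\tfrac1{2\pi}$, so $\lambda_k\sim 2\pi k$, fixing the rate. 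Alternatively, and more self-containedly, one computes the law of $\mathfrak t_{\btau_\beta}=T_0-qS_0$ by conditioning on $b_1,b_2$: given $(T_0,S_0)$, since $q$ is standard Cauchy the conditional law of $T_0-qS_0$ is Cauchy centered at $T_0$ with scale $|S_0|$; it then suffices to show $\mathbb E[e^{it(T_0-qS_0)}\mid b_1,b_2]=e^{itT_0-|t||S_0|}$ has unconditional expectation $e^{-|t|/2}$, i.e.\ $\mathbb E[e^{itT_0-|t||S_0|}]=e^{-|t|/2}$ — and by the symmetry $b_1\mapsto -b_1$ (which flips $x\mapsto -x$, hence $T_0\mapsto -T_0$, while fixing $S_0,y$) one reduces to $\mathbb E[\cos(tT_0)e^{-|t||S_0|}]=e^{-|t|/2}$; this last identity is most naturally obtained by differentiating/integrating the SDE characterization, or again by appeal to \cite{AW_2014} together with the density computation.

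The main obstacle is the last step: rigorously pinning the scale parameter to $1/2$. The independence structure $\mathfrak t_{\btau_\beta}=T_0-qS_0$ reduces the problem to showing $\mathbb E[e^{itT_0}e^{-|t||S_0|}]=e^{-|t|/2}$, but $T_0$ and $|S_0|$ are correlated functionals of hyperbolic Brownian motion and this moment identity is not obvious by direct computation. The honest route is to lean on the cited general result of Aizenman and Warzel for translation-invariant processes — applicable because $\Sineb$ is translation invariant (it is the bulk limit) — which gives the Cauchy form for free, combined with the elementary fact that the intensity of $\Sineb$ is $1/(2\pi)$ (so the zeros satisfy $\lambda_k\approx 2\pi k$, equivalently $\zeta_\beta$ has exponential type $1/2$ by Proposition \ref{prop:inf_prod}), which fixes the scale at exactly $1/2$.
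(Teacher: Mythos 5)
Your proof ultimately rests on the same ingredients as the paper's: the Aizenman--Warzel theorem for translation-invariant spectra gives the Cauchy form, and the intensity $1/(2\pi)$ of $\Sineb$ together with its reflection symmetry pins the location at $0$ and the scale at $1/2$ (the paper additionally notes that the hypothesis to check for \cite{AW_2014} is that $\zeta'/\zeta$ is Herglotz--Pick and continuous at $0$, via \eqref{eq:partial_frac}). The detour through $\mathfrak t_{\btau_\beta}=T_0-qS_0$ is correct but, as you acknowledge, does not close the argument on its own — a mixture of Cauchy laws with random center and scale need not be Cauchy — so it can be dropped.
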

\begin{proof}
This follows from Theorem 5.5 and the following Remark 2 of Aizenman and Warzel \cite{AW_2014}. By \eqref{eq:partial_frac}  the function $\zeta'/\zeta$ is of  Herglotz-Pick class: it maps the upper half-plane into itself. Moreover, $\zeta'/\zeta$ is almost surely continuous at $0$. Since the intensity of the $\Sine_\beta$ process is $1/(2\pi)$, and $\Sineb$ has the same distribution as $-\Sineb$, the parameter of the Cauchy distribution is $1/2$.
\end{proof}

\subsection{Exponential moments of  $(\log  \zeta_\beta)'$}

Theorem \ref{thm:E_mom} and Proposition \ref{prop:zeta_C} imply uniform exponential moment bounds on the log derivative of $\zeta_{\beta,\nu}$ away from the real line, this is the content of the following proposition. Chhaibi, Najnudel and Nikhegbali \cite{CNN} show such bounds in the $\beta=2$ case, and the proof below is inspired by their argument. 

\begin{proposition}\label{prop:expmom}
Let $K$ be a compact set in the upper or lower open half plane, and let $\zeta=\zeta_{\beta,\nu}$. Then for $\nu\in [-\infty,0)$ with  $c$ depending on $p, \beta$ and $K$, but not on $\nu$ we have
\begin{align}\label{eq:expmom}
    E\sup_{z\in K} \exp \Big(p \left|\tfrac{\zeta'(z)}{\zeta(z)}\right|\Big)\le c.
\end{align}
\end{proposition}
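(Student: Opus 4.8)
The plan is to use the partial fraction representation of $\zeta'/\zeta$ from Proposition \ref{prop:logder} together with the counting function bounds and the phase-function moment estimates of Theorem \ref{thm:E_mom}. First I would fix a compact set $K$ in, say, the open lower half plane, so that $\delta := \operatorname{dist}(K,\mathbb R)>0$ and $|z|\le M$ on $K$. By Proposition \ref{prop:logder}, for $z\notin\mathbb R$,
\[
\frac{\zeta'(z)}{\zeta(z)}=\int_{-\infty}^\infty \frac{\lambda/\rho - N(\lambda)}{(z-\lambda)^2}\,d\lambda + \operatorname{sign}(\Im z)\,\pi i/\rho,
\]
with $\rho=2\pi$ here (the eigenvalue spacing for $\Sineb$). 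Since $|z-\lambda|^2\ge \delta^2$ and $|z-\lambda|^2 \ge (|\lambda|-M)^2$ for $|\lambda|$ large, the integrand is bounded by $C_K\,|\,N(\lambda)-\lambda/\rho\,|\,(1+\lambda^2)^{-1}$ uniformly in $z\in K$, so
\[
\sup_{z\in K}\left|\frac{\zeta'(z)}{\zeta(z)}\right|\le C_K\left(1+\int_{-\infty}^\infty \frac{|N(\lambda)-\lambda/\rho|}{1+\lambda^2}\,d\lambda\right).
\]
This reduces the claim to showing that $\exp\!\big(p'\int |N(\lambda)-\lambda/\rho|/(1+\lambda^2)\,d\lambda\big)$ has a finite expectation, uniformly in $\nu$, for every $p'$.

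The second step is a moment bound on $\int |N_\nu(\lambda)-\lambda/\rho|/(1+\lambda^2)\,d\lambda$. Using $|N_\nu(\lambda)-\tfrac{1}{2\pi}\alpha_{\lambda,\nu}(0)|\le 1$ from \eqref{eq:counting} and $\rho=2\pi$, it suffices to control $\int |\alpha_{\lambda,\nu}(0)-\lambda(1-e^{\beta\nu/4})|/(1+\lambda^2)\,d\lambda$ plus the explicit shift $\int |\lambda e^{\beta\nu/4}|/(1+\lambda^2)\,d\lambda$, the latter being uniformly bounded in $\nu\le 0$. Now I would invoke the sub-multiplicativity of exponential moments: by Jensen's inequality applied to the probability measure $c_0\,d\lambda/(1+\lambda^2)$,
\[
\exp\!\Big(p'\!\int \tfrac{|\alpha_{\lambda,\nu}(0)-\lambda(1-e^{\beta\nu/4})|}{1+\lambda^2}d\lambda\Big)
\le c_0\!\int \exp\!\Big(\tfrac{p'}{c_0}\,|\alpha_{\lambda,\nu}(0)-\lambda(1-e^{\beta\nu/4})|\Big)\tfrac{d\lambda}{1+\lambda^2},
\]
so that
\[
E\exp\!\Big(p'\!\int \tfrac{|\alpha_{\lambda,\nu}(0)-\lambda(1-e^{\beta\nu/4})|}{1+\lambda^2}d\lambda\Big)
\le c_0\!\int E\exp\!\Big(\tfrac{p'}{c_0}\,|\alpha_{\lambda,\nu}(0)-\lambda(1-e^{\beta\nu/4})|\Big)\tfrac{d\lambda}{1+\lambda^2}.
\]
Choosing the normalizing constant $c_0$ large enough that $p'/c_0 < \tfrac14(1+\tfrac\beta2)$, the inner expectation is bounded by $c(1+|\lambda|)^{(4p'/c_0)^2/\beta}$ by \eqref{eq:alphamom} of Theorem \ref{thm:E_mom} (with $\gamma = 4p'/c_0$), uniformly in $\nu$. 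Taking $c_0$ still larger if necessary so that $(4p'/c_0)^2/\beta < 1$, the $\lambda$-integral $\int (1+|\lambda|)^{(4p'/c_0)^2/\beta}/(1+\lambda^2)\,d\lambda$ converges, giving a finite bound independent of $\nu$.

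Combining the two steps: $\exp(p|\zeta'/\zeta|)\le \exp(pC_K)\cdot\exp(pC_K\int|N-\lambda/\rho|/(1+\lambda^2))$ pointwise on $K$, and the supremum over $z\in K$ is dominated by a single $\nu$-uniform random variable whose exponential moment of order $p$ we have just bounded (by splitting off the deterministic shift term and applying the estimate above with $p' = pC_K$). The same argument works verbatim in the upper half plane by the symmetry $\Sineb \ed -\Sineb$, and the $\nu=-\infty$ case follows from the finite-$\nu$ bounds either by Fatou's lemma applied along the a.s.\ convergence $\zeta_{\beta,\nu}\to\zeta_\beta$ of Theorem \ref{thm:char_conv}/Proposition \ref{prop:HSDE} together with convergence of the log-derivatives on $K$, or directly since Theorem \ref{thm:E_mom} already covers $\nu\in[-\infty,0)$. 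The main obstacle is purely bookkeeping: making the constant $c_0$ in the Jensen step large enough to simultaneously push $p'/c_0$ below the critical threshold $\tfrac14(1+\tfrac\beta2)$ of \eqref{eq:alphamom} \emph{and} make the resulting exponent $(4p'/c_0)^2/\beta$ of $(1+|\lambda|)$ strictly below $1$ so the $\lambda$-integral converges — both are achievable since $c_0$ can be taken arbitrarily large, but the two requirements must be checked together, and one must confirm that all constants produced by Theorem \ref{thm:E_mom} and Proposition \ref{prop:logder} are genuinely independent of $\nu$.
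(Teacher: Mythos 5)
Your overall strategy is the same as the paper's (Proposition \ref{prop:logder} plus the counting-function bound \eqref{eq:counting} plus the $\alpha$ moment estimates of Theorem \ref{thm:E_mom}, glued together by Jensen), but the final Jensen step contains a genuine gap. You apply Jensen with the probability measure ``$c_0\,d\lambda/(1+\lambda^2)$'' and then claim you may take $c_0$ ``arbitrarily large'' so that the resulting exponent coefficient $p'/c_0$ drops below the threshold $\tfrac14(1+\tfrac\beta2)$ of \eqref{eq:alphamom}. But $c_0$ is a \emph{normalizing} constant: for $c_0\,d\lambda/(1+\lambda^2)$ to be a probability measure you are forced to take $c_0=1/\pi$, and more generally if you use the density proportional to the weight $w_\lambda$ appearing in your integral, Jensen produces the exponent $\|w\|_1\,f(\lambda)$ with $\|w\|_1$ fixed by the geometry of $K$. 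Since $p$ is arbitrary, $p'\|w\|_1$ will exceed $\tfrac14(1+\tfrac\beta2)$, at which point \eqref{eq:alphamom} is simply not applicable (and even where it is, the polynomial growth exponent $\gamma^2/\beta$ can exceed $1$, making the $\lambda$-integral diverge). So the step you describe as ``purely bookkeeping'' is exactly where an additional idea is needed. The paper's fix is to use a \emph{non-constant} weight: it takes $q_\lambda=p\,g_\lambda^{-1}d(K,\lambda)^{-2}$ as the probability density, choosing $g_\lambda$ small (below $\sqrt{\beta/32}$, so that $\gamma=4g_\lambda<\sqrt{\beta/2}<1+\beta/2$ and the growth exponent is $\le 1/2$) outside a large compact interval $L=[-\ell,\ell]$, and compensating by letting $g_\lambda$ be large on $L$; on $L$ one cannot use \eqref{eq:alphamom} but instead uses the unrestricted bound \eqref{eq:Emom_bnd}, which holds for all $\gamma$ and is harmless on a compact interval. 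Your write-up does not contain this device and cannot be completed as stated for general $p$.

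A secondary, fixable error: for finite $\nu$ you take $\rho=2\pi$ in Proposition \ref{prop:logder} and try to split off the shift $\int|\lambda|e^{\beta\nu/4}/(1+\lambda^2)\,d\lambda$, calling it ``uniformly bounded in $\nu$''; that integral is logarithmically divergent for every finite $\nu$. The correct density of zeros of $\zeta_{\beta,\nu}$ is $(1-e^{\beta\nu/4})/(2\pi)$, so one should take $\rho=2\pi/(1-e^{\beta\nu/4})$, after which the compensator $\lambda/\rho=\tfrac{\lambda}{2\pi}(1-e^{\beta\nu/4})$ matches the centering in \eqref{eq:alphamom} exactly and no shift term needs to be split off.
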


\begin{proof}
$\zeta_{\beta,\nu}$ is of Cartwright class by Proposition \ref{prop:zeta_C}. Proposition \ref{prop:logder} and Corollary \eqref{c:sineb-regularity} implies 
$$ \frac{\zeta'(z)}{\zeta(z)}=\int_{-\infty}^\infty \frac{1}{(z-\lambda)^2} (\tfrac{\lambda}{2\pi}(1-e^{\frac{\beta}{4}\nu}) - N(\lambda)) d\lambda\mp i(1-e^{\nu\beta/4})/2.
$$
By \eqref{eq:counting},  $| N(\lambda)- \tfrac1{2\pi}\alpha_{\lambda,\nu}(0)|\le 1$. With $f(\lambda)=|\alpha_{\lambda,\nu}(0)-\lambda(1-e^{\frac{\beta}{4}\nu})|$ we have the bound
\begin{align}\label{eq:logder_bnd}
\sup_{z\in K} \left|\frac{\zeta'(z)}{\zeta(z)}\right|\le \int_{-\infty}^\infty \frac{f(\lambda)}{d(\lambda,K)^2}d\lambda + \frac{\pi}{d(\mathbb R,K)}+1/2,
\end{align}
where $d$ is Euclidean distance.

Since  $d(\lambda,K)^{-2}$ decays like $\lambda^{-2}$ as $\lambda\to \pm \infty$,  there exists a positive bounded  function $g_\lambda$ so that
$q_\lambda=pg_\lambda^{-1}d(K,\lambda)^{-2}$ is a probability density, and $0<\eps<g_\lambda<\sqrt{\beta/32}$ outside an  interval $L=[-\ell,\ell]$.  Here $\eps, \ell$ only depend on $p$ and $K$.
By Jensen's inequality,
\begin{equation}\notag 
\exp\int_{-\infty}^\infty p\frac{f(\lambda)}{d(\lambda,K)^2}d\lambda \le
\int_{-\infty}^{\infty}  e^{f(\lambda)g_\lambda} q_\lambda d\lambda.
\end{equation}
By Theorem \ref{thm:E_mom}  applied with $\gamma=\sqrt{\beta/2}<1+\beta/2$ on $L^c$ we have
\begin{align}
 E  e^{f(\lambda)g_\lambda} < c\frac{(1+|\lambda|)^{1/2}}{1+\lambda^2},\qquad  E \int_{L^c}  e^{f(\lambda)g_\lambda} q_\lambda d\lambda\le  \frac{c}{1+\ell^{1/2}}.   \label{eq:expmomlogder_1}
\end{align}
Let $c'$ be the supremum of $\gamma_\lambda$. Then by \eqref{eq:Emom_bnd} we get
\begin{align}
E \int_L  e^{f(\lambda)g_\lambda} q_\lambda d\lambda\le e^{c_1 (1+\ell +\log(1+\ell))} \label{eq:expmomlogder_2}
\end{align}
with  $c_1$ depending only on $c', \beta$, but not $\nu$.
The claim  follows from the bounds \eqref{eq:logder_bnd}, \eqref{eq:expmomlogder_1}, and  \eqref{eq:expmomlogder_2}.
\end{proof}


\subsection{Moments of ratios of $\zeta_\beta$ are finite}

The exponential moment bounds for the log derivative of $\zeta_\beta$ in  Proposition \ref{prop:expmom} can be used to show that  ratios have finite moments.

\begin{proposition}\label{prop:moment_ratio}
Let $w_j \in \CC\setminus \R$,  let $K\subset \mathbb C^k$ be compact and  $\zeta=\zeta_{\beta,\nu}$. Then 
\begin{align*}
\sup_{z \in K,\nu}  E\prod_{j=1}^k\left| \frac{\zeta(z_j)}{\zeta(w_j)}\right|<\infty.
\end{align*}
\end{proposition}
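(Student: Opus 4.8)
The plan is to reduce everything to the exponential moment bound on the logarithmic derivative from Proposition \ref{prop:expmom}. The key observation is that for a fixed $z$ in the upper or lower half plane, the ratio $\zeta(z_j)/\zeta(w_j)$ can be recovered by integrating $\zeta'/\zeta$ along a path. More precisely, since $K$ is compact and the $w_j$ avoid the real line, I would first enlarge the picture: choose compact sets $K_j^z \subset \CC$ (for the $z_j$ coordinates) and compact sets in the half-planes (for the $w_j$), and then pick, for each $j$, a fixed compact connected set $\Gamma_j$ contained in the open upper or lower half plane (whichever contains $w_j$) that is large enough to contain $w_j$ and to be joined by a bounded-length path $\gamma_j$ to a fixed reference point $\zeta(i\operatorname{sign}(\Im w_j))$, and similarly a path from $z_j$ to that same reference point. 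Actually it is cleaner to write, for a base point $\zeta_j^\ast$ in the appropriate half plane,
\[
\log\frac{\zeta(z_j)}{\zeta(w_j)} = \int_{\gamma_j} \frac{\zeta'(\xi)}{\zeta(\xi)}\,d\xi,
\]
where $\gamma_j$ is a path from $w_j$ to $z_j$ that stays in a fixed compact subset $\tilde K_j$ of $\CC$ of bounded length. The only subtlety is that $z_j$ itself may be real, so the path cannot be entirely inside a half-plane; but the integrand $\zeta'/\zeta$ is still a.s.\ bounded on any compact set avoiding the zeros of $\zeta$, and in fact the relevant bound is needed only near the real line, where one uses that $\zeta$ is of Cartwright class with a.s.\ locally bounded logarithmic derivative.

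The cleaner route, which I would take, is to split the half-plane part off: for each $j$ write $z_j = w_j + (z_j - w_j)$ and interpolate along a straight segment. If this segment crosses the real axis we may instead route the path through a fixed compact set $\hat K_j$ lying in the same open half plane as $w_j$, traversing a portion near the real axis only over a compact set on which, by Proposition \ref{prop:logder} and Corollary \ref{c:sineb-regularity}, $\zeta'/\zeta$ has the explicit integral representation and hence the pointwise bound \eqref{eq:logder_bnd}. So in all cases there is a fixed compact set $\hat K_j$ (independent of the random data and of $\nu$) and a rectifiable path $\gamma_j \subset \hat K_j$ from $w_j$ to $z_j$ of length at most some constant $\ell_0$, such that
\[
\Big|\log\frac{\zeta(z_j)}{\zeta(w_j)}\Big| \le \ell_0 \sup_{\xi\in \hat K_j}\Big|\frac{\zeta'(\xi)}{\zeta(\xi)}\Big|.
\]
Then
\[
\prod_{j=1}^k\Big|\frac{\zeta(z_j)}{\zeta(w_j)}\Big| \le \exp\Big(\ell_0\sum_{j=1}^k \sup_{\xi\in \hat K_j}\Big|\frac{\zeta'(\xi)}{\zeta(\xi)}\Big|\Big),
\]
and by the generalized Hölder inequality (splitting the product of $k$ exponentials into $k$ factors, each raised to the power $k$) it suffices to bound $E\exp\big(k\ell_0 \sup_{\xi\in \hat K_j}|\zeta'(\xi)/\zeta(\xi)|\big)$ for each $j$. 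Each $\hat K_j$ decomposes into a compact piece in an open half plane, handled directly by Proposition \ref{prop:expmom} with $p = k\ell_0$, plus a compact piece near $\R$, handled by the same argument as in the proof of Proposition \ref{prop:expmom} using \eqref{eq:logder_bnd} and Theorem \ref{thm:E_mom} — both of which give bounds uniform in $\nu\in[-\infty,0)$. Taking the supremum over $z\in K$ is harmless because the $\hat K_j$ were chosen to contain a neighborhood of all coordinates in $K$, so the same single bound works.

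The main obstacle is the bookkeeping around the real axis: Proposition \ref{prop:expmom} is stated only for compact sets strictly inside one open half plane, whereas $z_j$ is allowed to be real. I expect to handle this by routing the interpolating path $\gamma_j$ so that it meets $\R$ only at (or near) the single point $z_j$ and otherwise stays in the half plane of $w_j$; near $z_j$ one either uses that $\zeta(z_j)\neq 0$ a.s.\ and $\zeta'/\zeta$ is continuous there, or — to keep the bound uniform in $\nu$ and integrable — one re-runs the estimate \eqref{eq:logder_bnd} on a fixed compact neighborhood of $z_j$ intersected with the closed half plane, which is exactly the type of region controlled by the proof of Proposition \ref{prop:expmom}. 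A careful reader will want the constant to be genuinely independent of $\nu$; this is inherited from the uniformity already built into Theorem \ref{thm:E_mom} and Proposition \ref{prop:expmom}, so no new estimate is required, only the observation that all the compact sets and path lengths involved can be fixed in advance depending only on $K$, $k$, and the distances $d(w_j,\R)$.
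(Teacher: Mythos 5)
Your overall skeleton is the same as the paper's: reduce by H\"older to a single ratio, write $\log(\zeta(z)/\zeta(w))$ as $\int_\gamma \zeta'/\zeta$, and feed the result into the exponential moment bound of Proposition \ref{prop:expmom}. But the step you flag as "the main obstacle" --- $z_j$ on or near the real axis --- is a genuine gap, and your proposed fix does not work. The bound \eqref{eq:logder_bnd} contains the factors $d(\lambda,K)^{-2}$ and $d(\R,K)^{-1}$, so it degenerates as the compact set approaches $\R$; there is no way to "re-run the estimate on a compact neighborhood of $z_j$ intersected with the closed half plane." More fundamentally, your bound $|\log(\zeta(z_j)/\zeta(w_j))|\le \ell_0\sup_{\hat K_j}|\zeta'/\zeta|$ over a set $\hat K_j$ that touches $\R$ cannot have finite exponential moments: at a real point $x$ we have $\zeta(x)=\cA(x)-q\cB(x)$ with $q$ Cauchy and independent of $\cA,\cB$, so $P(|\zeta'(x)/\zeta(x)|>t)\gtrsim 1/t$ and already $E|\zeta'(x)/\zeta(x)|=\infty$. (The a.s.\ statement "$\zeta'/\zeta$ is bounded on any compact set avoiding the zeros" is also not usable: a fixed compact set meeting $\R$ contains zeros of $\zeta$ with positive probability, since the zeros form a translation-invariant point process on $\R$.)

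The paper's resolution is a one-line trick you are missing: by the reflection symmetry $\zeta(\bar z)=\overline{\zeta(z)}$ one may assume $\Im w>0$ and $K$ in the \emph{closed} upper half plane, and then by the monotonicity \eqref{eq:triangle} --- $|\zeta(x+iy)|$ is increasing in $y\ge 0$, a consequence of the Cartwright product representation in Proposition \ref{prop:inf_prod} --- one may replace each $z_j$ by $z_j+i\delta$, since $|\zeta(z_j)|$ appears only in the numerator and only an upper bound is needed. After this shift all points lie in a fixed compact subset of the open upper half plane, the straight segment from $w$ to $z$ stays there, and a single application of Proposition \ref{prop:expmom} together with Jensen's inequality finishes the proof. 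No path routing near $\R$ and no separate estimate at real points is needed. If you incorporate this reduction, the rest of your argument goes through.
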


\begin{proof}
By H\"older's inequality and the reflection symmetry  $\zeta(\bar z)=\overline{\zeta(z)}$, it suffices to show that
$\sup_{z\in K,\nu}E(\left| \zeta(z)/\zeta(w)\right|^k)<\infty$ 
for $\Im w >0$ and $K$ in the closed upper half plane.  Since for real $x$, $|\zeta(x+iy)|$ is increasing in $y\ge 0$ by \eqref{eq:triangle}, we may further assume that $K$ is in the open upper half plane. We write
\[
\frac{\zeta(z)^k}{\zeta(w)^k}= \exp\Big(k \int_{\gamma} \frac{\zeta'(v)}{\zeta(v)} dv\Big),
\]
where $\gamma$ is the oriented line segment connecting $w$ and $z$. The claim  follows from Proposition \ref{prop:expmom} and Jensen's inequality. 
\end{proof}

\begin{proposition} 
\label{prop:ratio_analytic}
Fix $w_1,\dots, w_n\in \CC\setminus\R$, and $z_2, \dots, z_n\in \CC$. Then 
\[
r(z_1)= E\prod_{j=1}^n\frac{\zeta(z_j)}{\zeta(w_j)}
\]
is an entire function. 
\end{proposition}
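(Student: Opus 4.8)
The plan is to realize $r$ as the expectation of a random entire function and to apply Morera's theorem, using Proposition \ref{prop:moment_ratio} to license interchanging expectation with a contour integral. Write $F(z_1,\omega)=\prod_{j=1}^n\zeta(z_j)/\zeta(w_j)$ with $\zeta=\zeta_{\beta,\nu}$; since $z_2,\dots,z_n$ and $w_1,\dots,w_n$ are fixed, $z_1$ enters only through the single factor $\zeta(z_1)$. First I would note that for a.e.\ $\omega$ the map $z_1\mapsto F(z_1,\omega)$ is entire: $\zeta_{\beta,\nu}$ is a.s.\ entire by Proposition \ref{prop:tau-beta}, its zeros are the real eigenvalues of $\btau_{\beta,\nu}$, and the $w_j$ lie off $\R$, so the denominators $\zeta(w_j)$ are a.s.\ nonzero. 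Joint measurability of $(z_1,\omega)\mapsto F(z_1,\omega)$ follows from continuity in $z_1$ and measurability in $\omega$.

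The key estimate is that for every compact $K\subset\CC$,
$$
\sup_{z_1\in K}E\,|F(z_1)|^2<\infty .
$$
This is where Proposition \ref{prop:moment_ratio} is used: by the reflection symmetry $\zeta(\bar z)=\overline{\zeta(z)}$ we have $|\zeta(a)|^2=\zeta(a)\zeta(\bar a)$, so
$$
|F(z_1)|^2=\prod_{j=1}^n\frac{\zeta(z_j)\,\zeta(\bar z_j)}{\zeta(w_j)\,\zeta(\bar w_j)}=\prod_{j=1}^{2n}\Big|\frac{\zeta(\tilde z_j)}{\zeta(\tilde w_j)}\Big|,
$$
where the $2n$ denominator points $\tilde w_j$ are $w_1,\bar w_1,\dots,w_n,\bar w_n$, all in $\CC\setminus\R$, and the numerator points $\tilde z_j$ are $z_1,\bar z_1$ together with the fixed $z_2,\bar z_2,\dots,z_n,\bar z_n$ (the middle expression is real and nonnegative, hence equals the product of the moduli). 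As $z_1$ ranges over $K$ the tuple $(\tilde z_1,\dots,\tilde z_{2n})$ ranges over a compact subset of $\CC^{2n}$, so Proposition \ref{prop:moment_ratio} with $k=2n$ gives the stated uniform bound.

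From this $L^2$ bound the remaining steps are routine. For $z_1$ in a fixed compact $K$: $|r(z_1)|\le E|F(z_1)|\le(E|F(z_1)|^2)^{1/2}<\infty$; if $z_1^{(m)}\to z_1$ then $F(z_1^{(m)})\to F(z_1)$ a.s.\ and the family is bounded in $L^2$, hence uniformly integrable, so $r(z_1^{(m)})\to r(z_1)$ and $r$ is continuous; and for any triangle $\Delta\subset\CC$, Tonelli's theorem gives $E\oint_{\partial\Delta}|F(z_1,\omega)|\,|dz_1|\le|\partial\Delta|\,\sup_{z_1\in\partial\Delta}E|F(z_1)|<\infty$, so Fubini applies and
$$
\oint_{\partial\Delta}r(z_1)\,dz_1=E\Big(\oint_{\partial\Delta}F(z_1,\omega)\,dz_1\Big)=0
$$
by Cauchy's theorem for the entire function $F(\cdot,\omega)$. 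Morera's theorem then shows that $r$ is entire. The only non-mechanical point is recognizing that $|F(z_1)|^2$ is itself a product of ratios of $\zeta$ with all denominators off the real axis, so that the already-proved Proposition \ref{prop:moment_ratio} applies; the rest is uniform-integrability bookkeeping, and I do not expect any real obstacle there.
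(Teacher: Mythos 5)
Your proof is correct and follows essentially the same route as the paper: Proposition \ref{prop:moment_ratio} supplies the integrability needed to interchange $E$ with a contour integral via Fubini, and Morera's theorem finishes. The only difference is that the paper applies Proposition \ref{prop:moment_ratio} directly with $k=n$ to get the uniform $L^1$ bound on compacts (which already suffices for the Fubini step), whereas you detour through an $L^2$ bound via $|\zeta(a)|^2=\zeta(a)\zeta(\bar a)$ and $k=2n$; this costs nothing and has the mild benefit of yielding uniform integrability, hence the continuity of $r$ required by Morera's theorem, a point the paper's proof leaves implicit.
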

\begin{proof}
Proposition \ref{prop:moment_ratio} implies that for any closed contour $\gamma$ of finite length
\[
\int_\gamma E \left|\prod_{j=1}^n\frac{\zeta(z_j)}{\zeta(w_j)}\right| |dz_1|<\infty. 
\]
Fubini's theorem gives  
\[
\int_\gamma r(z_1) dz_1= E\int_\gamma \prod_{j=1}^n\frac{\zeta(z_j)}{\zeta(w_j)} dz_1=0,
\]
since $\zeta$ is an entire function.  Morera's theorem completes the proof. 
\end{proof}

\section{Moment formulas } \label{s:moments}

Using the SDE \eqref{eq:cE} and the bounds in Section \ref{s:moments_tails} we can set up equations for the expectations of certain functionals of $\cE$ and $\zeta$.

\subsection{The function $\hat \zeta$}

The function $\zeta=\zeta_\beta=A-qB$ does not have a first moment, since  $q$ has Cauchy distribution and is independent of $A$ and $B$. The following simple variant has some moments. 
Let 
\begin{equation}\label{e:hatzeta}
\hat \zeta=\frac{\zeta}{\sqrt{1+q^2}}=\frac{U\cE + \bar U \cE^*}2, \qquad U=\frac{1-iq}{ \sqrt{1+q^2}}.
\end{equation}
The first formula follows from \eqref{eq:zeta_cE}.   The random variable $U$ is uniformly distributed on the unit complex semicircle around $0$ with positive real part, $\bar U=1/U$, and $U,\cE$ are independent. When the $k+\epsilon$-th absolute moment of $\cE, \cE^*$ exist, we have
$$
E\hat\zeta^{k}=\frac{1}{2^k}\sum_{j=0}^{k}\binom{k}{j} E[U^{2j-k}]E[\cE^j(\cE^*)^{k-j}],  \qquad E[U^{m}]=\frac{\sin(\pi m/2)}{\pi m/2}.
$$
For real $z$ and $k<1+\beta/2$ these moments exist by Theorem \ref{thm:E_mom}. Mixed moments can be expressed similarly. 
Separating  odd and even cases, we get
\begin{equation}\label{e:hatzeta-moments}
\begin{aligned}
E\hat\zeta^{2\ell}&=\binom{2\ell}{\ell}\frac{1}{4^\ell} E[\cE^\ell(\cE^*)^\ell],\\ E\hat\zeta^{2\ell+1}&=\frac{1}{2^{2\ell+1}\pi }\sum_{j=0}^{2\ell+1}\binom{2\ell+1}{j} \frac{(-1)^{1+j-\ell}}{j-\ell-1/2}E[\cE^j(\cE^*)^{2\ell+1-j}].
\end{aligned}
\end{equation}
These formulas motivate the study of the moments of the structure function $\cE$, which are also interesting on their own right. 

\subsection{Joint moments of the structure function}

Our methods give differential equations for the moments of   products of the structure function $\mathcal E$. Fix $z_1\ldots,z_n\in \mathbb C$, let $\eta\in \{-1,1\}^k$ be an index set, and let 
$$
\mathcal E_\eta=\prod_{j=1\atop \eta_j=1}^k \mathcal E(z_j) \prod_{j=1\atop \eta_j=-1}^k \mathcal E^*(z_j), \qquad z \cdot \eta=\sum_{j=1}^kz_j\eta_j ,
$$
with $\cE_{\eta,\nu}$ defined similarly using $\cE_\nu, \cE_\nu^*$. In this section we will omit the time parameter in the notation. 

For convenience of notation let $\sigma_j$ be the operator that multiplies the $j$th coordinate of $\eta$ by $-1$, and let $\sigma_j \mathcal E_\eta =\mathcal E _{\sigma_j \eta}$. For a fixed time $u$, $(\mathcal E_{\eta}, \eta\in \{-1,1\}^k)$ is a vector in $\mathbb C^{2^k}$ indexed by $\eta$. Then $\sigma_j$ is a permutation matrix  acting on the vector space  $\mathbb C^{2^k}$. 

Consider the Brownian motion $b_1,b_2$ driving the SDE \eqref{eq:cE}. Let
$b^{\eta_j}=\eta_j i b_1-b_2$.
For $k=1$ the equations \eqref{eq:cE} can be written as
$$
d\cE_\eta= \frac{i z \eta_1}{2} f_\beta  \cE_\eta  du+ \frac{\sigma_1-1}{2} \cE_\eta db^{\eta_1}.
$$
The general $k\ge 1$ case is described by the following proposition.
\begin{proposition}\label{prop:cE_eta}
Fix $k\ge 1$ and consider the operator  
$$
\Xi= \sum_{j=1\atop \eta_j=1}^k(\sigma_j-1)\sum_{\ell=1\atop \eta_\ell=-1}^k(\sigma_\ell-1),
$$
This is a $2^k\times 2^k$ matrix acting on $\mathbb C^{2^k}$. 
Then we have 
\begin{align}
 d\mathcal E_{\eta,\nu}=\frac12 
\left(i z\cdot \eta  f_\beta\, du + \Xi dt+\sum_{j=1}^k  (\sigma_j-1)db^{\eta_j}\right)\mathcal E_{\eta,\nu}.  \label{eq:cE_eta}
\end{align}
For  $\nu\in (-\infty,0)$ the processes $\cE_{\eta,\nu}, \eta\in\{-1,1\}^k$ provide the unique strong solution of the system \eqref{eq:cE_eta} on $[\nu,\infty)$ with initial condition $1$ at time $\nu$. The processes $\cE_\eta$ can be obtained as the a.s.~uniform on compact limits of $\cE_{\eta,\nu}$ as $\nu\to -\infty$.
\end{proposition}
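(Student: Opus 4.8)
The plan is to obtain \eqref{eq:cE_eta} from It\^o's product rule applied to the finite product $\cE_{\eta,\nu}=\prod_{j:\,\eta_j=1}\cE_\nu(z_j)\prod_{j:\,\eta_j=-1}\cE^*_\nu(z_j)$, starting from the scalar equations \eqref{eq:cE} of Corollary \ref{cor:cE_SDE}. Write $G_j$ for the $j$th factor (either $\cE_\nu(z_j)$ or $\cE^*_\nu(z_j)$) and $\tilde G_j$ for the same factor with $\cE_\nu(z_j)$ and $\cE^*_\nu(z_j)$ interchanged, so that $(\sigma_j-1)\cE_{\eta,\nu}=(\tilde G_j-G_j)\prod_{\ell\neq j}G_\ell$. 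A direct check of the two cases $\eta_j=\pm1$ shows that \eqref{eq:cE} can be written uniformly as $dG_j=\tfrac{i}{2}z_j\eta_j f_\beta\,G_j\,du+\tfrac12(\tilde G_j-G_j)\,db^{\eta_j}$ with $b^{\eta_j}=\eta_j i b_1-b_2$. The product rule $d\prod_jG_j=\sum_j\big(\prod_{\ell\neq j}G_\ell\big)dG_j+\sum_{j<m}\big(\prod_{\ell\neq j,m}G_\ell\big)dG_j\,dG_m$ then splits into three groups, which I would identify with the three terms of \eqref{eq:cE_eta}.

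The drift terms add up to $\tfrac{i}{2}(z\cdot\eta)f_\beta\,\cE_{\eta,\nu}\,du$, and the martingale terms to $\tfrac12\sum_j(\sigma_j-1)\cE_{\eta,\nu}\,db^{\eta_j}$; these are the first and last terms of \eqref{eq:cE_eta}. The It\^o correction is the substantive point: since all the noises are built from the single pair $b_1,b_2$, one computes $db^{\eta_j}\,db^{\eta_m}=(\eta_j i b_1-b_2)\,(\eta_m i b_1-b_2)=(1-\eta_j\eta_m)\,dt$, which is $0$ when $\eta_j=\eta_m$ and $2\,dt$ otherwise. Since $\sigma_j$ and $\sigma_m$ commute and act on distinct coordinates, $\big(\prod_{\ell\neq j,m}G_\ell\big)dG_j\,dG_m=\tfrac14(\sigma_j-1)(\sigma_m-1)\cE_{\eta,\nu}\,(1-\eta_j\eta_m)\,dt$, so the correction equals $\tfrac12\sum_{j<m:\,\eta_j\neq\eta_m}(\sigma_j-1)(\sigma_m-1)\cE_{\eta,\nu}\,dt$. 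Finally, every unordered pair $\{j,m\}$ with $\eta_j\neq\eta_m$ is produced exactly once by the ordered double sum $\sum_{j:\eta_j=1}\sum_{\ell:\eta_\ell=-1}$ defining $\Xi$, so this term is $\tfrac12\,\Xi\cE_{\eta,\nu}\,dt$. Adding the three groups gives \eqref{eq:cE_eta}; the same computation applies verbatim when $\nu=-\infty$, since \eqref{eq:cE} holds there as well by Corollary \ref{cor:cE_SDE}.

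For uniqueness when $\nu\in(-\infty,0)$, I would note that \eqref{eq:cE_eta} is a linear equation for the $\CC^{2^k}$-valued process $(\cE_{\eta,\nu})_\eta$, with constant matrix coefficients $\sigma_j,\Xi$ and a single scalar coefficient $f_\beta(u)=\tfrac\beta4 e^{\beta u/4}$ that is bounded on each compact subinterval of $[\nu,\infty)$. The standard theory of linear SDEs with locally bounded coefficients then yields a unique strong solution on $[\nu,T]$ for every $T$, hence on $[\nu,\infty)$, for the initial value $1$ at time $\nu$; since each $G_j$ equals $1$ at time $\nu$, the product $\cE_{\eta,\nu}$ is precisely this solution. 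The convergence $\cE_{\eta,\nu}\to\cE_\eta$ a.s.\ uniformly on compacts then follows from the analogous statement for $\cE_\nu\to\cE$, $\cE^*_\nu\to\cE^*$ in Corollary \ref{cor:cE_SDE}, together with the elementary fact that a finite product of sequences converging uniformly on compacts to continuous (hence locally bounded) limits again converges uniformly on compacts.

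The step I expect to require the most care is the identification of the It\^o correction with $\tfrac12\,\Xi\cE_{\eta,\nu}\,dt$: one must keep track of the factor $\tfrac12$, verify the covariation identity $db^{\eta_j}db^{\eta_m}=(1-\eta_j\eta_m)dt$, and check that the $\eta$-dependent double sum in the definition of $\Xi$ counts each mixed pair $\{j,m\}$ exactly once without double counting. Everything else is routine It\^o calculus and standard linear SDE theory.
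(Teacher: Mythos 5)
Your proposal is correct and follows essentially the same route as the paper, which simply invokes It\^o's formula applied to the product $\cE_\eta$, linearity of the system for uniqueness, and Corollary \ref{cor:cE_SDE} for the convergence; your version just spells out the covariation computation $db^{\eta_j}db^{\eta_m}=(1-\eta_j\eta_m)\,dt$ and the pair-counting that identifies the It\^o correction with $\tfrac12\Xi$. All the details check out.
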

\begin{proof}
The proof follows from It\^o's formula applied to the product $\cE_\eta$. For a fixed $\nu\in (-\infty,0)$ the system \eqref{eq:cE_eta} is a linear system, so it has a unique strong solution with a given initial condition. Corollary \ref{cor:cE_SDE} implies the last statement. 
\end{proof}


For any $\nu\neq\infty$,  the coefficients of the vector valued SDE \eqref{eq:cE_eta} satisfy the Lipschitz conditions required by the standard existence and uniqueness theorem for SDEs, Theorem 5.2.1 in \cite{OksSDE}. This implies that $u\to \int_{\nu}^u \sum_{j=1}^k  (\sigma_j-1)\mathcal E_{\eta,\nu} db^{\eta_j}$ is an $L^2$ martingale. Hence for each $u\in [\nu,\infty)$  the expected value
\[
r_{\eta,\nu}(u):=E \cE_{\eta,\nu}(u)
\]
is finite, and it satisfies the differential equation system
\begin{align}
    r_{\eta,\nu}'=\tfrac{1}{2}\left(iz \cdot \eta f_\beta+\Xi\right)r_{\eta,\nu} \label{eq:r_eta}
\end{align}
with initial condition $r_{\eta,\nu}(\nu)=1$. Here we extend the definition  
$\Xi r_\eta=r_{\Xi \eta}$. Since the time dependent coefficients in the linear system \eqref{eq:r_eta} are bounded on compacts, there is a unique solution  $r_{\eta,\nu}(z)$, which is analytic in $z$.


For $\eta=\pm(1,\ldots,1)$, one of the sums in the expression for $\Xi$ vanishes, so $\Xi=0$. We get 
$
r'=\frac12 i\,z\cdot \eta f_\beta r
$,
and so 
\begin{align}\label{eq:r_all_one}
r_{\eta,\nu}=\exp\Big(\frac{i z\cdot \eta}{2} \left(e^{\frac{\beta  u}{4}}-e^{\frac{\beta  \nu }{4}}\right)\Big),
\qquad \eta=\pm(1,\ldots,1).
\end{align}
In particular, in this case we have $r_{\eta,\nu}(0)=\exp\Big(\frac{i z\cdot \eta}{2} \left(1-e^{\frac{\beta  \nu }{4}}\right)\Big)$.

\begin{proposition}\label{prop:E-moments}
Assume that $1+\beta/2>k$, and let $\lambda_1, \dots, \lambda_k\in \R $. Then at time $0$ we have 
\[
E\prod_{j=1}^k \cE(\lambda_j)=\prod_{j=1}^k e^{i\lambda_j/2}.
\]
\end{proposition}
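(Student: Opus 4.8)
The plan is to compute the left-hand side by taking the $\nu\to-\infty$ limit of the quantities $r_{\eta,\nu}(0)$ for the choice $\eta=(1,1,\dots,1)$, using the explicit formula \eqref{eq:r_all_one} together with the uniform integrability furnished by the moment bounds in Section \ref{s:moments_tails}. First I would record the exact setup: with $z_j=\lambda_j\in\R$ and $\eta=(1,\dots,1)$, the product $\cE_{\eta}$ is precisely $\prod_{j=1}^k\cE(\lambda_j)$ at time $0$, and the approximating processes are $\cE_{\eta,\nu}=\prod_{j=1}^k\cE_{\nu}(\lambda_j)$. By Proposition \ref{prop:cE_eta} we have $\cE_{\eta,\nu}\to\cE_\eta$ almost surely (uniformly on compacts, hence at every fixed point and time), so $\prod_{j=1}^k\cE_{\nu,0}(\lambda_j)\to\prod_{j=1}^k\cE_0(\lambda_j)=\prod_{j=1}^k\cE(\lambda_j)$ a.s.

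Next I would invoke \eqref{eq:r_all_one} with $\eta=(1,\dots,1)$ and $u=0$, which gives
\[
E\prod_{j=1}^k\cE_{\nu,0}(\lambda_j)=r_{\eta,\nu}(0)=\exp\Big(\tfrac{i}{2}\big(\textstyle\sum_{j=1}^k\lambda_j\big)\big(1-e^{\beta\nu/4}\big)\Big),
\]
valid for every $\nu\in(-\infty,0)$. As $\nu\to-\infty$, $e^{\beta\nu/4}\to 0$, so the right-hand side converges to $\exp\big(\tfrac{i}{2}\sum_{j=1}^k\lambda_j\big)=\prod_{j=1}^k e^{i\lambda_j/2}$. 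To conclude that the expectations pass to the limit as well, I would upgrade the a.s.\ convergence to $L^1$ convergence via uniform integrability. This is where the hypothesis $k<1+\beta/2$ enters: by Theorem \ref{thm:E_mom}, for each fixed $\lambda_j\in\R$ and any $\gamma$ with $|\gamma|<1+\beta/2$ one has $E|\cE_{\nu,0}(\lambda_j)|^\gamma\le c(1+|\lambda_j|)^{2\gamma^2/\beta}$ with $c$ independent of $\nu$. Choosing $\gamma=k+\delta$ for some small $\delta>0$ with $k+\delta<1+\beta/2$, Hölder's inequality gives $E\big|\prod_{j=1}^k\cE_{\nu,0}(\lambda_j)\big|^{1+\delta/k}\le E\prod_{j=1}^k|\cE_{\nu,0}(\lambda_j)|^{k+\delta}\le \prod_{j=1}^k\big(E|\cE_{\nu,0}(\lambda_j)|^{k(k+\delta)}\big)^{1/k}$ --- wait, more carefully: by Hölder with $k$ factors, $E\prod_j|\cE_{\nu,0}(\lambda_j)|^{k+\delta}\le\prod_j(E|\cE_{\nu,0}(\lambda_j)|^{k(k+\delta)})^{1/k}$, and $k(k+\delta)$ may exceed $1+\beta/2$, so instead I would apply Hölder directly to the unexponentiated product: $E\prod_{j=1}^k|\cE_{\nu,0}(\lambda_j)|^{1+\delta/k}\le\prod_{j=1}^k\big(E|\cE_{\nu,0}(\lambda_j)|^{k+\delta}\big)^{1/k}$, which is bounded uniformly in $\nu$ since $k+\delta<1+\beta/2$. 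Thus the family $\{\prod_j\cE_{\nu,0}(\lambda_j)\}_\nu$ is bounded in $L^{1+\delta/k}$, hence uniformly integrable, and therefore $E\prod_j\cE_{\nu,0}(\lambda_j)\to E\prod_j\cE_0(\lambda_j)$.

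Combining the two computations of the limit yields $E\prod_{j=1}^k\cE(\lambda_j)=\prod_{j=1}^k e^{i\lambda_j/2}$, as claimed. The main obstacle is the uniform integrability step: one must be careful to distribute Hölder's inequality so that each individual factor is raised only to a power strictly below the critical exponent $1+\beta/2$, which is exactly what the assumption $k<1+\beta/2$ permits (each of the $k$ factors carries exponent slightly above $k$, hence slightly above each single variable's "share" but still below $1+\beta/2$ when applied one variable at a time). Everything else is a direct appeal to Proposition \ref{prop:cE_eta}, the explicit solution \eqref{eq:r_all_one}, and the $\nu$-uniform bounds of Theorem \ref{thm:E_mom}.
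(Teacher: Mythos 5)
Your proposal is correct and follows essentially the same route as the paper: the explicit formula \eqref{eq:r_all_one} for $\eta=(1,\dots,1)$ gives the limit of $r_{\eta,\nu}(0)$, and Theorem \ref{thm:E_mom} with exponent $k+\delta<1+\beta/2$ plus the generalized H\"older inequality yields uniform integrability, which upgrades the a.s.\ convergence $\cE_{\nu}\to\cE$ to convergence of expectations. Your worked-out H\"older step (each factor raised to power $k(1+\delta/k)=k+\delta$) is exactly the detail the paper leaves implicit.
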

\begin{proof}
With $\eta=(1,\dots,1)$ we have $r_{\eta,\nu}(0)\to \prod_{j=1}^n e^{i\lambda_j/2}$ as $\nu\to -\infty$.

By Theorem \ref{thm:E_mom} there is a constant $c,\eps>0$  so that at time $0$ we have \[
E[|\cE_{\nu}(\lambda)|^{k+\eps}]<c (1+|\lambda|)^{2(k+\eps)^2/\beta}, \qquad \lambda\in \R.
\]
H\"older's inequality implies that $\prod_{j=1}^k \cE_{\nu}(\lambda_j)$ are uniformly integrable as $\nu\to -\infty$. Since $\prod_{j=1}^k \cE_{\nu}(\lambda_j)$ converges to $\prod_{j=1}^k \cE(\lambda_j)$, the statement follows. 
\end{proof}
With \eqref{e:hatzeta-moments} we get 
\begin{equation}\label{e:hatzeta-first}
  E \hat\zeta(\lambda )=\tfrac{2}{\pi}\cos(\lambda/2), \qquad \lambda \in \mathbb R. 
\end{equation}
Now consider the case $n=2$, $z_1=z_2=\lambda\in \R$. 
From \eqref{eq:r_all_one} we get
\[
r_{(1,1),\nu}=r_{-(1,1),\nu}^{-1}=\exp\Big(i \lambda \left(e^{\frac{\beta  u}{4}}-e^{\frac{\beta  \nu }{4}}\right)\Big).
\]
Introduce 
\[
r_*=r_{(1,-1),\nu}=r_{(-1,1),\nu}=E |\cE_{\nu,u}(\lambda)|^2,
\]
by \eqref{eq:r_eta} we have
\begin{align*}
r_*'&=\frac12 (\sigma_1-1)(\sigma_2-1) r_{\theta,\nu}=-\cos\left(\lambda \left(e^{\frac{\beta  u}{4}}-e^{\frac{\beta  \nu }{4}}\right)\right)+r_*.
\end{align*}
This ODE can be solved to give
\begin{align}\label{eq:2nd_mom}
    r_*(u)=1+e^u \int_{\nu}^u e^{-s} \left(1-\cos\left(\lambda \left(e^{\frac{\beta  s}{4}}-e^{\frac{\beta  \nu }{4}}\right)\right)\right)ds.
\end{align}
If $\beta>2$ then as $\nu\to -\infty$ this function converges to
\[
1+e^u \int_{-\infty}^u e^{-s} \left(1-\cos\left(\lambda  e^{\frac{\beta  s}{4}}\right)\right)ds.
\]
By Theorem 58 if $\beta>2$ then at time 0, $E|\cE_{\nu}(\lambda)|^{2+\eps}<c (1+|\lambda|^2)^{2(2+\eps)^2/\beta}$ with an absolute constant $c$ and $\eps>0$. By uniform integrability, \[
E[|\cE(\lambda)|^{2}]=1+ \int_{-\infty}^0 e^{-s} \left(1-\cos\left(\lambda  e^{\frac{\beta  s}{4}}\right)\right)ds, \qquad \lambda\in \R, \beta>2.
\]
With a time-change and  \eqref{e:hatzeta-moments} we get
\begin{align}\label{e:hatzeta-second}
2E[\hat \zeta(\lambda)^2]= E[|\cE(\lambda)|^2]&=1+\frac{4}{\beta}\int_0^1 t^{-4/\beta-1}(1-\cos(\lambda t))dt.
\end{align}
This can be written as a  generalized hypergeometric function
\begin{align}\label{eq:2ndmom}
 _1F_2\left(-\frac{2}{\beta };\frac{1}{2},1-\frac{2}{\beta };-\lambda ^2/4\right)
    &=\sum_{k=0}^\infty \frac{(-2/\beta)_k}{(1/2)_k (1-2/\beta)_k} \frac{(-\lambda^2/4)^k } {k! }=\sum_{k=0}^\infty \frac{ (-1)^k \lambda^{2k} }{(2 k)! (1- \tfrac{\beta}{2}  k)},
\end{align}
with $(x)_k=x(x+1)\cdots (x+k-1)$.

In the $\beta=2$ case the integral \eqref{eq:2nd_mom} with $u=0$ explodes as $-\lambda^2\nu/2$ as $\nu\to -\infty$. More precisely, we have
\begin{align*}
    &\lim_{\nu\to-\infty}  \big(E[|\cE_{\nu}(\lambda)|^2]+\lambda^2 \nu/2\big)=1-\frac32 \lambda^2+\sum_{k=2}^\infty \frac{(-1)^{k+1}}{(k-1) (2 k)!} \lambda^{2k}.
\end{align*}
With the cosine integral  $\operatorname{Ci}x=-\int_{x}^\infty (\cos t)/t\, ds$ and the Euler constant $\gamma$ we can write this as
\begin{align*}
\lambda ^2 \operatorname{Ci}|\lambda| -\gamma  \lambda ^2-\lambda ^2 \log |\lambda| -\lambda  \sin \lambda +\cos \lambda .
\end{align*}

Now consider the more general case $n=2$, $z_1=\lambda_1$, $z_2=\lambda_2$ with $\lambda_j\in \R$. Using the previous notation we have
\[
r_*=E\left[\cE_{\nu}(\lambda_1) \cE_{\nu}^*(\lambda_2)\right]
\]
By \eqref{eq:r_eta} and \eqref{eq:r_all_one} we get that  $r_*$ satisfies the ODE
\[
r_*'=\frac12 i(\lambda_1-\lambda_2)f_\beta r_*+\frac{1}{2}(r_*+\bar r_*)-\cos\left(\tfrac{\lambda_1+\lambda_2}{2}  \left(e^{\frac{\beta  u}{4}}-e^{\frac{\beta  \nu }{4}}\right)  \right).
\]
Writing $r_*=1+e^t a+i b$ with $a,b\in \R$ this leads to
\begin{align*}
  a'&=-\tfrac{\lambda_1-\lambda_2}{2} e^{-t} f_\beta \, b+e^{-t}\left(1-\cos\left(\tfrac{\lambda_1+\lambda_2}{2}  \left(e^{\frac{\beta  u}{4}}-e^{\frac{\beta  \nu }{4}}\right)  \right)\right),\\
    b'&=e^t \tfrac{\lambda_1-\lambda_2}{2} f_\beta\,  a+\tfrac{\lambda_1-\lambda_2}{2} f_\beta   ,
\end{align*}
with $a(\nu)=b(\nu)=0$. This ODE can be directly solved using the integrating factor method to get a somewhat complicated integral expression in terms of Bessel functions. For $\beta>2$ letting $\nu\to -\infty$ we get the two point function $E\left[\cE(\lambda_1) \cE^*(\lambda_2)\right]$.



\subsection{Moments of ratios of the structure function}

We consider expectations of products of functions $\cE, \cE^{-1}$. Let
\begin{align}
\cG_t=\frac{\cE^*_t}{\cE_t},
\end{align}
and for $z_1,\ldots, z_k$, $\eta\in \{-1,1\}^k$ fixed let 
\begin{align}
d\cE_t^\eta=\prod_{j=1}^k \cE_t^{\eta_j}(z_j).
\end{align}
For $\nu\in(-\infty,0)$ we define $\cG_{\nu,t}$, $\cE_{\nu,t}^\eta$ similarly.

It\^o's formula together with \eqref{eq:cE} gives
\begin{align}\label{eq:E_prod}
d\mathcal E^\eta=\frac{i}{2}z \cdot \eta\, f_\beta \cE^\eta \, dt + \frac12 \cE^\eta \sum_{j=1}^k\eta_j(\cG(z_j)-1) (idb_1-db_2).
\end{align}
For $\nu\in(-\infty,0)$ the process $\cE^\eta_\nu$ satisfies \eqref{eq:E_prod} with initial condition $\cE^\eta_{\nu,\nu}=1$.

\begin{proposition}\label{prop:ratios}
Let $z\in \CC^k$, with $\Im z_j<0$ for all $j$, and let $\eta\in \{-1,1\}^k$.
Then for $-\infty<\nu<u$ we have
\begin{align}\label{eq:E_eta_mom}
    E\,\cE_{\nu,u}^\eta=\exp\Big(\tfrac{i}{2} z\cdot \eta\, (e^{\beta u/4}-e^{\beta \nu/4})\Big).
\end{align}
\end{proposition}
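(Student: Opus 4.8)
The plan is to integrate the SDE \eqref{eq:E_prod} and show that the stochastic integral term has zero expectation, reducing the identity to the ODE obtained from the drift alone. First I would observe that for $\nu\in(-\infty,0)$ the process $\cE^\eta_\nu$ solves \eqref{eq:E_prod} with $\cE^\eta_{\nu,\nu}=1$, so that
\[
\cE^\eta_{\nu,u}=1+\tfrac{i}{2}\int_\nu^u z\cdot\eta\, f_\beta(s)\,\cE^\eta_{\nu,s}\,ds+\tfrac12\int_\nu^u \cE^\eta_{\nu,s}\sum_{j=1}^k\eta_j(\cG_{\nu,s}(z_j)-1)(i\,db_1-db_2).
\]
The key analytic input is that when $\Im z_j<0$ for all $j$, the quantities $|\cE^{-1}_{\nu,s}(z_j)|$ and $|\cG_{\nu,s}(z_j)|=|\cE^*_{\nu,s}(z_j)|/|\cE_{\nu,s}(z_j)|$ are controlled: by \eqref{eq:cE_Polya} of Corollary \ref{cor:cE_SDE} we have $|\cE^*_{\nu,s}(z_j)|\le|\cE_{\nu,s}(z_j)|$ for $\Im z_j<0$, hence $|\cG_{\nu,s}(z_j)|\le 1$ on that half-plane. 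So the only genuinely unbounded factor in the integrand of the martingale term is $\cE^\eta_{\nu,s}$ itself, and the coefficient multiplying it, namely $\sum_j\eta_j(\cG_{\nu,s}(z_j)-1)$, is bounded by $2k$ deterministically.

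Next I would verify that $\int_\nu^u \cE^\eta_{\nu,s}\sum_j\eta_j(\cG_{\nu,s}(z_j)-1)(i\,db_1-db_2)$ is a true martingale, not merely a local one. For $\nu$ finite the coefficients of the linear system \eqref{eq:cE_eta} are bounded on compact time intervals and Lipschitz, so by the standard $L^2$ theory (Theorem 5.2.1 of \cite{OksSDE}, as invoked after Proposition \ref{prop:cE_eta}) the solution has finite second moments locally in time and the stochastic integral is an $L^2$-martingale. Taking expectations therefore kills the martingale term, and writing $r(u)=E\,\cE^\eta_{\nu,u}$ we obtain the linear ODE $r'(u)=\tfrac{i}{2}(z\cdot\eta)f_\beta(u)\,r(u)$ with $r(\nu)=1$. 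Since $f_\beta(u)=\tfrac{\beta}{4}e^{\beta u/4}$, integrating gives $r(u)=\exp\big(\tfrac{i}{2}(z\cdot\eta)(e^{\beta u/4}-e^{\beta\nu/4})\big)$, which is \eqref{eq:E_eta_mom}.

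The main obstacle I anticipate is the justification that the stochastic integral has zero expectation: one must confirm that $s\mapsto\cE^\eta_{\nu,s}$ lies in $L^2$ on $[\nu,u]$ uniformly enough that the integrand is in $L^2(d\langle M\rangle)$. The cleanest route is to localize with stopping times $T_m=\inf\{s:|\cE^\eta_{\nu,s}|\ge m\}$, take expectations of the stopped equation (where the martingale term vanishes identically), derive a Gronwall bound $E|\cE^\eta_{\nu,s\wedge T_m}|\le\exp(c(s-\nu))$ with $c$ depending on $z\cdot\eta$ and $\beta$ but not on $m$, and then use this—together with the deterministic bound $|\sum_j\eta_j(\cG-1)|\le 2k$ valid in the lower half-plane—to pass to the limit $m\to\infty$ by dominated convergence and conclude the martingale is genuine. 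Note the restriction $\Im z_j<0$ is used precisely here, through the bound $|\cG_{\nu,s}(z_j)|\le1$; without it the diffusion coefficient of $\cE^\eta$ need not be controlled and the argument breaks.
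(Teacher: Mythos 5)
Your proposal follows essentially the same route as the paper: the key input is $|\cG_{\nu,s}(z_j)|\le 1$ for $\Im z_j<0$ from \eqref{eq:cE_Polya}, after which the stochastic integral in \eqref{eq:E_prod} is killed by taking expectations and the drift gives the ODE $r'=\tfrac{i}{2}(z\cdot\eta)f_\beta\,r$. The paper packages the martingale justification into Lemma \ref{l:linear-sde} of the Appendix, which is precisely your localization argument. One small correction to your sketch of that step: taking expectations of the stopped equation only controls $|E\,\cE^\eta_{\nu,s\wedge T_m}|$, not $E|\cE^\eta_{\nu,s\wedge T_m}|$, so the Gronwall loop does not close at the level of first moments; instead one applies It\^o's formula to $|\cE^\eta|^2$, using both the drift bound and the diffusion bound $|Z|\le b|X|$, to get a uniform-in-$m$ second-moment bound, and then Fatou plus the finiteness of $E\int|Z|^2\,ds$ shows the stochastic integral is a genuine ($L^2$) martingale. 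This is exactly what Lemma \ref{l:linear-sde} does, so your plan is correct once that step is run on $|\cE^\eta|^2$ rather than on $\cE^\eta$ itself.
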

\begin{proof}
By \eqref{eq:cE_Polya} of Corollary \ref{cor:cE_SDE} we have $|\cG_{\nu,t}(z)|\le 1$ for $\Im z<0$. Hence we can use Lemma \ref{l:linear-sde} of the Appendix for the SDE \eqref{eq:E_prod} for $\cE^\eta_\nu$.  With $r(u)=E\cE^\eta_{\nu,u}$, $\nu<u$ we get the equation
\[
r(u)=1+\frac{i}{2} z \eta \int_\nu^u f_\beta(s) r(s) ds. 
\]
Solving the corresponding ODE gives \eqref{eq:E_eta_mom}.
\end{proof}



\subsection{Borodin-Strahov moment formulas for $\zeta$}\label{subs:BS_zeta}

Borodin and Strahov  \cite{BorodinStrahov}  compute  
\begin{align}\label{eq:BS_moment}
\lim_{n\to \infty}E\prod_{j=1}^k \frac{p_n(z_j/\sqrt{n})}{p_n(w_j/\sqrt{n})}=\begin{cases}
e^{i \sum_{j=1}^n \frac{z_j-w_j}{2}}&\qquad \text{ if } \Im w_j<0, 1\le j\le n,\\
e^{-i \sum_{j=1}^n \frac{z_j-w_j}{2}}&\qquad \text{ if } \Im w_j>0, 1\le j\le n,
\end{cases}
\end{align}
for the characteristic polynomial $p_n$ of the Gaussian beta ensemble for $\beta=1,2,4$. The answer does not depend on the value of $\beta$ if $\beta=1,2,4$. Borodin and Strahov \cite{BorodinStrahov}   pose the question whether this is true for  all $\beta>0$.

We will show that the analogous expectation does not depend on $\beta$  for the stochastic zeta function.  In the next section we also show this for the circular beta ensemble.

Chhaibi, Najnudel  and Nikhegbali \cite{CNN} show the formula analogous to \eqref{eq:BS_moment} for the characteristic polynomial of Haar unitary matrices and for $\zeta_2$. Chhaibi, Hovhannisyan, Najnudel, Nikeghbali,   and Rodgers \cite{chhaibi2019limiting} show that the normalized characteristic polynomial of the Gaussian unitary ensemble converges to $\zeta_2$.

We will need the following simple lemma.
\begin{lemma}\label{lem:Cauchy_ave}
Suppose that $a_j, b_j, c_j, d_j\in \CC, 1\le j\le k$. Let $q$ be a Cauchy distributed random variable. Then
\begin{align*}
E \prod_{j=1}^k \frac{a_j+q b_j}{c_j+q d_j}=\begin{cases}
\prod_{j=1}^k \frac{a_j+i b_j}{c_j+i d_j},& \qquad \text{if } \Im \frac{c_j}{d_j}>0 \text{ for all } 1\le j\le k,\\
\prod_{j=1}^k \frac{a_j-i b_j}{c_j-i d_j},& \qquad \text{if } \Im \frac{c_j}{d_j}<0 \text{ for all } 1\le j\le k.
\end{cases}
\end{align*}
\end{lemma}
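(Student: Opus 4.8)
The plan is to reduce the claim to a residue/contour-integration computation using the fact that the Cauchy density decays like $1/x^2$. First I would observe that the integrand $\prod_{j=1}^k \frac{a_j+qb_j}{c_j+qd_j}$, viewed as a function of the real variable $q$, extends to a meromorphic function of a complex variable $w$ with poles exactly at the points $w_j^{\ast}=-c_j/d_j$ (assuming all $d_j\neq 0$; the degenerate cases $d_j=0$ or $b_j=0$ are handled by continuity or are trivial). By hypothesis all these poles lie strictly in one open half-plane: in the lower half-plane when $\Im(c_j/d_j)>0$, and in the upper half-plane when $\Im(c_j/d_j)<0$. The Cauchy density is $\frac{1}{\pi}\cdot\frac{1}{1+w^2}=\frac{1}{\pi}\cdot\frac{1}{(w-i)(w+i)}$, with its two poles at $w=\pm i$.

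Next I would set up the contour integral
\[
E\prod_{j=1}^k \frac{a_j+qb_j}{c_j+qd_j}=\frac{1}{\pi}\int_{-\infty}^\infty \frac{1}{1+w^2}\prod_{j=1}^k \frac{a_j+wb_j}{c_j+wd_j}\,dw,
\]
and close the contour with a large semicircle in whichever half-plane contains none of the poles $w_j^\ast$. The rational function $\prod_j \frac{a_j+wb_j}{c_j+wd_j}$ tends to the constant $\prod_j (b_j/d_j)$ as $|w|\to\infty$, so the integrand decays like $|w|^{-2}$ and the semicircular arc contributes $0$ in the limit. Thus the integral equals $\pm 2\pi i$ times the residue at the single enclosed pole of $\frac{1}{1+w^2}$, namely $w=-i$ if we close in the lower half-plane (when all $w_j^\ast$ are in the lower half-plane we must instead close in the upper half-plane, picking up $w=i$), with the sign determined by orientation. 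Computing $\mathrm{Res}_{w=i}\frac{1}{1+w^2}=\frac{1}{2i}$ and $\mathrm{Res}_{w=-i}\frac{1}{1+w^2}=-\frac{1}{2i}$, in each case the $\pm 2\pi i$ and the $1/\pi$ combine to give exactly the value of $\prod_j \frac{a_j+wb_j}{c_j+wd_j}$ evaluated at $w=i$ or $w=-i$ respectively, which is the claimed formula. (An equivalent bookkeeping: since the only poles of the whole integrand are at $\pm i$ and the $w_j^\ast$, and they all lie on one side, closing on the empty side isolates precisely one of $\pm i$.)

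The main obstacle — really the only subtlety — is making the semicircle-vanishing step rigorous and handling the boundary/degenerate cases cleanly: one must note the integrand is genuinely $O(|w|^{-2})$ uniformly on the arc (true because the rational part is bounded near $\infty$ and the arc avoids all $w_j^\ast$), and separately dispose of the cases where some $d_j=0$ (then that factor is the constant $a_j/c_j$, no pole, and $\Im(c_j/d_j)$ is interpreted as $\pm\infty$, consistent with either branch) or some $b_j=0$ (then that factor is $a_j/(c_j+wd_j)$, still fine). I would phrase the argument so the hypothesis ``$\Im(c_j/d_j)>0$ for all $j$'' is used exactly once, to guarantee all $w_j^\ast$ avoid the closed upper half-plane, so that closing there is legitimate; the other case is symmetric under $w\mapsto\bar w$. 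No deeper input is needed — this is entirely a one-variable residue computation, and the product structure over $j$ plays no role beyond keeping the rational function proper.
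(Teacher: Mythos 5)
Your proof is correct and is essentially the same argument the paper gives: write the expectation as $\frac1\pi\int_{\mathbb R}\frac{r(x)}{1+x^2}\,dx$ with $r(x)=\prod_j\frac{a_j+xb_j}{c_j+xd_j}$, close the contour by a large semicircle in the half-plane containing none of the poles $-c_j/d_j$, and pick up only the residue of $\frac{1}{1+w^2}$ at $i$ (resp.\ $-i$), giving $r(\pm i)$; the second case follows by conjugation. Your extra care with the arc estimate and the degenerate cases $d_j=0$, $b_j=0$ is fine but not a substantive difference.
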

\begin{proof}
For the first case, set
\[
r(x)=\prod_{j=1}^k \frac{a_j+x b_j}{c_j+x d_j}.
\]
We compute 
$$\int_{\infty}^\infty \frac{r(x)}{\pi(1+x^2)}dx=\lim_{r\to\infty}\int_{\gamma_r} \frac{r(z)}{\pi(1+z^2)}dz,$$ where $\gamma_r$ is the counterclockwise oriented curve constructed from the line segment $[-r,r]$ and the corresponding semicircle in the upper half plane. The function $\frac{r(z)}{\pi(1+z^2)}$  has  singularities at $\pm i$ and $-\frac{c_j}{d_j}$, and out of these only $i$ is in the upper  half plane. The residue of $\frac{r(z)}{\pi(1+z^2)}$ at $z=i$ is exactly $\frac{r(i)}{2\pi i}$.  This proves the first case, the second follows by conjugation. 
\end{proof}


Next, we show that  Borodin-Strahov conjecture holds for $\zeta_\beta$, Theorem \ref{thm:BS_conj-intro} of the Introduction. 


\begin{proof}[Proof of Theorem \ref{thm:BS_conj-intro}]
We  first consider the first case,   when $\Im w_j<0$.
We first prove the appropriate statement for the approximate versions of $\zeta$. Let $\nu\not=-\infty$.  

Recall that $\zeta_{\nu}(z)=\cA_{\nu,0}(z)-q \cB_{\nu,0}(z)$  where $q$ is a Cauchy distributed random variable independent of $\cA, \cB$. By \eqref{eq:cE_Polya} of Corollary \ref{cor:cE_SDE} we have 
\[
|\cA_{\nu,u}(z)-i \cB_{\nu,u}(z)|\ge |\cA_{\nu,u}(z)+i \cB_{\nu,u}(z)|, \qquad \text{for $\Im z<0$,}
\]
which implies 
$
\Im \frac{\cA_{\nu,0}(z)}{\cB_{\nu,0}(z)}<0$.
The product
$
\prod_{j=1}^k \left|\frac{\zeta_\nu(z_j)}{\zeta_\nu(w_j)}\right|
$
has finite expectation by Proposition \ref{prop:moment_ratio}.
Since $q$ is independent of $\cA, \cB$, Lemma \ref{lem:Cauchy_ave} implies
\begin{align*}
E\left[\prod_{j=1}^k \frac{\zeta_\nu(z_j)}{\zeta_\nu(w_j)}\big\vert \cA, \cB\right]=
\prod_{j=1}^k \frac{\cA_{\nu,0}(z_j)-i \cB_{\nu,0}(z_j)}{\cA_{\nu,0}(w_j)-i \cB_{\nu,0}(w_j)}=
\prod_{j=1}^k \frac{\cE_{\nu,0}(z_j)}{\cE_{\nu,0}(w_j)},& \quad \text{ if } \Im w_j<0   \text{ for all $j$,}
\end{align*}
and that the right hand side has finite expectation.


Assume first that we have $\Im z_j<0, \Im w_j<0$ for all $j$. Then by Proposition \ref{prop:ratios} we have 
\[
E\prod_{j=1}^k \frac{\zeta_\nu(z_j)}{\zeta_\nu(w_j)}=E\prod_{j=1}^k \frac{\cE_{\nu,0}(z_j)}{\cE_{\nu,0}(w_j)}=\exp\Big(\frac{i}{2}\sum_{k=1}^n(z_k-w_k) (1-e^{\beta \nu/4})\Big).
\]
Proposition \ref{prop:moment_ratio} shows that the products on the left are uniformly integrable as $\nu\to-\infty$. 
This implies the statement of the theorem in the case when $\Im z_j<0, \Im w_j<0$ for all $j$. 



The expected value of the product of the ratios is an entire function in each $z_j$ variable by Proposition \ref{prop:ratio_analytic}.  This extends the claim to $z_j\in \CC$, and proves the first case of the theorem.   The second case follows by conjugation. 
\end{proof}

\begin{remark}
If one could evaluate the function
\begin{align}
r_{\beta,n}(z_1, \dots, z_k, w_1, \dots, w_k)=E\prod_{j=1}^k \frac{\zeta(z_j)}{\zeta(w_j)} \label{eq:rho_beta}
\end{align}
for all choices of $z_j, w_j\in \CC$ then this would lead to the joint $n$-point correlation functions of the $\Sineb$ process. The $n$-point correlation function $\rho_{\beta,n}:\R^n\to \R$ would be given as 
\begin{align}\label{eq:n-corr}
\rho_{\beta,n}(\lambda_1, \dots, \lambda_n)=\left[\frac{\partial^n}{\partial {z_1}\dots \partial {z_n}}\Big|_{z=w} r_{\beta,n}(z_1, \dots, z_k, w_1, \dots, w_k)\right]_{w_1=\lambda_1, \dots, w_n=\lambda_n}
\end{align}
where $[f(\cdot)]_{x}=\lim_{\eps \to 0^+}\frac{1}{2\pi i}(f(x-i \eps)-f(x+i \eps))$, see \cite{BorodinStrahov}.

Theorem \ref{thm:BS_conj-intro} and \eqref{eq:n-corr} give $\rho_{\beta,1}(\lambda)=\frac{1}{2\pi}$, the intensity of the $\Sineb$ process. \end{remark}

\subsection{Borodin-Strahov moment formulas for the circular beta ensemble }

Let $\beta>0$ and let $p_n(z)$ be the  characteristic polynomial of a size $n$ circular beta ensemble as in \eqref{eq:char_pol}. The following theorem shows that the Borodin-Strahov moment formulas hold for this model even before taking the limit.
\begin{theorem}[Borodin-Strahov moment conjecture, circular beta case]\label{thm:finite-BStr}
\begin{align}\label{eq:finite-BStr}
E \prod_{j=1}^\ell \frac{p_n(e^{i z_j/n} ) e^{-i z_j/2}}{p_n(e^{i w_j/n}) e^{-i w_j/2}} =\begin{cases}
e^{i \sum_{j=1}^\ell \frac{z_j-w_j}{2}}&\qquad \text{ if } \Im w_j<0, 1\le j\le \ell,\\
e^{-i \sum_{j=1}^\ell \frac{z_j-w_j}{2}}&\qquad \text{ if } \Im w_j>0, 1\le j\le \ell.
\end{cases}
\end{align}
\end{theorem}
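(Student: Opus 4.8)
The plan is to reduce Theorem~\ref{thm:finite-BStr} to the structure of the Dirac operator $\Circop$ attached to the random discrete measure $\sigma_{n,\beta}$ built in the proof of Theorem~\ref{thm:char_conv}, and to then run the same argument that proved Theorem~\ref{thm:BS_conj-intro}, but at finite~$n$ where no limiting procedure is needed. First I would recall that by Proposition~\ref{prop:unitary_car2b}(ii) the secular function of $\Circop$ is exactly $\zeta_{\Circop}(z) = p_n(e^{iz/n})e^{-iz/2}$, so that the left-hand side of \eqref{eq:finite-BStr} is $E\prod_{j=1}^\ell \zeta_{\Circop}(z_j)/\zeta_{\Circop}(w_j)$. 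The operator $\Circop$ is (after the conjugations of Lemma~\ref{l:ort-eq}, which change none of the relevant quantities) of the form $\Dirop(X_{\lfloor n\cdot\rfloor},\mathfrak u_0,\mathfrak u_1)$ with $q$-boundary conditions, where the path $X$ is driven by the \emph{independent} modified Verblunsky coefficients of $\sigma_{n,\beta}$ coming from Bourgade--Najnudel--Rouault, and where the terminal boundary value $-x_n$ plays the role of the Cauchy variable; indeed for rotationally invariant Verblunsky coefficients the last modified coefficient $\gamma_{n-1}$ is uniform on the circle, which makes $-x_n$ (equivalently $q$) Cauchy distributed and independent of the rest of the path.

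Next I would set up the analogues of $\cA,\cB,\cE,\cE^*$ for $\Circop$: write $H(k/n,z)$ for the solution of \eqref{eq:H-discrete}, and set $\cA^{(n)}_k(z)=[1,0]H(k/n,z)$, $\cB^{(n)}_k(z)=[0,1]H(k/n,z)$, $\cE^{(n)}_k=\cA^{(n)}_k-i\cB^{(n)}_k$, $\cE^{*(n)}_k=\cA^{(n)}_k+i\cB^{(n)}_k$. By Lemma~\ref{lem:Polya} applied to this Dirac operator (its hypotheses hold since $\|R^{-1}\|$ is bounded on $[0,1]$) we have $|\cE^{(n)}_k(z)|\le|\cE^{*(n)}_k(z)|$ for $\Im z>0$, hence $\Im(\cA^{(n)}_n(z)/\cB^{(n)}_n(z))>0$ for $\Im z>0$ and $<0$ for $\Im z<0$. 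Since $\zeta_{\Circop}(z)=\cA^{(n)}_n(z)-q\,\cB^{(n)}_n(z)$ with $q$ Cauchy and independent of $\cA^{(n)},\cB^{(n)}$, conditioning on the path and applying Lemma~\ref{lem:Cauchy_ave} gives, whenever $\Im w_j<0$ for all $j$,
\begin{align}\notag
E\Big[\prod_{j=1}^\ell\frac{\zeta_{\Circop}(z_j)}{\zeta_{\Circop}(w_j)}\,\Big|\,X\Big]=\prod_{j=1}^\ell\frac{\cA^{(n)}_n(z_j)-i\cB^{(n)}_n(z_j)}{\cA^{(n)}_n(w_j)-i\cB^{(n)}_n(w_j)}=\prod_{j=1}^\ell\frac{\cE^{(n)}_n(z_j)}{\cE^{(n)}_n(w_j)},
\end{align}
provided the right-hand side is integrable, which again follows from the $q$-integration being a genuine probabilistic average of a ratio whose denominator stays off the real axis.

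It then remains to compute $E\prod_{j=1}^\ell \cE^{(n)}_n(z_j)/\cE^{(n)}_n(w_j)$. Here I would write down the discrete Szeg\H{o}-type recursion for the pair $(\cE^{(n)}_k,\cE^{*(n)}_k)$ that comes from \eqref{eq:H_rec}--\eqref{eq:ODEHdef}, i.e. the discrete analogue of the SDE \eqref{eq:cE}: at each step the transfer matrix mixes $\cE^{(n)}_k$ and $\cE^{*(n)}_k$ by an amount governed by the $k$-th Verblunsky coefficient, and the ``deterministic'' part of the step multiplies $\cE^{(n)}$ by $e^{iz/(2n)}$ and $\cE^{*(n)}$ by $e^{-iz/(2n)}$. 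Because the $\gamma_k$ are independent and rotationally invariant, taking the conditional expectation over $\gamma_k$ kills the cross terms (the factor $\cE^{*(n)}/\cE^{(n)}$, whose modulus is $\le1$ in the lower half-plane, is the discrete $\cG$), so $r_k(z,w):=E\prod_j \cE^{(n)}_k(z_j)/\cE^{(n)}_k(w_j)$ satisfies a simple one-step recursion $r_{k+1}=e^{\frac{i}{2n}\sum_j(z_j-w_j)}r_k$ with $r_0=1$; iterating $n$ times gives $r_n=e^{\frac{i}{2}\sum_j(z_j-w_j)}$, which is the claimed formula in the first case. The second case follows by complex conjugation, using $\zeta_{\Circop}(\bar z)=\overline{\zeta_{\Circop}(z)}$ and $\Im(-\bar z)=-\Im z$.

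I expect the main obstacle to be the bookkeeping in the discrete recursion step: one must check carefully that the expectation over the $k$-th (rotationally invariant) Verblunsky coefficient, taken conditionally on everything before step $k$, indeed annihilates exactly the off-diagonal contributions and leaves the clean scalar factor $e^{\frac{i}{2n}\sum_j(z_j-w_j)}$, including verifying that all the ratios appearing are integrable at each stage so that Fubini/tower-property manipulations are legitimate. A clean way to organize this is to note that the conditional law of the one-step transfer matrix, rotated appropriately, has the property that $E[\cE^{*(n)}_{k+1}/\cE^{(n)}_{k+1}\mid \mathcal F_k]$ behaves like the generator term in \eqref{eq:cE}, so the argument mirrors the continuous one in Proposition~\ref{prop:ratios}; alternatively one can avoid the recursion entirely by invoking the already-proven convergence \eqref{circ_cp_bnd} together with the uniform-integrability bounds of Proposition~\ref{prop:moment_ratio} and Theorem~\ref{thm:BS_conj-intro}, but that only yields the limit, not the exact finite-$n$ identity, so the direct discrete computation is the one to carry out.
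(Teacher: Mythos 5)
Your core strategy --- exploit the independence and rotational invariance of the modified Verblunsky coefficients so that the conditional expectation over $\gamma_k$ of the product of ratios produces exactly the factor $e^{\frac{i}{2n}\sum_j(z_j-w_j)}$, then telescope from $k=0$ to $k=n$ --- is precisely the paper's proof; there the averaging over the phase of $\gamma_k$ is a residue computation (Lemma \ref{lem:average}), and the inequality $|\cE_k(w_j)|\ge|\cE_k^*(w_j)|$ for $\Im w_j<0$ is what keeps the relevant poles inside/outside the right region. However, two concrete steps in your reduction are wrong as written. First, the object you run the recursion on, $\cE^{(n)}_k=[1,-i]H(k/n,z)$, does not satisfy the clean Szeg\H{o}-type one-step recursion you describe: the quantity that does is $e^{-izk/(2n)}\psi_k(e^{iz/n})=[1,-(x_k+iy_k)]H(k/n,z)$ (note that even in the continuous setting the paper's $\cE$ is $[1,-i]\cH=[1,-i]X_uH$, not $[1,-i]H$). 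With that definition the recursion \eqref{eq:cE_k_ratio} follows directly from \eqref{eq:mSzego}, and at $k=n$ the quantity is literally $p_n(e^{iz/n})e^{-iz/2}$, so the telescoped identity \emph{is} the theorem --- no further step is needed.

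Second, the Cauchy-averaging over the terminal boundary value is both unnecessary and incorrectly justified. Conditionally on $\gamma_0,\dots,\gamma_{n-2}$, the endpoint $x_n=x_{n-1}+v_{n-1}y_{n-1}$ is Cauchy with location $x_{n-1}$ and scale $y_{n-1}$; it is \emph{not} a standard Cauchy variable independent of the path. Applying Lemma \ref{lem:Cauchy_ave} correctly at this step would yield $\prod_j [1,-(x_{n-1}+iy_{n-1})]H(1,z_j)\big/[1,-(x_{n-1}+iy_{n-1})]H(1,w_j)$, not $\prod_j [1,-i]H(1,z_j)\big/[1,-i]H(1,w_j)$, so your claimed conditional identity is false and the subsequent recursion starts from the wrong quantity. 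In the paper the last coefficient $\gamma_{n-1}$, which is uniform on the circle, is handled by the same one-step averaging as all the others, with $r=|\gamma_{n-1}|=1$; the strict inequality $|d_j|<|c_j|$ required in Lemma \ref{lem:average} then comes from the extra factor $|e^{-iw_j/n}|<1$ when $\Im w_j<0$. With these two corrections your argument coincides with the paper's.
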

\begin{proof}
Consider the random discrete measure $\mu_{n,\beta}$ introduced in the proof of Theorem \ref{thm:char_conv}. The support of $\mu_{n,\beta}$ has circular beta distribution, and the weights are given by  an independent Dirichlet distribution with parameter $(\beta/2, \dots, \beta/2)$. 

Killip and Nenciu \cite{KillipNenciu}, see also \cite{BNR2009}, identify the joint distribution of the  modified Verblunsky coefficients $\gamma_{0}, \dots, \gamma_{n-1}$ corresponding to  $\mu_{n,\beta}$. They show that $\gamma_k$ are independent and rotational invariant, and  $|\gamma_k|^2$ has Beta$(1,\tfrac{\beta}{2}(n-k-1))$ distribution. In particular,  $\gamma_{n-1}$ is uniform on $\{|z|=1\}$. 

We will use the notation introduced in  Section \ref{s:Verblunsky}. Define $\cE_k(z), \cE_k^*(z)$ for $0\le k\le n$, $z\in \CC$ as  
\[
\bin{\cE_k(z)}{\cE_k^*(z)}=e^{-\frac{izk}{2n}} \bin{\psi_k(e^{iz/n})}{\psi_k^*(e^{iz/n})}=UX_kH_k.
\]
By \eqref{eq:mSzego} we have
\begin{align*}
\bin{\cE_{k+1}}{\cE_{k+1}^*}=A_k \mat{e^{\frac{iz}{2n}}}{0}{0}{e^{\frac{-iz }{2n}}}\bin{\cE_{k}}{\cE_{k}^*}
&=\mat{\frac{1}{1-\gamma_k }}{-\frac{\gamma_k }{1-\gamma_k }}{-\frac{\bar \gamma_k}{1-\bar \gamma_k }}{\frac{1 }{1-\bar \gamma_k }}\bin{e^{\frac{iz}{2n}} \cE_k}{e^{-\frac{iz}{2n}} \cE_k^* }, 
\end{align*}
which leads to 
\begin{align}\label{eq:cE_k_ratio}
\frac{\cE_{k+1}(z)}{\cE_{k+1}(w)}=e^{\frac{i(z-w)}{2n}}\frac{\cE_k(z)-\gamma_k e^{-\frac{i z}{n}}\cE_k^*(z)}{\cE_k(w)-\gamma_k e^{-\frac{i w}{n}}\cE_k^*(w)}.
\end{align}
If $\Im w<0$ then $|[1,-i] H_k(w)|\ge |[1,i] H_k(w)|$ by \eqref{E_AB}, which implies $|\cE_k(w)|\ge |\cE_k^*(w)|$ as well. 

Fix $w_j, z_j\in \CC$ with $1\le j\le \ell$, and assume that $\Im w_j<0$ for all $j$.
The random variable  $\gamma_k$ is independent of $\cE_j, \cE_j^*, j\le k$, and it has rotationally invariant distribution. Hence with
$
\mathcal{Q}_k:=\prod_{j=1}^\ell \frac{\cE_k(z_j)}{\cE_k(w_j)}
$
we have
\[
E\left[\mathcal Q_{k+1} \big\vert \mathcal Q_j, j\le k, |\gamma_k|=r\right]=e^{\frac{i}{2n}\sum_{j=1}^\ell (z_j-w_j)}\frac{1}{2\pi}\int_{0}^{2\pi} \prod_{j=1}^\ell\frac{\cE_k(z_j)-r e^{i t} e^{-\frac{i z_j}{n}}\cE_k^*(z_j)}{\cE_k(w_j)-r e^{i t}  e^{-\frac{i w_j}{n}}\cE_k^*(w_j)} dt.
\]
We have $|\cE_k(w_j)|\ge |\cE_k^*(w_j)|$ from $\Im w_j<0$, and thus by Lemma \ref{lem:average} below we get
\[
E\left[\mathcal Q_{k+1} \big\vert \mathcal Q_j, j\le k, |\gamma_k|=r\right]=e^{\frac{i}{2n}\sum_{j=1}^\ell (z_j-w_j)} \mathcal Q_k.
\]
Taking expectations and using $\mathcal Q_0=1$ we get
$
E\mathcal Q_n=e^{\frac{i}{2} \sum_{j=1}^\ell (z_j-w_j)}.$
Since
\[
\mathcal Q_n=\prod_{j=1}^\ell \frac{\cE_n(z_j)}{\cE_n(w_j)}=\prod_{j=1}^\ell\frac{\psi_n(e^{iz_j/n}) e^{-iz_j/2}}{\psi_n(e^{iw_j/n}) e^{-iw_j/2}},
\]
and $\psi_n=p_n$, the first case of \eqref{eq:finite-BStr} follows. The second case follows after conjugation.
\end{proof}

\begin{remark}
  The same proof works for random measures supported on $n$ points on the unit circle, with Verblunsky coefficients $\gamma_0,\dots, \gamma_{n-1}$ satisfying the following condition. Given $|\gamma_j|, 0\le j\le n-1$ the arguments of $\gamma_j$ are independent and uniformly distributed on $[0,2\pi]$. 
\end{remark}

The following lemma is related to  Lemma \ref{lem:Cauchy_ave}  through a Cayley transform. 

\begin{lemma}\label{lem:average}
Suppose that $a_j,b_j,c_j,d_j\in \CC$ with $c_j\neq 0$ and $|d_j|<|c_j|$. Then
\[
\frac{1}{2\pi}\int_0^{2\pi} \prod_{j=1}^k \frac{a_j-e^{it} b_j}{c_j-e^{it} d_j} dt=\prod_{j=1}^k \frac{a_j}{c_j}.
\]
\end{lemma}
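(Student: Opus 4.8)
The plan is to prove Lemma \ref{lem:average} by reducing the general $k$ case to a single contour integral over the unit circle and then applying the residue theorem. First I would substitute $\zeta = e^{it}$ so that $dt = dz/(iz)$ and the integral becomes $\frac{1}{2\pi i}\oint_{|\zeta|=1} \prod_{j=1}^k \frac{a_j - \zeta b_j}{c_j - \zeta d_j}\,\frac{d\zeta}{\zeta}$. The integrand is a rational function of $\zeta$ whose poles are at $\zeta = 0$ and at $\zeta = c_j/d_j$ for each $j$ with $d_j \neq 0$ (if $d_j = 0$ the corresponding factor contributes no pole and the claim is consistent with the convention $c_j/c_j$ in the product). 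The hypothesis $|d_j| < |c_j|$ guarantees $|c_j/d_j| > 1$, so the only pole inside the unit disk is the simple pole at $\zeta = 0$.

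Next I would compute the residue at $\zeta = 0$: since the product $\prod_{j=1}^k \frac{a_j - \zeta b_j}{c_j - \zeta d_j}$ is analytic at $0$ and takes the value $\prod_{j=1}^k \frac{a_j}{c_j}$ there, the residue of the full integrand at $\zeta = 0$ is exactly $\prod_{j=1}^k \frac{a_j}{c_j}$. By the residue theorem the contour integral equals this quantity, which is the claimed identity.

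A minor technical point to handle is the edge case where $c_j - \zeta d_j$ vanishes for some $\zeta$ on the unit circle itself; but $|d_j| < |c_j|$ rules this out, so the integrand is continuous on $|\zeta| = 1$ and the contour integral is well-defined. I do not expect any real obstacle here — the argument is a textbook application of the residue theorem, and the only thing to be careful about is correctly accounting for the pole at the origin coming from the $d\zeta/\zeta$ factor and verifying that all other poles lie strictly outside the closed unit disk. This lemma plays the analogous role for the discrete (circular beta) model that Lemma \ref{lem:Cauchy_ave} plays for $\zeta_\beta$, and the two are indeed related by a Cayley transform mapping the real line to the unit circle.
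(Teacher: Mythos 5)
Your proof is correct and is essentially identical to the paper's: both rewrite the average as the contour integral $\frac{1}{2\pi i}\oint_{|z|=1}\frac{1}{z}\prod_{j}\frac{a_j-zb_j}{c_j-zd_j}\,dz$, observe that $|d_j|<|c_j|$ forces all poles $c_j/d_j$ outside the closed unit disk, and read off the residue $\prod_j a_j/c_j$ at the origin. Your extra remarks on the $d_j=0$ case and the absence of poles on the contour are fine but not needed beyond what the paper states.
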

\begin{proof}
We can rewrite the left hand side as a complex line integral on the unit circle as 
\[
\frac{1}{2\pi i}\oint\limits_{\bigodot} \frac{1}{z}\prod_{j=1}^k \frac{a_j-z b_j}{c_j-z d_j} dz.
\]
The integrand has poles at $0$ and at $\frac{c_j}{d_j}, 1\le j\le k$. Because of our conditions the only pole inside the unit circle is 0, and the residue is $\prod_j \frac{a_j}{c_j}$.
\end{proof}

\appendix

\section{Law of iterated logarithm for Brownian integrals}

\begin{theorem}\label{thm:lil} 
For every $a>2, b<1/2$ there is a constant $c$ so that  
\begin{align}
 P\Big(\sup_{t>0} B(t)^2/t-a \log(1+|\hspace{-0.2em}\log t|)>y\Big)\le  c e^{-by} \qquad \text{for all }y\ge 0.\label{eq:lil_tail}
\end{align}
\end{theorem}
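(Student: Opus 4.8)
The plan is to reduce to the range $t\ge 1$ using Brownian time inversion, and then to run a geometric block decomposition of $[1,\infty)$ on which the reflection principle gives clean Gaussian tail bounds; the ratio of the blocks will be tuned in terms of $a$ and $b$.

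First I would use the time-inversion symmetry: $W_s:=sB(1/s)$ (with $W_0=0$) is again a standard Brownian motion, and substituting $t=1/s$ gives
$$\sup_{0<t\le 1}\Big(\tfrac{B(t)^2}{t}-a\log(1+|\hspace{-0.2em}\log t|)\Big)=\sup_{s\ge 1}\Big(\tfrac{W_s^2}{s}-a\log(1+\log s)\Big),$$
using $sB(1/s)^2=W_s^2/s$ and $|\hspace{-0.2em}\log(1/s)|=\log s$ for $s\ge1$. Hence the supremum over $(0,1]$ has the same law as the supremum over $[1,\infty)$, and since $\sup_{t>0}$ is the maximum of the two, a union bound reduces \eqref{eq:lil_tail} — at the cost of a factor $2$ — to proving $P\big(\sup_{t\ge1}(B(t)^2/t-a\log(1+\log t))>y\big)\le c\,e^{-by}$.

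Next, fix a ratio $c>1$ and cover $[1,\infty)$ by the blocks $[c^k,c^{k+1}]$, $k\ge0$. On the $k$-th block one has $t\ge c^k$ and $\log(1+\log t)\ge\log(1+k\log c)\ge0$, so if $B(t)^2/t-a\log(1+\log t)>y$ somewhere on the block then $B(t)^2>t\,(y+a\log(1+\log t))\ge c^k\big(y+a\log(1+k\log c)\big)$, which forces $\sup_{u\le c^{k+1}}|B_u|>\sqrt{c^k(y+a\log(1+k\log c))}$. The reflection principle yields $P(\sup_{u\le T}|B_u|>x)\le 2e^{-x^2/(2T)}$ (with an \emph{absolute} constant $2$, since $P(|N(0,1)|>u)\le e^{-u^2/2}$), giving block probability at most $2(1+k\log c)^{-a/(2c)}e^{-y/(2c)}$. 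Summing over $k$,
$$P\Big(\sup_{t\ge1}\big(\tfrac{B(t)^2}{t}-a\log(1+\log t)\big)>y\Big)\le 2e^{-y/(2c)}\sum_{k\ge0}(1+k\log c)^{-a/(2c)}.$$

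The only place the hypotheses enter, and the step that needs the slightest care, is the choice of $c$: the series converges exactly when $a/(2c)>1$, i.e. $c<a/2$, and $e^{-y/(2c)}\le e^{-by}$ for $y\ge0$ exactly when $c<1/(2b)$. Since $a>2$ and $b<1/2$, the interval $\big(1,\min(a/2,\,1/(2b))\big)$ is nonempty; choosing any $c$ in it makes the series a finite constant $C(a,b)$ and completes the estimate, with the final constant $4C(a,b)$ absorbing both factors of $2$. I do not expect a genuine obstacle here — the work is the bookkeeping around the monotonicity of the thresholds on each block and keeping the reflection constant free of $k$-dependence, both of which are routine.
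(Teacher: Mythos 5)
Your proof is correct and is essentially the paper's argument in discrete form: after the same time-inversion reduction to $t\ge 1$, your union bound over geometric blocks $[c^k,c^{k+1}]$ is the discrete analogue of the paper's averaging trick $\mathbf 1(\exists t: f(t)\ge g(t))\le \tfrac1\eps\int_0^\infty \mathbf 1(f(s)\ge g(s-\eps))\,ds$ applied in exponential time, and both rest on the same reflection-principle tail $P(\sup_{u\le T}|B_u|>x)\le 2e^{-x^2/(2T)}$. The parameter constraints you derive, $c<a/2$ and $c<1/(2b)$, match the paper's $ae^{-\eps}>2$ and $e^{-\eps}/2>b$ under $c=e^{\eps}$, so no further changes are needed (beyond renaming your block ratio to avoid clashing with the constant $c$ in the statement).
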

 This is an effective small and large-time version of the upper bound in the law of iterated logarithm. By setting $t=1$ inside the $\sup$ we get $B(1)^2$, which shows that the rate of the  exponential decay cannot be more than $1/2$. The lower bound $2$ on the parameter $a$ is sharp by the usual law  of  iterated logarithm.  

\begin{proof} Let $f,g:(-1,\infty)\to \mathbb R$ be non-decreasing functions and $\eps\in(0,1)$. If $f(t)>g(t)$ for  $t\ge 0$ then  $f(s)\ge g(s-\eps)$ for $s\in [t, t+\eps]$. Hence
\begin{align}\label{eq:lil_ineq}
1(f(t)\ge g(t)\mbox{ for some } t\ge 0)\le \tfrac{1}{\eps} \int_0^\infty 1(f(s)\ge g(s-\eps))ds.
\end{align}
Let 
$\bar B_t=\max_{0\le s\le t} |B_s|$, and apply \eqref{eq:lil_ineq} to  $f(t)=\bar B(e^{t})^2$ and  $g(t)=e^{t}(y+a\log(1+t))$. The expectation of the resulting inequality bounds 
$P(\sup_{t\ge 1} B_t^2/t-a \log(1+\log t)>y)$ above as 
\begin{align}
P(\bar B(e^t)^2\ge g(t)\text{ for some $t\ge 0$})
  \le \tfrac{1}{\eps} \int_0^\infty P(\bar B(e^s)^2\ge e^{s-\eps}(y+a\log(1+s-\eps))ds. \label{eq:lil_bnd1}
\end{align}
Since $\max_{0\le r\le 1} B(r)$ is distributed as $|B(1)|$,  union and Gaussian tail bounds yield
 \[
P(\bar B(e^s)^2\ge e^s x)=P(\bar B(1)^2\ge x)\le 2P(B(1)^2\ge x)\le 2 e^{-x/2}.
\]
Thus the right hand side of \eqref{eq:lil_bnd1} is bounded above by  
\begin{align*}
 \tfrac{2}{\eps} \int_0^\infty  e^{- e^{-\eps}(y+a\log(1+s-\eps))/2} ds=\frac{  4(1-\eps)^{1-e^{-\eps} a/2}e^{-e^{-\eps} y/2}}{(a e^{-\eps}-2)\eps}.
\end{align*}
To make the last step valid and to get the required bound we need $a e^{-\eps}>2$ and $e^{-\eps}/2>b$, so we choose $\eps<\min(1, \log(a/2),\log(1/(2b))$.
Time inversion $B_t \to tB(1/t)$ gives the same bound for the supremum for $0< t\le 1$, from which \eqref{eq:lil_tail} follows. 
\end{proof}






We apply Theorem \ref{thm:lil} to estimate the growth of Brownian integrals. 

\begin{proposition}\label{prop:BR_mart}
Suppose that $B$ is two-sided Brownian motion and $x_u, u\le 0$ is adapted to the filtration generated by its increments. Assume further that there is a random variable $C$ and a constant $a>0$ so that 
$
|X_u|\le C e^{a u} 
$ for $u\le 0$.
Then 
$$
|\int_{-\infty}^u X_u dB |\le \frac{2C}{\sqrt{a}}  e^{a u} \left(Z+ \log(1+2a|u|)+|\hspace{-0.2em}\log a|  +\log(1+2|\hspace{-0.2em}\log C |)\right) \quad \text{for all } u\le 0,
$$
so the left hand side is well defined. With  an absolute constant $c$, the  random variable $Z$ satisfies 
\begin{align}\label{eq:Z_tail}
P(Z>y)\le ce^{- y}, \qquad y\ge 0. 
\end{align}
\end{proposition}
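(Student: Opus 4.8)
The plan is to realize $M_u:=\int_{-\infty}^u X_s\,dB_s$ as a time‑changed Brownian motion via the Dambis--Dubins--Schwarz (DDS) theorem, and then transport the small‑ and large‑time law of the iterated logarithm of Theorem~\ref{thm:lil} through the time change. First, for each finite $\nu<u$ the Itô integral $\int_\nu^u X_s\,dB_s$ is well defined, and since
$\int_{-\infty}^u X_s^2\,ds\le\int_{-\infty}^u C^2e^{2as}\,ds=\frac{C^2}{2a}e^{2au}<\infty$ a.s.,
a standard localization argument shows $\int_\nu^u X_s\,dB_s$ converges a.s. as $\nu\to-\infty$; thus $M_u$ is a continuous local martingale in $u$ with $\langle M\rangle_u=\int_{-\infty}^uX_s^2\,ds$, $\langle M\rangle_{-\infty}=0$ and $\langle M\rangle_u\le T_u:=\frac{C^2}{2a}e^{2au}$. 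By DDS there is a Brownian motion $W$ (extended to a full Brownian motion on $[0,\infty)$ if necessary) with $M_u=W(\langle M\rangle_u)$ for all $u\le 0$.

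\textbf{Transporting the LIL.} Apply Theorem~\ref{thm:lil} to $W$ with $a=3$ and some fixed $b\in(0,\tfrac12)$: then $N:=\sup_{t>0}\big(W(t)^2/t-3\log(1+|\log t|)\big)$ is finite a.s., satisfies $P(N>y)\le ce^{-by}$, and $N\ge W(1)^2\ge 0$. Hence for every $t>0$, using subadditivity of $\sqrt{\,\cdot\,}$ and elementary inequalities,
\[
|W(t)|\le\sqrt{t\big(N+3\log(1+|\log t|)\big)}\le\sqrt t\,\sqrt N+\sqrt{3t}\,\sqrt{\log(1+|\log t|)}\le\sqrt t\,\big(\sqrt N+1+\log(1+|\log t|)\big).
\]
Evaluating at $t=\langle M\rangle_u\in(0,T_u]$ and bounding the two pieces separately (the first by monotonicity in $t$, the second by the sup over $s\in(0,T_u]$), one needs the estimate that $g(s):=\sqrt s\,(1+\log(1+|\log s|))$ satisfies $\sup_{0<s\le T}g(s)\le\sqrt T\,(K_1+\log(1+|\log T|))$ for an absolute constant $K_1$; this follows because a direct computation of $g'$ shows $g$ is increasing on $(0,s^\ast]$ and on $[1,\infty)$ for some $s^\ast\in(0,1)$, with bounded interior maximum $K_0$ on $(0,1)$, so the sup is either $g(T)$ or at most $K_0\le K_0 s^{\ast-1/2}\sqrt T$. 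This yields $|M_u|\le\sqrt{T_u}\,\big(\sqrt N+K_1+\log(1+|\log T_u|)\big)$.

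\textbf{Bookkeeping.} Substitute $\sqrt{T_u}=\tfrac{C}{\sqrt{2a}}e^{au}\le\tfrac{2C}{\sqrt a}e^{au}$ and $|\log T_u|=|2\log C-\log(2a)+2au|\le 2|\log C|+\log 2+|\log a|+2a|u|$; using $1+\sum x_i\le\prod(1+x_i)$ for $x_i\ge0$ together with $\log(1+\log 2)\le 1$ and $\log(1+|\log a|)\le|\log a|$ gives
\[
\log(1+|\log T_u|)\le\log(1+2|\log C|)+|\log a|+\log(1+2a|u|)+1.
\]
Setting $Z:=\sqrt N+K_1+1$ produces exactly the asserted bound, and $P(Z>y)=P(N>(y-K_1-1)^2)\le ce^{-b(y-K_1-1)^2}$ for $y>K_1+1$, which is $\le c'e^{-y}$ for all $y\ge 0$ after adjusting the constant, i.e.\ \eqref{eq:Z_tail}.

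\textbf{Main obstacle.} No single estimate is hard; the delicate point is coordinating them. Theorem~\ref{thm:lil} controls $W(t)^2/t$ uniformly over all $t$, but the time change $\langle M\rangle_u$ is random and, through $C$, so is its deterministic envelope $T_u$; in particular $\langle M\rangle_u$ may be close to $0$, where $|\log\langle M\rangle_u|$ blows up. Verifying that the $\sqrt t$ factor absorbs this growth is precisely the monotonicity/boundedness analysis of $g(s)=\sqrt s\,(1+\log(1+|\log s|))$, and the remaining work is the arithmetic needed to extract the precise combination $\log(1+2a|u|)+|\log a|+\log(1+2|\log C|)$. A minor technical point, worth only a sentence, is justifying that $\int_{-\infty}^uX_s\,dB_s$ exists at all, which follows from a.s.\ finiteness of $\langle M\rangle_u$ by the usual localization.
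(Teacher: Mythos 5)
Your proof is correct and follows essentially the same route as the paper: finiteness of the quadratic variation, the Dambis--Dubins--Schwarz representation, Theorem~\ref{thm:lil} applied with parameter $3$, and the same final bookkeeping. The only (cosmetic) difference is that the paper bounds $M_u^2$ using monotonicity of $x\mapsto x(1+\log(1+|\log x|))$ and takes square roots at the very end, whereas you take square roots first and therefore need the small extra (and correctly executed) analysis of the non-monotone function $\sqrt{s}\,(1+\log(1+|\log s|))$.
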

\begin{proof}
We have 
\begin{align}\notag
\int_{-\infty}^u X_s^2\, ds \le \int_{-\infty}^u C^2 e^{2a s} ds=\frac{C^2}{2a} e^{2a u}<\infty,
\end{align}
so the process
$
M_u=\int_{-\infty}^u X_u dB
$
is well defined. Moreover, 
\begin{equation}\label{eq:MM_bnd}
[M]_u\le \frac{C^2}{2a} e^{2a u}, \qquad \text{ for all } u\le 0.
\end{equation}
By the Dubins-Schwarz theorem there is a Brownian motion $W(x), x\ge 0$ so that $
M_u=W([M]_u)$.
Let 
$$Z=\tfrac13 \sup_{x>0} (W(x)^2/x-3 \log(1+|\hspace{-0.2em}\log x|)).$$ 
By Theorem \ref{thm:lil} this random variable satisfies \eqref{eq:Z_tail}, and for $u\le 0$ we have 
\[
M_u^2\le 3 [M]_u Z+3 [M]_u  (1+\log(1+|\hspace{-0.2em}\log [M]_u|)).
\]
The function $x(1+\log(1+|\hspace{-0.2em}\log x|))$ is increasing, so by \eqref{eq:MM_bnd} we also get
\begin{equation}\label{e:Mu2}
M_u^2\le \frac{3C^2}{2a} e^{2a u} \big(
Z+1+ \log(1+|\hspace{-0.2em}\log \frac{C^2}{2a} e^{2a u}|)
\big).
\end{equation}
For $x,y>0$ we have   
$|\hspace{-0.2em}\log (xy)|\le |\hspace{-0.2em}\log x|+|\hspace{-0.2em}\log y|$, $\log(1+x+y)\le \log (1+x)+\log(1+y)$, and $\log(1+x)\le x$. Using these bounds repeatedly we get
\begin{align*}
\log(1+|\hspace{-0.2em}\log \frac{C^2}{2a} e^{2a u}|)&\le \log(1+2 |\hspace{-0.2em}\log C|+ |\hspace{-0.2em}\log 2a|+2a |u|) \\
&\le \log(1+2a |u|)+\log 2+|\hspace{-0.2em}\log a| +\log(1+2|\hspace{-0.2em}\log C |).
\end{align*}
Take square roots in \eqref{e:Mu2} and use the inequality $\sqrt{1+y}\le 1+y$ for $y>0$. For $q$ fixed,  $Z+q$ satisfies the same tail bound as $Z$ with a different $c$. This implies the claim. 
\end{proof}

\section{Moment bounds for an almost linear SDE}

The following lemma is used when calculating moments of ratios of $\zeta$.

\begin{lemma}\label{l:linear-sde}
Consider the diffusion
\begin{align}\label{eq:linear-sde}
    dX=Y dt+Z dW,
\end{align}
with $|Y|\le a |X|$, $|Z|\le b |X|$, and  $a, b$ and $E|X_0^2|$ finite.
Then for any $t\ge 0$ 
\begin{align}\notag
    E|X_t|^2\le E|X_0|^2 e^{(2 a+2b^2)t}, \qquad 
    EX_t=EX_0+\int_0^t E Y_s ds. \label{eq:linear-sde-moment}
\end{align}
In particular, if $Y=\eta X$ for $\eta\in \CC$ then $EX_t=EX_0 e^{\eta t}$.
\end{lemma}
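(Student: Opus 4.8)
\textbf{Proof plan for Lemma \ref{l:linear-sde}.} The plan is to establish the two claimed estimates by a localization-plus-Gr\"onwall argument, exactly as in the standard proof of $L^2$-bounds for linear SDEs (cf.\ Theorem 5.2.1 in \cite{OksSDE}), but keeping track of the fact that $Y$ and $Z$ are only bounded by multiples of $|X|$ rather than being Lipschitz in the usual affine sense. First I would introduce the stopping times $\tau_N=\inf\{t\ge 0: |X_t|\ge N\}$ so that all the stochastic integrals below are genuine (square-integrable) martingales up to $t\wedge\tau_N$, apply It\^o's formula to $|X_{t\wedge\tau_N}|^2$, and take expectations. Since the $dW$ term integrates to zero, this yields
\begin{align}\notag
E|X_{t\wedge\tau_N}|^2 = E|X_0|^2 + E\int_0^{t\wedge\tau_N}\bigl(2\,\Re(\bar X_s Y_s) + |Z_s|^2\bigr)\,ds
\le E|X_0|^2 + \int_0^t (2a+b^2)\,E|X_{s\wedge\tau_N}|^2\,ds,
\end{align}
where I used $|2\Re(\bar X Y)|\le 2a|X|^2$ and $|Z|^2\le b^2|X|^2$. (If $X$ is complex-valued one applies It\^o to $X\bar X$; the cross term is $2\Re(\bar X Y)\,ds + |Z|^2\,ds$ since $d[W]_s=ds$ and $W$ is real; in the real case it is the same with $\Re$ removed. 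Note the coefficient is $2a+b^2$, slightly better than the stated $2a+2b^2$, so the stated bound follows a fortiori.) Gr\"onwall's inequality then gives $E|X_{t\wedge\tau_N}|^2\le E|X_0|^2 e^{(2a+b^2)t}$ uniformly in $N$, and Fatou's lemma as $N\to\infty$ (using $\tau_N\to\infty$ a.s., which holds because $X$ has continuous paths) yields the first inequality.

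Next, for the formula for $EX_t$, the key point is that once $\sup_{s\le t}E|X_s|^2<\infty$ is known, the process $\int_0^s Z_r\,dW_r$ is a true martingale on $[0,t]$: its quadratic variation has expectation $\int_0^s E|Z_r|^2\,dr\le b^2\int_0^s E|X_r|^2\,dr<\infty$. Therefore taking expectations directly in the integrated form $X_t = X_0+\int_0^t Y_s\,ds+\int_0^t Z_s\,dW_s$ kills the stochastic integral and gives $EX_t=EX_0+\int_0^t EY_s\,ds$; finiteness of $\int_0^t E|Y_s|\,ds$ follows from $|Y_s|\le a|X_s|$ and Cauchy--Schwarz. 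For the final assertion, substituting $Y_s=\eta X_s$ shows $m(t):=EX_t$ satisfies the ordinary integral equation $m(t)=EX_0+\eta\int_0^t m(s)\,ds$; since $m$ is continuous (being an integral) and the equation is the Volterra form of $m'=\eta m$, $m(0)=EX_0$, we conclude $EX_t=EX_0\,e^{\eta t}$.

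The only genuinely delicate point is making sure the stochastic integrals are handled rigorously before the $L^2$-bound is in hand; this is why the localization by $\tau_N$ is needed in the first step, and why the $EX_t$ identity is proved only after the second-moment bound. Everything else is a routine application of It\^o's formula and Gr\"onwall's lemma. I would also remark that the hypothesis $E|X_0|^2<\infty$ is used twice: to start Gr\"onwall and to guarantee the martingale property of the Brownian integral; and that the bound is uniform in $t$ on compact intervals, which is all that is needed for the applications to $E\,\cE^\eta_{\nu,u}$ in the main text.
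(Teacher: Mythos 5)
Your proof is correct and follows essentially the same route as the paper: localize with a stopping time so the stochastic integral is a true martingale, apply It\^o to $|X|^2$, use Gr\"onwall and Fatou for the second moment, and only then invoke the finiteness of $E\int|Z|^2$ to justify taking expectations in the SDE for the first-moment identity. (The paper's It\^o correction $2|Z|^2$, rather than your $|Z|^2$, reflects that in the intended application the driving noise is the complex combination $i\,db_1-db_2$ with $[W,\bar W]_t=2t$; either way the stated bound $e^{(2a+2b^2)t}$ holds.)
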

\begin{proof}
Let $\tau_c$ be the first time $|X_t|\ge c$. By It\^o's formula 
$$
\int_0^{\tau_c\wedge s}d|X|^2=\int_0^{\tau_c\wedge s}2 \Re(\bar X Z dW)+(2 \Re X \bar Y +2|Z|^2)dt.
$$
In the interval $[0,\tau_c\wedge s]$ the quadratic variation is bounded, so  
\begin{align*}
M_t=|X_{t\wedge \tau_c}|^2-\int_0^{t\wedge \tau_c} (2 \Re X_s \bar Y_s+2Z_s^2)\, ds
\end{align*}
is a martingale, and 
\begin{align}
 E|X_{t\wedge \tau_c}^2|=E\int_0^{t\wedge \tau_c} (2 \Re X_s \bar Y_s+2Z_s^2)\, ds  \le (2a+2b^2)\, E\int_0^{t\wedge \tau_c} |X|^2  ds.
\end{align}
Since
$$
\int_0^{t\wedge \tau_c} |X|^2  ds=\int_0^{t} |X|^2 \ind(s\le \tau_c)  ds \le \int_0^{t} |X|_{s\wedge \tau_c}^2  ds
$$
we have
$
 E|X_{t\wedge \tau_c}^2|\le E|X_0^2|e^{(2a+2b^2) t}
$
by Gronwall's inequality. Fatou's lemma gives
\[
E|X_t^2|\le E|X_0^2|e^{(2a+2b^2) t}
\]
as well. From this we see that the quadratic variation of 
$
\int_0^t Z dW
$
has finite expectation, so it is a martingale. Thus we can take expectations in \eqref{eq:linear-sde}  which gives (\ref{eq:linear-sde-moment}). 
The last claim follows from solving the  equation 
$
EX_t=EX_0+\eta \int_0^t EX_s ds.
$
\end{proof}

\noindent {\bf Acknowledgments.}
The first author was partially supported by the NSF award DMS-1712551. The second author was supported by the Canada Research Chair program, the NSERC Discovery Accelerator grant, and the MTA Momentum Random Spectra research group. We thank Lucas Ashbury-Bridgwood, Alexei Borodin, Paul Bourgade, Reda Chhaibi, L\'aszl\'o Erd\H os, Yun Li, Vadim Gorin, and Alexei Poltoratski  for useful discussions. 

\addcontentsline{toc}{section}{References}


\def\cprime{$'$}

\bigskip\noindent
Benedek Valk\'o
\\Department of Mathematics
\\University of Wisconsin - Madison
\\Madison, WI 53706, USA
\\{\tt valko@math.wisc.edu}
\\[20pt]
B\'alint Vir\'ag
\\Departments of Mathematics and Statistics
\\University of Toronto
\\Toronto ON~~M5S 2E4, Canada
\\{\tt balint@math.toronto.edu}

\end{document}